\definecolor{researchgate-color}{HTML}{0e5d4e}
\definecolor{arxiv-color}{rgb}{0.70, 0.11, 0.11}
\numberwithin{equation}{section}
\titleformat*{\section}{\Large \scshape\center}
\titleformat*{\subsection}{\fontsize{14}{14} \sffamily}
\theoremstyle{plain}
\newtheorem{theorem}{Theorem}[section]
\newtheorem*{theorem*}{Theorem}
\newtheorem{lemma}[theorem]{Lemma}
\newtheorem{proposition}[theorem]{Proposition}
\newtheorem{corollary}[theorem]{Corollary}
\theoremstyle{definition}
\newtheorem{definition}[theorem]{Definition}
\newtheorem{example}[theorem]{Example}
\theoremstyle{remark}
\newtheorem*{remark}{Remark}
\newcommand{\Addresses}{{
  \bigskip
  \footnotesize

  \textsc{Department of Mathematical Sciences, Norwegian University of Science and Technology,\\ 7491 Trondheim, Norway.}\par\nopagebreak
  \textit{E-mail addresses}: \texttt{eirik.berge@ntnu.no}
}}
\begin{document}
\pagenumbering{gobble}
\title{
  \Huge{A Primer on Coorbit Theory}
  
  \vspace{0.3cm}
  
  \Large{From Basics to Recent Developments}}
\author{\large{Eirik Berge}}
\date{}
\maketitle
\pagenumbering{arabic}

\begin{abstract}
Coorbit theory is a powerful machinery that constructs a family of Banach spaces, the so-called \textit{coorbit spaces}, from well-behaved unitary representations of locally compact groups. A core feature of coorbit spaces is that they can be discretized in a way that reflects the geometry of the underlying locally compact group. Many established function spaces such as \textit{modulation spaces}, \textit{Besov spaces}, \textit{Sobolev-Shubin spaces}, and \textit{shearlet spaces} are examples of coorbit spaces. \par
The goal of this survey is to give an overview of coorbit theory with the aim of presenting the main ideas in an accessible manner. Coorbit theory is generally seen as a complicated theory, filled with both technicalities and conceptual difficulties. Faced with this obstacle, we feel obliged to convince the reader of the theory's elegance. As such, this survey is a showcase of coorbit theory and should be treated as a stepping stone to more complete sources.
\end{abstract}


\section{Introduction}

Whenever a new mathematical theory is developed, one of two things usually happens: On the one hand, the theory might not be sufficiently interesting. Together with the failure to generate non-trivial results in well-established special cases, this signals a premature end. On the other hand, a newly developed theory might succeed in these endeavours. What follows is a period of flourishing, where researchers from related fields develop the theory to its fullest potential. However, there is a third and more disheartening possibility as well; the theory is wonderful in all regards but is largely left unnoticed by the mathematical community. This was the case for the theory of coorbit spaces, developed in the late '80s in a series of papers \cite{feichtinger1988unified, feichtinger1989banach1, feichtinger1989banach2} by Hans Georg Feichtinger and Karlheinz Gr\"{o}chenig. However, with the turn of the century, interest in coorbit spaces has been growing rapidly. This is due to a plethora of reasons, the most obvious one being the emergence of time-frequency analysis as a central topic in modern harmonic analysis. Many results in time-frequency analysis can be either proven or illuminated by the constructions in coorbit theory. \par
The goal of this survey is to provide an introduction to coorbit theory aimed at non-experts. We have tried to strike a balance between providing sufficient details, while at the same time prioritizing concepts over technicalities. The original papers on coorbit theory are, although insightful, admittedly difficult for novices to digest. More recent sources, e.g.\ \cite{renethesis, felix_thesis, dahlke2015harmonic}, are either not fully devoted to coorbit theory or include technicalities that distract most beginners from the core ideas. This is not intended as critique of the above sources as their main aim is to derive new results. In fact, we have the privilege of dwelling on pedagogical points precisely because we do not aim for novelty. We hope this survey can establish a natural starting point to learn coorbit theory for both students and researchers in neighboring fields.
\newpage
\noindent\textbf{Overview:} Before embarking, we give a brief overview of what coorbit theory is all about. This requires the usage of terminology that might be unfamiliar to the reader; if this causes bewilderment, then skip this part for now and return to it once you have finished reading Chapter \ref{sec: Chapter_2}. We begin with a unitary representation $\pi:G \to \mathcal{H}_{\pi}$ of a locally compact group $G$ on a Hilbert space $\mathcal{H}_{\pi}$. Consider the \textit{wavelet transform} \[\mathcal{W}_{g}:\mathcal{H}_{\pi} \to L^{\infty}(G), \quad \mathcal{W}_{g}(f)(x) := \langle f, \pi(x) g \rangle_{\mathcal{H}_{\pi}},\] where $g, f \in \mathcal{H}_{\pi}$ and $x \in G$. Under some assumptions on the representation $\pi$ and the element $g \in \mathcal{H}_{\pi}$, the transformation $\mathcal{W}_{g}$ is actually an isometry from $\mathcal{H}_{\pi}$ to the Hilbert space $L^{2}(G)$. The inner mechanics of coorbit theory deals with the following two points:
\begin{itemize}
    \item We construct a collection $\mathcal{C}o_{p}(G)$ of Banach spaces for each $1 \leq p \leq \infty$ called \textit{coorbit spaces}. Each space $\mathcal{C}o_{p}(G)$ contains the elements $f \in \mathcal{H}_{\pi}$ such that $\mathcal{W}_{g}(f)$ has a certain decay (depending on $p$) as a function on the group $G$. To make the definition of the coorbit spaces $\mathcal{C}o_{p}(G)$ precise, we will first need to extend the wavelet transform to the distributional setting.
    \item By picking a suitable \textit{atom} $g \in \mathcal{H}_{\pi}$ we can generate any $f \in \mathcal{C}o_{p}(G)$ through the formula \begin{equation}
        \label{atomic_decomposition_introduction}
        f = \sum_{i \in I}c_{i}(f)\pi(x_i)g,
    \end{equation}
    where $\{x_{i}\}_{i \in I} \subset G$ is a collection of carefully chosen points and $(c_i)_{i \in I}$ are coefficients that depend linearly on $f$. This systematic decomposition is known as an \textit{atomic decomposition}. Intuitively, we decompose each element $f \in \mathcal{C}o_{p}(G)$ into its atomic parts relative to the chosen atom $g \in \mathcal{H}_{\pi}$. The selection of the points $\{x_{i}\}_{i \in I} \subset G$ depends heavily on the structure of $G$, giving the theory a geometric flavor.
\end{itemize}

Two classes of coorbit spaces that have appeared prominently in the literature are the \textit{(homogeneous) Besov spaces} in classical harmonic analysis and the \textit{modulation spaces} in time-frequency analysis. One can obtain a deeper appreciation for these seemingly different spaces by realizing that they are both special cases of the coorbit space machinery. These two examples will be returned to time and time again to illustrate the concepts presented. 
\\\\
\textbf{Existing Literature:} There are sources in the literature that deal with coorbit spaces from a somewhat expository viewpoint. We emphasize three of them as they deserve a special mention:
\begin{itemize}
    \item The Ph.D. thesis \cite{felix_thesis} of Felix Voigtlaender has been very helpful, especially for technical aspects of the survey. Although the first chapters of \cite{felix_thesis} are more advanced than this survey (e.g.\ they deals with quasi-Banach spaces), it nevertheless introduces all the main ideas in a clear manner.
    \item The book \cite{dahlke2015harmonic} is a collection of survey papers written by various authors. Especially Chapter 2 (written by Filippo De Mari and Ernesto De Vito) and Chapter 3 (written by Stephan Dahlke, S\"{o}ren H\"{a}user, Gabriele Steidl, and Gerd Teschke) have been helpful for comprehending the basics of coorbit theory. 
    \item The paper \cite{christensen1996} is mostly an expository account of different aspects of coorbit theory. It is both well-written and useful, although it assumes more background knowledge from the reader than we do. A drawback is that \cite{christensen1996} has, due to its publication date, no modern examples and directions in coorbit theory.
\end{itemize}

As coorbit theory is a popular topic nowadays, there have been several advances of the theory in the last five years. Most of these topics are not discussed outside of their respective research papers. It is our belief that the community would benefit from having these results more easily available. We will go through some of the recent developments in Section~\ref{sec: A Kernel Theorem for Coorbit Spaces} and Chapter \ref{sec: Chapter 4}. In Section~\ref{sec: Where_To_Go_Next?} we give references to many recent works on coorbit theory.
\\\\
\textbf{Unconventional Topics:}
\begin{itemize}
\item \textit{Reproducing Kernel Hilbert Spaces}: This is included in Section \ref{sec: Reproducing_Kernel_Hilbert_Spaces} since the wavelet transform automatically produces reproducing kernel Hilbert spaces, see Proposition \ref{spaces_are_RKHS}. These reproducing kernel Hilbert spaces have received interest recently in \cite{berge2020interpolation, luef2020wiener, romero2020dual, cont_wavelet_transform}. The reproducing kernel approach also illuminates the reproducing formula in Theorem \ref{reproducing_formula}, which is central to the theory. It should be noted that reproducing kernel Hilbert spaces are often implicitly present in works on coorbit theory.
\item \textit{Large Scale Geometry}: We have included certain definitions from large scale geometry in Section \ref{sec: Atomic Decompositions} as this provides a convenient language for discussing discretizations. Large scale geometry has had little intersection with coorbit theory, except for in \cite{renethesis} where it is utilized successfully. Both \cite{renethesis} and the papers \cite{berge2019modulation, eiriklargescale} uses large scale geometry to analyze \textit{decomposition spaces}, which is a family of spaces that are related to coorbit spaces. We hope that large scale geometry can provide a conceptual framework that might bring new ideas to the table.
\end{itemize}
We have chosen to omit \textit{Wiener amalgam spaces} from the survey. This choice is a difficult one; although Wiener amalgam spaces are a useful tool, they are also a conceptional hurdle for some and not always needed in practical applications of coorbit theory. We refer the reader to \cite{feichtinger1980banach} and the survey \cite{heil2003introduction} for more details on Wiener amalgam spaces. 
\\\\
\textbf{Outline: }
\begin{itemize}
    \item \textbf{Chapter \ref{sec: Chapter_2}:} We introduce locally compact groups and unitary representations in Section~\ref{sec: Locally_Compact_Groups} and Section~\ref{sec: Unitary_rep_theory}, respectively. In addition to fixing notation, this allows us to require few prerequisites from the reader. The wavelet transform is a central player in coorbit theory and is introduced in Section~\ref{sec: Unitary_rep_theory}. We go through the orthogonality relations for the wavelet transform in Section~\ref{sec: Orthogonality_of_the_Wavelet_Transform}. In Section~\ref{sec: Reproducing_Kernel_Hilbert_Spaces} we review reproducing kernel Hilbert spaces and show that such spaces naturally arise when considering the wavelet transform. Finally, we derive the reproducing and reconstruction formulas for the wavelet transform in Section~\ref{sec: Inversion_Formula}. 
    
    \item \textbf{Chapter \ref{sec: Construction_of_Smoothness_Spaces}:} We introduce the integrable setting in Section~\ref{sec: Test_Spaces_and_Distributional_Spaces} and extend the wavelet transform to the distributional level in Section~\ref{sec: Reservoirs_and_the_Extended_Wavelet_Transform}. This allows us to define the coorbit spaces in Section~\ref{sec: Coorbit_Spaces} in a rigorous manner. The basic properties of the coorbit spaces are derived in Section~\ref{sec: Basic_properties_of_the_coorbit_spaces} with the help of the \textit{correspondence principle} given in Theorem \ref{corresponance_principle}. In Section~\ref{sec: Weighted_Versions} we discuss weighted coorbit spaces. We show that the coorbit spaces have extraordinary sampling properties in Section~\ref{sec: Atomic Decompositions} through a general procedure called \textit{atomic decompositions}. Terminology borrowed from large scale geometry will be used to make the main result in Theorem~\ref{big_discretization_theorem} more transparent. Finally, we discuss Banach frames and a kernel theorem for coorbit theory in respectively Section~\ref{sec: Banach_Frames} and Section~\ref{sec: A Kernel Theorem for Coorbit Spaces}.
    
    \item \textbf{Chapter \ref{sec: Chapter 4}:} We solidify the results presented in previous chapters by giving non-trivial examples of the theory. This includes shearlet spaces in signal analysis in Section~\ref{sec: Shearlet_Spaces}, Bergman spaces in complex analysis in Section~\ref{sec: Bergman_Spaces_and_the_Blaschke_Group}, and coorbit spaces built on nilpotent Lie groups in Section~\ref{sec: Coorbit_Spaces_on_Nilpotent_Lie_Groups}. We end in Section~\ref{sec: Where_To_Go_Next?} by giving references to recent developments related to embeddings between coorbit spaces and generalizations of coorbit theory. 
\end{itemize}

\textbf{Acknowledgments: }I have received advice and concrete suggestions from many researchers throughout the writing process. I would in particular like to thank Stine Marie Berge, Franz Luef, and Felix Voigtlaender for illuminating discussions and helpful comments. Finally, I would like to express my gratitude to everyone who participated in the seminar course I gave on coorbit theory at the Norwegian University of Science and Technology during the fall of 2020.

\section{Starting Out}
\label{sec: Chapter_2}

We start by giving an overview of preliminary topics, namely locally compact groups, unitary representations, and basic properties of the (generalized) wavelet transform. Most of this material is fairly standard, and is mainly collected from the books \cite{folland2016course, grochenig2001foundations, dahlke2015harmonic, fuhr2005abstract, deitmar2014principles}. We aim for a suitable generality and present concrete examples as we go along. 

\subsection{Prelude on Locally Compact Groups}
\label{sec: Locally_Compact_Groups}

The first order of business is to get acquainted with locally compact groups.

\begin{definition}
A \textit{locally compact group} is a locally compact Hausdorff topological space $G$ that is simultaneously a group such that the multiplication and inversion maps \[(x,y) \longmapsto xy, \qquad x \longmapsto x^{-1}, \quad x,y \in G,\] are continuous.
\end{definition}

For us, the object of main importance on a locally compact group is the left Haar measure: Recall that a \textit{Radon measure} is Borel measure that is finite
on compact sets, inner regular on
open sets, and outer regular on all Borel sets. Do not worry if you are rusty on the measure-theoretic nonsense; we will never use these technical conditions explicitly. The important point is that each locally compact group $G$ can be equipped with a unique (up to a positive constant) \textit{left-invariant} Radon measure $\mu_{L}$, that is, $\mu_{L}$ satisfies $\mu_{L}(xE) = \mu_{L}(E)$ for all $x \in G$ and every Borel set $E \subset G$. We call the measure $\mu_{L}$ the \textit{left Haar measure} of the group $G$. The existence of the left Haar measure implies that any locally compact group is canonically equipped with a measure-theoretic setting. \par 
As the terminology indicates, there is also a \textit{right Haar measure} $\mu_{R}$ on any locally compact group. How much the two measures $\mu_{L}$ and $\mu_{R}$ deviate is captured by the \textit{modular function} $\Delta:G \to (0, \infty)$ defined as follows: For $x \in G$ the measure $\mu_{x}(E) := \mu_{L}(Ex)$ is again a left-invariant Radon measure. Therefore, the uniqueness of the left Haar measure implies the existence of a number $\Delta(x) \in (0, \infty)$ such that \[\mu_{x}(E) = \Delta(x)\mu_{L}(E),\] for every Borel set $E \subset G$. It is straightforward to see that $\mu_{L} = \mu_{R}$ precisely when $\Delta \equiv 1$. Motivated by this observation, groups where $\mu_{L} = \mu_{R}$ are called \textit{unimodular}. When this is the case, we use the abbreviation $\mu := \mu_{L} = \mu_{R}$ and refer to $\mu$ as the \textit{Haar measure} on the group $G$. It is clear that commutative locally compact groups are unimodular. Moreover, locally compact groups that are either compact or discrete are also unimodular, see \cite[Chapter 2.4]{folland2016course}.

\begin{example}
The reader has surely seen plenty of locally compact groups previously. Two elementary ones are $\mathbb{R}^n$ with the usual vector sum and $\mathbb{R}^{*} := \mathbb{R} \setminus \{0\}$ with the usual product. On $\mathbb{R}^n$, the Haar measure is the Lebesgue measure $dx$, while on $\mathbb{R}^{*}$ the Haar measure is $dx/|x|$. To exemplify the last claim, we see for $E = (r,s)$ with $s > r > 0$ and $x > 0$ that \[\mu(xE) = \int_{xr}^{xs} \, \frac{dt}{t} = \log(xs) - \log(xr) = \log\left(\frac{s}{r}\right) = \mu(E).\]
\end{example}

\begin{example}
\label{affine_group_first_example}
There are many locally compact groups of interest that are not unimodular. As an example, we consider the \textit{(full) Affine group} $\textrm{Aff} = \mathbb{R} \times \mathbb{R}^{*}$ with the group multiplication \[(b,a) \cdot (b',a') := (ab' + b,aa'), \qquad (b,a), \, (b',a') \in \textrm{Aff}.\] The group operation models the composition of affine maps, and can equivalently be realized as $2 \times 2$ matrices of the form \[\begin{pmatrix} a & b \\ 0 & 1 \end{pmatrix}, \qquad (b,a) \in \textrm{Aff},\] where the group operation is matrix multiplication. Notice that the group operation is not commutative. Moreover, the affine group is not unimodular: The reader can verify that the left and right Haar measures on $\mathrm{Aff}$ are respectively given by \[\mu_{L}(b,a) = \frac{db \, da}{a^2}, \qquad \mu_{R}(b,a) = \frac{db \, da}{|a|}.\]
\end{example}

\begin{remark}
If you find yourself in the situation where you have a locally compact group $G$ but no obvious candidate for a Haar measure, then do not despair; there are several ways of constructing the Haar measure on many locally compact groups. We refer the interested reader to \cite[Proposition~2.21]{folland2016course} for a concrete example.
\end{remark}

For a locally compact group G, we can form the spaces $L^{p}(G)$ for $1 \leq p < \infty$ consisting of equivalence classes of measurable functions $f:G \to \mathbb{C}$ such that \[\|f\|_{L^{p}(G)} := \left(\int_{G}|f(x)|^{p} \, d\mu_{L}(x) \right)^{\frac{1}{p}} < \infty.\] The case $p = \infty$ also has the obvious extension from the familiar Euclidean case. For locally compact groups that are not unimodular, some authors use the notation $L^{p}(G,\mu_{L})$ for clarity. However, we will always consider the left Haar measure, and thus boldly use the abbreviated notation $L^{p}(G)$. The spaces $L^{p}(G)$ are Banach spaces for all $1 \leq p \leq \infty$. Moreover, when $p = 2$ we even have a Hilbert space structure given by the inner product \[\langle f, g \rangle_{L^{2}(G)} := \int_{G} f(x)\overline{g(x)} \, d\mu_{L}(x).\]\par
We have for each $y \in G$ the \textit{left-translation operator} $L_{y}$ given by $L_{y}f(x) := f(y^{-1}x)$ for $x \in G$. The reason for the inverse is so that we have $L_{y} \circ L_{z} = L_{yz}$ for $y,z \in G$. This detail is important when we study unitary representations in Section \ref{sec: Unitary_rep_theory}. For similar reasons, we define for each $y \in G$ the \textit{right-translation operator} $R_{y}$ by the formula $R_{y}f(x) := f(xy)$ for $x \in G$.

\begin{definition}
For $f,g \in L^{1}(G)$ we can form the \textit{convolution} between $f$ and $g$ given by \[f *_{G} g(x) := \int_{G} f(y)g(y^{-1}x) \, d\mu_{L}(y).\]
\end{definition}
Notice that, in contrast with the usual convolution of functions on $\mathbb{R}^n$, the convolution is generally non-commutative. In fact, the convolution is commutative precisely when the group operation on $G$ is commutative \cite[Theorem 1.6.4]{deitmar2014principles}. Moreover, it follows from \cite[Corollary~20.14]{hewitt2012abstract} that the convolution inequality 
\begin{equation*}
    \|f *_G g \|_{L^{p}(G)} \leq \| f\|_{L^{1}(G)} \| g\|_{L^{p}(G)}
\end{equation*}
is valid for all $1 \leq p \leq \infty$, $g \in L^{p}(G)$, and $f \in L^{1}(G)$. 

\begin{example}
\label{reduced_heisenberg_group_example}
A non-commutative group that will be of central importance for us is the \textit{(full) Heisenberg group} $\mathbb{H}^{n}$. As a set we have $\mathbb{H}^{n} = \mathbb{R}^{n} \times \mathbb{R}^n \times \mathbb{R}$, while the group multiplication is given by \[\big(x,\omega,t \big) \cdot \big(x',\omega',t'\big) := \left(x + x', \omega + \omega', t + t' + \frac{1}{2}(x' \omega - x \omega')\right).\] Although what we have described is strictly speaking one group for each dimension $n$, we collectively refer to these groups as the Heisenberg group for simplicity. In Section \ref{sec: Orthogonality_of_the_Wavelet_Transform} we will use a different realization of the Heisenberg group due to integrability issues. The Heisenberg group is unimodular and the Haar measure on $\mathbb{H}^{n}$ is the usual Lebesgue measure on $\mathbb{R}^{2n+1}$. We refer the reader to \cite{howe1980role} for an excellent exposition on the ubiquity of the Heisenberg group in harmonic analysis.  
\end{example}

\begin{example}
When working with locally compact groups, it is advantageous to have both continuous and discrete examples in mind. Most discrete examples arise from letting $G$ be any countable group with the discrete topology. Let us briefly consider $G = \mathbb{Z}$ to see what the convolution looks like in this case: The Haar measure on $\mathbb{Z}$ is the counting measure. It is common to use the notation $l^{p}(\mathbb{Z}) := L^{p}(\mathbb{Z})$ for all $1 \leq p \leq \infty$. Per convention, we use sequence notation $a = (a_{n})_{n \in \mathbb{Z}}$ with $a_{n} := a(n)$ for functions $a: \mathbb{Z} \to \mathbb{C}$. The convolution between two elements $a,b \in l^{1}(\mathbb{Z})$ is precisely the well-known \textit{Cauchy product} of sequences given by \[(a *_{\mathbb{Z}} b)_{n} := \sum_{m = -\infty}^{\infty}a_m b_{n - m}.\]
\end{example}

\subsection{Unitary Representations and the Wavelet Transform}
\label{sec: Unitary_rep_theory}

We will now consider unitary representations of locally compact groups. This will give rise to the (generalized) wavelet transform that we will examine closely. Ultimately, we use the wavelet transform to construct the coorbit spaces in Chapter \ref{sec: Construction_of_Smoothness_Spaces}. Given a Hilbert space $\mathcal{H}$ we let $\mathcal{U}(\mathcal{H})$ denote the group of all unitary operators from $\mathcal{H}$ to itself.

\begin{definition}
Let $G$ be a locally compact group and let $\mathcal{H}_{\pi}$ be a Hilbert space. A \textit{unitary representation} of $G$ on $\mathcal{H}_{\pi}$ is a group homomorphism $\pi:G \to \mathcal{U}(\mathcal{H}_{\pi})$ such that the transformation
\begin{equation}
\label{continuity_strong}
    G \ni x \longmapsto \pi(x)g \in \mathcal{H}_{\pi}
\end{equation}
is continuous for all $g \in \mathcal{H}_{\pi}$.
\end{definition}

It turns out that the continuity requirement \eqref{continuity_strong} is equivalent to the seemingly weaker requirement that 
\begin{equation}
\label{wavelet_transform_general}
    G \ni x \longmapsto \mathcal{W}_{g}f(x) := \langle f, \pi(x)g \rangle
\end{equation}
is a continuous function on $G$ for all $f,g \in \mathcal{H}_{\pi}$. The function $\mathcal{W}_{g}f$ is called the \textit{(generalized) wavelet transform} of $f$ with respect to $g$. Hence $\mathcal{W}_{g}f:G \to \mathbb{C}$ is a continuous function by assumption whenever we have a unitary representation. Moreover, we have that $\mathcal{W}_{g}f$ is a bounded function on $G$ since 
\[|\mathcal{W}_{g}f(x)| = |\langle f, \pi(x)g \rangle| \leq \|f\|\|\pi(x)g\| = \|f\|\|g\|, \qquad x \in G.\]
We often take the view that $g \in \mathcal{H}_{\pi}$ is fixed and consider the map $\mathcal{W}_{g}:\mathcal{H}_{\pi} \to C_{b}(G)$ sending $f$ to $\mathcal{W}_{g}f$, where $C_{b}(G)$ denotes the set of complex valued continuous functions on $G$ that are bounded. The wavelet transform has a central place in coorbit theory, and much of the theory revolves around understanding subtle properties of this transformation. 

\begin{example}
\label{example_regular_representation}
An example of a unitary representation on any locally compact group $G$ is the \textit{left regular representation} $L: G \to \mathcal{U}(L^{2}(G))$ given by \[L(x)f(y) := L_{x}f(y) = f(x^{-1}y),\] for $x,y \in G$ and $f \in L^{2}(G)$. Note that $L_{xy} = L_{x} \circ L_{y}$ and $L_{x}^{-1} = L_{x^{-1}}$. Hence the fact that $L_{x}$ is unitary follows from the computation \[\|L_{x}f\|_{L^{2}(G)}^2 = \int_{G}|L_{x}f(y)|^2 \, d\mu_{L}(y) = \int_{G}|f(x^{-1}y)|^2 \, d\mu_{L}(y) = \int_{G}|f(y)|^2 \, d\mu_{L}(y) = \|f\|_{L^{2}(G)}^2.\] For the continuity assertion \eqref{continuity_strong}, we refer the reader to \cite[Proposition 2.42]{folland2016course}. 
\end{example}

\begin{definition}
Let $\pi:G \to \mathcal{U}(\mathcal{H}_{\pi})$ be a unitary representation of a locally compact group $G$. 
\begin{itemize}
    \item We say that a closed subspace $\mathcal{M} \subset \mathcal{H}_{\pi}$ is an \textit{invariant subspace} if $\pi(x)g \in \mathcal{M}$ for all $g \in \mathcal{M}$ and $x \in G$. When this happens, the restriction $\pi|_{\mathcal{M}}$ is a unitary representation of $G$ on $\mathcal{M}$ and we call $\pi|_{\mathcal{M}}:G \to \mathcal{U}(\mathcal{M})$ a \textit{subrepresentation} of $\pi$.
    \item If there are no non-trivial (other than $\{0\}$ and $\mathcal{H}_{\pi}$) invariant subspaces of $\mathcal{H}_{\pi}$, then $\pi$ is called \textit{irreducible.} Otherwise, we say that $\pi$ is \textit{reducible}.
\end{itemize}
\end{definition}

For any unitary representation $\pi:G \to \mathcal{U}(\mathcal{H}_\pi)$ we have for $f,g \in \mathcal{H}_{\pi}$ and $x,y \in G$ that
\begin{equation}
\label{wavelet_intertwines_left_reg}
    \mathcal{W}_{g}(\pi(y)f)(x) = \langle \pi(y)f, \pi(x)g \rangle = \langle f, \pi(y^{-1})\pi(x)g \rangle = \mathcal{W}_{g}(f)(y^{-1}x) = L_{y}\left[\mathcal{W}_{g}(f)\right](x).
\end{equation}
The simple calculation \eqref{wavelet_intertwines_left_reg} should not be underestimated; it shows that the wavelet transform gives us a way to relate the representation $\pi$ and the left regular representation $L$ in Example \ref{example_regular_representation}. This notion is formalized in the following definition.

\begin{definition}
Let $G$ be a locally compact group and consider two unitary representations $\pi:G \to \mathcal{U}(\mathcal{H}_{\pi})$ and $\tau:G \to \mathcal{U}(\mathcal{H}_{\tau})$. We say that a bounded linear operator $T:\mathcal{H}_{\pi} \to \mathcal{H}_{\tau}$ is an \textit{intertwiner} between $\pi$ and $\tau$ if $T \circ \pi(x) = \tau(x) \circ T$ for every $x \in G$. If $T$ is additionally a unitary operator, then we refer to $T$ as a \textit{unitary intertwiner}. If a unitary intertwiner exists between $\pi$ and $\tau$, then $\pi$ and $\tau$ are called \textit{equivalent}.
\end{definition}

If we are only considering one unitary representation $\pi:G \to \mathcal{U}(\mathcal{H}_{\pi})$, then a bounded linear operator $T: \mathcal{H}_{\pi} \to \mathcal{H}_{\pi}$ satisfying $T \circ \pi(x) = \pi(x) \circ T$ is simply referred to as a \textit{(unitary) intertwiner} of $\pi$. We leave it to the reader to verify that if $\pi$ is an irreducible unitary representation and $T$ is a unitary intertwiner between $\pi$ and another unitary representation $\tau$, then $\tau$ is also irreducible. \par 
It is tempting, but slightly premature, to reformulate \eqref{wavelet_intertwines_left_reg} in the following way: The wavelet transform $\mathcal{W}_{g}$ is, for any choice of $g \in \mathcal{H}_{\pi}$, an intertwiner between $\pi$ and the left regular representation $L$ given in Example \ref{example_regular_representation}. The problem is that in general the wavelet transform $\mathcal{W}_{g}f$ is not in $L^{2}(G)$ as the following example shows.

\begin{example}
Consider the left regular representation $L:\mathbb{R} \to \mathcal{U}(L^{2}(\mathbb{R}))$ on $G = \mathbb{R}$. Then for $f,g \in L^{2}(\mathbb{R})$ and $x \in \mathbb{R}$ the wavelet transform has the form \[\mathcal{W}_{g}f(x) = \int_{-\infty}^{\infty}f(y)g(y - x) \, dy = f * \check{g}(x),\]
where $\check{g}(x) := g(-x)$. The space $L^{2}(\mathbb{R})$ is not closed under convolution: Let \[f(x) = g(x) = \mathcal{F}\left(e^{-\omega^2}|\omega|^{-\frac{1}{3}}\right)(x),\] where $\mathcal{F}$ denotes the Fourier transform. Then one can check that $f,g \in L^{2}(\mathbb{R})$ and $\mathcal{W}_{g}f \not\in L^{2}(\mathbb{R})$.
\end{example}

We will in Section \ref{sec: Orthogonality_of_the_Wavelet_Transform} work with additional assumptions on the representation $\pi$ and the fixed vector $g \in \mathcal{H}_{\pi}$ so that $\mathcal{W}_{g}f \in L^{2}(G)$ for all $f \in \mathcal{H}_{\pi}$. In that case, a natural question emerges that we will answer in Section \ref{sec: Orthogonality_of_the_Wavelet_Transform}: 

\begin{center}
    \begin{adjustwidth}{40pt}{40pt}
        \textbf{Q:} Is $\mathcal{W}_{g}$ a unitary intertwiner between $\pi$ and some subrepresentation of the left regular representation $L$?
    \end{adjustwidth}
\end{center}

\begin{example}
\label{example_with_the_STFT}
Let us revisit the Heisenberg group $\mathbb{H}^{n}$ in Example \ref{reduced_heisenberg_group_example} and describe its irreducible unitary representations. First of all, we have the family of one-dimensional representations of $\mathbb{H}^{n}$ given by \[\chi_{\alpha,\beta}(x,\omega,t) := e^{2 \pi i(\alpha x + \beta \omega)} \in \mathcal{U}(\mathbb{C}), \qquad \alpha,\beta \in \mathbb{R}^{n}, \, (x,\omega, t) \in \mathbb{H}^{n}.\] The \textit{central characters} $\chi_{\alpha,\beta}$ are obviously irreducible, unitary, and non-equivalent. We refer the reader to \cite[Chapter 9.2]{grochenig2001foundations} for an explanation of why $\chi_{\alpha, \beta}$ are called the central characters of $\mathbb{H}^{n}$. \par
Let $T_{x}$ and $M_{\omega}$ be respectively the \textit{translation operator} and \textit{modulation operator} on $L^{2}(\mathbb{R}^{n})$ given by
\begin{equation}
\label{time_shift_and_frequency_shift}
    T_{x}f(y) := f(y-x), \qquad M_{\omega}f(y) := e^{2 \pi i y \omega}f(y), \quad x,y,\omega \in \mathbb{R}^{n}.
\end{equation}
These operators can be combined to form the \textit{Schr\"{o}dinger representation} $\rho: \mathbb{H}^{n} \to \mathcal{U}(L^{2}(\mathbb{R}^{n}))$ given by 
\begin{equation}
\label{Schrodinger_representation}
    \rho(x,\omega,t)f(y) := e^{2 \pi i t}e^{\pi i x \omega}T_{x}M_{\omega}f(y).
\end{equation}
It can be verified that the Schr\"{o}dinger representation is an irreducible unitary representation of $\mathbb{H}^{n}$, see \cite[Theorem 9.2.1]{grochenig2001foundations}. Moreover, one can generate new non-equivalent irreducible unitary representations by dilating the Schr\"{o}dinger representation \[\rho_{\lambda}(x,\omega,t) := \rho(\lambda x, \omega, \lambda t), \qquad \lambda \in \mathbb{R}\setminus \{0\}.\]
And that's it! The Stone-von Neumann theorem \cite[Theorem 9.3.1]{grochenig2001foundations} states that any irreducible unitary representation of $\mathbb{H}^{n}$ is equivalent to either $\chi_{\alpha,\beta}$ for some $\alpha, \beta \in \mathbb{R}^{n}$ or $\rho_{\lambda}$ for some $\lambda \in \mathbb{R}\setminus \{0\}.$
\end{example}

The following result shows a fundamental relationship between irreducible unitary representations and intertwiners. 

\begin{lemma}[Schur's Lemma]
Let $\pi:G \to \mathcal{U}(\mathcal{H}_{\pi})$ be a unitary representation. Then $\pi$ is irreducible if and only if every intertwiner of $\pi$ is a constant multiple of the identity $\textrm{Id}_{\mathcal{H}_{\pi}}$.
\end{lemma}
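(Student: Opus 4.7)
My plan is to prove the two implications separately, with the easy direction coming from projecting onto an invariant subspace, and the harder direction using the spectral theorem for self-adjoint operators.

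For the implication that "every intertwiner is scalar" forces irreducibility, I would argue by contrapositive. Suppose $\pi$ is reducible, and let $\mathcal{M} \subset \mathcal{H}_\pi$ be a non-trivial closed invariant subspace. Because $\pi(x)$ is unitary and $\mathcal{M}$ is $\pi$-invariant, the orthogonal complement $\mathcal{M}^\perp$ is also invariant (a short check using $\langle \pi(x) h, m\rangle = \langle h, \pi(x^{-1}) m\rangle$ for $h \in \mathcal{M}^\perp$, $m \in \mathcal{M}$). Then the orthogonal projection $P: \mathcal{H}_\pi \to \mathcal{M}$ commutes with each $\pi(x)$, so $P$ is an intertwiner. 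Since $\mathcal{M}$ is neither $\{0\}$ nor $\mathcal{H}_\pi$, $P$ is not a scalar multiple of $\textrm{Id}_{\mathcal{H}_\pi}$, which is the desired contradiction.

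For the converse, assume $\pi$ is irreducible and let $T$ be an intertwiner. I would first reduce to the self-adjoint case. A direct computation shows that if $T \circ \pi(x) = \pi(x) \circ T$ for all $x$, then (by taking adjoints and replacing $x$ by $x^{-1}$, using that $\pi$ is unitary) $T^*$ is also an intertwiner. Hence the two self-adjoint operators
\[
A := \tfrac{1}{2}(T + T^*), \qquad B := \tfrac{1}{2i}(T - T^*),
\]
are intertwiners with $T = A + iB$. It therefore suffices to show that every self-adjoint intertwiner is a scalar multiple of the identity.

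The main obstacle, and the heart of the argument, is this self-adjoint case, since a self-adjoint operator need not have any eigenvalues at all. I would handle it via the spectral theorem: given a self-adjoint intertwiner $S$, let $\{E_S(B)\}_{B \in \mathcal{B}(\mathbb{R})}$ be its projection-valued spectral measure. Any bounded operator that commutes with $S$ also commutes with every bounded Borel function of $S$ (this is the standard fact that the bicommutant of $S$ contains all such functional-calculus elements, or more elementarily, one approximates $\mathbf{1}_B$ by polynomials in $S$ via dominated convergence on the functional-calculus side). In particular, every spectral projection $E_S(B)$ commutes with $\pi(x)$ for all $x \in G$. Now suppose for contradiction that $S$ is not a scalar multiple of the identity; then the spectrum $\sigma(S) \subset \mathbb{R}$ contains two distinct points, so we can find a Borel set $B \subset \mathbb{R}$ with $E_S(B)$ a proper non-zero projection. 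The range of $E_S(B)$ is then a non-trivial closed $\pi$-invariant subspace, contradicting irreducibility. Hence $S = c \cdot \textrm{Id}_{\mathcal{H}_\pi}$, and combining this with the reduction above yields $T = (c_1 + ic_2) \cdot \textrm{Id}_{\mathcal{H}_\pi}$ as required.
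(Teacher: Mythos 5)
Your proof is correct. The paper does not actually prove Schur's Lemma — it defers to \cite[Theorem 3.5]{folland2016course} — and your argument (orthogonal projection onto an invariant subspace for the easy direction; reduction to self-adjoint intertwiners and spectral projections for the hard direction) is precisely the standard spectral-theorem proof given in that reference, with all the key points (invariance of $\mathcal{M}^{\perp}$ under a unitary representation, $T^{*}$ being an intertwiner, spectral projections commuting with everything that commutes with $S$, and a non-scalar self-adjoint operator having a proper non-zero spectral projection) handled correctly.
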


We refer the reader to \cite[Theorem 3.5]{folland2016course} for a proof of Schur's Lemma. One of the main uses of Schur's Lemma is showing that certain irreducible representations are impossible. The following result illustrates this.

\begin{corollary}
\label{corollary_abelian_reps}
Let $\pi:G \to \mathcal{U}(\mathcal{H}_{\pi})$ be a unitary representation of a commutative locally compact group $G$. If $\pi$ is irreducible, then $\mathrm{dim}(\mathcal{H}_{\pi}) = 1$.
\end{corollary}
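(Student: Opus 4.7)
The plan is to exploit Schur's Lemma together with the commutativity of $G$. First, I would fix an arbitrary $y \in G$ and observe that the operator $\pi(y):\mathcal{H}_{\pi} \to \mathcal{H}_{\pi}$ is itself an intertwiner of $\pi$ with itself. Indeed, for any $x \in G$, commutativity of $G$ gives
\[
\pi(y) \circ \pi(x) = \pi(yx) = \pi(xy) = \pi(x) \circ \pi(y),
\]
so the intertwining condition holds automatically.

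Next, I would invoke Schur's Lemma: since $\pi$ is irreducible by hypothesis, every intertwiner of $\pi$ must be a scalar multiple of the identity. Hence there exists a function $\lambda: G \to \mathbb{C}$ such that $\pi(y) = \lambda(y)\cdot \mathrm{Id}_{\mathcal{H}_{\pi}}$ for every $y \in G$. (In fact $\lambda(y)$ has modulus one because $\pi(y)$ is unitary, but this is not needed for the dimension count.)

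With every $\pi(y)$ being a scalar multiple of the identity, every (closed) subspace $\mathcal{M} \subseteq \mathcal{H}_{\pi}$ is automatically $\pi$-invariant. For $\pi$ to be irreducible, $\mathcal{H}_{\pi}$ can therefore possess no closed subspaces other than $\{0\}$ and itself. But if $\mathrm{dim}(\mathcal{H}_{\pi}) \geq 2$, one may select a nonzero vector $v \in \mathcal{H}_{\pi}$ and form the one-dimensional (hence closed) subspace $\mathbb{C} v$, which is a proper nontrivial $\pi$-invariant subspace, contradicting irreducibility. Under the convention that representations act on nontrivial Hilbert spaces (so $\mathcal{H}_{\pi} \neq \{0\}$), this forces $\mathrm{dim}(\mathcal{H}_{\pi}) = 1$.

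There is no serious obstacle here; the only subtle point is recognizing that Schur's Lemma reduces the problem of ruling out invariant subspaces to the elementary linear-algebra observation that scalar operators leave every subspace invariant. The argument also delivers a bonus: $\lambda$ is then a continuous group homomorphism $G \to \mathbb{T}$, which explains why irreducible unitary representations of commutative groups are precisely the characters appearing throughout abstract harmonic analysis.
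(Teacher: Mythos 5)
Your argument is correct and follows the same route as the paper: commutativity makes each $\pi(y)$ an intertwiner, Schur's Lemma forces it to be a scalar multiple of the identity, and then every closed subspace is invariant, which under irreducibility leaves only $\mathrm{dim}(\mathcal{H}_{\pi}) = 1$. You merely spell out the final dimension count slightly more explicitly than the paper does.
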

\begin{proof}
Notice that for all $x,y \in G$ we have \[\pi(x)\pi(y) = \pi(xy) = \pi(yx) = \pi(y)\pi(x).\]
Thus $\pi(x) \in \mathcal{U}(\mathcal{H}_{\pi})$ is in fact a unitary intertwiner of $\pi$. Hence Schur's Lemma implies that $\pi(x) = C_{x} \cdot \textrm{Id}_{\mathcal{H}_{\pi}}$ for all $x \in G$, where $C_x$ is a constant dependent on $x$. However, it is now clear that \textit{any} closed subspace of $\mathcal{H}_{\pi}$ is invariant. This can only be the case, under the assumption of irreducibility, when $\mathcal{H}_{\pi}$ does not have any closed subspaces other than $\{0\}$ and $\mathcal{H}_{\pi}$.  
\end{proof}

Let us try to construct an invariant subspace of a unitary representation $\pi:G \to \mathcal{U}(\mathcal{H}_{\pi})$. Fix a non-zero vector $g \in \mathcal{H}_{\pi}$ and form the subspace
\begin{equation*}
\mathcal{M}_{g} := \overline{\textrm{span} \left\{\pi(x)g \, : \, x \in G \right\}} \subset \mathcal{H}_{\pi}.
\end{equation*}
Notice that $\mathcal{M}_{g}$ is a closed subspace of $\mathcal{H}_{\pi}$ that is non-trivial since $g = \pi(e)g \in \mathcal{M}_{g}$, where $e \in G$ is the identity element of $G$. Moreover, $\mathcal{M}_{g}$ is clearly invariant under the action of $\pi$. We call $\mathcal{M}_{g}$ the \textit{cyclic subspace} generated by $g \in \mathcal{H}_{\pi}$. If $\mathcal{M}_{g} = \mathcal{H}_{\pi}$, then the vector $g$ is said to be \textit{cyclic}. If this is not the case, then the representation $\pi$ is reducible as $\mathcal{M}_{g}$ would be a non-trivial invariant subspace. Conversely, assume that every non-zero vector $g \in \mathcal{H}_{\pi}$ is cyclic and let $\mathcal{M} \subset \mathcal{H}_{\pi}$ be a non-trivial invariant subspace. Fix a non-zero $g \in \mathcal{M}$ and notice that $\mathcal{M}_{g} \subset \mathcal{M}$. Since $g$ is cyclic this forces $\mathcal{M} = \mathcal{H}_{\pi}$ so that $\pi$ is irreducible. We summarize this discussion for later reference in the following proposition.

\begin{proposition}
\label{proposition_cyclic_irreducible}
A unitary representation $\pi:G \to \mathcal{U}(\mathcal{H}_{\pi})$ is irreducible precisely when every non-zero vector $g \in \mathcal{H}_{\pi}$ is cyclic.
\end{proposition}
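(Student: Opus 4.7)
The plan is to prove both directions of the equivalence directly using the notion of the cyclic subspace $\mathcal{M}_g = \overline{\mathrm{span}\{\pi(x)g : x \in G\}}$, which is the minimal closed invariant subspace containing $g$. Most of the groundwork is already laid out in the paragraph preceding the proposition; the proof is essentially a matter of recording the two implications carefully.

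For the forward direction, I would assume $\pi$ is irreducible and fix an arbitrary non-zero vector $g \in \mathcal{H}_\pi$. I would then observe that $\mathcal{M}_g$ is closed by construction, non-zero (since $g = \pi(e)g \in \mathcal{M}_g$), and invariant under $\pi$ (continuity and linearity of each $\pi(x)$ propagate through spans and closures). Irreducibility forces $\mathcal{M}_g \in \{\{0\}, \mathcal{H}_\pi\}$, and since $g \neq 0$ rules out $\{0\}$, I conclude $\mathcal{M}_g = \mathcal{H}_\pi$, i.e.\ $g$ is cyclic.

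For the reverse direction, I would argue by contrapositive: assume $\pi$ is reducible, so there exists a closed invariant subspace $\mathcal{M} \subset \mathcal{H}_\pi$ with $\mathcal{M} \notin \{\{0\}, \mathcal{H}_\pi\}$. Pick any non-zero $g \in \mathcal{M}$; by invariance $\pi(x)g \in \mathcal{M}$ for every $x \in G$, and since $\mathcal{M}$ is closed this yields $\mathcal{M}_g \subset \mathcal{M} \subsetneq \mathcal{H}_\pi$. Hence $g$ fails to be cyclic, proving the contrapositive.

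I do not expect any genuine obstacle here; the only minor point to be careful about is justifying that $\mathcal{M}_g$ is invariant, which boils down to noting that the action $\pi(x)$ commutes with finite linear combinations of the form $\sum_i c_i \pi(y_i)g$ (because $\pi(x)\pi(y_i) = \pi(xy_i)$) and that $\pi(x)$, being unitary, is continuous and therefore preserves the closure. This is standard but worth mentioning once for clarity.
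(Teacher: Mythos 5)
Your proposal is correct and follows exactly the argument the paper gives in the paragraph preceding the proposition: the forward direction via the cyclic subspace $\mathcal{M}_g$ being a non-zero closed invariant subspace, and the converse via picking a non-zero vector in a putative non-trivial invariant subspace and noting $\mathcal{M}_g \subset \mathcal{M}$. Your extra remark on why $\mathcal{M}_g$ is invariant (unitarity giving continuity, hence preservation of closures) fills in a detail the paper leaves as "clearly invariant," but the route is the same.
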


The following result shows that cyclic vectors are of central importance for the wavelet transform.

\begin{lemma}
\label{injection_lemma}
Consider a unitary representation $\pi:G \to \mathcal{U}(\mathcal{H}_{\pi})$ and fix a non-zero vector $g \in \mathcal{H}_{\pi}$. The wavelet transform $\mathcal{W}_{g}:\mathcal{H}_{\pi} \to C_{b}(G)$ is injective if and only if $g$ is a cyclic vector. 
\end{lemma}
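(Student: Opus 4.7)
The plan is to translate both conditions into statements about orthogonality in $\mathcal{H}_{\pi}$ and then appeal to the standard Hilbert space fact that a closed subspace $\mathcal{M}$ equals $\mathcal{H}_{\pi}$ if and only if $\mathcal{M}^{\perp} = \{0\}$. Concretely, the key identity I would establish first is
\[
\ker(\mathcal{W}_{g}) = \mathcal{M}_{g}^{\perp},
\]
which makes the statement essentially a triviality once set up correctly.

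To prove this identity, I would unwind the definition: for $f \in \mathcal{H}_{\pi}$, the condition $\mathcal{W}_{g}f \equiv 0$ means $\langle f, \pi(x)g \rangle = 0$ for every $x \in G$, i.e.\ $f$ is orthogonal to the set $\{\pi(x)g : x \in G\}$. By linearity of the inner product in the first argument, this is equivalent to $f$ being orthogonal to $\operatorname{span}\{\pi(x)g : x \in G\}$, and by continuity of the inner product this upgrades to orthogonality with the closure $\mathcal{M}_{g}$. Thus $\mathcal{W}_{g}f = 0$ if and only if $f \in \mathcal{M}_{g}^{\perp}$.

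With the identity $\ker(\mathcal{W}_{g}) = \mathcal{M}_{g}^{\perp}$ in hand, the two directions fall out immediately. If $g$ is cyclic, then $\mathcal{M}_{g} = \mathcal{H}_{\pi}$, whence $\mathcal{M}_{g}^{\perp} = \{0\}$, so $\ker(\mathcal{W}_{g}) = \{0\}$ and $\mathcal{W}_{g}$ is injective. Conversely, if $\mathcal{W}_{g}$ is injective, then $\mathcal{M}_{g}^{\perp} = \{0\}$, and since $\mathcal{M}_{g}$ is closed by construction, the projection theorem gives $\mathcal{M}_{g} = (\mathcal{M}_{g}^{\perp})^{\perp} = \{0\}^{\perp} = \mathcal{H}_{\pi}$, so $g$ is cyclic.

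There is no genuine obstacle here; the only subtlety worth flagging explicitly in the write-up is the passage from orthogonality with the raw spanning set to orthogonality with its closed linear span, which is where the continuity of $\langle \cdot, \cdot \rangle$ (and hence the definition of $\mathcal{M}_{g}$ as a closure) is used. Everything else is a direct invocation of the orthogonal complement formalism in a Hilbert space.
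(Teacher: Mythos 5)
Your proof is correct and rests on exactly the same observation as the paper's, namely that $\mathcal{W}_{g}f = 0$ if and only if $f \perp \mathcal{M}_{g}$; the paper merely phrases the two directions via contradiction and contrapositive rather than packaging the observation as the kernel identity $\ker(\mathcal{W}_{g}) = \mathcal{M}_{g}^{\perp}$. Your formulation is a slightly cleaner organization of the identical argument.
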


\begin{proof}
Assume by contradiction that $g$ is a cyclic vector and $\mathcal{W}_{g}$ is not injective. Pick $f \in \mathcal{H}_{\pi} \setminus \{0\}$ such that $\mathcal{W}_{g}f$ is the zero function on $G$, that is, \[\mathcal{W}_{g}f(x) = \langle f, \pi(x)g \rangle = 0,\] for all $x \in G$. This implies that $f$ is orthogonal to the cyclic subspace $\mathcal{M}_{g}$. In particular, $\mathcal{M}_{g} \neq \mathcal{H}_{\pi}$ and we have a contradiction. Conversely, assume that $g$ is not cyclic so that $\mathcal{M}_{g} \neq \mathcal{H}_{\pi}$. By picking $f \in \mathcal{M}_{g}^{\perp} \setminus \{0\}$ we have that $\langle f, \pi(x)g \rangle = 0$ for all $x \in G$. Hence $\mathcal{W}_{g}:\mathcal{H}_{\pi} \to C_{b}(G)$ is not injective.
\end{proof}

\subsection{Square Integrability and Orthogonality}
\label{sec: Orthogonality_of_the_Wavelet_Transform}

We want to examine the wavelet transform $\mathcal{W}$ given in \eqref{wavelet_transform_general} in more detail. It is instructive to look at a concrete example first to see what we might expect.

\begin{example}
Let us consider the Schr\"{o}dinger representation $\rho$ of the Heisenberg group $\mathbb{H}^{n}$ given in \eqref{Schrodinger_representation}. The wavelet transform corresponding to this representation is given by 
\begin{equation}
\label{generalized_wavelet_Schrodinger}
    \mathcal{W}_{g}f(x,\omega,t) = \langle f, \rho(x,\omega,t)g \rangle = e^{-2\pi i t}e^{-\pi i x \omega} \langle f, T_{x}M_{\omega}g \rangle = e^{-2\pi i t}e^{\pi i x \omega} \langle f, M_{\omega}T_{x}g \rangle,
\end{equation}
for $f,g \in L^{2}(\mathbb{R}^{n})$. We can recognize the term $\langle f, M_{\omega}T_{x}g \rangle$ as the \textit{short-time Fourier transform} (STFT), which is usually denoted by \begin{equation*}
    V_{g}f(x,\omega) := \langle f, M_{\omega}T_{x}g \rangle = \int_{\mathbb{R}^n}f(t)\overline{g(t-x)}e^{-2 \pi i t \omega} \, dt.
\end{equation*}
Hence the wavelet transform for the Schr\"{o}dinger representation is, up to a phase factor, the short-time Fourier transform. The STFT satisfies two important properties:

\begin{description}
    \item[Orthogonality:] For $f_1,f_2,g_1,g_2 \in L^{2}(\mathbb{R}^{n})$ we have the orthogonality relation 
    \begin{equation}
    \label{STFT_orthogonality}
        \langle V_{g_1}f_1, V_{g_2}f_2 \rangle_{L^{2}(\mathbb{R}^{2n})} = \langle f_1, f_2 \rangle_{L^{2}(\mathbb{R}^{n})} \overline{\langle g_1, g_2 \rangle}_{L^{2}(\mathbb{R}^{n})}.
    \end{equation}
    \item[Reconstruction:] Fix $g \in L^{2}(\mathbb{R}^{n})$ with $\|g\|_{L^{2}(\mathbb{R}^{n})} = 1$. Given any $f \in L^{2}(\mathbb{R}^{n})$, we can reconstruct $f$ from $V_{g}f$ through the formula 
    \begin{equation}
    \label{reconstruction_for_the_STFT}
        \langle f, h \rangle_{L^{2}(\mathbb{R}^{n})} = \int_{\mathbb{R}^{2n}}V_{g}f(x,\omega) \overline{V_{g}h(x,\omega)} \, dx \,d\omega,
    \end{equation}
    for any $h \in L^{2}(\mathbb{R}^{n})$.
\end{description}
The proofs can be found in \cite[Theorem 3.2.1]{grochenig2001foundations} and \cite[Corollary 3.2.3]{grochenig2001foundations}, respectively.
\end{example}

We postpone discussing the reconstruction property \eqref{reconstruction_for_the_STFT} to Section \ref{sec: Inversion_Formula}. It turns out that the STFT case is a best case scenario; not all generalized wavelet transforms exhibit such a simple orthogonality relation. From \eqref{STFT_orthogonality} we see that $V_{g}:L^{2}(\mathbb{R}^{n}) \to L^{2}(\mathbb{R}^{2n})$ is an isometry for any normalized $g \in L^{2}(\mathbb{R}^{n})$. Generalizing this observation, we would like to answer the following question in this section:
\begin{center}
    \begin{adjustwidth}{40pt}{40pt}
        \textbf{Q:} Under which conditions on a general unitary representation $\pi:G \to \mathcal{U}(\mathcal{H}_{\pi})$ and a non-zero vector $g \in \mathcal{H}_{\pi}$ can we ensure that the generalized wavelet transform $\mathcal{W}_{g}:\mathcal{H}_{\pi} \to L^{2}(G)$ is an isometry?
    \end{adjustwidth}
\end{center}
Notice that this question is precisely the same as the question we asked in Section \ref{sec: Unitary_rep_theory} regarding whether $\mathcal{W}_{g}$ is a unitary intertwiner between $\pi$ and a subrepresentation of the left regular representation. Given a unitary representation $\pi:G \to \mathcal{U}(\mathcal{H}_{\pi})$ we first of all need that $\mathcal{W}_{g}$ is injective. By Proposition \ref{proposition_cyclic_irreducible} and Lemma \ref{injection_lemma} this will be satisfied for all non-zero vectors $g \in \mathcal{H}_{\pi}$ whenever $\pi$ is irreducible. Henceforth we will require that $\pi$ is irreducible. Secondly, we need a condition on $g$ to ensure that $\mathcal{W}_{g}f \in L^{2}(G)$ for all $f \in \mathcal{H}_{\pi}$. 

\begin{definition}
Let $\pi:G \to \mathcal{U}(\mathcal{H}_{\pi})$ be an irreducible unitary representation. We say that a non-zero vector $g \in \mathcal{H}_{\pi}$ is \textit{square integrable} if $\mathcal{W}_{g}g \in L^{2}(G)$. Explicitly, we require that \[\int_{G} |\langle g, \pi(x)g \rangle |^2 \, d\mu_{L}(x) < \infty.\] The representation $\pi$ is said to be \textit{square integrable} if there exists at least one square integrable vector for $\pi$.
\end{definition}

\begin{remark}
\label{integrability_remark}
Pay attention to the fact that a square integrable representation $\pi$ of a locally compact group $G$ is both unitary and irreducible by definition. These assumptions are implicit whenever we say that a representation $\pi:G \to \mathcal{U}(\mathcal{H}_{\pi})$ is square integrable. A stronger requirement one could impose is for a non-zero vector $g$ to be \textit{integrable} in the sense that $\mathcal{W}_{g}g \in L^{1}(G)$. It follows from the inclusion $L^{1}(G) \cap L^{\infty}(G) \subset L^{2}(G)$ that every integrable vector is square integrable. We will return to this more stringent condition in Chapter \ref{sec: Construction_of_Smoothness_Spaces}.
\end{remark}

\begin{example}
An irreducible unitary representation is not automatically square integrable: Consider the \textit{trivial representation} $\pi:G \to \mathcal{U}(\mathbb{C})$ given by $\pi(x) = \textrm{Id}_{\mathbb{C}}$ for all $x \in G$. Then for $z \in \mathbb{C} \setminus \{0\}$ we have \[\int_{G}|\langle z, \pi(x)z \rangle|^2 \, d\mu_{L}(x) = \int_{G}|\langle z,z \rangle|^2 \, d\mu_{L}(x) = |z|^4 \mu_{L}(G).\] Hence the trivial representation of $G$ is square integrable if and only if $\mu_{L}(G) < \infty.$ This in turn happens if and only if $G$ is compact by \cite[Proposition 1.4.5]{deitmar2014principles}. Since the wavelet transform is continuous, it is clear that any irreducible unitary representation of a compact group is automatically square integrable. In fact, it is not terribly difficult to show that a locally compact group $G$ has a square integrable representation on a finite dimensional vector space if and only if $G$ is compact, see \cite[Proposition 16.4]{robert1983introduction}.
\end{example}

\begin{example}
\label{example_Heisenberg_reduction}
The wavelet transform \eqref{generalized_wavelet_Schrodinger} for the Schr\"{o}dinger representation is \textit{not} square integrable. This is due to the last component $\{0\} \times \{0\} \times \mathbb{R}$ being only present in the phase factors. Notice that $\rho(x,\omega,t) = \textrm{Id}_{L^{2}(\mathbb{R}^n)}$ precisely whenever $(x,\omega,t) = (0,0,n)$ for $n \in \mathbb{Z}$. Hence we can consider the quotient group $\mathbb{H}_{r}^{n} := \mathbb{H}^{n} / \, \textrm{ker}(\rho) \simeq \mathbb{R}^n \times \mathbb{R}^n \times \mathbb{T}$ with the Haar measure $dx \, d\omega \, d\tau$ and the product \[\left(x,\omega,e^{2\pi i \tau}\right) \cdot \left(x',\omega',e^{2\pi i \tau'}\right) := \left(x + x', \omega + \omega', e^{2\pi i (\tau + \tau')}e^{\pi i (x' \omega - x \omega')}\right),\] for $x,x',\omega,\omega' \in \mathbb{R}^n$ and $\tau,\tau' \in \mathbb{R}$. The group $\mathbb{H}_{r}^{n}$ is called the \textit{reduced Heisenberg group}. \par 
The Schr\"{o}dinger representation $\rho:\mathbb{H}^{n} \to \mathcal{U}(L^{2}(\mathbb{R}^n))$ descends to an irreducible unitary representation $\rho_{r}:\mathbb{H}_{r}^{n} \to \mathcal{U}(L^{2}(\mathbb{R}^n))$ given by \[\rho_{r}\left(x,\omega,e^{2\pi i \tau}\right)f(y) = e^{2\pi i \tau}e^{\pi i x \omega}T_{x}M_{\omega}f(y), \qquad \left(x,\omega,e^{2\pi i \tau}\right) \in \mathbb{H}_{r}^{n},\] where $T_{x}$ and $M_{\omega}$ are given in \eqref{time_shift_and_frequency_shift}. Although sloppy, it is common to refer to $\rho_r$ as the \textit{Schr\"{o}dinger representation} as well. In contrast with $\rho$, the representation $\rho_{r}$ is square integrable: For any non-zero $g \in L^{2}(\mathbb{R}^n)$ we have \begin{equation}
\label{Reduced_Heisenberg_computation}
\|\mathcal{W}_{g}g\|_{L^{2}(\mathbb{H}_{r}^{n})}^2 = \int_{0}^{1} \int_{\mathbb{R}^n} \int_{\mathbb{R}^n} |V_{g}g(x,\omega)|^2 \, dx \, d\omega \, d\tau = \|V_{g}g\|_{L^{2}(\mathbb{R}^{2n})}^2 = \|g\|_{L^{2}(\mathbb{R}^n)}^4,    
\end{equation}
where we used the orthogonality relation \eqref{STFT_orthogonality} of the STFT. Hence the map $\mathcal{W}_{g}$ is an isometry from $L^{2}(\mathbb{R}^n)$ to $L^{2}(\mathbb{H}_{r}^{n})$ when $\|g\|_{L^{2}(\mathbb{R}^n)} = 1$.
\end{example}

At first glance, the condition that $g \in \mathcal{H}_{\pi}$ is square integrable seems slightly weaker than the requirement desired, namely that $\mathcal{W}_{g}f \in L^{2}(G)$ for all $f \in \mathcal{H}_{\pi}$. However, it turns out that they are in fact equivalent.

\begin{proposition}
\label{L2-equality-definitions}
Let $\pi:G \to \mathcal{U}(\mathcal{H}_{\pi})$ be a square integrable representation with a square integrable vector $g \in \mathcal{H}_{\pi}$. Then $\mathcal{W}_{g}f \in L^{2}(G)$ for all $f \in \mathcal{H}_{\pi}$.
\end{proposition}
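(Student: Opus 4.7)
The plan is to exploit the irreducibility of $\pi$ together with a Schur-type argument to promote the hypothesis $\mathcal{W}_g g \in L^2(G)$ to the whole Hilbert space. Concretely, I would introduce
\[\mathcal{A}_g := \{f \in \mathcal{H}_\pi : \mathcal{W}_g f \in L^2(G)\},\]
which is a linear subspace containing $g$, and argue that $\mathcal{A}_g = \mathcal{H}_\pi$.

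The first step is to observe that $\mathcal{A}_g$ is $\pi$-invariant: by the intertwining relation \eqref{wavelet_intertwines_left_reg} we have $\mathcal{W}_g(\pi(y) f) = L_y \mathcal{W}_g f$, and since $L_y$ is an isometry of $L^2(G)$ (Example \ref{example_regular_representation}), $\pi(y)f \in \mathcal{A}_g$ whenever $f \in \mathcal{A}_g$. Consequently the closure of $\mathcal{A}_g$ in $\mathcal{H}_\pi$ is a non-trivial closed invariant subspace, and irreducibility forces this closure to equal $\mathcal{H}_\pi$. Thus $\mathcal{A}_g$ is at least dense.

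The real work is to upgrade density to equality. I would regard $Tf := \mathcal{W}_g f$ as a densely defined unbounded operator $T : \mathcal{A}_g \subset \mathcal{H}_\pi \to L^2(G)$ and verify that $T$ is closed: if $f_n \to f$ in $\mathcal{H}_\pi$ and $Tf_n \to h$ in $L^2(G)$, then the pointwise bound $|Tf_n(x) - \mathcal{W}_g f(x)| \leq \|f_n - f\|\|g\|$, together with a.e.\ convergence along a subsequence of the $L^2$-limit, forces $h = \mathcal{W}_g f$ almost everywhere, so $f \in \mathcal{A}_g$. The positive self-adjoint operator $T^{*}T$ on $\mathcal{H}_\pi$ then inherits the intertwining with $\pi$ from the relation $T \pi(y) = L_y T$, so every bounded spectral projection of $T^{*}T$ is an intertwiner of the irreducible representation $\pi$. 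Schur's Lemma forces each such projection to be $0$ or $\mathrm{Id}_{\mathcal{H}_\pi}$, which means $T^{*}T = c \cdot \mathrm{Id}_{\mathcal{H}_\pi}$ for some $c > 0$. In particular $T$ is bounded on its domain, and combining boundedness with closedness forces $\mathcal{A}_g = \mathcal{H}_\pi$.

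The main obstacle I foresee is the careful handling of the unbounded operator $T^{*} T$: making rigorous what \emph{commutes with $\pi$} means for an unbounded self-adjoint operator (namely strong commutativity, i.e.\ commutation of all bounded spectral projections with $\pi(y)$) is the one point where the argument becomes technical. A more elementary alternative, essentially the Duflo--Moore route, would be to construct directly a constant $c$ such that $\|\mathcal{W}_g f\|_{L^2(G)}^2 = c \|f\|_{\mathcal{H}_\pi}^2$ on a dense invariant subspace and then extend by continuity; this avoids spectral theory but essentially hides the same Schur-type argument.
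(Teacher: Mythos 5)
Your proposal is correct, but it takes a genuinely different route from the paper at the one point where the paper is not self-contained. The paper's proof introduces exactly the same subspace $\mathcal{H}_g = \{f : \mathcal{W}_g f \in L^2(G)\}$, notes it is non-trivial and $\pi$-invariant via \eqref{wavelet_intertwines_left_reg}, and concludes by irreducibility --- but for the crucial fact that this subspace is \emph{norm-closed} in $\mathcal{H}_\pi$ it simply cites \cite[Lemma 6.3]{wong2002wavelet}, calling the argument ``rather tricky.'' You instead prove the weaker and genuinely elementary statement that the \emph{operator} $T = \mathcal{W}_g$ is graph-closed (via the uniform pointwise bound against an a.e.-convergent subsequence of the $L^2$-limit, which is sound), and then supply the missing hard step yourself: $T^{*}T$ is a positive self-adjoint intertwiner of the irreducible $\pi$, so by the unbounded version of Schur's Lemma (the same generalization the paper invokes in Corollary \ref{corollary_unimodular} via \cite[Proposition 12.2.2]{deitmar2014principles}) it equals $c\,\mathrm{Id}$ with $c>0$, whence $T$ is bounded on its dense domain and graph-closedness forces $\mathcal{A}_g = \mathcal{H}_\pi$. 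The trade-off is clear: the paper's proof is shorter but outsources the substance to a reference, while yours is self-contained modulo the spectral theorem and, as a bonus, recovers the identity $\|\mathcal{W}_g f\|_{L^2(G)}^2 = c\|f\|_{\mathcal{H}_\pi}^2$, i.e.\ a piece of the Duflo--Moore Theorem \ref{Duflo_Moore_theorem}. The only point needing care --- which you correctly flag --- is making the commutation $T^{*}T\pi(y) = \pi(y)T^{*}T$ precise at the level of domains and spectral projections; since $\pi(y)$ and $L_y$ are unitary, the adjoint identities $(L_yT)^{*} = T^{*}L_{y^{-1}}$ hold exactly and the argument goes through.
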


\begin{proof}
Consider the subspace $\mathcal{H}_{g} \subset \mathcal{H}_{\pi}$ consisting of those $f \in \mathcal{H}_{\pi}$ such that $\mathcal{W}_{g}f \in L^{2}(G)$. Then $\mathcal{H}_{g}$ is a non-trivial subspace since $g \in \mathcal{H}_{g}$. The fact that $\mathcal{H}_{g}$ is closed in $\mathcal{H}_{\pi}$ is rather tricky, and we refer the reader to \cite[Lemma 6.3]{wong2002wavelet} for the argument. Notice that $\mathcal{H}_{g}$ is an invariant subspace since \eqref{wavelet_intertwines_left_reg} shows that \[\mathcal{W}_{g} \pi(x)f = L_{x}\mathcal{W}_{g}f, \qquad f \in \mathcal{H}_{g}, \, x \in G.\] By irreducibility, we have $\mathcal{H}_{g} = \mathcal{H}_{\pi}$ and the result follows.
\end{proof}

\begin{remark}
There are several ways of characterizing square integrable representations that we will not emphasize. One of the more elegant formulations \cite[Theorem 2]{duflo1976regular} states that an irreducible unitary representation is square integrable precisely when it is equivalent to a subrepresentation of the left regular representation. In the literature, e.g.\ \cite{fuhr2005abstract}, such representations are sometimes referred to as \textit{discrete series representations}.
\end{remark}

The next result gives a complete answer to how the orthogonality relation \eqref{STFT_orthogonality} generalizes to arbitrary square integrable representations. 

\begin{theorem}[Duflo-Moore Theorem]
\label{Duflo_Moore_theorem}
Let $\pi:G \to \mathcal{U}(\mathcal{H}_{\pi})$ be a square integrable representation. There exists a unique self-adjoint, positive, densely defined operator $C_{\pi}: \mathcal{D}(C_{\pi}) \subset \mathcal{H}_{\pi} \to \mathcal{H}_{\pi}$ with a densely defined inverse such that:
\begin{itemize}
    \item A non-zero element $g \in \mathcal{H}_{\pi}$ is square integrable if and only if $g \in \mathcal{D}(C_{\pi})$.
    \item For $g_1, g_2 \in \mathcal{D}(C_{\pi})$ and $f_1,f_2 \in \mathcal{H}_{\pi}$ we have the orthogonality relation 
    \begin{equation}
    \label{wavelet_transform_orthogonality}
        \langle \mathcal{W}_{g_1}f_1, \mathcal{W}_{g_2}f_2 \rangle_{L^{2}(G)} = \langle f_1, f_2 \rangle_{\mathcal{H}_{\pi}} \overline{\langle C_{\pi}g_1, C_{\pi}g_2 \rangle}_{\mathcal{H}_{\pi}}.
    \end{equation}
    \item The operator $C_{\pi}$ is injective and satisfies the invariance relation
    \begin{equation}
    \label{invariance_relation}
        \pi(x)C_{\pi} = \sqrt{\Delta(x)} C_{\pi} \pi(x),
    \end{equation}
    for all $x \in G$ where $\Delta$ denotes the modular function on $G$.
\end{itemize}
\end{theorem}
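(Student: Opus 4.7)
The plan is to apply Schur's Lemma to $\mathcal{W}_g^* \mathcal{W}_g$ in order to produce, for each square integrable $g$, a scalar $\lambda(g) \geq 0$ which will turn out to be $\|C_\pi g\|^2$. I would then assemble these scalars into a closed positive quadratic form on the space $\mathcal{A} \subset \mathcal{H}_\pi$ of square integrable vectors (with $0$ adjoined) and appeal to the classical form/operator correspondence to extract the self-adjoint positive operator $C_\pi$.

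Before invoking Schur, I would set up the domain. A short change of variables based on $\mathcal{W}_{\pi(y)g}f(x) = \mathcal{W}_g f(xy)$ together with the identity $\|R_y h\|_{L^2(G)}^2 = \Delta(y)^{-1}\|h\|_{L^2(G)}^2$ yields both the $\pi$-invariance of $\mathcal{A}$ and the covariance
\[
\|\mathcal{W}_{\pi(y)g}f\|_{L^2(G)}^2 = \Delta(y)^{-1}\|\mathcal{W}_g f\|_{L^2(G)}^2, \qquad g \in \mathcal{A}, \; f \in \mathcal{H}_\pi.
\]
Since additivity of $g \mapsto \mathcal{W}_g f$ makes $\mathcal{A}$ a linear subspace and $\mathcal{A} \neq \{0\}$ by square integrability of $\pi$, irreducibility forces $\overline{\mathcal{A}} = \mathcal{H}_\pi$.

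Now I would fix $g \in \mathcal{A}$. Proposition \ref{L2-equality-definitions} states that $\mathcal{W}_g$ is everywhere defined on $\mathcal{H}_\pi$, and the closed graph theorem (pointwise convergence $\mathcal{W}_g f_n(x) \to \mathcal{W}_g f(x)$ upon $f_n \to f$ suffices to close the graph) makes it bounded into $L^2(G)$. The intertwining identity \eqref{wavelet_intertwines_left_reg} promotes $\mathcal{W}_g^* \mathcal{W}_g$ to a bounded positive self-adjoint intertwiner of $\pi$ with itself, so Schur's Lemma forces $\mathcal{W}_g^* \mathcal{W}_g = \lambda(g)\,\textrm{Id}_{\mathcal{H}_\pi}$ for some $\lambda(g) \geq 0$. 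Polarizing in the $g$-variable upgrades this to a sesquilinear form $\beta$ on $\mathcal{A}$ satisfying
\[
\langle \mathcal{W}_{g_1}f_1,\mathcal{W}_{g_2}f_2\rangle_{L^2(G)} = \overline{\beta(g_1,g_2)}\,\langle f_1,f_2\rangle_{\mathcal{H}_\pi}.
\]

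The heart of the proof --- and the step I expect to be the most delicate --- is to distill $C_\pi$ from $\beta$. I would verify that the positive quadratic form $g \mapsto \lambda(g)$ is closed on $\mathcal{A}$: if $(g_n) \subset \mathcal{A}$ is Cauchy in the form norm and $g_n \to g$ in $\mathcal{H}_\pi$, then $(\mathcal{W}_{g_n}f)$ is $L^2$-Cauchy for every $f$, its pointwise limit is $\mathcal{W}_g f$ by continuity of the inner product, so $\mathcal{W}_g f \in L^2(G)$ and $g \in \mathcal{A}$ with $\lambda(g_n - g) \to 0$. This simultaneously identifies $\mathcal{A}$ as the closed form domain and confirms the first bullet of the theorem. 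The classical correspondence between closed positive densely defined quadratic forms and positive self-adjoint operators then produces a unique such $C_\pi$ with $\mathcal{D}(C_\pi) = \mathcal{A}$ and $\lambda(g) = \|C_\pi g\|^2$; substituting back delivers \eqref{wavelet_transform_orthogonality}. Injectivity is immediate since $C_\pi g = 0$ would force $\mathcal{W}_g \equiv 0$; dense definedness of $C_\pi^{-1}$ follows from $\overline{\mathrm{Ran}(C_\pi)} = \ker(C_\pi)^{\perp} = \mathcal{H}_\pi$ via self-adjointness; and the invariance \eqref{invariance_relation} follows from $\|C_\pi \pi(y) g\|^2 = \Delta(y)^{-1}\|C_\pi g\|^2$, which by polarization gives $\pi(y)^* C_\pi^2 \pi(y) = \Delta(y)^{-1} C_\pi^2$ and hence the claimed operator identity after extracting the positive square root.
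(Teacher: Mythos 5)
The paper does not actually prove Theorem \ref{Duflo_Moore_theorem}; it defers entirely to the appendix of Grossmann--Morlet--Paul and to the original Duflo--Moore paper. So there is no in-paper argument to compare against, and your sketch should be judged on its own. It is correct, and it is essentially the standard route: closed-graph boundedness of $\mathcal{W}_g$ for square integrable $g$ (legitimately imported from Proposition \ref{L2-equality-definitions}), Schur's Lemma applied to the bounded self-intertwiner $\mathcal{W}_g^*\mathcal{W}_g$, polarization in $g$, and Kato's representation theorem for the resulting closed positive form, with the invariance relation \eqref{invariance_relation} recovered from $\lambda(\pi(y)g)=\Delta(y)^{-1}\lambda(g)$ and uniqueness of positive square roots. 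All the individual steps check out, including the modular-function computation $\|R_y h\|_{L^2(G)}^2=\Delta(y)^{-1}\|h\|_{L^2(G)}^2$ and the deduction of dense definedness of $C_\pi^{-1}$ from injectivity and self-adjointness.

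Two small points deserve more care than you give them. First, "additivity of $g\mapsto\mathcal{W}_g f$ makes $\mathcal{A}$ a linear subspace" silently uses that the cross terms $\mathcal{W}_{g_1}g_2$ lie in $L^2(G)$; this is again Proposition \ref{L2-equality-definitions}, so it is available, but it should be cited there and not only later. Second, your $\mathcal{A}$ (square integrable vectors with $0$ adjoined) clashes with the paper's later use of $\mathcal{A}$ for the $L^1$-analyzing vectors in Section \ref{sec: Test_Spaces_and_Distributional_Spaces}; within a self-contained proof this is harmless, but it would need renaming if spliced into the text. Neither point is a gap in the argument.
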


For readers interested in the details of this remarkable result, we recommend reading the appendix in \cite[Chapter 2.4]{grossmann1985transforms} as well as the original paper \cite{duflo1976regular}. We will refer to the operator $C_{\pi}$ in Theorem \ref{Duflo_Moore_theorem} as the \textit{Duflo-Moore operator} corresponding to the square integrable representation $\pi:G \to \mathcal{U}(\mathcal{H}_{\pi})$. For our purposes, we record the following consequence: The map $\mathcal{W}_{g}:\mathcal{H}_{\pi} \to L^{2}(G)$ is an isometry if and only if $g \in \mathcal{H}_{\pi}$ is in the domain of the Duflo-Moore operator $C_{\pi}$ and satisfies the \textit{admissibility condition} \[\|C_{\pi}g\|_{\mathcal{H}_{\pi}} = 1.\] An element $g \in \mathcal{H}_{\pi}$ that satisfies these conditions is said to be \textit{admissible}. Notice that any square integrable vector can be normalized to become admissible.

\begin{corollary}
\label{corollary_unimodular}
Let $\pi:G \to \mathcal{U}(\mathcal{H}_{\pi})$ be a square integrable representation of a unimodular group $G$. Then the Duflo-Moore operator $C_{\pi}$ is defined on the whole $\mathcal{H}_{\pi}$ and satisfies $C_{\pi} = c_{\pi} \cdot \mathrm{Id}_{\mathcal{H}_{\pi}}$ for some $c_{\pi} > 0$. In particular, every non-zero vector $g \in \mathcal{H}_{\pi}$ is square integrable. 
\end{corollary}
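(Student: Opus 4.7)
The plan is to exploit the invariance relation \eqref{invariance_relation} from the Duflo--Moore theorem. Since $G$ is unimodular, $\Delta \equiv 1$, so \eqref{invariance_relation} collapses to
\[
\pi(x) C_\pi = C_\pi \pi(x), \qquad x \in G,
\]
as an identity of operators on the dense domain $\mathcal{D}(C_\pi)$. In other words, $C_\pi$ commutes with every $\pi(x)$. If $C_\pi$ were bounded, Schur's Lemma would immediately give $C_\pi = c_\pi \cdot \mathrm{Id}_{\mathcal{H}_\pi}$, so the real task is to promote the scalar conclusion to the (a priori only densely defined) self-adjoint positive operator $C_\pi$.

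The bridge is the spectral theorem for unbounded self-adjoint operators. Write $C_\pi = \int_{[0,\infty)} \lambda \, dE(\lambda)$ for a projection-valued spectral measure $E$ on $\mathbb{R}$ (supported in $[0,\infty)$ by positivity of $C_\pi$). Because $\pi(x)$ is unitary and commutes with $C_\pi$ on $\mathcal{D}(C_\pi)$, the unitary $\pi(x)$ commutes with every bounded Borel function of $C_\pi$; in particular, for every Borel set $B \subset \mathbb{R}$ the spectral projection $E(B)$ is a bounded intertwiner of $\pi$. Since $\pi$ is irreducible (recall Remark~\ref{integrability_remark}), Schur's Lemma forces $E(B) = \alpha_B \cdot \mathrm{Id}_{\mathcal{H}_\pi}$, and since $E(B)$ is an orthogonal projection we must have $\alpha_B \in \{0,1\}$. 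Thus the spectral measure is concentrated on a single point $\{c_\pi\} \subset [0,\infty)$, which forces $C_\pi = c_\pi \cdot \mathrm{Id}_{\mathcal{H}_\pi}$ as a bounded operator defined on all of $\mathcal{H}_\pi$.

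Finally I would rule out $c_\pi = 0$: the Duflo--Moore theorem guarantees that $C_\pi$ is injective with densely defined inverse, so $c_\pi = 0$ is impossible; hence $c_\pi > 0$. Since $C_\pi$ is now everywhere defined, the first bullet of Theorem~\ref{Duflo_Moore_theorem} yields that every non-zero $g \in \mathcal{H}_\pi$ is square integrable. The only subtle step is the spectral-theoretic version of Schur's Lemma in the unbounded setting, namely justifying that $\pi(x)$ commuting with $C_\pi$ on its domain implies commutation with all spectral projections; this is the standard passage from commuting with an unbounded self-adjoint operator to commuting with its bounded Borel functional calculus, which is where I would expect to be most careful.
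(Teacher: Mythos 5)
Your proposal is correct and follows essentially the same route as the paper: unimodularity collapses the invariance relation \eqref{invariance_relation} to the statement that $C_{\pi}$ is a (densely defined) intertwiner of $\pi$, whence Schur's Lemma in its unbounded form forces $C_{\pi} = c_{\pi}\cdot\mathrm{Id}_{\mathcal{H}_{\pi}}$, with $c_{\pi}>0$ ruled in by positivity and injectivity. The only difference is that the paper cites the generalized Schur's Lemma for densely defined self-adjoint intertwiners as a black box (Deitmar--Echterhoff, Proposition 12.2.2), whereas you supply its proof via the spectral theorem and the $\{0,\mathrm{Id}\}$ dichotomy for spectral projections, which is exactly the standard argument behind that citation.
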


\begin{proof}
By looking at the invariance relation \eqref{invariance_relation} when $\Delta(x) = 1$ for all $x \in G$, we see that $C_{\pi}$ is a (densely defined) intertwiner of the representation $\pi$. This is only possible when $C_{\pi} = c_{\pi} \cdot \textrm{Id}_{\mathcal{H}_{\pi}}$ for some constant $c_{\pi} \in \mathbb{C}$ due to a generalization of Schur's Lemma, see \cite[Proposition 12.2.2]{deitmar2014principles}. The constant $c_{\pi}$ necessarily has to be positive since $C_{\pi}$ is a positive operator.
\end{proof}

\begin{remark}
We would like to point out that a converse statement to Corollary \ref{corollary_unimodular} is also valid: If $\pi:G \to \mathcal{U}(\mathcal{H}_{\pi})$ is a square integrable representation such that the Duflo-Moore operator $C_{\pi}$ is defined on the whole of $\mathcal{H}_{\pi}$, then the group $G$ is necessarily unimodular. To see this, one uses the invariance relation \eqref{invariance_relation} together with the general fact that the modular function $\Delta$ is either identically one or unbounded. For the last property, it suffices that $\Delta:G \to (0, \infty)$ is a group homomorphism by \cite[Proposition 2.24]{folland2016course}.
\end{remark}

\begin{example}
\label{Example_duflo_morse_Heisenberg}
Let us quickly verify that the Schr\"{o}dinger representation $\rho_{r}$ does indeed fit within this framework. We have previously mentioned that the reduced Heisenberg group $\mathbb{H}_{r}^{n}$ is unimodular. Hence Corollary \ref{corollary_unimodular} implies that the Duflo-Moore operator $C_{\rho_{r}}$ corresponding to $\rho_{r}$ is simply a constant times the identity. We can gauge from \eqref{Reduced_Heisenberg_computation} that $C_{\rho_r} = \textrm{Id}_{L^{2}(\mathbb{R}^n)}$. Hence a function $g \in L^{2}(\mathbb{R}^{n})$ is admissible for the Schr\"{o}dinger representation precisely when $\|g\|_{L^{2}(\mathbb{R}^{n})} = 1$.
\end{example}

\begin{example}
\label{compact_duflo_moore_example}
Let $\pi:G \to \mathcal{U}(\mathcal{H}_{\pi})$ be an irreducible unitary representation of a compact group $G$. From Peter-Weyl theory, see e.g.\ \cite[Theorem 7.3.2]{deitmar2014principles}, it follows that $\mathcal{H}_{\pi}$ has to be finite dimensional. Moreover, any non-zero vector $g \in \mathcal{H}_{\pi}$ is square integrable since $\mathcal{W}_{g}g$ is a continuous function on the compact space $G$. Thus the Duflo-Moore operator satisfies $C_{\pi} = c_{\pi} \cdot \textrm{Id}_{\mathcal{H}_{\pi}}$ for some $c_{\pi} > 0$. So, what is the constant $c_{\pi}$? It follows from \cite[Example 12.2.7]{deitmar2014principles} that we have the elegant formula \[c_{\pi} = \frac{1}{\sqrt{\textrm{dim}(\mathcal{H}_{\pi})}}.\] 
\end{example}

\begin{example}
Let us demonstrate how Theorem \ref{Duflo_Moore_theorem} can simplify concrete settings: Consider two normalized vectors $x, y \in \mathbb{R}^{n}$ and a rotation $R \in SO(n)$. The quantity $|\langle y, Rx \rangle|^{2}$ measures the square deviation from $Rx$ and $y$ being orthogonal. What is the average of such orthogonality deviations when the normalized vectors $x, y \in \mathbb{R}^{n}$ are fixed and $R \in SO(n)$ is allowed to vary? Unwinding the question, we are asking for the value \[\int_{SO(n)}|\langle y, Rx \rangle|^2 \, d\mu(R), \qquad x,y \in \mathbb{R}^{n}, \quad \|x\| = \|y\| = 1.\] When $n = 2$ the answer should be $1/2$ based on geometric considerations. This can be verified by brute force since any $R \in SO(2)$ can be written as $R = R_{\theta}$ for $\theta \in [0, 2\pi)$ with 
\[R_{\theta} := \begin{pmatrix} \cos(\theta) & -\sin(\theta) \\ \sin(\theta) & \cos(\theta)\end{pmatrix}.\] \par
Is there a more satisfactory approach that works for all $n \geq 2$? Look closely, there is nothing up my sleeve: Consider the obvious representation $\pi:SO(n) \to \mathcal{U}(\mathbb{R}^{n})$ given by $\pi(R)x := R \cdot x$ for $R \in SO(n)$ and $x \in \mathbb{R}^{n}$. Then $\pi$ is easily seen to be square integrable. We can by Theorem \ref{Duflo_Moore_theorem} and Example \ref{compact_duflo_moore_example} write 
\begin{equation}
\label{orthogonal_group_formula}
    \int_{SO(n)}|\langle y, Rx \rangle|^2 \, d\mu(R) = \int_{SO(n)}|\mathcal{W}_{x}y(R)|^2 \, d\mu(R) = \langle y, y\rangle \langle C_{\pi}x, C_{\pi}x \rangle = \frac{1}{n}.
\end{equation}
In words, the formula \eqref{orthogonal_group_formula} expresses the fact that in higher dimensions, two random normalized vectors are more likely to be orthogonal to each other; there are simply more ways to be orthogonal in higher dimensions.
\end{example}

We would like to end this section with an example of a square integrable representation of a non-unimodular group. Although somewhat lengthy, we encourage the fatigued reader to soldier on through the next example as most of the theory we have developed is present in some way. 

\begin{example}
\label{wavelet_example}
In this example we examine a unitary representation of the affine group $\textrm{Aff}$ given in Example \ref{affine_group_first_example}. We have a family of \textit{dilation operators} $D_{a}$ on $L^{2}(\mathbb{R})$ for $a \in \mathbb{R}^{*}$ given by
\begin{equation}
\label{usual_dilation_operator}
D_{a}f(x) := \frac{1}{\sqrt{|a|}}f\left(\frac{x}{a}\right), \qquad f \in L^{2}(\mathbb{R}).
\end{equation}
Together with the translation operator $T_{b}$ in \eqref{time_shift_and_frequency_shift} we obtain a unitary representation of the affine group $\pi:\textrm{Aff} \to \mathcal{U}(L^{2}(\mathbb{R}))$ given by 
\begin{equation}
\label{classical_wavelet_representation}
    \pi(b,a)f(x) := T_{b}D_{a}f(x) = \frac{1}{\sqrt{|a|}}f\left(\frac{x - b}{a}\right), \qquad (b,a) \in \mathrm{Aff}.
\end{equation} 
It is common to refer to $\pi$ as the \textit{wavelet representation}. To see that a unitary representation is irreducible, it can often be a good strategy to jump straight to checking when it is square integrable. For the wavelet representation, a formal computation using the Fourier transform shows that 
\begin{equation}
\label{wavelet_square_int_equation}
    \int_{\textrm{Aff}}|\langle f, \pi(b,a)g \rangle |^2 \, \frac{db \, da}{a^2} = \left(\int_{\mathbb{R}}|\mathcal{F}(f)(b)|^2 \, db\right) \left(\int_{\mathbb{R}^{*}} \frac{|\mathcal{F}(g)(a)|^2}{|a|} \, da\right),
\end{equation}
for any $f,g \in L^{2}(\mathbb{R})$. We refer the reader to \cite[Example 2.48]{dahlke2015harmonic} for details of the computation above. The right-hand side of \eqref{wavelet_square_int_equation} is always non-zero as long as we choose $f,g$ to be non-zero elements in $L^{2}(\mathbb{R})$. Hence $g$ is a cyclic vector for all non-zero $g \in L^{2}(\mathbb{R})$. This implies that the wavelet representation $\pi$ is irreducible by Proposition \ref{proposition_cyclic_irreducible}. \par
Which non-zero vectors $g \in L^{2}(\mathbb{R})$ are square integrable? From \eqref{wavelet_square_int_equation}, we see that we need $g$ to satisfy the condition 
\begin{equation}
\label{Calderon_equation}
    \int_{\mathbb{R}^{*}} \frac{|\mathcal{F}(g)(a)|^2}{|a|} \, da < \infty.
\end{equation}
The condition \eqref{Calderon_equation} is sometimes called the \textit{Calder\'{o}n condition} or the \textit{wavelet condition}. It is clear from \eqref{wavelet_square_int_equation} and the uniqueness statement of Theorem \ref{Duflo_Moore_theorem} that the Duflo-Moore operator $C_{\pi}$ is the Fourier multiplier given by \[C_{\pi}g =  \mathcal{F}^{-1}\left(\frac{1}{\sqrt{|a|}}\mathcal{F}(g)(a)\right), \qquad g \in \mathcal{D}(C_{\pi}).\] 
We know that $g \in \mathcal{D}\left(C_{\pi}\right)$ is admissible if and only if $\|C_{\pi}g\|_{L^{2}(\mathbb{R})} = 1$. Hence $g \in L^{2}(\mathbb{R})$ is admissible for the wavelet representation if and only if 
\begin{equation}
\label{admissible_wavelet}
    \int_{\mathbb{R}^{*}} \frac{|\mathcal{F}(g)(a)|^2}{|a|} \, da = 1.
\end{equation}
Elements in $L^{2}(\mathbb{R})$ that satisfy \eqref{admissible_wavelet} are sometimes called \textit{admissible wavelets} in the literature.

The wavelet transform for the wavelet representation is given explicitly by 
\begin{equation}
\label{classical_wavelet_transform}
\mathcal{W}_{g}f(b,a) = \langle f, T_{b}D_{a}g \rangle = \frac{1}{\sqrt{|a|}}\int_{\mathbb{R}}f(x)\overline{g\left(\frac{x - b}{a}\right)} \, dx,
\end{equation}
where $(b,a) \in \textrm{Aff}$ and $f,g \in L^{2}(\mathbb{R})$. This is precisely the \textit{continuous wavelet transform} in wavelet analysis, see e.g.\ \cite[Chapter 2]{daubechies1992ten}. In fact, this example is the motivation for the terminology \textit{(generalized) wavelet transform}. If $g \in L^{2}(\mathbb{R})$ is an admissible wavelet and $f_1,f_2 \in L^{2}(\mathbb{R})$ are arbitrary, then Theorem \ref{Duflo_Moore_theorem} implies that we have the orthogonality relation \[\int_{\textrm{Aff}} \mathcal{W}_{g}f_{1}(b,a) \overline{\mathcal{W}_{g}f_2(b,a)} \, \frac{db \, da}{a^2} = \int_{\mathbb{R}}f_1(x)\overline{f_{2}(x)} \, dx.\]
\end{example}

\subsection{Reproducing Kernel Hilbert Spaces}
\label{sec: Reproducing_Kernel_Hilbert_Spaces}

In this section we define reproducing kernel Hilbert spaces and show that they naturally occur in the setting of generalized wavelet transforms. We believe that reproducing kernel Hilbert spaces can illuminate the theory and make results such as Theorem \ref{reproducing_formula} in Section \ref{sec: Inversion_Formula} more transparent. Although the theory of reproducing kernel Hilbert spaces is often implicit in works on coorbit theory, it is seldom written out in detail.

\begin{definition}
Let $X$ be a set and let $\mathcal{H}$ be a Hilbert space consisting of functions $f: X \to \mathbb{C}$. We say that $\mathcal{H}$ is a \textit{reproducing kernel Hilbert space} if the evaluation functionals $\{E_{x}\}_{x \in X}$ are bounded, where \[E_{x}(f) := f(x), \qquad f \in \mathcal{H}.\] If the evaluation functionals $\{E_{x}\}_{x \in X}$ are uniformly bounded, then we refer to $\mathcal{H}$ as a \textit{uniform reproducing kernel Hilbert space}.
\end{definition}

Given a reproducing kernel Hilbert space $\mathcal{H}$, we have by the Riesz representation theorem that for each $x \in X$ there is a unique element $k_{x} \in \mathcal{H}$ such that \[f(x) = \langle f, k_x \rangle, \qquad f \in \mathcal{H}.\] We refer to $k_x$ as the \textit{reproducing kernel for the point} $x \in X$. Since $k_{x}$ is again a function on $X$, we can evaluate $k_{x}(y)$ for $y \in X$ and obtain $k_{x}(y) = \langle k_{x}, k_{y} \rangle = \overline{k_{y}(x)}$. The function $K:X \times X \to \mathbb{C}$ given by \[K(x,y) = \langle k_y, k_x \rangle\] is called the \textit{reproducing kernel} for $\mathcal{H}$.

\begin{example}
Consider the \textit{Paley-Wiener space} $PW_{A}$ for a fixed $A > 0$ consisting of functions $f \in L^{2}(\mathbb{R})$ such that $\textrm{supp}(\mathcal{F}(f)) \subset \left[-A, A\right]$, where $\mathcal{F}$ denotes the Fourier transform. This space plays a major role in sampling theory and classical harmonic analysis. The elements in $PW_{A}$ are actually smooth functions since their Fourier transforms have compact support. Moreover, the space $PW_{A}$ is a Hilbert spaces under the inner-product \[\langle f, g \rangle_{PW_{A}} := \left<\mathcal{F}(f), \mathcal{F}(g)\right>_{L^{2}\left[-A, A\right]}.\] To see that the evaluation functionals $\{E_{x}\}_{x \in \mathbb{R}}$ are bounded, we compute for $f \in PW_{A}$ that 
\begin{align*}
    |E_{x}(f)| = |f(x)| & = \left| \mathcal{F}^{-1}\left(\mathcal{F}(f)\right)(x)\right| \\ & = \left|\int_{-A}^{A}\mathcal{F}(f)(\omega)e^{2 \pi i x \omega} \, d\omega\right| \\ & \leq \left(\int_{-A}^{A}\left|\mathcal{F}(f)(\omega)\right|^2 \, d\omega \right)^{\frac{1}{2}} \left(\int_{-A}^{A} \, d\omega\right)^{\frac{1}{2}} \\ & = \sqrt{2A} \cdot \|f\|_{PW_{A}}.
\end{align*}
Since $A > 0$ is fixed, we conclude that $PW_{A}$ is a uniform reproducing kernel Hilbert space. To find the reproducing kernel $K_{A}: \mathbb{R} \times \mathbb{R} \to \mathbb{C}$, notice that \[f(x) = \langle f, k_{x} \rangle_{PW_{A}} = \int_{-A}^{A}\mathcal{F}(f)(\omega) \overline{\mathcal{F}(k_{x})(\omega)} \, d\omega.\] In view of the Fourier inversion $f = \mathcal{F}^{-1}\left(\mathcal{F}(f)\right)$, it follows that $\mathcal{F}(k_{x})(\omega) = e^{-2 \pi i x \omega}$. Hence \[K_{A}(x,y) = \overline{k_{x}(y)} = \overline{\mathcal{F}^{-1}(e^{-2\pi i x \cdot})(y)} = \begin{cases} \frac{1}{\pi} \frac{\sin(2\pi A(x - y))}{x - y}, \, & \textrm{if} \, \, x \neq y \\ 2A, \, & \textrm{if} \, \, x = y\end{cases}.\]
\end{example}

A useful feature of reproducing kernel Hilbert spaces is that convergence in norm implies pointwise convergence. To see this, let $f_n,f \in \mathcal{H}$ and assume $\|f_n- f\| \to 0$. Then 
\begin{equation}
\label{norm_implies_pointwise}
    |f_{n}(x) - f(x)| = |\langle f_n - f, k_x\rangle| \leq \|f_n - f\| \|k_x\| = \|f_n - f\| \|E_x\| \to 0.
\end{equation}
If $\mathcal{H}$ in addition is a uniform reproducing kernel Hilbert space, then \eqref{norm_implies_pointwise} shows that convergence in norm implies uniform convergence. The reader can consult \cite{paulsen2016introduction} for more examples and properties of general reproducing kernel Hilbert spaces. \par
We now return to the setting of square integrable representations $\pi:G \to \mathcal{U}(\mathcal{H}_{\pi})$ to illustrate how they naturally give rise to reproducing kernel Hilbert spaces. Pick an admissible vector $g \in \mathcal{H}_{\pi}$ so that $\mathcal{W}_{g}:\mathcal{H}_{\pi} \to L^{2}(G)$ is an isometry. We will consider the image space \[\mathcal{W}_{g}(\mathcal{H}_{\pi}) \subset L^{2}(G).\]
Notice that, since $\mathcal{W}_{g}$ is an isometry, we have \begin{equation}
\label{eq:isometry_property}
    \mathcal{W}_{g}^{*} \circ \mathcal{W}_{g} = \textrm{Id}_{\mathcal{H}_{\pi}} \, \, \, \textrm{and} \, \, \,  \mathcal{W}_{g} \circ \mathcal{W}_{g}^{*}\Big|_{\mathcal{W}_{g}(\mathcal{H}_{\pi})} = \textrm{Id}_{\mathcal{W}_{g}(\mathcal{H}_{\pi})}.
\end{equation}

\begin{proposition}
\label{spaces_are_RKHS}
Let $\pi:G \to \mathcal{U}(\mathcal{H}_{\pi})$ be a square integrable representation with an admissible vector $g \in \mathcal{H}_{\pi}$. The space $\mathcal{W}_{g}(\mathcal{H}_{\pi})$ is a uniform reproducing kernel Hilbert space with reproducing kernel
\[K_{g}(x,y) = \mathcal{W}_{g}g(y^{-1}x), \qquad x,y \in G.\] 
\end{proposition}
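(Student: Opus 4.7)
The plan is to verify the two reproducing kernel Hilbert space requirements directly, leveraging the fact that admissibility of $g$ upgrades $\mathcal{W}_{g}$ to an isometry, together with the intertwining identity \eqref{wavelet_intertwines_left_reg}. First I would observe that since $g$ is admissible, the Duflo--Moore theorem gives $\|\mathcal{W}_{g}f\|_{L^{2}(G)} = \|f\|_{\mathcal{H}_{\pi}}$ and hence $\langle \mathcal{W}_{g}f_{1}, \mathcal{W}_{g}f_{2}\rangle_{L^{2}(G)} = \langle f_{1}, f_{2}\rangle_{\mathcal{H}_{\pi}}$ by polarization. In particular, $\mathcal{W}_{g}(\mathcal{H}_{\pi})$ is a closed subspace of $L^{2}(G)$, and every element possesses a canonical continuous representative because $\mathcal{W}_{g}f \in C_{b}(G)$; this is what makes pointwise evaluation meaningful.

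For the uniform bound on the evaluation functionals, take any $F = \mathcal{W}_{g}f \in \mathcal{W}_{g}(\mathcal{H}_{\pi})$ and $x \in G$, and estimate by Cauchy--Schwarz:
\[
|E_{x}(F)| = |\mathcal{W}_{g}f(x)| = |\langle f, \pi(x)g\rangle_{\mathcal{H}_{\pi}}| \leq \|f\|_{\mathcal{H}_{\pi}} \|g\|_{\mathcal{H}_{\pi}} = \|g\|_{\mathcal{H}_{\pi}} \|F\|_{L^{2}(G)}.
\]
Since the bound is independent of $x$, this establishes the uniform RKHS property.

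To identify the reproducing kernel, I would rewrite the pointwise evaluation using the isometry:
\[
F(x) = \langle f, \pi(x)g\rangle_{\mathcal{H}_{\pi}} = \langle \mathcal{W}_{g}f, \mathcal{W}_{g}(\pi(x)g)\rangle_{L^{2}(G)} = \langle F, \mathcal{W}_{g}(\pi(x)g)\rangle_{L^{2}(G)},
\]
which exhibits $k_{x} = \mathcal{W}_{g}(\pi(x)g)$ as the reproducing vector for the point $x$. Applying the intertwining identity \eqref{wavelet_intertwines_left_reg}, $k_{x}(y) = \mathcal{W}_{g}(\pi(x)g)(y) = \mathcal{W}_{g}g(x^{-1}y)$, and consequently
\[
K_{g}(x,y) = \langle k_{y}, k_{x}\rangle_{L^{2}(G)} = k_{y}(x) = \mathcal{W}_{g}g(y^{-1}x),
\]
which is the claimed formula.

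The only genuine subtlety I foresee is the transition from $L^{2}(G)$-equivalence classes to honest functions on $G$; this is why the continuity of $\mathcal{W}_{g}f$ (noted right after \eqref{wavelet_transform_general}) must be invoked explicitly to legitimize pointwise evaluation. Everything else is a direct bookkeeping consequence of the isometry property and \eqref{wavelet_intertwines_left_reg}.
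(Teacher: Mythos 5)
Your proposal is correct and follows essentially the same route as the paper: admissibility makes $\mathcal{W}_{g}$ an isometry onto a closed subspace, the reproducing vector is identified as $k_{x} = \mathcal{W}_{g}(\pi(x)g)$, and the kernel formula follows from the intertwining identity \eqref{wavelet_intertwines_left_reg}. The only cosmetic differences are that you obtain the uniform bound on $E_{x}$ via Cauchy--Schwarz in $\mathcal{H}_{\pi}$ (the paper instead computes $\|E_{x}\| = \|k_{x}\| = \|g\|_{\mathcal{H}_{\pi}}$ exactly) and you use the polarized isometry where the paper writes $F = \mathcal{W}_{g}(\mathcal{W}_{g}^{*}F)$; your explicit attention to the continuous representative is a point the paper leaves implicit.
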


\begin{proof}
The admissibility of $g \in \mathcal{H}_{\pi}$ ensures that $\mathcal{W}_{g}(\mathcal{H}_{\pi})$ is a closed subspace of $L^{2}(G)$. Thus $\mathcal{W}_{g}(\mathcal{H}_{\pi})$ is a Hilbert space with the norm \[\|\mathcal{W}_{g}f\|_{\mathcal{W}_{g}(\mathcal{H}_{\pi})} := \|\mathcal{W}_{g}f\|_{L^{2}(G)} = \|f\|_{\mathcal{H}_{\pi}}, \qquad f \in \mathcal{H}_{\pi}.\] For $F \in \mathcal{W}_{g}(\mathcal{H}_{\pi})$ and $x \in G$ we can thus write \[F(x) = \mathcal{W}_{g}\left(\mathcal{W}_{g}^{*}F\right)(x) = \left\langle \mathcal{W}_{g}^{*}F, \pi(x)g \right\rangle = \left\langle F, \mathcal{W}_{g}\left(\pi(x)g\right) \right\rangle.\] Since $\mathcal{W}_{g}\left(\pi(x)g\right) \in \mathcal{W}_{g}(\mathcal{H}_{\pi})$ we have that $\mathcal{W}_{g}(\mathcal{H}_{\pi})$ is a reproducing kernel Hilbert space. The reproducing kernel $K_{g}:G \times G \to \mathbb{C}$ is given by \[K_{g}(x,y) =  \left\langle \mathcal{W}_{g}\left(\pi(y)g\right), \mathcal{W}_{g}\left(\pi(x)g\right) \right\rangle = \langle \pi(y)g, \pi(x)g \rangle = \mathcal{W}_{g}(\pi(y)g)(x) = \mathcal{W}_{g}g(y^{-1}x).\] If $E_{x}$ is the evaluation functional for the point $x \in G$ then \[\|E_{x}\|_{\mathcal{W}_{g}(\mathcal{H}_{\pi})^{*}} = \|k_x\|_{\mathcal{W}_{g}(\mathcal{H}_{\pi})} = \|\mathcal{W}_{g}\left(\pi(x)g\right)\|_{\mathcal{W}_{g}(\mathcal{H}_{\pi})} = \|\pi(x)g\|_{\mathcal{H}_{\pi}} = \|g\|_{\mathcal{H}_{\pi}}.\]
It follows that $\mathcal{W}_{g}(\mathcal{H}_{\pi})$ is a uniform reproducing kernel Hilbert space since we have fixed $g$.
\end{proof}

For a locally compact group $G$, we say that an element $S \in L^{2}(G)$ is \textit{self-adjoint convolution idempotent} if $\overline{S(x)} = S(x^{-1})$ for all $x \in G$ and $S *_{G} S = S$. It will follow from Theorem \ref{reproducing_formula} that the element $\mathcal{W}_{g}g$ is self-adjoint convolution idempotent whenever $g \in \mathcal{H}_{\pi}$ is admissible. A converse to this statement can be found in \cite[Proposition 2.38]{fuhr2005abstract}. In \cite[Theorem 2.45]{fuhr2005abstract} the following generalization of a classical result of Wilczok \cite{wilczok2000new} is derived.

\begin{proposition}
Let $G$ be a locally compact group that is connected and non-compact. Consider a square integrable representation $\pi:G \to \mathcal{U}(\mathcal{H}_{\pi})$ and fix an admissible vector $g \in \mathcal{H}_{\pi}$. If $F \in \mathcal{W}_{g}(\mathcal{H}_{\pi})$ is supported on a set of finite Haar measure, then $F \equiv 0$. 
\end{proposition}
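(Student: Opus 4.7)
My plan is to leverage three properties of $\mathcal{W}_g(\mathcal{H}_\pi)$: its reproducing kernel Hilbert space structure from Proposition~\ref{spaces_are_RKHS}, its invariance and irreducibility under left translations via \eqref{wavelet_intertwines_left_reg} together with the irreducibility of $\pi$, and the interplay between these structures and the connected non-compact geometry of $G$.

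First, I would exploit the RKHS pointwise estimate. For every $F \in \mathcal{W}_g(\mathcal{H}_\pi)$ and every $x \in G$, the reproducing formula combined with $\|k_x\|_{L^2(G)} = \|g\|_{\mathcal{H}_\pi}$ (computed in the proof of Proposition~\ref{spaces_are_RKHS}) yields
\[
|F(x)| \;=\; |\langle F, k_x\rangle_{L^2(G)}| \;\leq\; \|g\|_{\mathcal{H}_\pi}\,\|F\|_{L^2(G)}.
\]
If $\operatorname{supp}(F) \subset E$ with $\mu_L(E) < \infty$, then H\"older's inequality on $E$ gives $\|F\|_{L^2}^{2} \leq \mu_L(E)\,\|F\|_{\infty}^{2} \leq \mu_L(E)\,\|g\|^{2}\,\|F\|_{L^2}^{2}$, so any non-zero such $F$ forces $\mu_L(E) \geq 1/\|g\|^{2}$. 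This is a necessary condition on the size of the support but falls short of the full conclusion.

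Next, I would strengthen this via left-translation invariance. The subspace
\[
\mathcal{V} \;:=\; \{F \in \mathcal{W}_g(\mathcal{H}_\pi) : \operatorname{supp}(F) \text{ has finite Haar measure}\}
\]
is left-translation invariant: $L_z$ preserves $\mathcal{W}_g(\mathcal{H}_\pi)$ by the intertwining \eqref{wavelet_intertwines_left_reg}, and $\operatorname{supp}(L_z F) = z \cdot \operatorname{supp}(F)$ has the same Haar measure. Transported back via the isometry $\mathcal{W}_g$, $\mathcal{V}$ corresponds to a $\pi$-invariant subspace of $\mathcal{H}_\pi$; since $\pi$ is irreducible by Remark~\ref{integrability_remark}, Proposition~\ref{proposition_cyclic_irreducible} and Lemma~\ref{injection_lemma} force the closure $\overline{\mathcal{V}}$ to be either $\{0\}$ or the entire space $\mathcal{W}_g(\mathcal{H}_\pi)$. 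Assuming a non-zero $F$ as in the statement, $\mathcal{V}$ would then be dense in $\mathcal{W}_g(\mathcal{H}_\pi)$.

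The hardest step will be turning this density, together with connectedness and non-compactness, into an actual contradiction. My route is through the localization operator $T_E := P \circ M_{\chi_E}$ on $\mathcal{W}_g(\mathcal{H}_\pi)$, where $P$ is the orthogonal projection onto $\mathcal{W}_g(\mathcal{H}_\pi)$ and $M_{\chi_E}$ is multiplication by $\chi_E$. This operator is self-adjoint with spectrum in $[0,1]$, and using the reproducing kernel $K_g(x,y) = \mathcal{W}_g g(y^{-1}x)$ one verifies (at least in the unimodular case, with a Duflo--Moore modification otherwise) that $T_E$ is trace class with $\operatorname{tr}(T_E) = \mu_L(E)\|g\|^{2}$. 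The existence of $F$ amounts to $1$ being an eigenvalue of $T_E$, and the identity $L_z T_E L_z^{-1} = T_{zE}$ then produces $1$-eigenvectors of $T_{zE}$ from translates $L_z F$. Exploiting $\int_G \mu_L(zE \cap E)\,d\mu_L(z) = \mu_L(E)^2 < \infty$ together with non-compactness, one can select a sequence $\{z_n\}$ with $\mu_L(z_n E \cap E) \to 0$, giving an asymptotically orthogonal infinite family of such eigenvectors; combining this with the RKHS pointwise bound on their linear combinations is intended to collide with the finite trace of the corresponding composite localization operator. Making this translation-to-contradiction step precise---particularly handling the non-unimodular case cleanly---is the main technical obstacle I anticipate.
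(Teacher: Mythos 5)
The paper does not actually prove this proposition: it is quoted from F\"uhr \cite[Theorem 2.45]{fuhr2005abstract}, so your attempt has to stand on its own. Your first two steps are sound and are indeed the standard opening moves: the RKHS bound $\|F\|_{\infty} \leq \|g\|_{\mathcal{H}_{\pi}}\|F\|_{L^{2}(G)}$, the resulting lower bound $\mu_L(E) \geq 1/\|g\|^{2}$ on the support of a nonzero element, and the fact that $PM_{\chi_E}P = (M_{\chi_E}P)^{*}(M_{\chi_E}P)$ is trace class with trace $\mu_L(E)\|g\|^{2}$ (this last point needs no unimodularity and no Duflo--Moore correction, since $\|k_x\| = \|g\|$ for every $x \in G$ by the computation in Proposition \ref{spaces_are_RKHS}). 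The density argument in your second step is also correct but carries no contradiction by itself, since limits of elements with finite-measure support need not have finite-measure support.

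The gap is the final step, and it is not merely a matter of missing detail. First, the counting does not close: the trace bound shows that the subspace $\mathcal{V}_{E'}$ of elements supported in $E'$ has dimension at most $\mu_L(E')\|g\|^{2}$, so for $E_N = \bigcup_{n \leq N} z_n E$ you get at most $N\mu_L(E)\|g\|^{2}$ on one side and, from asymptotic orthogonality, roughly $N$ independent eigenvectors on the other; since your own step one forces $\mu_L(E)\|g\|^{2} \geq 1$, these two bounds are consistent for every $N$ and no contradiction appears (and the ``composite'' operator $T_{\bigcup_n z_nE}$ over infinitely many translates has infinite trace). Second, and decisively, your argument invokes connectedness only through $\mu_L(G) = \infty$, and the proposition is false at that level of generality: supercuspidal representations of $p$-adic groups such as $\mathrm{SL}_2(\mathbb{Q}_p)$ are square integrable irreducible unitary representations whose matrix coefficients are compactly supported (modulo the finite center), so $\mathcal{W}_gg$ itself is a nonzero element of $\mathcal{W}_g(\mathcal{H}_{\pi})$ supported on a set of finite Haar measure --- yet these groups are non-compact, unimodular, $\sigma$-compact, and of infinite Haar measure, i.e.\ they satisfy every hypothesis your argument actually uses. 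A correct proof must therefore exploit connectedness in an essential way; in F\"uhr's treatment it enters through the fact that a connected group has no proper open subgroup and hence no compact open subgroup, which is exactly the structural feature separating the theorem from the totally disconnected counterexamples.
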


\begin{remark}
The reader can consult \cite[Chapter 2.5]{fuhr2005abstract} for more interesting results regarding self-adjoint convolution idempotents. We refer the reader to \cite{berge2020interpolation, cont_wavelet_transform} for further properties of the spaces $\mathcal{W}_{g}(\mathcal{H}_{\pi})$.
\end{remark}

\subsection{The Reproducing and Reconstruction Formulas}
\label{sec: Inversion_Formula}

We end this chapter by providing two important results that tie up loose ends. Firstly, we prove the \textit{reproducing formula} in Theorem \ref{reproducing_formula}. This result has a simple interpretation in the language of reproducing kernel Hilbert spaces. Secondly, we generalize the reconstruction formula for the STFT in \eqref{reconstruction_for_the_STFT} to square integrable representations in Corollary \ref{reconstruction_corollary}. Both of these results have short and elegant proofs that build on the theory developed so far.

\begin{theorem}[Reproducing Formula]
\label{reproducing_formula}
Let $\pi:G \to \mathcal{U}(\mathcal{H}_{\pi})$ be a square integrable representation and fix an admissible vector $g \in \mathcal{H}_{\pi}$. Then $\mathcal{W}_{g} \circ \mathcal{W}_{g}^{*}$ is the projection from $L^{2}(G)$ to $\mathcal{W}_{g}(\mathcal{H}_{\pi})$ and has the explicit form 
\begin{equation*}
    \mathcal{W}_{g} \circ \mathcal{W}_{g}^{*}(F) = F *_{G} \mathcal{W}_{g}g, \qquad F \in L^{2}(G).
\end{equation*}
In particular, for $F \in \mathcal{W}_{g}(\mathcal{H}_{\pi})$ we have 
\begin{equation}
\label{special_case_reproducing_formula}
F = F *_{G} \mathcal{W}_{g}g.
\end{equation}
\end{theorem}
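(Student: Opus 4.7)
The plan is to split the theorem into two claims and handle each with a short, structurally transparent argument. The first claim, that $\mathcal{W}_g \circ \mathcal{W}_g^{*}$ is the orthogonal projection from $L^2(G)$ onto $\mathcal{W}_g(\mathcal{H}_\pi)$, I would derive from the general Hilbert space principle that for any isometry $T$ one has $T^{*}T = \mathrm{Id}$ and $TT^{*}$ is the orthogonal projection onto $\mathrm{ran}(T)$. Admissibility of $g$ together with Theorem \ref{Duflo_Moore_theorem} guarantees that $\mathcal{W}_g$ is isometric, and both halves of display \eqref{eq:isometry_property} are precisely what is needed. Closedness of $\mathcal{W}_g(\mathcal{H}_\pi)$ is automatic because isometries have closed range.

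The substantive content is the explicit convolution identity. For $F \in L^2(G)$ and $x \in G$, my approach is to unfold the adjoint and invoke the fundamental intertwining identity \eqref{wavelet_intertwines_left_reg}:
\[ \mathcal{W}_g(\mathcal{W}_g^{*} F)(x) = \langle \mathcal{W}_g^{*} F, \pi(x) g \rangle_{\mathcal{H}_\pi} = \langle F, \mathcal{W}_g(\pi(x) g) \rangle_{L^2(G)} = \langle F, L_x[\mathcal{W}_g g] \rangle_{L^2(G)}. \]
Writing out the $L^2(G)$ inner product turns the right-hand side into $\int_G F(y)\,\overline{\mathcal{W}_g g(x^{-1} y)}\, d\mu_L(y)$. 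To recognize this as $F *_G \mathcal{W}_g g$ evaluated at $x$, I need the symmetry
\[ \overline{\mathcal{W}_g g(z)} = \overline{\langle g, \pi(z) g \rangle} = \langle \pi(z) g, g \rangle = \langle g, \pi(z^{-1}) g \rangle = \mathcal{W}_g g(z^{-1}), \]
which follows from unitarity of $\pi(z)$. Substituting $z = x^{-1} y$ yields $\overline{\mathcal{W}_g g(x^{-1} y)} = \mathcal{W}_g g(y^{-1} x)$, which is exactly the integrand in the definition of $F *_G \mathcal{W}_g g(x)$.

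A more conceptual route, should one prefer it, is to observe that the orthogonal projection from $L^2(G)$ onto a reproducing kernel Hilbert subspace is integration against its reproducing kernel; Proposition \ref{spaces_are_RKHS} identifies that kernel as $K_g(x,y) = \mathcal{W}_g g(y^{-1}x)$, which reads off immediately as convolution with $\mathcal{W}_g g$. Either way, the specialization \eqref{special_case_reproducing_formula} is automatic, since any projection acts as the identity on its range.

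The main obstacle I anticipate is purely notational: on a possibly non-commutative, non-unimodular group, the definition of convolution involves a specific placement of $y^{-1}$ versus $y$, and a single sign or inverse error in the conjugation computation would spoil the identification. Questions of integrability are mild, since for each fixed $x$ the integrand lies in $L^1(G)$ by Cauchy-Schwarz applied to $F, \mathcal{W}_g g \in L^2(G)$, so the pointwise convolution is well-defined before one even considers $L^2$ estimates.
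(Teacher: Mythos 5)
Your proposal is correct and follows essentially the same route as the paper's own proof: isometry of $\mathcal{W}_g$ gives the projection statement, and the convolution identity comes from unfolding the adjoint, applying the intertwining relation \eqref{wavelet_intertwines_left_reg}, and using the symmetry $\overline{\mathcal{W}_g g(z)} = \mathcal{W}_g g(z^{-1})$ (which you helpfully verify, whereas the paper merely asserts it). The RKHS alternative you sketch is a nice conceptual cross-check but is not needed.
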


\begin{proof}
The map $\mathcal{W}_{g}:\mathcal{H}_{\pi} \to L^{2}(G)$ is an isometry since $g \in \mathcal{H}_{\pi}$ is admissible. Hence $\mathcal{W}_{g} \circ \mathcal{W}_{g}^{*}$ is the projection from $L^{2}(G)$ to $\mathcal{W}_{g}(\mathcal{H}_{\pi})$. For $x \in G$ and $F \in L^{2}(G)$, an initial computation using \eqref{wavelet_intertwines_left_reg} shows that \[\mathcal{W}_{g}\left(\mathcal{W}_{g}^{*}F\right)(x) = \langle \mathcal{W}_{g}^{*}(F), \pi(x)g \rangle = \langle F, \mathcal{W}_{g}(\pi(x)g)\rangle = \langle F, L_{x}\mathcal{W}_{g}g\rangle.\] Since $\overline{\mathcal{W}_{g}g(x)} = \mathcal{W}_{g}g(x^{-1})$ we end up with \[\langle F, L_{x}\mathcal{W}_{g}g\rangle = \int_{G}F(y)\overline{\mathcal{W}_{g}g(x^{-1}y)} \, d\mu_{L}(y) = \int_{G}F(y)\mathcal{W}_{g}g(y^{-1}x) \, d\mu_{L}(y) =  (F *_{G} \mathcal{W}_{g}g)(x). \qedhere\]
\end{proof}

\begin{remark}
The special case \eqref{special_case_reproducing_formula} motivates the name \textit{reproducing formula}, as we can reproduce the values of $F \in \mathcal{W}_{g}(\mathcal{H}_{\pi})$ by convolving $F$ with $\mathcal{W}_{g}g \in \mathcal{W}_{g}(\mathcal{H}_{\pi})$. Notice that $\mathcal{W}_{g}g$ is precisely the reproducing kernel $k_{e}$ for the identity element $e \in G$. Hence \eqref{special_case_reproducing_formula} shows that the reproducing kernel $k_{e}$ is a (right) identity for $\mathcal{W}_{g}(\mathcal{H}_{\pi})$ with respect to the convolution product. The fact that the wavelet transform $\mathcal{W}_{g}$ for any admissible $g \in \mathcal{H}_{\pi}$ is an isomorphism \[\mathcal{W}_{g}:\mathcal{H}_{\pi} \xrightarrow{\sim} \mathcal{W}_{g}(\mathcal{H}_{\pi}) = \left\{F \in L^{2}(G) \, : \, F = F *_{G} \mathcal{W}_{g}g\right\}\] is a special case of the \textit{correspondence principle} in Theorem \ref{corresponance_principle}.
\end{remark}

We now take a brief detour to \textit{weak integrals} so that uninitiated readers will be less squeamish when encountering expressions on the form \eqref{eq: weak_integral_adjoint}. Let $\Phi:G \to \mathcal{H}$ be a continuous function from a locally compact group $G$ to a Hilbert space $\mathcal{H}$. We need to make sense of 
\begin{equation}
\label{weak_integral_element}
    \int_{G} \Phi(x) \, d\mu_{L}(x)
\end{equation}
as an element in $\mathcal{H}$. This can be done under a mild additional requirement. Specifically, we require that the linear functional on $\mathcal{H}$ given by
\[f \longmapsto \int_{G} \left\langle \Phi(x), f \right\rangle \, d\mu_{L}(x)\] is well-defined and bounded. Under this assumption, the Riesz representation theorem implies the existence of an element in $\mathcal{H}$ denoted by \eqref{weak_integral_element}
such that \[\left\langle \int_{G} \Phi(x) \, d\mu_{L}(x), f \right\rangle = \int_{G} \langle \Phi(x), f \rangle \, d\mu_{L}(x),\] for every $f \in \mathcal{H}$. We refer to the element \eqref{weak_integral_element} as the \textit{weak integral} of the function $\Phi$. 

\begin{proposition}
\label{proposition_adjoint_formula}
Let $\pi:G \to \mathcal{U}(\mathcal{H}_{\pi})$ be a square integrable representation and fix an admissible vector $g \in \mathcal{H}_{\pi}$. Then for $F \in L^{2}(G)$ we can represent $\mathcal{W}_{g}^{*}(F)$ as the weak integral \begin{equation}
\label{eq: weak_integral_adjoint}
    \mathcal{W}_{g}^{*}(F) = \int_{G} F(x)\pi(x)g \, d\mu_{L}(x).
\end{equation}
\end{proposition}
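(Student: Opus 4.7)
The plan is to verify the identity by testing both sides against an arbitrary $f \in \mathcal{H}_{\pi}$ and appealing to the defining property of the adjoint together with the Riesz representation theorem that underlies the weak integral.

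First I would set $\Phi(x) := F(x)\pi(x)g$ and check that $\Phi$ defines a weak integral in the sense described before the statement. For fixed $f \in \mathcal{H}_{\pi}$, the integrand of the candidate linear functional is
\[\langle \Phi(x), f \rangle_{\mathcal{H}_{\pi}} = F(x) \langle \pi(x)g, f \rangle_{\mathcal{H}_{\pi}} = F(x) \overline{\mathcal{W}_{g}f(x)}.\]
Since $g$ is admissible, Theorem~\ref{Duflo_Moore_theorem} tells us that $\mathcal{W}_{g}f \in L^{2}(G)$ with $\|\mathcal{W}_{g}f\|_{L^{2}(G)} = \|f\|_{\mathcal{H}_{\pi}}$, so Cauchy--Schwarz gives
\[\left| \int_{G} \langle \Phi(x), f \rangle_{\mathcal{H}_{\pi}} \, d\mu_{L}(x) \right| \leq \|F\|_{L^{2}(G)} \|\mathcal{W}_{g}f\|_{L^{2}(G)} = \|F\|_{L^{2}(G)} \|f\|_{\mathcal{H}_{\pi}}.\]
This shows the functional $f \mapsto \int_{G} \langle \Phi(x), f \rangle_{\mathcal{H}_{\pi}} \, d\mu_{L}(x)$ is well-defined and bounded on $\mathcal{H}_{\pi}$, so the weak integral $\int_{G} F(x)\pi(x)g \, d\mu_{L}(x)$ exists as an element of $\mathcal{H}_{\pi}$ via the Riesz representation theorem.

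Next I would identify this element with $\mathcal{W}_{g}^{*}(F)$ by unwinding the computation above. Reading off the integrand and using the definition of the inner product on $L^{2}(G)$,
\[\int_{G} \langle \Phi(x), f \rangle_{\mathcal{H}_{\pi}} \, d\mu_{L}(x) = \int_{G} F(x)\overline{\mathcal{W}_{g}f(x)} \, d\mu_{L}(x) = \langle F, \mathcal{W}_{g}f \rangle_{L^{2}(G)}.\]
By the defining property of the Hilbert space adjoint, this equals $\langle \mathcal{W}_{g}^{*}(F), f \rangle_{\mathcal{H}_{\pi}}$. Comparing with the defining property of the weak integral gives
\[\left\langle \int_{G} F(x)\pi(x)g \, d\mu_{L}(x), f \right\rangle_{\mathcal{H}_{\pi}} = \left\langle \mathcal{W}_{g}^{*}(F), f \right\rangle_{\mathcal{H}_{\pi}}\]
for every $f \in \mathcal{H}_{\pi}$, and the identity \eqref{eq: weak_integral_adjoint} follows.

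The main obstacle is the slight mismatch between the setup introduced for weak integrals (which assumed $\Phi$ continuous) and the fact that $F \in L^{2}(G)$ need not produce a continuous $\Phi$. This is inessential: the Riesz representation argument only relies on boundedness of the functional, which is supplied by the Cauchy--Schwarz bound above together with the admissibility of $g$. Once this is acknowledged, the rest of the proof is a direct unwinding of definitions.
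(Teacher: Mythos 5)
Your proposal is correct and follows essentially the same route as the paper's proof: establish the Cauchy--Schwarz bound $\left|\int_{G}\langle F(x)\pi(x)g, f\rangle \, d\mu_{L}(x)\right| \leq \|F\|_{L^{2}(G)}\|f\|_{\mathcal{H}_{\pi}}$ using the isometry property of $\mathcal{W}_{g}$, then identify the weak integral with $\mathcal{W}_{g}^{*}(F)$ by unwinding $\langle \mathcal{W}_{g}^{*}(F), f\rangle = \langle F, \mathcal{W}_{g}f\rangle$. Your closing remark about continuity of $\Phi$ being inessential is a fair observation; the paper glosses over this by invoking the continuity of $\pi$, but as you note, boundedness of the functional is all that is actually needed.
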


\begin{proof}
Notice that $\Phi^{F}:G \to \mathcal{H}_{\pi}$ given by $\Phi^{F}(x) \coloneqq F(x)\pi(x)g$ for $F \in L^{2}(G)$ satisfies the required properties for a weak integral due to the assumed continuity of $\pi$ and the estimate
\begin{align*}
\left|\int_{G}\langle F(x) \pi(x)g, f \rangle \, d\mu_{L}(x) \right| 
& \leq
\int_{G}|\langle F(x) \pi(x)g, f \rangle| \, d\mu_{L}(x)
\\ & = 
\int_{G}|F(x)|\cdot|\mathcal{W}_{g}f(x)| \, d\mu_{L}(x)
\\ & \leq
\|F\|_{L^{2}(G)}\|\mathcal{W}_{g}f\|_{L^{2}(G)}
\\ & = 
\|F\|_{L^{2}(G)}\|f\|_{\mathcal{H}_{\pi}},
\end{align*}
for $f \in \mathcal{H}_{\pi}$. The claim hence follows from the computation
\[\langle \mathcal{W}_{g}^{*}(F), f \rangle_{\mathcal{H}_{\pi}} = \langle F, \mathcal{W}_{g}f \rangle_{\mathcal{W}_{g}(\mathcal{H}_{\pi})} = \int_{G}F(x) \overline{\mathcal{W}_{g}f(x)} \, d\mu_{L}(x) = \int_{G}\langle F(x) \pi(x)g, f \rangle \, d\mu_{L}(x). \qedhere\]
\end{proof}

By combining Proposition \ref{proposition_adjoint_formula} with \eqref{eq:isometry_property} we obtain the following generalization of the reconstruction formula for the STFT given in \eqref{reconstruction_for_the_STFT}.

\begin{corollary}[Reconstruction Formula]
\label{reconstruction_corollary}
Let $\pi:G \to \mathcal{U}(\mathcal{H}_{\pi})$ be a square integrable representation and fix an admissible vector $g \in \mathcal{H}_{\pi}$. We can represent any $f \in \mathcal{H}_{\pi}$ as the weak integral 
\begin{equation*}
    f = \mathcal{W}_{g}^{*}\left(\mathcal{W}_{g}f\right) = \int_{G}\mathcal{W}_{g}f(x) \pi(x)g \, d\mu_{L}(x).
\end{equation*}
Hence we have for any $h \in \mathcal{H}_{\pi}$ that 
\[\langle f, h \rangle = \int_{G}\mathcal{W}_{g}f(x)\overline{\mathcal{W}_{g}h(x)} \, d\mu_{L}(x).\]
\end{corollary}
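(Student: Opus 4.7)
The approach is to combine the two ingredients already established: the isometry identity from equation \eqref{eq:isometry_property}, namely $\mathcal{W}_{g}^{*} \circ \mathcal{W}_{g} = \textrm{Id}_{\mathcal{H}_{\pi}}$, and the weak integral representation of the adjoint from Proposition~\ref{proposition_adjoint_formula}. Starting from an arbitrary $f \in \mathcal{H}_{\pi}$, I would first observe that admissibility of $g$ ensures $\mathcal{W}_{g}f \in L^{2}(G)$, so $F := \mathcal{W}_{g}f$ is a legitimate input to Proposition~\ref{proposition_adjoint_formula}. Then the isometry identity gives $f = \mathcal{W}_{g}^{*}(\mathcal{W}_{g}f) = \mathcal{W}_{g}^{*}(F)$, and substituting the weak integral formula \eqref{eq: weak_integral_adjoint} yields the first displayed equation.

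For the second displayed equation, I would pair both sides of $f = \mathcal{W}_{g}^{*}(\mathcal{W}_{g}f)$ with an arbitrary $h \in \mathcal{H}_{\pi}$. The right-hand side becomes, by the defining property of the weak integral,
\[
\langle \mathcal{W}_{g}^{*}(\mathcal{W}_{g}f), h \rangle_{\mathcal{H}_{\pi}} = \int_{G} \mathcal{W}_{g}f(x)\,\langle \pi(x)g, h\rangle_{\mathcal{H}_{\pi}}\, d\mu_{L}(x).
\]
The inner product inside the integral rewrites as $\langle \pi(x)g, h\rangle = \overline{\langle h, \pi(x)g\rangle} = \overline{\mathcal{W}_{g}h(x)}$, which produces the second claimed formula directly. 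Alternatively, one can read this off by applying the adjoint property $\langle \mathcal{W}_{g}^{*}(\mathcal{W}_{g}f), h\rangle_{\mathcal{H}_{\pi}} = \langle \mathcal{W}_{g}f, \mathcal{W}_{g}h\rangle_{L^{2}(G)}$ and then expanding the $L^{2}(G)$ inner product as an integral.

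There is no real obstacle here, since both key technical facts (the weak integral formula for $\mathcal{W}_{g}^{*}$ and the isometry property) are already in place. The only conceptual point worth flagging is that the identity $f = \int_{G} \mathcal{W}_{g}f(x)\,\pi(x)g\, d\mu_{L}(x)$ is to be interpreted in the weak sense defined just before Proposition~\ref{proposition_adjoint_formula}; the integrand $x \mapsto \mathcal{W}_{g}f(x)\,\pi(x)g$ will typically fail to be Bochner integrable, so the pointwise-in-$h$ pairing is what makes the statement rigorous. Once this interpretation is granted, the proof is a one-line composition of results already proven in the section.
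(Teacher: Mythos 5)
Your proposal is correct and is exactly the paper's argument: the corollary is obtained by combining the isometry identity \eqref{eq:isometry_property} with the weak integral representation of $\mathcal{W}_{g}^{*}$ from Proposition~\ref{proposition_adjoint_formula}, and the second formula follows by pairing with $h$. Your remark about the weak interpretation of the integral is a sensible clarification but does not change the substance.
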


\begin{example}
For the wavelet representation given in Example \ref{wavelet_example}, the reconstruction formula in Corollary \ref{reconstruction_corollary} takes the form \[f = \int_{\textrm{Aff}}\mathcal{W}_{g}f(b,a)T_{b}D_{a}g \, \frac{db \, da}{a^2},\]
for $f \in L^{2}(\mathbb{R})$ arbitrary and $g \in L^{2}(\mathbb{R})$ an admissible wavelet. 
\end{example}

\section{In the Midst of Coorbit Spaces}
\label{sec: Construction_of_Smoothness_Spaces}
In this chapter we will define the coorbit spaces and derive their basic properties. The coorbit spaces consist of elements $\eta$ such that the wavelet transform $\mathcal{W}_{g}\eta$ has suitable decay as a function on the group $G$. However, the elements $\eta$ will not be picked from $\mathcal{H}_{\pi}$, but rather from a larger distributional space. The aim of the first two sections in this chapter is to make this notion precise. Once this is ready, we will define coorbit spaces without weights in Section \ref{sec: Coorbit_Spaces}. The weighted versions will be introduced in Section \ref{sec: Weighted_Versions} so that we can initially introduce coorbit spaces with minimal technicalities. Although this is an uncommon approach in the literature, we believe that what this approach lacks in efficiency is made up for by increased clarity. In Section \ref{sec: Atomic Decompositions} we show that the coorbit spaces can be discretized in a way that reflects the geometry of the underlying group. Finally, we discuss Banach frames and kernel theorems for coorbit spaces respectively in Section~\ref{sec: Banach_Frames} and Section~\ref{sec: A Kernel Theorem for Coorbit Spaces}.
\\\\
\textbf{Restriction to $\sigma$-compact groups:} For some results in this chapter, we will need that the locally compact group $G$ is \textit{$\sigma$-compact}, that is, there exists a sequence of compact sets $(K_n)_{n \in \mathbb{N}}$ with $K_n \subset G$ such that $\cup_{n \in \mathbb{N}}K_n = G$. Rather than explicitly requiring this at individual points in the exposition, we henceforth restrict our attention to $\sigma$-compact groups. Whenever we refer to a representation $\pi:G \to \mathcal{U}(\mathcal{H}_{\pi})$, it is from now on implicitly assumed that $G$ is a $\sigma$-compact locally compact group. We remark that $\sigma$-compactness for locally compact groups is a mild condition: Not only is any second countable or connected locally compact group (e.g. any Lie group) $\sigma$-compact, but by \cite[Proposition 2.4]{folland2016course} we can always find a subgroup of a locally compact group that is open, closed, and $\sigma$-compact.

\subsection{Integrable Representations and Test Vectors}
\label{sec: Test_Spaces_and_Distributional_Spaces}

In this section we will go from the square integrable setting to the integrable setting. An irreducible unitary representation $\pi:G \to \mathcal{U}(\mathcal{H}_{\pi})$ is said to be \textit{integrable} if there exists an \textit{integrable vector}, that is, if there is a non-zero vector $g \in \mathcal{H}_{\pi}$ such that $\mathcal{W}_{g}g \in L^{1}(G)$. Notice that $\pi$ is then automatically square integrable since $\mathcal{W}_{g}g \in L^{1}(G) \cap L^{\infty}(G) \subset L^{2}(G)$. We use the notation \[\mathcal{A} := \left\{g \in \mathcal{H}_{\pi} \, : \, \mathcal{W}_{g}g \in L^{1}(G) \right\}.\] The set $\mathcal{A}$ is sometimes called the \textit{analyzing vectors} in the literature \cite{feichtinger1988unified}. Notice that $\mathcal{A}$ contains all the integrable vectors as well as the zero vector. \par 
From now on, we will require that $\mathcal{A}$ is non-trivial, that is, we require that the representation $\pi$ is integrable. Given an integrable vector $g \in \mathcal{A} \setminus \{0\}$, we can define the corresponding space of \textit{test vectors} \[\mathcal{H}_{g}^{1} := \left\{f \in \mathcal{H}_{\pi} \, : \, \mathcal{W}_{g}f \in L^{1}(G)\right\}.\]
The terminology \textquote{test vectors} is not standard, although it has been used in \cite{speck19, felix_thesis}. We will explain in Section \ref{sec: Reservoirs_and_the_Extended_Wavelet_Transform} why this terminology is suitable. \par 
The space $\mathcal{H}_{g}^{1}$ can be equipped with the norm 
\begin{equation*}
    \|f\|_{\mathcal{H}_{g}^{1}} := \|\mathcal{W}_{g}f\|_{L^{1}(G)}, \qquad f \in \mathcal{H}_{g}^{1}.
\end{equation*}
To see that this is a norm and not just a seminorm we assume that $\|\mathcal{W}_{g}f\|_{L^{1}(G)} = 0$ for some $f \in \mathcal{H}_{g}^{1}$. Then $\mathcal{W}_{g}f$ is zero almost everywhere as a function on $G$. This implies that $\mathcal{W}_{g}f$ represents the zero equivalence class in $L^{2}(G)$. The injectivity of $\mathcal{W}_{g}:\mathcal{H}_{\pi} \to L^{2}(G)$ ensured by Proposition \ref{proposition_cyclic_irreducible} and Lemma \ref{injection_lemma} gives that $f = 0$ as an element in $\mathcal{H}_{\pi},$ and hence also as an element in $\mathcal{H}_{g}^{1}$. 

\begin{proposition}
\label{inclusion_proposition}
Let $\pi:G \to \mathcal{U}(\mathcal{H}_{\pi})$ be an integrable representation and fix an integrable vector $g \in \mathcal{A} \setminus \{0\}$.
The restriction $\pi|_{\mathcal{H}_{g}^{1}}$ acts by isometries on the set of test vectors $\mathcal{H}_{g}^{1}$. Furthermore, the space of test vectors $\mathcal{H}_{g}^{1}$ is dense in $\mathcal{H}_{\pi}$.
\end{proposition}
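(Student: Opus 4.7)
The plan is to deduce both statements from the covariance identity \eqref{wavelet_intertwines_left_reg}, the left-invariance of the Haar measure, and irreducibility via the cyclic vector characterization in Proposition \ref{proposition_cyclic_irreducible}.

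For the isometry claim, I would first observe that $\mathcal{H}_g^1$ is a linear subspace of $\mathcal{H}_\pi$, since the wavelet transform is linear in its first argument and $L^1(G)$ is closed under addition and scalar multiplication. Next, for $y \in G$ and $f \in \mathcal{H}_g^1$, I would apply \eqref{wavelet_intertwines_left_reg} to write
\[
\mathcal{W}_g(\pi(y)f)(x) \;=\; L_y[\mathcal{W}_g f](x) \;=\; \mathcal{W}_g f(y^{-1} x).
\]
The left-invariance of $\mu_L$ then immediately gives $\|L_y \mathcal{W}_g f\|_{L^1(G)} = \|\mathcal{W}_g f\|_{L^1(G)}$, which shows both that $\pi(y) f \in \mathcal{H}_g^1$ and that $\|\pi(y)f\|_{\mathcal{H}_g^1} = \|f\|_{\mathcal{H}_g^1}$. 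Thus the restriction $\pi|_{\mathcal{H}_g^1}$ is well-defined as an action by isometries.

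For the density claim, the key observation is that $g \in \mathcal{H}_g^1$ by the very definition of integrability of $g$. Combined with the invariance just established, this means the orbit $\{\pi(x)g : x \in G\}$, and therefore the cyclic subspace
\[
\mathcal{M}_g = \overline{\mathrm{span}\{\pi(x) g : x \in G\}},
\]
satisfies $\mathrm{span}\{\pi(x) g : x \in G\} \subset \mathcal{H}_g^1 \subset \mathcal{H}_\pi$. Since $\pi$ is integrable, it is in particular irreducible (Remark \ref{integrability_remark}), so Proposition \ref{proposition_cyclic_irreducible} guarantees that the non-zero vector $g$ is cyclic, i.e., $\mathcal{M}_g = \mathcal{H}_\pi$. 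Passing to closures in $\mathcal{H}_\pi$ then yields $\overline{\mathcal{H}_g^1} = \mathcal{H}_\pi$, which is the desired density.

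No step here looks like a serious obstacle; the mildest subtlety is simply ensuring that $\mathcal{H}_g^1$ is genuinely a (not necessarily closed) subspace of $\mathcal{H}_\pi$ before invoking the cyclicity argument, so that containing the spanning set $\{\pi(x) g : x \in G\}$ is enough to contain its linear span. Everything else is a direct application of the three ingredients noted above: covariance of $\mathcal{W}_g$, invariance of $\mu_L$, and irreducibility of $\pi$.
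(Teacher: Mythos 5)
Your proof is correct and follows essentially the same route as the paper: the isometry claim via the covariance identity \eqref{wavelet_intertwines_left_reg} and left-invariance of the Haar measure, and density from irreducibility. The only cosmetic difference is that you obtain density by applying the cyclic-vector characterization of Proposition \ref{proposition_cyclic_irreducible} to $g$, whereas the paper observes directly that the closure of $\mathcal{H}_{g}^{1}$ is a non-trivial closed invariant subspace; the two formulations are interchangeable.
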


\begin{proof}
 It is clear that $\mathcal{H}_{g}^{1}$ is a linear subspace of $\mathcal{H}_{\pi}$. Moreover, for $f \in \mathcal{H}_{g}^{1}$ we have for $x \in G$ that  \[\|\pi(x)f\|_{\mathcal{H}_{g}^{1}} = \|\mathcal{W}_{g}\pi(x)f\|_{L^{1}(G)} = \|L_{x}\mathcal{W}_{g}f\|_{L^{1}(G)} = \|\mathcal{W}_{g}f\|_{L^{1}(G)} = \|f\|_{\mathcal{H}_{g}^{1}}.\] Hence the closure of $\mathcal{H}_{g}^{1}$ in the norm on $\mathcal{H}_{\pi}$ is a non-trivial closed subspace of $\mathcal{H}_{\pi}$ where $\pi$ acts by isometries. The irreducibility of $\pi$ implies that $\mathcal{H}_{g}^{1}$ is a dense subspace of $\mathcal{H}_{\pi}$.
\end{proof}

\begin{remark}
It is tempting, in light of Proposition \ref{L2-equality-definitions}, to attempt to show that $\mathcal{H}_{g}^{1}$ is closed in $\mathcal{H}_{\pi}$. Then Proposition \ref{inclusion_proposition} would show that $\mathcal{H}_{g}^{1} = \mathcal{H}_{\pi}$. However, this is generally false and we will give a concrete counterexample in Example \ref{Feichtinger_algebra_example}. In fact, it will be clear from Section \ref{sec: Coorbit_Spaces} that coorbit theory is not very interesting whenever $\mathcal{H}_{g}^{1} = \mathcal{H}_{\pi}$.
\end{remark}

\begin{proposition}
\label{prop:completeness_H1}
Let $\pi:G \to \mathcal{U}(\mathcal{H}_{\pi})$ be an integrable representation. Then for any integrable vector $g \in \mathcal{A} \setminus \{0\}$ the test vectors $\mathcal{H}_{g}^1$ form a Banach space that is continuously embedded into $\mathcal{H}_{\pi}$.
\end{proposition}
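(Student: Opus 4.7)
The plan is to first establish the continuous embedding $\mathcal{H}_g^1 \hookrightarrow \mathcal{H}_\pi$, and then use this together with the completeness of $L^1(G)$ to deduce that $\mathcal{H}_g^1$ is itself complete. The (isometric) definition $\|f\|_{\mathcal{H}_g^1} = \|\mathcal{W}_g f\|_{L^1(G)}$ already makes $\mathcal{H}_g^1$ a normed space, so everything reduces to these two points.

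For the continuous embedding I would reduce to the admissible case and invoke the reconstruction formula. Since $g \in \mathcal{A}$, we have $\mathcal{W}_g g \in L^1(G) \cap L^\infty(G) \subset L^2(G)$, so $g$ is square integrable and lies in the domain of the Duflo--Moore operator $C_\pi$. Theorem \ref{Duflo_Moore_theorem} tells us that $C_\pi$ is injective, so $C_\pi g \neq 0$ and we may form the admissible vector $\tilde g := g / \|C_\pi g\|_{\mathcal{H}_\pi}$, for which $\mathcal{W}_{\tilde g} f = \|C_\pi g\|_{\mathcal{H}_\pi}^{-1}\, \mathcal{W}_g f$. Applying Corollary \ref{reconstruction_corollary} to $\tilde g$ and pairing the resulting weak integral $f = \int_G \mathcal{W}_{\tilde g} f(x)\, \pi(x) \tilde g\, d\mu_L(x)$ against unit vectors in $\mathcal{H}_\pi$ yields
\[
\|f\|_{\mathcal{H}_\pi} \;\leq\; \|\tilde g\|_{\mathcal{H}_\pi}\, \|\mathcal{W}_{\tilde g} f\|_{L^1(G)} \;=\; \frac{\|g\|_{\mathcal{H}_\pi}}{\|C_\pi g\|_{\mathcal{H}_\pi}^{2}}\, \|f\|_{\mathcal{H}_g^1},
\]
which is the desired continuous inclusion.

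For completeness, let $(f_n)$ be Cauchy in $\mathcal{H}_g^1$. By definition $(\mathcal{W}_g f_n)$ is Cauchy in $L^1(G)$ and converges to some $F \in L^1(G)$. The embedding just proved implies that $(f_n)$ is also Cauchy in $\mathcal{H}_\pi$, so it has an $\mathcal{H}_\pi$-limit $f$. Now $\mathcal{W}_g : \mathcal{H}_\pi \to C_b(G)$ is uniformly bounded (by $\|g\|_{\mathcal{H}_\pi}$), so $\mathcal{W}_g f_n \to \mathcal{W}_g f$ pointwise on $G$, while passing to a subsequence of the $L^1$-convergence gives $\mathcal{W}_g f_n \to F$ almost everywhere. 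The two limits must agree, so $\mathcal{W}_g f = F \in L^1(G)$. This shows $f \in \mathcal{H}_g^1$ and $\|f_n - f\|_{\mathcal{H}_g^1} = \|\mathcal{W}_g f_n - F\|_{L^1(G)} \to 0$.

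The main obstacle I expect is pinning down the continuous embedding: without it, one cannot pull the $\mathcal{H}_\pi$-limit of a Cauchy sequence back into $\mathcal{H}_g^1$ to close the completeness argument. The normalization through $C_\pi$ is somewhat ad hoc but unavoidable, since the reconstruction formula is only stated for admissible vectors; an alternative would be to bypass admissibility altogether by using the log-convexity estimate $\|\mathcal{W}_g f\|_{L^2(G)}^2 \leq \|\mathcal{W}_g f\|_{L^1(G)} \|\mathcal{W}_g f\|_{L^\infty(G)}$ together with $\|\mathcal{W}_g f\|_{L^2(G)} = \|C_\pi g\|_{\mathcal{H}_\pi}\, \|f\|_{\mathcal{H}_\pi}$ and $\|\mathcal{W}_g f\|_{L^\infty(G)} \leq \|g\|_{\mathcal{H}_\pi}\|f\|_{\mathcal{H}_\pi}$, which produces the same embedding constant up to rearrangement.
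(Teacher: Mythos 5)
Your proof is correct, and its overall architecture (first the continuous embedding $\mathcal{H}_g^1 \hookrightarrow \mathcal{H}_\pi$, then completeness by showing the $L^1$-limit and the $\mathcal{H}_\pi$-limit produce the same wavelet transform pointwise) matches the paper's. The one place you diverge is in how the embedding constant is produced: the paper argues directly from the orthogonality relation \eqref{wavelet_transform_orthogonality}, writing $\|C_\pi g\|_{\mathcal{H}_\pi}^2\|f\|_{\mathcal{H}_\pi}^2 = \|\mathcal{W}_g f\|_{L^2(G)}^2 = \int_G |\mathcal{W}_g f(x)|\cdot|\langle f,\pi(x)g\rangle|\,d\mu_L(x)$ and bounding one factor by $\|f\|_{\mathcal{H}_\pi}\|g\|_{\mathcal{H}_\pi}$ --- which is exactly the log-convexity route you sketch as an ``alternative'' in your last paragraph --- whereas your primary argument normalizes $g$ to the admissible vector $\tilde g = g/\|C_\pi g\|_{\mathcal{H}_\pi}$ and pairs the reconstruction formula of Corollary \ref{reconstruction_corollary} against unit vectors. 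Both are legitimate and yield the same constant $\|g\|_{\mathcal{H}_\pi}/\|C_\pi g\|_{\mathcal{H}_\pi}^2$; since the reconstruction formula is itself a corollary of the orthogonality relation, the two derivations differ only in packaging, with the paper's version being marginally more self-contained. In the completeness step the paper obtains pointwise convergence of $\mathcal{W}_g f_n \to \mathcal{W}_g f$ from the reproducing-kernel structure of $\mathcal{W}_g(\mathcal{H}_\pi)$ rather than from the uniform bound $\|\mathcal{W}_g h\|_{L^\infty(G)} \le \|g\|_{\mathcal{H}_\pi}\|h\|_{\mathcal{H}_\pi}$ plus an a.e.-convergent subsequence, but this is a cosmetic difference and your version closes the argument just as well.
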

\begin{proof}
We begin by showing that the space $\mathcal{H}_{g}^{1}$ is continuously embedded into $\mathcal{H}_{\pi}$. For an element $f \in \mathcal{H}_{g}^{1}$, we have by the orthogonality relations in \eqref{wavelet_transform_orthogonality} that
\begin{align*}
    \|C_{\pi}g\|_{\mathcal{H}_{\pi}}^{2}\|f\|_{\mathcal{H}_{\pi}}^2 = \|\mathcal{W}_{g}f\|_{L^{2}(G)}^2 & = \int_{G}|\langle f, \pi(x)g \rangle|  |\mathcal{W}_{g}f(x)| \, d\mu_{L}(x)  \\ & \leq \int_{G}\|f\|_{\mathcal{H}_{\pi}} \|\pi(x)g\|_{\mathcal{H}_{\pi}} |\mathcal{W}_{g}f(x)| \, d\mu_{L}(x) \\ & = \|f\|_{\mathcal{H}_{\pi}} \|g\|_{\mathcal{H}_{\pi}} \|\mathcal{W}_{g}f\|_{L^{1}(G)}. 
\end{align*}
Since $C_{\pi}g \neq 0$ due to the injectivity of $C_{\pi}$, we can rearrange and obtain  
\begin{equation}
\label{norm_inequality}
    \|f\|_{\mathcal{H}_{\pi}} \leq \frac{\|g\|_{\mathcal{H}_{\pi}}}{\|C_{\pi}g\|_{\mathcal{H}_{\pi}}^{2}} \|\mathcal{W}_{g}f\|_{L^{1}(G)} = \frac{\|g\|_{\mathcal{H}_{\pi}}}{\|C_{\pi}g\|_{\mathcal{H}_{\pi}}^{2}} \|f\|_{\mathcal{H}_{g}^{1}}.
\end{equation}
\par 
Let us now show that $\mathcal{H}_{g}^{1}$ is complete. Assume that $\{f_{n}\}_{n \in \mathbb{N}}$ is a Cauchy-sequence in $\mathcal{H}_{g}^{1}$. By definition, this means that for every $\epsilon > 0$ there exists $N \in \mathbb{N}$ such that for $n,m \geq N$ we have \[ \|\mathcal{W}_{g}f_n - \mathcal{W}_{g}f_m\|_{L^{1}(G)} = \|f_n - f_m\|_{\mathcal{H}_{g}^1} < \epsilon.\] Now, by completeness of $L^{1}(G)$, the sequence $\mathcal{W}_{g}f_n$ converges to an element $F \in L^{1}(G)$. Moreover, we see from \eqref{norm_inequality} that there exists an element $f \in \mathcal{H}_{\pi}$ such that $f_n$ converges to $f$ in the norm on $\mathcal{H}_{\pi}$. Hence by the continuity of $\mathcal{W}_{g}$ as a transformation from $\mathcal{H}_{\pi}$ to $L^{2}(G)$, the sequence $\mathcal{W}_{g}f_n$ converges to $\mathcal{W}_{g}f$ in the $L^{2}(G)$-norm. However, since $\mathcal{W}_{g}(\mathcal{H}_{\pi})$ is a reproducing kernel Hilbert space, we know that the convergence $\mathcal{W}_{g}f_n \to \mathcal{W}_{g}f$ is also valid pointwise. This forces $F = \mathcal{W}_{g}f$. Hence $f \in \mathcal{H}_{g}^{1}$ and $f_n \to f$ in the $\mathcal{H}_{g}^{1}$-norm, showing that $\mathcal{H}_{g}^{1}$ is complete.
\end{proof}

The main goal of this section is to show in Theorem \ref{Theorem_about_independence_of_window} that the set of test vectors $\mathcal{H}_{g}^{1}$ does not depend on the choice of integrable vector $g \in \mathcal{A} \setminus \{0\}$. To do this, we first need two preliminary results given in Lemma \ref{lemma_about_domain_of_duflo_more} and Lemma \ref{lemma_about_existence_of_non_orthogonal_element} regarding the Duflo-Moore operator $C_{\pi}$ and integrable vectors. These technicalities are somewhat neglected in the original sources \cite{feichtinger1989banach1, feichtinger1989banach2, feichtinger1988unified} on coorbit theory. To our knowledge, this was first put on rigorous footing in \cite[Lemma 2.4.5]{felix_thesis}. 

\begin{lemma}
\label{lemma_about_domain_of_duflo_more}
Let $\pi:G \to \mathcal{U}(\mathcal{H}_{\pi})$ be an integrable representation. Then \[C_{\pi}(\mathcal{A}) \subset \mathcal{D}(C_{\pi}),\]
where $\mathcal{D}(C_{\pi})$ denotes the domain of the Duflo-Moore operator. 
\end{lemma}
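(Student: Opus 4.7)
The goal is to show, given a non-zero $g \in \mathcal{A}$, that $C_\pi g$ lies in $\mathcal{D}(C_\pi)$. Since $\mathcal{W}_g g \in L^1(G) \cap L^\infty(G) \subset L^2(G)$, the vector $g$ is already square integrable and so $g \in \mathcal{D}(C_\pi)$, making $C_\pi g$ a well-defined element of $\mathcal{H}_\pi$; by the Duflo--Moore theorem, it remains to show $C_\pi g$ is itself square integrable, i.e.\ $\mathcal{W}_{C_\pi g}(C_\pi g) \in L^2(G)$. My plan is to mimic the argument of Proposition~\ref{L2-equality-definitions}: if we can produce a single non-zero $f \in \mathcal{H}_\pi$ with $\mathcal{W}_{C_\pi g}(f) \in L^2(G)$, then $\{f \in \mathcal{H}_\pi : \mathcal{W}_{C_\pi g}(f) \in L^2(G)\}$ is a non-zero, closed, $\pi$-invariant subspace, hence all of $\mathcal{H}_\pi$ by irreducibility, and in particular contains $C_\pi g$.

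The natural test function is $f = g$. Using that $\mathcal{D}(C_\pi)$ is $\pi$-invariant (a direct consequence of the invariance relation $\pi(x) C_\pi = \sqrt{\Delta(x)}\, C_\pi \pi(x)$ rearranged to $C_\pi \pi(x) = \Delta(x)^{-1/2} \pi(x) C_\pi$), and the self-adjointness of $C_\pi$ applied between vectors $g$ and $\pi(x) g$, both of which lie in $\mathcal{D}(C_\pi)$, a short computation gives
\[
\mathcal{W}_{C_\pi g}(g)(x) = \langle g, \pi(x) C_\pi g\rangle = \sqrt{\Delta(x)}\, \langle g, C_\pi \pi(x) g\rangle = \sqrt{\Delta(x)}\, \langle C_\pi g, \pi(x) g\rangle = \sqrt{\Delta(x)}\, \mathcal{W}_g(C_\pi g)(x).
\]
Therefore proving $\mathcal{W}_{C_\pi g}(g) \in L^2(G)$ is equivalent to establishing the weighted bound $\int_G \Delta(x)\, |\mathcal{W}_g(C_\pi g)(x)|^2 \, d\mu_L(x) < \infty$.

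To establish this bound I would first normalize $g$ so that it is admissible (which does not affect membership in $\mathcal{A}$ or $\mathcal{D}(C_\pi)$), and then invoke the reproducing identity $\mathcal{W}_g(C_\pi g) = \mathcal{W}_g(C_\pi g) *_G \mathcal{W}_g g$ from Theorem~\ref{reproducing_formula}. The integrability $\mathcal{W}_g g \in L^1(G)$ combined with the symmetry $\mathcal{W}_g g (x^{-1}) = \overline{\mathcal{W}_g g(x)}$ and the transformation rule $d\mu_L(x^{-1}) = \Delta(x)^{-1} d\mu_L(x)$ yields the useful identity $\int_G |\mathcal{W}_g g(x)|\, \Delta(x)^{-1}\, d\mu_L(x) = \|\mathcal{W}_g g\|_{L^1(G)}$. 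Applying this together with Young's convolution inequality to the reproducing identity should yield pointwise decay of $|\mathcal{W}_g(C_\pi g)(x)|$ in terms of $\Delta(x)^{-1/2}$, which one then feeds back into the weighted $L^2$ integral. The main obstacle lies precisely in this final estimate: in the non-unimodular setting both $\Delta$ and $\Delta^{-1}$ are unbounded and neither $\int \Delta\, d\mu_L$ nor $\int \Delta^{-1}\, d\mu_L$ is finite, so the argument depends on exploiting the inversion symmetry of $\mathcal{W}_g g$ to extract precisely the cancellation the $L^1$-hypothesis affords. If a direct attack proves too delicate, a fallback is to approximate: set $g_n := E_n g$, where $E_n$ is the spectral projection of $C_\pi$ onto $[0,n]$, so that $g_n \in \mathcal{D}(C_\pi^k)$ for every $k$, then derive a uniform $L^2$ bound on $\mathcal{W}_{C_\pi g_n}(C_\pi g_n)$ from the orthogonality relations of Theorem~\ref{Duflo_Moore_theorem} and pass to the limit $n \to \infty$ using the closedness arguments already deployed for $\mathcal{H}_g^1$ in Proposition~\ref{prop:completeness_H1}.
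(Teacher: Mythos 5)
Your reduction is sound as far as it goes: the identity $\mathcal{W}_{C_{\pi}g}(g)(x) = \sqrt{\Delta(x)}\,\mathcal{W}_{g}(C_{\pi}g)(x)$ is a correct use of the invariance relation \eqref{invariance_relation} and self-adjointness, so the whole argument hinges on the weighted estimate $\int_{G}\Delta(x)\,|\mathcal{W}_{g}(C_{\pi}g)(x)|^{2}\,d\mu_{L}(x) < \infty$. But that estimate is exactly where the proof stops: you never establish it, and the route you sketch would not suffice even if carried out. Convolving with an $L^{1}$ kernel via Young's inequality cannot by itself produce pointwise decay of order $\Delta(x)^{-1/2}$, and even granting such decay the integrand would only be bounded by $\Delta(x)\cdot\Delta(x)^{-1} = 1$, which is not integrable on a non-compact group. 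The spectral-projection fallback is likewise only a sketch, and the uniform bound it requires is precisely as hard as the original claim. There is also a secondary circularity risk in the framework itself: the closedness of $\{f : \mathcal{W}_{h}f \in L^{2}(G)\}$ invoked in Proposition \ref{L2-equality-definitions} is quoted for a window $h$ already known to be square integrable, whereas you would need it for the window $h = C_{\pi}g$ before knowing that $C_{\pi}g$ is square integrable.

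The paper avoids all of this by never touching $\mathcal{W}_{C_{\pi}g}$. Since $C_{\pi}$ is self-adjoint, $C_{\pi}g \in \mathcal{D}(C_{\pi}) = \mathcal{D}(C_{\pi}^{*})$ holds as soon as the anti-/linear functional $f \mapsto \langle C_{\pi}f, C_{\pi}g\rangle$ is bounded on the dense domain $\mathcal{D}(C_{\pi})$; this is the weak (adjoint) formulation of membership in the domain, and it is the idea your attempt is missing. The orthogonality relation \eqref{wavelet_transform_orthogonality}, with the roles arranged so that $\langle \mathcal{W}_{g}g, \mathcal{W}_{f}g\rangle_{L^{2}(G)} = \|g\|_{\mathcal{H}_{\pi}}^{2}\,\langle C_{\pi}f, C_{\pi}g\rangle$, converts the functional into a pairing of $\mathcal{W}_{g}g \in L^{1}(G)$ against $\mathcal{W}_{f}g \in L^{\infty}(G)$, giving
\[
|\langle C_{\pi}f, C_{\pi}g\rangle| \leq \|g\|_{\mathcal{H}_{\pi}}^{-2}\,\|\mathcal{W}_{g}g\|_{L^{1}(G)}\,\|\mathcal{W}_{f}g\|_{L^{\infty}(G)} \leq \left(\|g\|_{\mathcal{H}_{\pi}}^{-1}\|g\|_{\mathcal{H}_{g}^{1}}\right)\|f\|_{\mathcal{H}_{\pi}},
\]
which uses the hypothesis $g \in \mathcal{A}$ exactly once and requires no analysis of the modular function at all. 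Note, for contrast, that your weighted integral is formally $\|g\|_{\mathcal{H}_{\pi}}^{2}\,\|C_{\pi}(C_{\pi}g)\|_{\mathcal{H}_{\pi}}^{2}$ by the same orthogonality relation, so proving it finite directly presupposes essentially the conclusion; passing to the adjoint is what breaks that circle.
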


\begin{proof}
Due to the self-adjointness of $C_{\pi}$, it suffices to show that $C_{\pi}(g) \in \mathcal{D}(C_{\pi}^{*})$ for all $g \in \mathcal{A}$. To show this, we prove that the linear functional on $\mathcal{D}(C_{\pi})$ given by \[f \longmapsto \langle C_{\pi}f, C_{\pi}g \rangle_{\mathcal{H}_{\pi}} =  \langle f, C_{\pi}^{*}C_{\pi}g \rangle_{\mathcal{H}_{\pi}}\] is bounded. For $g = 0$ the boundedness clearly holds. For $g \neq 0$ the claim follows from the orthogonality relation \eqref{wavelet_transform_orthogonality} since 
\begin{align*}
    |\langle C_{\pi}f, C_{\pi}g \rangle_{\mathcal{H}_{\pi}}| & = \|g\|_{\mathcal{H}_{\pi}}^{-2} |\langle \mathcal{W}_{g}g, \mathcal{W}_{f}g \rangle_{L^{2}(G)}|
    \\ & \leq \|g\|_{\mathcal{H}_{\pi}}^{-2} \|\mathcal{W}_{g}g\|_{L^{1}(G)} \|\mathcal{W}_{f}g\|_{L^{\infty}(G)}
    \\ & \leq \left(\|g\|_{\mathcal{H}_{\pi}}^{-1}\|g\|_{\mathcal{H}_{g}^{1}}\right) \cdot \|f\|_{\mathcal{H}_{\pi}}. \qedhere
\end{align*}
\end{proof}

\begin{lemma}
\label{lemma_about_existence_of_non_orthogonal_element}
Let $\pi:G \to \mathcal{U}(\mathcal{H}_{\pi})$ be an integrable representation and fix two integrable vectors $g_1,g_2 \in \mathcal{A} \setminus \{0\}$. Then there exists an integrable vector $g \in \mathcal{A} \setminus \{0\}$ such that \[\langle C_{\pi}g, C_{\pi}g_i \rangle \neq 0, \qquad i = 1,2.\]
\end{lemma}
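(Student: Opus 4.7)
My plan is to reduce to the nontrivial case $\langle C_\pi g_1, C_\pi g_2\rangle = 0$ and then build $g$ as a two-term perturbation $g_1 + \lambda\,\pi(y_0)g_1$ of $g_1$ for suitably chosen $\lambda \in \mathbb{C}$ and $y_0 \in G$.

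First I will introduce the bounded linear functionals $\phi_i(f) := \langle f, C_\pi^2 g_i\rangle$ on $\mathcal{H}_\pi$, which are legitimate thanks to Lemma~\ref{lemma_about_domain_of_duflo_more} since $C_\pi^2 g_i$ is a bona fide element of $\mathcal{H}_\pi$. Self-adjointness of $C_\pi$ then gives $\phi_i(f) = \langle C_\pi f, C_\pi g_i\rangle$ on $\mathcal{D}(C_\pi) \supset \mathcal{A}$, and $\phi_i(g_i) = \|C_\pi g_i\|^2 > 0$ by injectivity of $C_\pi$, so each $\phi_i$ is nonzero. If either $\phi_2(g_1) \neq 0$ or $\phi_1(g_2) \neq 0$ I am immediately done by taking $g := g_1$ or $g := g_2$, so I may assume $\langle C_\pi g_1, C_\pi g_2\rangle = 0$.

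Under this assumption I will try $g := g_1 + \lambda\,\pi(y_0)g_1$. The first task is to verify $g \in \mathcal{A}$: the expansion of $\mathcal{W}_g g$ produces four terms, each of which is a left-, right-, or conjugation-translate of $\mathcal{W}_{g_1}g_1 \in L^1(G)$; for example $\mathcal{W}_{\pi(y_0)g_1}(\pi(y_0)g_1)(x) = \mathcal{W}_{g_1}g_1(y_0^{-1}xy_0)$. A change of variable keeps every term in $L^1(G)$ at the cost of paying a modular-function factor for the right-translated and conjugation-translated pieces. The second task is to arrange $\phi_i(g) \neq 0$ for both $i$. Because $\phi_2$ is a nonzero continuous functional and $g_1$ is cyclic by Proposition~\ref{proposition_cyclic_irreducible}, the continuous map $y \mapsto \phi_2(\pi(y)g_1)$ is not identically zero, so I may pick $y_0 \in G$ with $\phi_2(\pi(y_0)g_1) \neq 0$. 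Then $\phi_2(g) = \lambda\,\phi_2(\pi(y_0)g_1)$ is nonzero for every $\lambda \neq 0$, while $\phi_1(g) = \|C_\pi g_1\|^2 + \lambda\,\phi_1(\pi(y_0)g_1)$ is affine in $\lambda$ with nonzero constant term and hence vanishes for at most one value of $\lambda$. Any $\lambda \neq 0$ outside this single bad value delivers the desired $g$, and $g \neq 0$ is forced by $\phi_1(g) \neq 0$.

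The main obstacle is verifying membership in $\mathcal{A}$, where the transformation behaviour of the Haar measure under right translation and conjugation enters; the crucial observation is that only translates of the single integrable function $\mathcal{W}_{g_1}g_1$ appear in the expansion, so no additional integrability hypothesis is required. Once this is settled, the cyclicity argument locating $y_0$ and the affine argument locating $\lambda$ are essentially elementary.
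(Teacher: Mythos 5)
Your proposal is correct and follows essentially the same route as the paper: reduce to the case $\langle C_{\pi}g_1, C_{\pi}g_2\rangle = 0$, use cyclicity of $g_1$ together with the fact that $C_{\pi}^{2}g_2 \in \mathcal{H}_{\pi}$ (Lemma \ref{lemma_about_domain_of_duflo_more}) to find $y_0$ with $\langle C_{\pi}(\pi(y_0)g_1), C_{\pi}g_2\rangle \neq 0$, verify that $g_1 + \lambda\,\pi(y_0)g_1 \in \mathcal{A}$ via left/right translation invariance of $L^{1}(G)$, and then tune the scalar. The only cosmetic differences are your use of the functionals $\phi_i$ and of a general $\lambda \in \mathbb{C}$ avoiding the single root of an affine function, where the paper takes $\epsilon > 0$ sufficiently small.
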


\begin{proof}
If $\langle C_{\pi}g_1, C_{\pi}g_{2} \rangle \neq 0$, then we can simply take $g = g_1$. The injectivity of the Duflo-Moore operator $C_{\pi}$ ensures that $\langle C_{\pi}g_1, C_{\pi}g_1 \rangle \neq 0$. Hence we are left with the case $\langle C_{\pi}g_1, C_{\pi}g_{2} \rangle = 0$. \par 
We point out that Lemma \ref{lemma_about_domain_of_duflo_more} allows us to consider $C_{\pi}(C_{\pi}(g_{2}))$. Notice that neither $C_{\pi}g_2$ nor $C_{\pi}(C_{\pi}(g_{2}))$ can be zero due to the injectivity of $C_{\pi}$. Since the collection $\{\pi(x)g_1\}_{x \in G}$ is dense in $\mathcal{H}_{\pi}$, there exists some fixed $x_0 \in G$ such that \[0 \neq \langle \pi(x_0)g_1, C_{\pi}(C_{\pi}(g_{2})) \rangle = \langle C_{\pi}(\pi(x_0)g_1),C_{\pi}g_2 \rangle,\]
where we used that $C_{\pi}$ is self-adjoint. The desired element we need will be of the form \[g := g_1 + \epsilon \cdot \pi(x_0)g_1\] for some $\epsilon > 0$ that is yet to be determined. First of all, we need to check that $g \in \mathcal{A} \setminus \{0\}$ for every $\epsilon > 0$. This follows from the calculation
\begin{align*}
    \mathcal{W}_{g}g & = \mathcal{W}_{g_1}g_1 + \epsilon \cdot \left( \mathcal{W}_{g_1}\pi(x_0)g_1 + \mathcal{W}_{\pi(x_0)g_1}g_1 \right) + \epsilon^2 \cdot \mathcal{W}_{\pi(x_0)g_1}\pi(x_0)g_1 \\ & = \mathcal{W}_{g_1}g_1 + \epsilon \cdot \left(L_{x_0}\mathcal{W}_{g_1}g_1 + R_{x_0}\mathcal{W}_{g_1}g_1 \right) + \epsilon^2 \cdot L_{x_0}R_{x_0}\mathcal{W}_{g_1}g_1,
\end{align*}
together with the fact that $L^{1}(G)$ is both left-invariant and right-invariant. To see that $g$ satisfies the required properties, we first have that
\begin{equation*}
    \langle C_{\pi}g, C_{\pi}g_2 \rangle = \epsilon \cdot \langle C_{\pi}(\pi(x_0)g_1), C_{\pi}g_2 \rangle \neq 0.
\end{equation*}
Secondly, by choosing $\epsilon$ sufficiently small we also have that
\begin{equation*}
    \langle C_{\pi}g, C_{\pi}g_1 \rangle = \|C_{\pi}g_1\|^2 + \epsilon \cdot \langle C_{\pi}(\pi(x_0)g_1), C_{\pi}g_1 \rangle \neq 0 . \qedhere
\end{equation*}
\end{proof}

We can now state the main result of this section regarding the independence of the test vectors $\mathcal{H}_{g}^{1}$ of the chosen integrable vector $g \in \mathcal{A} \setminus \{0\}$.

\begin{theorem}
\label{Theorem_about_independence_of_window}
Let $\pi:G \to \mathcal{U}(\mathcal{H}_{\pi})$ be an integrable representation. Given two integrable vectors $g_1, g_2 \in \mathcal{A} \setminus \{0\}$ the spaces $\mathcal{H}_{g_1}^1$ and $\mathcal{H}_{g_2}^1$ coincide with equivalent norms.
\end{theorem}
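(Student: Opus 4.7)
The plan is to show that the $\mathcal{W}_{g_2}$-transform of any $f \in \mathcal{H}_{g_1}^1$ can be expressed as a convolution of two $L^1(G)$-functions, from which membership in $\mathcal{H}_{g_2}^1$ and the norm estimate $\|f\|_{\mathcal{H}_{g_2}^1} \lesssim \|f\|_{\mathcal{H}_{g_1}^1}$ both follow immediately from the convolution inequality $\|F *_G H\|_{L^1(G)} \leq \|F\|_{L^1(G)}\|H\|_{L^1(G)}$. The interchange of $g_1$ and $g_2$ then delivers the reverse inequality by symmetry, giving equivalence of the norms and hence equality of the spaces.

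To build such a convolution identity, I would first invoke Lemma~\ref{lemma_about_existence_of_non_orthogonal_element} to select an auxiliary integrable vector $g \in \mathcal{A} \setminus \{0\}$ satisfying $\langle C_\pi g, C_\pi g_i\rangle \neq 0$ for $i = 1,2$. Using the Duflo-Moore orthogonality relation \eqref{wavelet_transform_orthogonality} (valid since integrable vectors are square integrable, hence in $\mathcal{D}(C_\pi)$), I would rewrite, for any $h \in \mathcal{H}_\pi$,
\begin{equation*}
    \langle f, h\rangle_{\mathcal{H}_\pi} \;=\; \frac{1}{\overline{\langle C_\pi g_1, C_\pi g\rangle}} \int_G \mathcal{W}_{g_1}f(x)\,\overline{\mathcal{W}_g h(x)}\, d\mu_L(x),
\end{equation*}
which (using $\mathcal{W}_{g_1}f \in L^1(G)$ and $\mathcal{W}_g h \in L^\infty(G)$ as in the weak-integral discussion preceding Proposition~\ref{proposition_adjoint_formula}) identifies $f$ as the weak integral $c_1 \int_G \mathcal{W}_{g_1}f(x)\,\pi(x)g\, d\mu_L(x)$. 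Applying $\mathcal{W}_g$ to this equality and using the fundamental intertwining identity $\mathcal{W}_g(\pi(x)g) = L_x \mathcal{W}_g g$ from \eqref{wavelet_intertwines_left_reg}, I get the first key relation
\begin{equation*}
    \mathcal{W}_g f \;=\; \frac{1}{\overline{\langle C_\pi g_1, C_\pi g\rangle}}\, \mathcal{W}_{g_1}f *_G \mathcal{W}_g g.
\end{equation*}
Since $\mathcal{W}_g g \in L^1(G)$ by $g \in \mathcal{A}$ and $\mathcal{W}_{g_1}f \in L^1(G)$ by assumption, the convolution inequality shows $\mathcal{W}_g f \in L^1(G)$. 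Repeating exactly the same argument with the pair $(g, g_2)$ in place of $(g_1, g)$ produces
\begin{equation*}
    \mathcal{W}_{g_2}f \;=\; \frac{1}{\overline{\langle C_\pi g, C_\pi g_2\rangle}}\, \mathcal{W}_g f *_G \mathcal{W}_{g_2}g_2,
\end{equation*}
and combining the two estimates yields a constant $C = C(g_1, g_2, g)$ with $\|\mathcal{W}_{g_2}f\|_{L^1(G)} \leq C \|\mathcal{W}_{g_1}f\|_{L^1(G)}$.

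The main obstacle I anticipate is the careful justification of the weak-integral reconstruction formula with two distinct windows $g_1, g$, together with verifying that every application of the orthogonality relation and every convolution manipulation is legal under only the integrability hypothesis (as opposed to admissibility, where $\|C_\pi g\|=1$ lets one avoid the awkward scalar factor $\langle C_\pi g_1, C_\pi g\rangle^{-1}$). This is precisely what forces the detour through Lemma~\ref{lemma_about_domain_of_duflo_more} and Lemma~\ref{lemma_about_existence_of_non_orthogonal_element}: without the auxiliary $g$ guaranteed by the latter, the scalar $\langle C_\pi g_1, C_\pi g_2\rangle$ could vanish and the single-step identity $\mathcal{W}_{g_2}f = c \cdot \mathcal{W}_{g_1}f *_G \mathcal{W}_{g_2}g_2$ would be unavailable. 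Everything else is bookkeeping: swap $g_1 \leftrightarrow g_2$ to obtain the reverse estimate, and conclude that the two norms are equivalent and the underlying sets agree.
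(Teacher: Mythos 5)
Your proposal is correct and follows essentially the same route as the paper's proof: both hinge on the convolution identity $\mathcal{W}_{g_1}f *_{G} \mathcal{W}_{g}g = \overline{\langle C_{\pi}g_1, C_{\pi}g\rangle}\,\mathcal{W}_{g}f$ obtained from the Duflo--Moore orthogonality relation, the auxiliary vector supplied by Lemma~\ref{lemma_about_existence_of_non_orthogonal_element}, and the $L^1$ convolution inequality. The only cosmetic differences are that you derive the identity via a weak-integral reconstruction argument rather than by directly evaluating the convolution integral and recognizing it as an $L^2$ pairing, and that you run the two-step chain through $g$ uniformly while the paper first dispatches the case $\langle C_{\pi}g_1, C_{\pi}g_2\rangle \neq 0$ in a single step.
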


\begin{proof}
Assume first that the two integrable vectors $g_1, g_2 \in \mathcal{A} \setminus \{0\}$ satisfy $\langle C_{\pi}g_1, C_{\pi}g_2 \rangle \neq 0$. We pick $f \in \mathcal{H}_{g_1}^{1}$ and want to show that $f \in \mathcal{H}_{g_2}^{1}$, that is, we need to check that $\mathcal{W}_{g_2}f \in L^{1}(G)$. A short calculation reveals that
\begin{align*}
    \left(\mathcal{W}_{g_1}f *_{G} \mathcal{W}_{g_2}g_2\right)(x) & = \int_{G} \langle f, \pi(y)g_1 \rangle \langle g_2, \pi(y^{-1}x) g_2 \rangle \, d\mu_{L}(y) \\ & = \int_{G} \langle f, \pi(y)g_1 \rangle \overline{\langle \pi(x) g_2, \pi(y) g_2 \rangle} \, d\mu_{L}(y) \\ & = \left\langle \mathcal{W}_{g_1}f, \mathcal{W}_{g_2}(\pi(x)g_2) \right\rangle_{L^{2}(G)} \\ & =   \overline{\langle C_{\pi}g_1, C_{\pi}g_2\rangle} \mathcal{W}_{g_2}f(x).
\end{align*}
Since $\langle C_{\pi}g_1, C_{\pi}g_2 \rangle \neq 0$, we can rearrange and integrate so that \[\|\mathcal{W}_{g_2}f\|_{L^{1}(G)} = \frac{\|\mathcal{W}_{g_1}f *_{G} \mathcal{W}_{g_2}g_2\|_{L^{1}(G)}}{|\langle C_{\pi}g_1, C_{\pi}g_2\rangle|} \leq \frac{\|\mathcal{W}_{g_1}f\|_{L^{1}(G)}\|\mathcal{W}_{g_2}g_2\|_{L^{1}(G)}}{|\langle C_{\pi}g_1, C_{\pi}g_2\rangle|}.\] \par 
Let us now tackle the case where $g_1, g_2 \in \mathcal{A} \setminus \{0\}$ satisfy $\langle C_{\pi}g_1, C_{\pi}g_2 \rangle = 0$. Again, we assume that $f \in \mathcal{H}_{g_1}^{1}$ and we want to show that $f \in \mathcal{H}_{g_2}^{1}$. We can by Lemma \ref{lemma_about_existence_of_non_orthogonal_element} pick an integrable vector $g \in \mathcal{A} \setminus \{0\}$ such that $\langle C_{\pi}g, C_{\pi}g_i \rangle \neq 0$ for $i = 1,2$. Performing similar calculations as previously, we obtain
\begin{align*}
    \left(\mathcal{W}_{g_1}f *_{G} \mathcal{W}_{g}g\right) *_{G} \mathcal{W}_{g_2}g_2 & = \overline{\langle C_{\pi}g_1, C_{\pi}g \rangle} \mathcal{W}_{g}f *_{G} \mathcal{W}_{g_2} g_2 \\ & = \overline{\langle C_{\pi}g_1, C_{\pi}g \rangle} \overline{\langle C_{\pi}g, C_{\pi}g_2 \rangle} \mathcal{W}_{g_2}f. 
\end{align*}
We have conceptually used $g$ as a stepping stone between $g_1$ and $g_2$. After a rearrangement, we can integrate and obtain
\begin{align*}
    \|\mathcal{W}_{g_2}f\|_{L^{1}(G)} & = \frac{\|\mathcal{W}_{g_1}f *_{G} \mathcal{W}_{g}g *_{G} \mathcal{W}_{g_2}g_2\|_{L^{1}(G)}}{|\langle C_{\pi}g_1, C_{\pi}g \rangle||\langle C_{\pi}g, C_{\pi}g_2 \rangle|} \\ & \leq  \frac{\|\mathcal{W}_{g_1}f\|_{L^{1}(G)}\|\mathcal{W}_{g}g\|_{L^{1}(G)} \|\mathcal{W}_{g_2}g_2\|_{L^{1}(G)}}{|\langle C_{\pi}g_1, C_{\pi}g \rangle||\langle C_{\pi}g, C_{\pi}g_2 \rangle|}.
\end{align*}
It clear from the arguments above that the norms on $\mathcal{H}_{g_1}^1$ and $\mathcal{H}_{g_2}^1$ are equivalent. 
\end{proof}

Due to the independence of $\mathcal{H}_{g}^{1}$ of the integrable vector $g \in \mathcal{A} \setminus \{0\}$, we will use the notation \[\mathcal{H}^{1} := \mathcal{H}_{g}^{1}.\] It follows from Theorem \ref{Theorem_about_independence_of_window} that $\mathcal{A} \subset \mathcal{H}^{1}$ since $g \in \mathcal{A}$ is in $\mathcal{H}_{g}^{1}$ by definition. For unimodular groups, the following result shows that we do not need to keep track of both $\mathcal{H}^{1}$ and $\mathcal{A}$.

\begin{proposition}
\label{equality_of_spaces}
We have the equality $\mathcal{A} = \mathcal{H}^{1}$ when $\pi:G \to \mathcal{U}(\mathcal{H}_{\pi})$ is an integrable representation of a unimodular group $G$.
\end{proposition}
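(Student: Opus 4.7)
The inclusion $\mathcal{A} \subset \mathcal{H}^{1}$ has already been recorded in the paragraph preceding the proposition, so only the reverse inclusion $\mathcal{H}^{1} \subset \mathcal{A}$ requires argument. The plan is to pick $f \in \mathcal{H}^{1}$ and a fixed integrable vector $g \in \mathcal{A} \setminus \{0\}$, express $\mathcal{W}_{f}f$ as a convolution-type integral built out of $\mathcal{W}_{g}f$, and then use unimodularity of $G$ together with a Young-type estimate to conclude $\mathcal{W}_{f}f \in L^{1}(G)$.

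For the key identity, I would apply the orthogonality relation \eqref{wavelet_transform_orthogonality} with $g_{1}=g_{2}=g$, $f_{1}=f$, and $f_{2}=\pi(x)f$, combined with the intertwining relation $\mathcal{W}_{g}\pi(x)f = L_{x}\mathcal{W}_{g}f$ from \eqref{wavelet_intertwines_left_reg}. This immediately gives
\[
\|C_{\pi}g\|_{\mathcal{H}_{\pi}}^{2}\,\mathcal{W}_{f}f(x) \;=\; \bigl\langle \mathcal{W}_{g}f,\, L_{x}\mathcal{W}_{g}f\bigr\rangle_{L^{2}(G)} \;=\; \int_{G} \mathcal{W}_{g}f(y)\,\overline{\mathcal{W}_{g}f(x^{-1}y)}\,d\mu(y),
\]
where $\|C_{\pi}g\|_{\mathcal{H}_{\pi}} > 0$ by injectivity of the Duflo-Moore operator. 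Taking absolute values, integrating in $x$, and swapping the order of integration yields
\[
\|\mathcal{W}_{f}f\|_{L^{1}(G)} \;\leq\; \|C_{\pi}g\|_{\mathcal{H}_{\pi}}^{-2} \int_{G}|\mathcal{W}_{g}f(y)| \left(\int_{G}|\mathcal{W}_{g}f(x^{-1}y)|\,d\mu(x)\right) d\mu(y).
\]
For each fixed $y$, the inner integral collapses to $\|\mathcal{W}_{g}f\|_{L^{1}(G)}$ by the invariance of Haar measure under $x \mapsto x^{-1}$ (valid precisely because $G$ is unimodular) followed by right translation. Combining the two factors delivers the desired bound
\[
\|\mathcal{W}_{f}f\|_{L^{1}(G)} \;\leq\; \|C_{\pi}g\|_{\mathcal{H}_{\pi}}^{-2}\,\|\mathcal{W}_{g}f\|_{L^{1}(G)}^{2} \;<\; \infty,
\]
since $f \in \mathcal{H}^{1}$, which is exactly the statement that $f \in \mathcal{A}$.

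The main obstacle is producing the right formula for $\mathcal{W}_{f}f$ in terms of $\mathcal{W}_{g}f$; the subsequent estimate is essentially bookkeeping. Unimodularity is essential at the step that evaluates the inner integral: in the non-unimodular case a modular factor $\Delta$ would appear and prevent closing the bound, which is consistent with the fact that $\mathcal{A}$ and $\mathcal{H}^{1}$ genuinely differ in general (cf.\ the discussion following Theorem~\ref{Theorem_about_independence_of_window}). One could alternatively route the computation through the reconstruction formula of Corollary~\ref{reconstruction_corollary} after first normalizing $g$ to be admissible, but the orthogonality relation supplies the needed identity in a single line without any admissibility assumption.
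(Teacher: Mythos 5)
Your proposal is correct and follows essentially the same route as the paper: the identity $\|C_{\pi}g\|^{2}\,\mathcal{W}_{f}f(x)=\langle \mathcal{W}_{g}f, L_{x}\mathcal{W}_{g}f\rangle_{L^{2}(G)}$ obtained from the orthogonality relation and the intertwining property, followed by Fubini and the inversion/translation invariance of Haar measure (where unimodularity enters) to bound $\|\mathcal{W}_{f}f\|_{L^{1}(G)}$ by $\|C_{\pi}g\|^{-2}\|\mathcal{W}_{g}f\|_{L^{1}(G)}^{2}$. The only cosmetic difference is the order in which the two invariance substitutions are applied to evaluate the inner integral.
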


\begin{proof}
We fix $f \in \mathcal{H}^{1}$ and want to show that $f \in \mathcal{A}$. The orthogonality relation in \eqref{wavelet_transform_orthogonality} for $x \in G$ shows that \[\langle C_{\pi}g, C_{\pi}g \rangle_{\mathcal{H}_{\pi}} \langle f, \pi(x)f \rangle_{\mathcal{H}_{\pi}} = \left\langle \mathcal{W}_{g}f, \mathcal{W}_{g}(\pi(x)f)\right\rangle_{L^{2}(G)}.\] Taking the absolute value and using the intertwining property \eqref{wavelet_intertwines_left_reg}, we have \[\|C_{\pi}g\|_{\mathcal{H}_{\pi}}^{2}|\langle f, \pi(x)f \rangle_{\mathcal{H}_{\pi}}| \leq \int_{G}|\mathcal{W}_{g}f(y)| |\mathcal{W}_{g}f(x^{-1}y)| \, d\mu(y).\] Notice that $\|C_{\pi}g\|^2 \neq 0$ since $C_{\pi}$ is injective. Hence we can divide by $\|C_{\pi}g\|^2$ and integrate with respect to $x$, use Fubini's theorem, and use the right-invariance of the measure $\mu$ to obtain 
\begin{align*}
    \|\mathcal{W}_{f}f\|_{L^{1}(G)} & \leq \frac{1}{\|C_{\pi}g\|^2}\int_{G}\int_{G}|\mathcal{W}_{g}f(y)| |\mathcal{W}_{g}f(x^{-1}y)| \, d\mu(y) \, d\mu(x)  \\ & = \frac{1}{\|C_{\pi}g\|^2}\int_{G}|\mathcal{W}_{g}f(y)|\left(\int_{G} |\mathcal{W}_{g}f(x^{-1}y)| \, d\mu(x) \right)\, d\mu(y) \\ & = \frac{1}{\|C_{\pi}g\|^2}\int_{G} |\mathcal{W}_{g}f(x^{-1})| \, d\mu(x) \int_{G}\left|\mathcal{W}_{g}f(y)\right|\, d\mu(y).
\end{align*}
Since $G$ is unimodular, we can use the substitution $x \mapsto x^{-1}$ to obtain \[\|\mathcal{W}_{f}f\|_{L^{1}(G)} \leq \frac{1}{\|C_{\pi}g\|^2}\int_{G} \left|\mathcal{W}_{g}f(x)\right| \, d\mu(x) \int_{G}\left|\mathcal{W}_{g}f(y)\right|\, d\mu(y) \leq \frac{1}{\|C_{\pi}g\|^2} \|\mathcal{W}_{g}f\|_{L^{1}(G)}^{2} < \infty.\]
Thus $f \in \mathcal{A}$ and the claim follows. 
\end{proof}

\begin{example}
\label{Feichtinger_algebra_example}
Let us consider the Schr\"{o}dinger representation $\rho_{r}$ of the reduced Heisenberg group $\mathbb{H}_{r}^{n}$. It follows from \eqref{generalized_wavelet_Schrodinger} that for any $g \in L^{2}(\mathbb{R}^n)$ we have $\mathcal{W}_{g}g \in L^{1}(\mathbb{H}_{r}^{n})$ precisely whenever $V_{g}g \in L^{1}(\mathbb{R}^{2n})$, where $V$ denotes the STFT and $\mathcal{W}$ denotes the wavelet transform corresponding to $\rho_r$. Motivated by this observation, we will work with the STFT instead of the wavelet transform. \par 
It is straightforward to check that $V_{g}g \in \mathcal{S}(\mathbb{R}^{2n}) \subset L^{1}(\mathbb{R}^{2n})$ whenever $g \in \mathcal{S}(\mathbb{R}^{n})$ is a smooth and rapidly decaying function, for details see \cite[Theorem 11.2.5]{grochenig2001foundations}. Hence $\rho_r$ is an integrable representation. We can by Theorem \ref{Theorem_about_independence_of_window} and Proposition \ref{equality_of_spaces} unambiguously define the \textit{Feichtinger algebra} 
\[M^{1}(\mathbb{R}^n) := \mathcal{H}^{1} = \mathcal{A} = \left\{f \in L^{2}(\mathbb{R}^{n}) \, : \, V_{f}f \in L^{1}(\mathbb{R}^{2n}) \right\}.\] \par 
We obtain from Proposition \ref{prop:completeness_H1} that $M^{1}(\mathbb{R}^n)$ is a Banach space. The Feichtinger algebra $M^{1}(\mathbb{R}^n)$ was first introduced in \cite{feichtinger1981new} and gained more widespread attention after its appearance in \cite{grochenig2001foundations}. We refer the reader to \cite{jakobsen2018no} for a detailed and modern exposition on the Feichtinger algebra. In particular, functions in $M^{1}(\mathbb{R})$ are automatically continuous by \cite[Corollary 4.2]{jakobsen2018no}. Since there are plenty of non-continuous\footnote{More precisely, there is a dense subset $D \subset L^{2}(\mathbb{R}^{n})$ of equivalence classes of functions that does not have a continuous representative.} elements in $L^{2}(\mathbb{R}^n)$, this gives an example where $\mathcal{H}^{1} \neq \mathcal{H}_{\pi}$.
\end{example}

\subsection{Reservoirs and the Extended Wavelet Transform}
\label{sec: Reservoirs_and_the_Extended_Wavelet_Transform}

Let $\pi:G \to \mathcal{U}(\mathcal{H}_{\pi})$ be an integrable representation and fix an integrable vector $g \in \mathcal{A} \setminus \{0\}$. In light of the previous section, we might prematurely define the coorbit space $\mathcal{C}o_{p}(G)$ for $1 \leq p \leq \infty$ to be all $f \in \mathcal{H}_{\pi}$ such that $\mathcal{W}_{g}f \in L^{p}(G)$. However, this naive definition suffers from the following problem: We will obtain $\mathcal{C}o_{p}(G) = \mathcal{H}_{\pi}$ for every $p \geq 2$. Only having interesting coorbit spaces in the range $1 \leq p \leq 2$ shatters any dream of good duality results; see Proposition \ref{duality_statement} for what we are missing out on. The problem is that the space $\mathcal{H}_{\pi}$ is to small to accommodate a full range $1 \leq p \leq \infty$ of interesting spaces. In this section, we will fix this problem by introducing a larger reference space $\mathcal{R}$ and ensuring that everything works the way it should. After this is done, we can confidently define the coorbit spaces properly in Section \ref{sec: Coorbit_Spaces}.

\begin{definition}
    Let $\pi: G \to \mathcal{U}(\mathcal{H}_{\pi})$ be an integrable representation. The space of bounded anti-linear functionals on $\mathcal{H}^{1}$ is denoted by $\mathcal{R}$ and called the \textit{reservoir space}.
\end{definition}

\begin{remark}
Implicitly, we have chosen an integrable vector $g \in \mathcal{A} \setminus \{0\}$ and are considering $\mathcal{H}_{g}^{1}$ and the space $\mathcal{R}_{g}$ of bounded anti-linear functionals on $\mathcal{H}_{g}^{1}$. However, due to Theorem \ref{Theorem_about_independence_of_window} we omit $g$ from the notation as it is of minor importance. The reservoir space $\mathcal{R}$ will seldom consist of functions in any reasonable sense. If we want to understand when two elements $\phi,\psi \in \mathcal{R}$ are equal, we need to test them on all the elements in $\mathcal{H}^{1}$. This is the motivation for calling $\mathcal{H}^{1}$ the space of \textit{test vectors}.
\end{remark}

\begin{lemma}
\label{lemma_inclusions}
There are natural continuous embeddings
\[\mathcal{H}^{1} \xhookrightarrow{} \mathcal{H}_{\pi} \xhookrightarrow{} \mathcal{R}.\]
\end{lemma}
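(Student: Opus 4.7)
The first embedding $\mathcal{H}^{1} \hookrightarrow \mathcal{H}_{\pi}$ is essentially already established; indeed, the inequality \eqref{norm_inequality} derived in the proof of Proposition \ref{prop:completeness_H1} shows that the set-theoretic inclusion $\mathcal{H}^{1} \subset \mathcal{H}_{\pi}$ is continuous, with operator norm bounded by $\|g\|_{\mathcal{H}_{\pi}}/\|C_{\pi}g\|_{\mathcal{H}_{\pi}}^{2}$ for any fixed integrable vector $g \in \mathcal{A} \setminus \{0\}$. So I would just point back to Proposition \ref{prop:completeness_H1} and move on.

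For the second embedding $\mathcal{H}_{\pi} \hookrightarrow \mathcal{R}$, the plan is to send each $h \in \mathcal{H}_{\pi}$ to the anti-linear functional $\iota(h):\mathcal{H}^{1} \to \mathbb{C}$ defined by $\iota(h)(f) := \langle h, f \rangle_{\mathcal{H}_{\pi}}$ for $f \in \mathcal{H}^{1}$. Anti-linearity in $f$ is immediate from the conjugate-linearity of the inner product in its second slot. Boundedness follows from the Cauchy--Schwarz inequality combined with the first embedding: for any $f \in \mathcal{H}^{1}$,
\[|\iota(h)(f)| \leq \|h\|_{\mathcal{H}_{\pi}}\|f\|_{\mathcal{H}_{\pi}} \leq \frac{\|g\|_{\mathcal{H}_{\pi}}}{\|C_{\pi}g\|_{\mathcal{H}_{\pi}}^{2}}\|h\|_{\mathcal{H}_{\pi}}\|f\|_{\mathcal{H}^{1}},\]
which simultaneously shows $\iota(h) \in \mathcal{R}$ and that $\iota:\mathcal{H}_{\pi} \to \mathcal{R}$ is continuous with $\|\iota(h)\|_{\mathcal{R}} \lesssim \|h\|_{\mathcal{H}_{\pi}}$.

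The only remaining point is injectivity of $\iota$, which is needed to call it an embedding. Suppose $\iota(h) = 0$, meaning $\langle h, f \rangle_{\mathcal{H}_{\pi}} = 0$ for every $f \in \mathcal{H}^{1}$. By Proposition \ref{inclusion_proposition}, $\mathcal{H}^{1}$ is dense in $\mathcal{H}_{\pi}$, so continuity of the inner product forces $\langle h, f \rangle_{\mathcal{H}_{\pi}} = 0$ for all $f \in \mathcal{H}_{\pi}$, and hence $h = 0$. This yields the second embedding.

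There is no real obstacle here; the only subtlety worth flagging is that one must take $\mathcal{R}$ to consist of \emph{anti-linear} rather than linear functionals so that the embedding via $\langle h, \cdot \rangle_{\mathcal{H}_{\pi}}$ is $\mathbb{C}$-linear in $h$ (matching the definition already set up in the previous subsection). With the linear-functional convention one would instead be forced to embed via $\overline{\langle h, \cdot \rangle}$ and lose linearity.
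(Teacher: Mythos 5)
Your argument is correct and follows essentially the same route as the paper: the first embedding is quoted from Proposition \ref{prop:completeness_H1}, and the second is the map $h \mapsto \langle h, \cdot \rangle_{\mathcal{H}_{\pi}}$ with continuity obtained from Cauchy--Schwarz composed with the embedding $\mathcal{H}^{1} \xhookrightarrow{} \mathcal{H}_{\pi}$. Your additional verification of injectivity (via density of $\mathcal{H}^{1}$ in $\mathcal{H}_{\pi}$) is a point the paper leaves implicit, and your remark about the anti-linear convention is exactly why the reservoir is defined that way.
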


\begin{proof}
If $\phi \in \mathcal{R}$ and $g \in \mathcal{H}^{1}$, we denote the dual pairing $\phi(g)$ by $\langle \phi, g \rangle$. We can embed $\mathcal{H}_{\pi}$ into $\mathcal{R}$ by letting $f \in \mathcal{H}_{\pi}$ act on $g \in \mathcal{H}^{1}$ by \[f(g) := \langle f, g \rangle_{\mathcal{H}_{\pi}}.\] To see that the inclusion $\mathcal{H}_{\pi} \xhookrightarrow{} \mathcal{R}$ is continuous we compute for $f \in \mathcal{H}_{\pi}$ that \[\|f\|_{\mathcal{R}} = \sup_{g \in \mathcal{H}^{1} \setminus \{0\}}\frac{|\langle f,g \rangle|}{\|g\|_{\mathcal{H}^{1}}} \leq \left(\sup_{g \in \mathcal{H}^{1} \setminus \{0\}} \frac{\|g\|_{\mathcal{H}_{\pi}}}{\|g\|_{\mathcal{H}^{1}}} \right)\|f\|_{\mathcal{H}_{\pi}}.\]
The claim follows the continuity of the inclusion $\mathcal{H}^{1} \xhookrightarrow{} \mathcal{H}_{\pi}$ in Proposition \ref{prop:completeness_H1}.
\end{proof}

Given an integrable representation $\pi:G \to \mathcal{U}(\mathcal{H}_{\pi})$ we can let $\pi$ act on the reservoir space $\mathcal{R}$ through duality. More precisely, for $x \in G$ and $\phi \in \mathcal{R}$ we define $\pi(x)\phi$ to be the element in $\mathcal{R}$ that acts on $g \in \mathcal{H}^{1}$ by \[(\pi(x)\phi)(g) = \langle \pi(x)\phi,g \rangle := \langle \phi, \pi(x^{-1})g \rangle.\] This gives an isometric action on $\mathcal{R}$ since \[\|\pi(x)\phi\|_{\mathcal{R}} = \sup_{g \in \mathcal{H}^{1} \setminus \{0\}} \frac{|\langle \pi(x)\phi, g \rangle|}{\|g\|_{\mathcal{H}^{1}}} = \sup_{g \in \mathcal{H}^{1} \setminus \{0\}} \frac{|\langle \phi, \pi(x^{-1})g \rangle|}{\|\pi(x^{-1})g\|_{\mathcal{H}^{1}}} = \|\phi\|_{\mathcal{R}},\]
where we used that $\pi$ acts by isometries on $\mathcal{H}^{1}$, see Proposition \ref{inclusion_proposition}. We can now extend the wavelet transform to a duality pairing between $\mathcal{H}^{1}$ and $\mathcal{R}$ as follows: 

\begin{definition}
Let $\pi:G \to \mathcal{U}(\mathcal{H}_{\pi})$ be an integrable representation. For $\phi \in \mathcal{R}$ and $g \in \mathcal{H}^1$ we define the \textit{(extended) wavelet transform} to be the function on $G$ given by \[\mathcal{W}_{g}\phi(x) := \langle \phi, \pi(x)g \rangle = \phi\left(\pi(x)g\right) = (\pi(x^{-1})\phi)(g), \qquad x \in G.\]
\end{definition}

Notice that the definition of the extended wavelet transform is well-defined since $\mathcal{H}^{1}$ is invariant under $\pi$. Some authors, e.g.\ \cite{dahlke2015harmonic}, change the notation for the extended wavelet transform to emphasize its domain, while other authors \cite{felix_thesis} do not change the notation. We have opted for the latter and will strive to make it clear what the wavelet transform acts on. 

\begin{proposition}
\label{proposition_basic_props_of_extended_wavelet_transform}
Let $\pi:G \to \mathcal{U}(\mathcal{H}_{\pi})$ be an integrable representation and fix $g \in \mathcal{H}^{1}$. Then $\mathcal{W}_{g}(\mathcal{R}) \subset C_{b}(G)$ and we have the intertwining property
\begin{equation}
\label{extended_intertwining_property}
    \mathcal{W}_{g}(\pi(x)\phi) = L_{x}\left[\mathcal{W}_{g}\phi\right],
\end{equation}
for $x \in G$ and $\phi \in \mathcal{R}$.
\end{proposition}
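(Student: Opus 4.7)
The plan is to verify three things in turn: (i) $\mathcal{W}_{g}\phi$ is bounded on $G$, (ii) $\mathcal{W}_{g}\phi$ is continuous on $G$, and (iii) the intertwining identity. For (i), I would simply estimate pointwise using the definition of the reservoir norm:
\[
|\mathcal{W}_{g}\phi(x)| = |\langle \phi,\pi(x)g\rangle| \leq \|\phi\|_{\mathcal{R}}\|\pi(x)g\|_{\mathcal{H}^{1}} = \|\phi\|_{\mathcal{R}}\|g\|_{\mathcal{H}^{1}},
\]
where the last equality invokes Proposition \ref{inclusion_proposition}, which states that $\pi$ acts by isometries on $\mathcal{H}^{1}$. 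So $\mathcal{W}_{g}\phi \in L^{\infty}(G)$.

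For (iii) the intertwining property is a one-line unwinding of definitions: for $x,y \in G$,
\[
\mathcal{W}_{g}(\pi(x)\phi)(y) = \langle \pi(x)\phi,\pi(y)g\rangle = \langle \phi,\pi(x^{-1})\pi(y)g\rangle = \langle \phi,\pi(x^{-1}y)g\rangle = \mathcal{W}_{g}\phi(x^{-1}y) = L_{x}[\mathcal{W}_{g}\phi](y),
\]
using only the definition of the action of $\pi$ on $\mathcal{R}$ and the fact that $\pi$ is a homomorphism.

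The real content is (ii), continuity of $\mathcal{W}_{g}\phi$. My strategy is to show that the orbit map $G \ni x \mapsto \pi(x)g \in \mathcal{H}^{1}$ is continuous; continuity of $\mathcal{W}_{g}\phi = \phi \circ (\pi(\cdot)g)$ then follows from the continuity of the bounded anti-linear functional $\phi$. To prove orbit-map continuity in the $\mathcal{H}^{1}$-norm, pick any integrable vector $g_{0} \in \mathcal{A}\setminus\{0\}$; by Theorem \ref{Theorem_about_independence_of_window} the norm $\|\cdot\|_{\mathcal{H}^{1}}$ is equivalent to $\|\mathcal{W}_{g_{0}}(\cdot)\|_{L^{1}(G)}$. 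Using the intertwining identity \eqref{wavelet_intertwines_left_reg} already established for the non-extended wavelet transform,
\[
\|\pi(x)g - \pi(x_{0})g\|_{\mathcal{H}^{1}} \asymp \|L_{x}\mathcal{W}_{g_{0}}g - L_{x_{0}}\mathcal{W}_{g_{0}}g\|_{L^{1}(G)}.
\]
The main obstacle is thus reduced to the continuity of left translation in $L^{1}(G)$, which is a standard fact on locally compact groups (see for instance \cite[Proposition 2.42]{folland2016course}, invoked in Example \ref{example_regular_representation}): since $\mathcal{W}_{g_{0}}g \in L^{1}(G)$, the right-hand side tends to zero as $x \to x_{0}$.

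Combining these three ingredients gives $\mathcal{W}_{g}\phi \in C_{b}(G)$ and the intertwining formula \eqref{extended_intertwining_property}. No new machinery is needed beyond continuity of left translation in $L^{1}(G)$, the boundedness of $\phi$ as an anti-linear functional on $\mathcal{H}^{1}$, and the isometric nature of $\pi$ on $\mathcal{H}^{1}$.
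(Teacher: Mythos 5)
Your proof is correct and follows essentially the same route as the paper: the same pointwise estimate for boundedness, the same one-line unwinding for the intertwining identity, and continuity via composing $\phi$ with the orbit map $x \mapsto \pi(x)g$ into $\mathcal{H}^{1}$. The only difference is that the paper asserts continuity of the orbit map as "clear" from Proposition \ref{inclusion_proposition}, whereas you correctly fill in that detail by reducing it, via $\mathcal{W}_{g_0}$ and \eqref{wavelet_intertwines_left_reg}, to the strong continuity of left translation on $L^{1}(G)$.
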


\begin{proof}
The map $\Gamma_{g}: x \mapsto \pi(x)g$ is clearly a continuous map $\Gamma_{g}: G \to \mathcal{H}^{1}$ by Proposition \ref{inclusion_proposition}. Hence $\mathcal{W}_{g}\phi = \phi \circ \Gamma_{g}$ is continuous since it can be described as the composition of two continuous maps. The boundedness of $\mathcal{W}_{g}\phi$ follows from the straightforward computation
\begin{equation*}
    |\mathcal{W}_{g}\phi(x)| = |\phi(\pi(x)g)| \leq \|\phi\|_{\mathcal{R}} \|\pi(x)g\|_{\mathcal{H}^{1}} = \|\phi\|_{\mathcal{R}} \|g\|_{\mathcal{H}^{1}}, \qquad x \in G.
\end{equation*}
Finally, the intertwining property is verified by the computation 
\begin{equation*}
    \left(\mathcal{W}_{g}(\pi(x)\phi)\right)(y) = \langle \pi(x)\phi, \pi(y)g \rangle = \langle \phi, \pi(x^{-1}y)g \rangle = L_{x}\mathcal{W}_{g}\phi(y), \qquad x,y \in G. \qedhere
\end{equation*}
\end{proof}

\begin{remark}
Although the (extended) wavelet transform $\mathcal{W}_{g}$ is well-defined for all $g \in \mathcal{H}^{1}$, we will for the most part work with the setting where $g \in \mathcal{A} \subset \mathcal{H}^1$ for convenience. Hence we will primarily state results for $\mathcal{W}_{g}$ when $g \in \mathcal{A}$, even though they are sometimes valid for $g \in \mathcal{H}^{1}$ as well.
\end{remark}

\begin{example}
We defined in Example \ref{Feichtinger_algebra_example} the Feichtinger algebra $M^{1}(\mathbb{R}^n)$ as the set of test vectors corresponding to the STFT. The reservoir space $\mathcal{R}$ in this setting will be denoted by $M^{\infty}(\mathbb{R}^{n})$. \par 
Let us do a concrete calculation in the case $n = 1$: The \textit{Dirac Comb distribution} $\delta_{\mathbb{Z}}$ is defined formally as acting on functions $f:\mathbb{R} \to \mathbb{C}$ by 
\begin{equation}
\label{eq:dirac_comb}
    \delta_{\mathbb{Z}}(f) := \sum_{n = -\infty}^{\infty}f(n).
\end{equation}
The expression \eqref{eq:dirac_comb} is obviously not always well defined. It follows from \cite[Corollary 5.9]{jakobsen2018no} that $\delta_{\mathbb{Z}} \in M^{\infty}(\mathbb{R})$. For $g(t) := e^{-t^2} \in \mathcal{S}(\mathbb{R}) \subset M^{1}(\mathbb{R})$ and $(x,\omega) \in \mathbb{R}^{2}$ we have the explicit computation
\begin{align*}
    V_{g}\delta_{\mathbb{Z}}(x,\omega) = \delta_{\mathbb{Z}}\left(M_{-\omega}T_{-x}g\right) = \delta_{\mathbb{Z}}\left(e^{-2 \pi i \omega t}e^{-(t - x)^{2}}\right) = \sum_{n = -\infty}^{\infty}e^{-2 \pi i \omega n}e^{-(n - x)^{2}}.
\end{align*}
An interesting observation is that \[V_{g}\delta_{\mathbb{Z}}(0,\omega) = \vartheta(z, \tau),\] where $\tau = i/\pi$, $z = -\omega$, and $\vartheta$ is the \textit{Jacobi theta function} omnipresent in complex analysis.
\end{example}

\begin{lemma}
\label{lemma_convergence_in_H1_and_convolution_relation}
Let $\pi:G \to \mathcal{U}(\mathcal{H}_{\pi})$ be an integrable representation and fix $g \in \mathcal{A} \setminus \{0\}$. Then linear combinations of elements of the form $\pi(x)g$ for $x \in G$ constitute a dense subspace of $\mathcal{H}^{1}$ with respect to the norm on $\mathcal{H}^{1}$. Moreover, if $g$ is admissible then we have the reproducing formula
\begin{equation*}
    \mathcal{W}_{g}\phi = \mathcal{W}_{g}\phi *_{G} \mathcal{W}_{g}g,
\end{equation*}
for any $\phi \in \mathcal{R}$.
\end{lemma}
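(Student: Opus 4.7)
Both parts of the lemma will follow from upgrading the reconstruction identity of Corollary \ref{reconstruction_corollary} from a weak integral in $\mathcal{H}_{\pi}$ to a Bochner integral in the finer Banach space $\mathcal{H}^{1}$. The key preliminary fact is that the orbit map $y \mapsto \pi(y)g$ is strongly continuous from $G$ into $(\mathcal{H}^{1},\|\cdot\|_{\mathcal{H}^{1}})$: Proposition \ref{inclusion_proposition} already shows that $\pi$ acts isometrically on $\mathcal{H}^{1}$, and continuity follows from the identity
\[\|\pi(y)g - \pi(y_{0})g\|_{\mathcal{H}^{1}} = \|L_{y}\mathcal{W}_{g}g - L_{y_{0}}\mathcal{W}_{g}g\|_{L^{1}(G)}\]
combined with strong continuity of left translation on $L^{1}(G)$, applied to $\mathcal{W}_{g}g \in L^{1}(G)$. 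Note that the weaker density of $\mathrm{span}\{\pi(x)g\}$ in $\mathcal{H}_{\pi}$ is already automatic from irreducibility (Proposition \ref{proposition_cyclic_irreducible}); the whole content is in promoting this to the finer $\mathcal{H}^{1}$-topology.

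\textbf{Convolution identity.} Applying Corollary \ref{reconstruction_corollary} to $\pi(x)g \in \mathcal{H}^{1}$, I claim that the identity
\[\pi(x)g = \int_{G}\langle \pi(x)g, \pi(y)g\rangle_{\mathcal{H}_{\pi}}\, \pi(y)g\, d\mu_{L}(y)\]
holds as a Bochner integral in $\mathcal{H}^{1}$. The integrand is $\mathcal{H}^{1}$-continuous by the preliminary remark, and its $\mathcal{H}^{1}$-norm is dominated by $|\mathcal{W}_{g}g(x^{-1}y)|\cdot\|g\|_{\mathcal{H}^{1}}$, which is integrable by left-invariance of $\mu_{L}$ since $g \in \mathcal{A}$; that the Bochner integral agrees with the weak integral from Corollary \ref{reconstruction_corollary} follows from pushing forward through the continuous injection $\mathcal{H}^{1} \hookrightarrow \mathcal{H}_{\pi}$ (Lemma \ref{lemma_inclusions}). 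Since $\phi \in \mathcal{R}$ is bounded and antilinear on $\mathcal{H}^{1}$, it commutes with the Bochner integral, and after recognising $\mathcal{W}_{g}g(y^{-1}x) = \langle \pi(y)g,\pi(x)g\rangle_{\mathcal{H}_{\pi}}$ the right-hand side rearranges to $(\mathcal{W}_{g}\phi *_{G} \mathcal{W}_{g}g)(x)$, proving the reproducing formula.

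\textbf{Density.} For general $f \in \mathcal{H}^{1}$, the same upgrade shows $f = \int_{G}\mathcal{W}_{g}f(y)\pi(y)g\, d\mu_{L}(y)$ as a Bochner integral in $\mathcal{H}^{1}$, using $\mathcal{W}_{g}f \in L^{1}(G) \cap C_{b}(G)$ and the $\mathcal{H}^{1}$-continuity of the orbit map. Fix $\varepsilon > 0$. Since $G$ is $\sigma$-compact, I may choose a compact $K \subset G$ with $\int_{G\setminus K}|\mathcal{W}_{g}f|\, d\mu_{L} < \varepsilon$, so the truncated Bochner integral over $K$ is within $\varepsilon\|g\|_{\mathcal{H}^{1}}$ of $f$. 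On the compact set $K$ the orbit map is uniformly $\mathcal{H}^{1}$-continuous, so choosing a fine enough measurable partition $K = \bigsqcup_{i} E_{i}$ with sample points $y_{i} \in E_{i}$ makes the Riemann-type sum
\[\sum_{i}\left(\int_{E_{i}}\mathcal{W}_{g}f(y)\, d\mu_{L}(y)\right)\pi(y_{i})g,\]
which is a finite linear combination of orbit vectors, approximate the truncated Bochner integral to within $\varepsilon$ in $\mathcal{H}^{1}$-norm. The main obstacle throughout is precisely the passage from $\mathcal{H}_{\pi}$-valued weak integrals (which are all Corollary \ref{reconstruction_corollary} supplies) to $\mathcal{H}^{1}$-valued Bochner integrals; once the $\mathcal{H}^{1}$-continuity of the orbit map is in hand, both conclusions reduce to standard manipulations.
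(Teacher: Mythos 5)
Your argument is essentially correct, but note that the survey itself deliberately gives no proof of this lemma: the remark following it points to two known routes, the minimality characterisation of $\mathcal{H}^{1}$ from \cite[Corollary 4.8]{feichtinger1988unified} and the Bochner-integration proof of \cite[Lemmas 2.4.7 and 2.4.8]{felix_thesis}. What you have written is a faithful reconstruction of the second route: the $\mathcal{H}^{1}$-continuity of the orbit map via strong continuity of left translation on $L^{1}(G)$, the domination $\|\mathcal{W}_{g}f(y)\,\pi(y)g\|_{\mathcal{H}^{1}} = |\mathcal{W}_{g}f(y)|\,\|g\|_{\mathcal{H}^{1}}$ making the reconstruction integral Bochner-convergent in $\mathcal{H}^{1}$, the identification with the weak integral of Corollary \ref{reconstruction_corollary} through the continuous injection of Lemma \ref{lemma_inclusions}, and then interchanging $\phi \in \mathcal{R}$ with the integral for the convolution identity and Riemann-sum truncation for density. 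One small point to tidy up: the density statement is asserted for arbitrary $g \in \mathcal{A}\setminus\{0\}$, whereas Corollary \ref{reconstruction_corollary} (and hence your Bochner upgrade of it) is stated for admissible $g$; this costs nothing, since replacing $g$ by $g/\|C_{\pi}g\|_{\mathcal{H}_{\pi}}$ changes neither $\operatorname{span}\{\pi(x)g : x \in G\}$ nor the space $\mathcal{H}^{1}$, so one may assume admissibility throughout the density argument. With that normalisation made explicit, both halves of your proof are complete and supply exactly the technical content the survey chose to outsource.
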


\begin{remark}
Originally the density statement in Lemma \ref{lemma_convergence_in_H1_and_convolution_relation} was proved by showing a minimality statement regarding the space $\mathcal{H}^{1}$. More precisely, it was shown in \cite[Corollary 4.8]{feichtinger1988unified} that $\mathcal{H}^{1}$ is the minimal $\pi$-invariant Banach space inside $\mathcal{H}_{\pi}$ where $\pi$ acts isometrically and such that $\mathcal{A} \cap \mathcal{H}^{1} \neq \{0\}.$ A different proof of the density statement in Lemma \ref{lemma_convergence_in_H1_and_convolution_relation} was given in \cite[Lemma 2.4.7]{felix_thesis} using Bochner integration. The reader can also find a proof of the convolution statement in \cite[Lemma 2.4.8]{felix_thesis}, again using Bochner integration. We have opted to not present a proof of Lemma \ref{lemma_convergence_in_H1_and_convolution_relation} as it is mostly a technical tool.
\end{remark}

\begin{corollary}
\label{injection_on_L_infinity_corollary}
Let $\pi:G \to \mathcal{U}(\mathcal{H}_{\pi})$ be an integrable representation and fix an integrable vector $g \in \mathcal{A} \setminus \{0\}$. Then $\mathcal{W}_{g}:\mathcal{R} \to L^{\infty}(G)$ is injective.
\end{corollary}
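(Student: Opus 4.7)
The plan is to chain together three ingredients already available in the excerpt: the continuity of the extended wavelet transform, the density of the orbit of $g$ in $\mathcal{H}^{1}$, and the continuity of reservoir elements as anti-linear functionals on $\mathcal{H}^{1}$.

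First I would unravel what it means for $\mathcal{W}_{g}\phi$ to vanish in $L^{\infty}(G)$. A priori this only tells us $\mathcal{W}_{g}\phi = 0$ almost everywhere, and on a general locally compact group this is a weaker statement than pointwise vanishing. To bridge the gap, I would invoke Proposition~\ref{proposition_basic_props_of_extended_wavelet_transform}, which asserts $\mathcal{W}_{g}(\mathcal{R}) \subset C_{b}(G)$. Since a continuous function that is zero almost everywhere with respect to the left Haar measure on $G$ (whose support is all of $G$) must vanish identically, we obtain $\langle \phi, \pi(x)g \rangle = 0$ for every $x \in G$.

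Next I would use the density half of Lemma~\ref{lemma_convergence_in_H1_and_convolution_relation}: linear combinations of $\{\pi(x)g\}_{x \in G}$ are dense in $\mathcal{H}^{1}$ with respect to the $\mathcal{H}^{1}$-norm. From the previous step, $\phi$ annihilates every such linear combination, so $\phi$ vanishes on a dense subspace of $\mathcal{H}^{1}$. Since $\phi \in \mathcal{R}$ is by definition a bounded (hence continuous) anti-linear functional on $\mathcal{H}^{1}$, the vanishing extends to all of $\mathcal{H}^{1}$, which means $\phi = 0$ as an element of $\mathcal{R}$.

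The argument is essentially a duality/density argument, and the only genuinely non-trivial input is Lemma~\ref{lemma_convergence_in_H1_and_convolution_relation}. Without that density statement it would be unclear why vanishing of $\phi$ on the orbit of $g$ forces $\phi = 0$, since $\mathcal{H}^{1}$ is strictly smaller than $\mathcal{H}_{\pi}$ and carries the stronger $\mathcal{H}^{1}$-topology; merely knowing that the orbit is total in $\mathcal{H}_{\pi}$ (by irreducibility and Proposition~\ref{proposition_cyclic_irreducible}) would not suffice. This is where coorbit theory genuinely leverages integrability over mere square-integrability, and it is the step I would flag as the conceptual heart of the proof, even though its appearance in the argument is just a one-line citation.
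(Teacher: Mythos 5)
Your proof is correct and follows essentially the same route as the paper: vanishing of $\mathcal{W}_{g}\phi$ forces $\phi$ to annihilate the orbit $\{\pi(x)g\}_{x \in G}$, and the density statement of Lemma~\ref{lemma_convergence_in_H1_and_convolution_relation} together with the continuity of $\phi$ on $\mathcal{H}^{1}$ then gives $\phi = 0$. Your preliminary step upgrading almost-everywhere vanishing to pointwise vanishing via Proposition~\ref{proposition_basic_props_of_extended_wavelet_transform} is a correct refinement that the paper leaves implicit.
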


\begin{proof}
Assume that $\mathcal{W}_{g}\phi(x) = \phi(\pi(x)g) = 0$ for every $x \in G$. Then Lemma \ref{lemma_convergence_in_H1_and_convolution_relation} shows that $\phi = 0$ since the span of the elements $\pi(x)g$ for $x \in G$ is a dense subspace of $\mathcal{H}^{1}$.
\end{proof}

Notice that for an integrable vector $g \in \mathcal{A} \setminus \{0\}$ we have by definition that $\mathcal{W}_g: \mathcal{H}^{1} \to L^{1}(G)$. Hence we can consider the adjoint map $\mathcal{W}_{g}^{*}: L^{\infty}(G) \to \mathcal{R}$ defined by the relation
\[\langle \mathcal{W}_{g}^{*}(F), f \rangle_{\mathcal{R}, \mathcal{H}^{1}} = \langle F, \mathcal{W}_{g}f \rangle_{L^{\infty}(G), L^{1}(G)} = \int_{G}F(x)\overline{\mathcal{W}_{g}f(x)} \, d\mu_{L}(x) = \int_{G}F(x)\langle \pi(x)g, f \rangle \, d\mu_{L}(x),\]
for $F \in L^{\infty}(G)$ and $f \in \mathcal{H}^{1}$. The adjoint map $\mathcal{W}_{g}^{*}: L^{\infty}(G) \to \mathcal{R}$ can hence be written weakly as \[\mathcal{W}_{g}^{*}(F) = \int_{G}F(x)\pi(x)g \, d\mu_{L}(x), \qquad F \in L^{\infty}(G).\] 

\begin{proposition}
\label{results_about_adjoint}
Let $\pi:G \to \mathcal{U}(\mathcal{H}_{\pi})$ be an integrable representation and fix an integrable vector $g \in \mathcal{A} \setminus \{0\}$. The adjoint map $\mathcal{W}_{g}^{*}: L^{\infty}(G) \to \mathcal{R}$ satisfies
\begin{equation*}
    \mathcal{W}_{g}\left(\mathcal{W}_{g}^{*}(F)\right) = F *_{G} \mathcal{W}_{g}g, \qquad \mathcal{W}_{g}^{*}(\mathcal{W}_{g}\phi) = \phi,
\end{equation*}
for $F \in L^{\infty}(G)$ and $\phi \in \mathcal{R}$.
\end{proposition}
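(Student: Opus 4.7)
The plan is to verify the two identities by unraveling the definition of the adjoint $\mathcal{W}_g^*$ as a dual pairing, combining this with the intertwining property of the extended wavelet transform, and then exploiting the reproducing formula from Lemma \ref{lemma_convergence_in_H1_and_convolution_relation} together with the injectivity from Corollary \ref{injection_on_L_infinity_corollary}. I should note that the second identity really uses that $g$ is admissible; since any integrable vector can be normalized to be admissible via the Duflo-Moore operator (see Lemma \ref{lemma_about_domain_of_duflo_more}), I would implicitly assume $\|C_\pi g\|_{\mathcal{H}_\pi} = 1$ when invoking the reproducing formula in the second step.

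For the first identity, I would compute $\mathcal{W}_g(\mathcal{W}_g^*(F))(x)$ directly by pairing $\mathcal{W}_g^*(F) \in \mathcal{R}$ against $\pi(x)g \in \mathcal{H}^1$. By the defining relation of the adjoint, this equals
\[
\langle F, \mathcal{W}_g(\pi(x)g)\rangle_{L^\infty(G),L^1(G)} = \int_G F(y)\,\overline{\mathcal{W}_g(\pi(x)g)(y)}\,d\mu_L(y).
\]
Applying the intertwining property \eqref{extended_intertwining_property} gives $\mathcal{W}_g(\pi(x)g)(y) = \mathcal{W}_g g(x^{-1}y)$, and the elementary symmetry $\overline{\mathcal{W}_g g(z)} = \mathcal{W}_g g(z^{-1})$ (which is just the identity $\overline{\langle g,\pi(z)g\rangle} = \langle g, \pi(z^{-1})g\rangle$) rewrites the integrand as $F(y)\mathcal{W}_g g(y^{-1}x)$. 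This is exactly the convolution $(F *_G \mathcal{W}_g g)(x)$, finishing the first claim. I would also briefly check the required integrability: $\mathcal{W}_g(\pi(x)g) = L_x\mathcal{W}_g g \in L^1(G)$ since $g$ is integrable and $L^1(G)$ is left-invariant, so the pairing makes sense.

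For the second identity, I would feed $F = \mathcal{W}_g\phi \in L^\infty(G)$ (which is bounded by Proposition \ref{proposition_basic_props_of_extended_wavelet_transform}) into the first identity to obtain
\[
\mathcal{W}_g\bigl(\mathcal{W}_g^*(\mathcal{W}_g\phi)\bigr) \;=\; \mathcal{W}_g\phi \,*_G\, \mathcal{W}_g g.
\]
The reproducing formula from Lemma \ref{lemma_convergence_in_H1_and_convolution_relation} (applied to admissible $g$) tells us the right-hand side equals $\mathcal{W}_g\phi$. Hence $\mathcal{W}_g\bigl(\mathcal{W}_g^*(\mathcal{W}_g\phi)\bigr) = \mathcal{W}_g\phi$, and injectivity of $\mathcal{W}_g:\mathcal{R}\to L^\infty(G)$ from Corollary \ref{injection_on_L_infinity_corollary} yields $\mathcal{W}_g^*(\mathcal{W}_g\phi) = \phi$.

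The main obstacle is conceptual rather than computational: keeping track of which space each object lives in, and in particular confirming that $\mathcal{W}_g^*(F)$, defined a priori only as an anti-linear functional, can be legitimately fed into the extended wavelet transform. Once one accepts that the chain $\mathcal{H}^1 \hookrightarrow \mathcal{H}_\pi \hookrightarrow \mathcal{R}$ and the natural dual pairings align properly, both identities follow from the two-line computations above. A secondary point worth flagging is the admissibility assumption on $g$ needed to invoke the reproducing formula; without it the second identity holds only up to the scalar $\|C_\pi g\|_{\mathcal{H}_\pi}^{-2}$, which is why the normalization $\|C_\pi g\|_{\mathcal{H}_\pi}=1$ is implicit here.
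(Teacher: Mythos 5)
Your proof is correct and follows essentially the same route as the paper: compute $\mathcal{W}_g(\mathcal{W}_g^*(F))(x)$ via the dual pairing and the intertwining property to get the convolution identity, then apply it to $F = \mathcal{W}_g\phi$, invoke the reproducing formula of Lemma \ref{lemma_convergence_in_H1_and_convolution_relation}, and conclude by the injectivity in Corollary \ref{injection_on_L_infinity_corollary}. Your remark that the scalar-free reproducing formula requires the normalization $\|C_{\pi}g\|_{\mathcal{H}_{\pi}}=1$ is a fair observation about a hypothesis the paper leaves implicit, and correctly handled since any integrable vector can be so normalized.
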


\begin{proof}
For $x \in G$ a straightforward computation shows that
\begin{equation}
\label{convolution_in_L_infinity}
    \mathcal{W}_{g}\left(\mathcal{W}_{g}^{*}(F)\right)(x) = \langle \mathcal{W}_{g}^{*}(F), \pi(x)g \rangle_{\mathcal{R}, \mathcal{H}^{1}} = \langle F, \mathcal{W}_{g}(\pi(x)g) \rangle_{L^{\infty}(G), L^{1}(G)} = (F *_{G} \mathcal{W}_{g}g)(x).
\end{equation}
Finally, we need to show that the map $\mathcal{W}_{g}^{*} \circ \mathcal{W}_{g}:\mathcal{R} \to \mathcal{R}$ is in fact the identity map. For $\phi \in \mathcal{R}$ we have from \eqref{convolution_in_L_infinity} and Lemma \ref{lemma_convergence_in_H1_and_convolution_relation} that \[\mathcal{W}_{g}(\mathcal{W}_{g}^{*}(\mathcal{W}_{g}\phi)) = \mathcal{W}_{g}\phi *_{G} \mathcal{W}_{g}g = \mathcal{W}_{g}\phi.\] The injectivity of $\mathcal{W}_{g}:\mathcal{R} \to L^{\infty}(G)$ ensured by Corollary \ref{injection_on_L_infinity_corollary} shows that $\mathcal{W}_{g}^{*}(\mathcal{W}_{g}\phi) = \phi$.
\end{proof}

The following result reveals a deep connection between the extended wavelet transform and convolutions on the group $G$. 

\begin{theorem}
\label{resovoir_vs_L_infinity}
Let $\pi:G \to \mathcal{U}(\mathcal{H}_{\pi})$ be an integrable representation and fix an integrable vector $g \in \mathcal{A} \setminus \{0\}$. A function $F \in L^{\infty}(G)$ satisfies the convolution relation $F = F *_{G} \mathcal{W}_{g}g$ precisely when it can be written uniquely as $F = \mathcal{W}_{g}\phi$ for some $\phi \in \mathcal{R}$.
\end{theorem}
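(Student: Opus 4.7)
The plan is to leverage Proposition \ref{results_about_adjoint} directly, since the two identities it provides, namely $\mathcal{W}_{g}\mathcal{W}_{g}^{*}F = F *_{G} \mathcal{W}_{g}g$ for $F \in L^{\infty}(G)$ and $\mathcal{W}_{g}^{*}\mathcal{W}_{g}\phi = \phi$ for $\phi \in \mathcal{R}$, already contain essentially all the analytic content needed. The theorem then becomes a clean repackaging of these identities as a characterization of the image $\mathcal{W}_{g}(\mathcal{R}) \subset L^{\infty}(G)$, supplemented by the injectivity result Corollary \ref{injection_on_L_infinity_corollary} to handle uniqueness.

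I would first dispose of the easy direction. Suppose $F = \mathcal{W}_{g}\phi$ for some $\phi \in \mathcal{R}$. Apply the identity $\mathcal{W}_{g}^{*}\mathcal{W}_{g}\phi = \phi$ to rewrite $\phi$ as $\mathcal{W}_{g}^{*}F$, and then apply $\mathcal{W}_{g}$ to both sides and invoke the other identity to obtain
\[
F = \mathcal{W}_{g}\phi = \mathcal{W}_{g}\bigl(\mathcal{W}_{g}^{*}F\bigr) = F *_{G} \mathcal{W}_{g}g.
\]
For the converse, assume $F \in L^{\infty}(G)$ satisfies $F = F *_{G} \mathcal{W}_{g}g$ and define $\phi := \mathcal{W}_{g}^{*}(F) \in \mathcal{R}$, which is well-defined because $\mathcal{W}_{g}^{*}$ maps $L^{\infty}(G)$ into $\mathcal{R}$. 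The first formula of Proposition \ref{results_about_adjoint} then gives
\[
\mathcal{W}_{g}\phi = \mathcal{W}_{g}\bigl(\mathcal{W}_{g}^{*}F\bigr) = F *_{G} \mathcal{W}_{g}g = F,
\]
so $F$ lies in $\mathcal{W}_{g}(\mathcal{R})$. Uniqueness of the representing reservoir element is then immediate from Corollary \ref{injection_on_L_infinity_corollary}, which ensures $\mathcal{W}_{g}:\mathcal{R} \to L^{\infty}(G)$ is injective.

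I do not expect a real obstacle in this proof. All the substantive work has already been front-loaded into earlier statements: the density of $\mathrm{span}\{\pi(x)g : x \in G\}$ in $\mathcal{H}^{1}$ from Lemma \ref{lemma_convergence_in_H1_and_convolution_relation} underlies the injectivity of the extended wavelet transform, and the boundedness of $\mathcal{W}_{g}^{*}:L^{\infty}(G) \to \mathcal{R}$ is established by duality with $\mathcal{W}_{g}:\mathcal{H}^{1} \to L^{1}(G)$. The only conceptual point worth highlighting is that this theorem provides an \emph{intrinsic} description of the image $\mathcal{W}_{g}(\mathcal{R})$ as a reproducing subspace of $L^{\infty}(G)$ cut out by a single convolution equation, generalizing the Hilbert-space identity \eqref{special_case_reproducing_formula} to the distributional level and paving the way for defining coorbit spaces in Section \ref{sec: Coorbit_Spaces} as subspaces of $\mathcal{R}$ carved out by $L^{p}$-decay of $\mathcal{W}_{g}\phi$.
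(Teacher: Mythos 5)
Your proof is correct and follows essentially the same route as the paper: both directions are obtained by combining the two identities of Proposition \ref{results_about_adjoint}, and uniqueness comes from the injectivity of $\mathcal{W}_{g}:\mathcal{R} \to L^{\infty}(G)$ in Corollary \ref{injection_on_L_infinity_corollary}. No gaps.
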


\begin{proof}
If $F \in L^{\infty}(G)$ is such that $F = F *_{G} \mathcal{W}_{g}g$, then Proposition \ref{results_about_adjoint} shows that $F = \mathcal{W}_{g}(\phi)$ where $\phi := \mathcal{W}_{g}^{*}(F)$. Moreover, the description $F = \mathcal{W}_{g}\phi$ is necessarily unique due to the injectivity of $\mathcal{W}_{g}:\mathcal{R} \to L^{\infty}(G)$. Conversely, assume that $F \in L^{\infty}(G)$ satisfies $F = \mathcal{W}_{g}\phi$ for some $\phi \in \mathcal{R}$. Then we have from Proposition \ref{results_about_adjoint} that \[\mathcal{W}_{g}^{*}(F) = \mathcal{W}_{g}^{*}\left(\mathcal{W}_{g}\phi\right) = \phi.\]
Thus $\mathcal{W}_{g}(\mathcal{W}_{g}^{*}(F)) = \mathcal{W}_{g}\phi = F$. The claim follows from a final application of Proposition \ref{results_about_adjoint}.
\end{proof}

\begin{remark}
We mentioned in Example \ref{Feichtinger_algebra_example} that the space $\mathcal{S}(\mathbb{R}^{n})$ is included in the Feichtinger algebra $M^{1}(\mathbb{R}^{n})$. Hence we have by Lemma \ref{lemma_inclusions} the inclusions \[\mathcal{S}(\mathbb{R}^{n}) \subset M^{1}(\mathbb{R}^{n}) \subset L^{2}(\mathbb{R}^{n}) \subset M^{\infty}(\mathbb{R}^{n}) \subset \mathcal{S}'(\mathbb{R}^{n}),\]
where the set of \textit{tempered distributions} $\mathcal{S}'(\mathbb{R}^{n})$ is the dual space of $\mathcal{S}(\mathbb{R}^{n})$. We can view the pair $\left(M^{1}(\mathbb{R}^{n}), M^{\infty}(\mathbb{R}^{n})\right)$ as a refinement of the pair $\left(\mathcal{S}(\mathbb{R}^{n}), \mathcal{S}'(\mathbb{R}^{n})\right)$. A time-frequency analysis enthusiast might even use the word \textquote{improvement} since the Feichtinger algebra $M^{1}(\mathbb{R}^{n})$ is, in contrast with $\mathcal{S}(\mathbb{R}^{n})$, a Banach space. 
\end{remark}

\subsection{Coorbit Spaces and the Correspondence Principle}
\label{sec: Coorbit_Spaces}

Now that all the pieces are in place we will define the coorbit spaces. These are the main objects of study for this survey, and we spend a decent amount of time deriving their basic properties.

\begin{definition}
Let $\pi:G \to \mathcal{U}(\mathcal{H}_{\pi})$ be an integrable representation and fix an integrable vector $g \in \mathcal{A} \setminus \{0\}$. The \textit{coorbit space} $\mathcal{C}o_{p}(G)$ consists of all elements in the reservoir space $\phi \in \mathcal{R}$ such that $\mathcal{W}_{g}\phi$ decays fast enough to be in $L^{p}(G)$. Precisely, we define for each $1 \leq p \leq \infty$ the space \[\mathcal{C}o_{p}(G) := \mathcal{C}o_{p}^{\pi}(G) := \left\{\phi \in \mathcal{R} \, : \, \mathcal{W}_{g}\phi \in L^{p}(G)\right\},\] with the norm \[\|\phi\|_{\mathcal{C}o_{p}(G)} := \|\mathcal{W}_{g}\phi\|_{L^{p}(G)}.\] 
\end{definition}

We will only use the full notation $\mathcal{C}o_{p}^{\pi}(G)$ in Section~\ref{sec: A Kernel Theorem for Coorbit Spaces} when we are dealing with multiple representations. The observant reader will have noticed that we did not mention the integrable vector $g \in \mathcal{A} \setminus \{0\}$ in the notation $\mathcal{C}o_{p}(G)$. This is because, as probably suspected, the coorbit spaces $\mathcal{C}o_{p}(G)$ do not depend on the choice of integrable vector, see \cite[Section 5.2]{feichtinger1988unified} for details. 

\begin{example}
Let $G$ be a compact group and let $\pi:G \to \mathcal{U}(\mathcal{H}_{\pi})$ be an irreducible representation. Then $\pi$ is automatically integrable since any $g \in \mathcal{H}_{\pi}$ satisfies 
\begin{equation*}
    \int_{G}|\mathcal{W}_{g}g(x)| \, d\mu_{L}(x) \leq \|\mathcal{W}_{g}g\|_{L^{\infty}(G)}\cdot\mu_{L}(G) < \infty.
\end{equation*}
Here we used that the Haar measure of a compact group is finite, see \cite[Proposition 1.4.5]{deitmar2014principles}. Moreover, it is clear that every $g \in \mathcal{H}_{\pi}$ satisfies $\mathcal{W}_{g}g \in L^{p}(G)$ for all $1 \leq p \leq \infty$. Thus all the coorbit spaces coincide, that is, $\mathcal{C}o_{p}(G) = \mathcal{H}_{\pi}$ for all $1 \leq p \leq \infty$. Moreover, we mentioned in Example \ref{compact_duflo_moore_example} that the space $\mathcal{H}_{\pi}$ is necessarily finite-dimensional whenever $G$ is compact. Hence coorbit spaces are rather dull when considering compact groups. \par Coorbit spaces associated with a commutative group $G$ are even more boring: In this case Corollary \ref{corollary_abelian_reps} ensures that $\mathcal{H}_{\pi}$ is one-dimensional. From this, it is easy to check that an integrable representation $\pi: G \to \mathcal{U}(\mathcal{H}_{\pi})$ can only exist whenever $G$ is compact. Henceforth we will only be interested in coorbit spaces corresponding to locally compact groups that are both non-compact and non-commutative.
\end{example}

\begin{remark}
Before we proceed, it is instructive to consider how the definition of the coorbit spaces can be generalized. 
\begin{itemize}
    \item One could allow $p$ to take values in $(0,1)$ as well. This would make the spaces $\mathcal{C}o_{p}(G)$ for $p \in (0,1)$ quasi-normed spaces instead of normed spaces. We will not consider this extension, and refer the reader to \cite{felix_thesis} for basic results in this direction.
    \item We can consider weighted coorbit spaces $\mathcal{C}o_{p,w}(G)$ where $w:G \to (0, \infty)$ is a weight function. To do this, one must first incorporate weights into the definition of analyzing vectors $\mathcal{A}_{w}$ and test vectors $\mathcal{H}_{w}^{1}$. We will briefly go through this extension in Section \ref{sec: Weighted_Versions}. The weighted extension offer mostly technical challenges rather than conceptual ones. As such, we feel content with supplying the proofs only in the unweighted setting. We will however provide the reader the proper references whenever we leave out details.
    \item One could go a step further and consider the coorbit space $\mathcal{C}o(Y)$, where $Y$ is a \textit{solid} and \textit{translation invariant} Banach space of functions on $G$. We omit the precise definitions here and refer the reader to the original papers \cite{feichtinger1988unified, feichtinger1989banach1, feichtinger1989banach2} as well as Voigtlaender's Ph.D. thesis \cite{felix_thesis} for more on the theory in this level of generality. Most concrete applications of coorbit theory use weighted $L^{p}$-spaces, or mixed-norm $L^{p,q}$ spaces as in the following example.
\end{itemize} 
\end{remark}

\begin{example}
\label{example_modulation_spaces_definition}
Let us again consider the STFT. In this case, we have the notation $\mathcal{H}^{1} = M^{1}(\mathbb{R}^{n})$ and $\mathcal{R} = M^{\infty}(\mathbb{R}^{n})$. The coorbit spaces in this setting are called the \textit{modulation spaces}. More explicitly, for a non-zero $g \in M^{1}(\mathbb{R}^{n})$ and $1 \leq p \leq \infty$ the space $M^{p}(\mathbb{R}^{n})$ consists of elements $f \in M^{\infty}(\mathbb{R}^n)$ such that \begin{equation*}
    \|f\|_{M^{p}(\mathbb{R}^{n})} := \left(\int_{\mathbb{R}^{2n}}|V_{g}f(x,\omega)|^{p} \, dx \, d\omega\right)^{\frac{1}{p}} < \infty.
\end{equation*}
It will be clear from Proposition \ref{concrete_descriptions} that $M^{2}(\mathbb{R}^n) = L^{2}(\mathbb{R}^{n})$. \par
We can generalize the modulation spaces slightly by using mixed-norm $L^{p,q}$ spaces. More precisely, we define the \textit{mixed-norm modulation spaces} $M^{p,q}(\mathbb{R}^{n})$ for $1 \leq p,q \leq \infty$ as the elements $f \in M^{\infty}(\mathbb{R}^n)$ such that \[\|f\|_{M^{p,q}(\mathbb{R}^{n})} := \left(\int_{\mathbb{R}^n}\left(\int_{\mathbb{R}^{n}}|V_{g}f(x,\omega)|^{p} \, dx \right)^{\frac{q}{p}} \, d\omega \right)^{\frac{1}{q}} < \infty.\]
Notice that $M^{p,p}(\mathbb{R}^{n}) = M^{p}(\mathbb{R}^n)$. This extension allows us to consider different levels of integrability in time and frequency.
We remark that the space $M^{\infty,1}(\mathbb{R}^{n})$ has appeared in the theory of pseudodifferential operators under the name \textit{Sj\"{o}strand's class}. We refer the reader to \cite{grochenig2006} for more on Sj\"{o}strand's class in the context of time-frequency analysis. More general information regarding the mixed modulation spaces $M^{p,q}(\mathbb{R}^{n})$ can be found in \cite[Chapter 11 and 12]{grochenig2001foundations}.
\end{example}

Most of the basic properties of coorbit spaces will be derived in Section \ref{sec: Basic_properties_of_the_coorbit_spaces}. Before this, we will establish a powerful result known as the \textit{correspondence principle}. In essence, the correspondence principle states that one can identify the abstract coorbit space $\mathcal{C}o_{p}(G)$ with the space \[\mathcal{M}_{p}(G) := \{F \in L^{p}(G) \,: \, F = F *_{G} \mathcal{W}_{g}g \}.\] Notice that $\mathcal{M}_{p}(G)$ is more concrete that $\mathcal{C}o_{p}(G)$, in the sense that it consists of functions on the group $G$ in question. The fact that the wavelet transform $\mathcal{W}_{g}$ for $g \in \mathcal{A} \setminus \{0\}$ provides the isomorphism between $\mathcal{C}o_{p}(G)$ and $\mathcal{M}_{p}(G)$ makes the result even more conceptually pleasing. 

\begin{theorem}[Correspondence Principle]
\label{corresponance_principle}
Let $\pi:G \to \mathcal{U}(\mathcal{H}_{\pi})$ be an integrable representation and fix an integrable vector $g \in \mathcal{A} \setminus \{0\}$. Then for every $1 \leq p \leq \infty$ the wavelet transform $\mathcal{W}_{g}$ is an isomorphism 
\begin{equation*}
    \mathcal{W}_{g}:\mathcal{C}o_{p}(G) \xrightarrow{\sim} \mathcal{M}_{p}(G).
\end{equation*}
\end{theorem}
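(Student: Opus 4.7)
My plan is to exhibit $\mathcal{W}_g \colon \mathcal{C}o_p(G) \to \mathcal{M}_p(G)$ as an isometric bijection whose two-sided inverse is the restriction of the adjoint $\mathcal{W}_g^\ast$. Since the definition of $\mathcal{C}o_p(G)$ simply reads off $\|\phi\|_{\mathcal{C}o_p(G)} = \|\mathcal{W}_g \phi\|_{L^p(G)}$, the isometry part is built into the definition once we know the map lands in $\mathcal{M}_p(G)$. Norm-preservation will then give continuity of both $\mathcal{W}_g$ and its inverse for free, so the real work is well-definedness, injectivity, and surjectivity.

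First I would verify that $\mathcal{W}_g$ genuinely maps $\mathcal{C}o_p(G)$ into $\mathcal{M}_p(G)$. For $\phi \in \mathcal{C}o_p(G) \subset \mathcal{R}$ we already have $\mathcal{W}_g \phi \in L^p(G)$ by definition, and $\mathcal{W}_g \phi$ is continuous and bounded by Proposition~\ref{proposition_basic_props_of_extended_wavelet_transform}. The reproducing identity $\mathcal{W}_g \phi = \mathcal{W}_g \phi *_G \mathcal{W}_g g$ on $\mathcal{R}$, which is recorded in Lemma~\ref{lemma_convergence_in_H1_and_convolution_relation} (normalizing $g$ to be admissible, which is harmless since an integrable vector can always be rescaled), then shows $\mathcal{W}_g \phi \in \mathcal{M}_p(G)$.

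Next, injectivity of $\mathcal{W}_g \colon \mathcal{C}o_p(G) \to \mathcal{M}_p(G)$ is immediate from Corollary~\ref{injection_on_L_infinity_corollary}, since $\mathcal{C}o_p(G) \subset \mathcal{R}$ and $\mathcal{W}_g$ is already injective on the whole reservoir. For surjectivity, take any $F \in \mathcal{M}_p(G)$. Because $\mathcal{M}_p(G) \subset L^p(G) \cap L^\infty_{\mathrm{loc}}$ and the defining convolution equation $F = F *_G \mathcal{W}_g g$ is precisely the condition characterizing the image of the extended wavelet transform in $L^\infty(G)$ by Theorem~\ref{resovoir_vs_L_infinity}, we obtain a unique $\phi \in \mathcal{R}$ with $\mathcal{W}_g \phi = F$; the hypothesis $F \in L^p(G)$ then forces $\phi \in \mathcal{C}o_p(G)$ by definition. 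Concretely, $\phi = \mathcal{W}_g^\ast(F)$ by Proposition~\ref{results_about_adjoint}.

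The main conceptual obstacle is reconciling the two different domains on which $F = F *_G \mathcal{W}_g g$ has been analyzed: Theorem~\ref{resovoir_vs_L_infinity} is phrased in $L^\infty(G)$, while the correspondence principle asks us to work inside $L^p(G)$ for $1 \le p \le \infty$. For $p = \infty$ this is a tautology, and for $1 \le p < \infty$ one must justify that the $L^\infty$-version of the argument still produces a $\phi \in \mathcal{R}$ even though $F$ is only assumed in $L^p$. The cleanest remedy is to notice that $\mathcal{W}_g g \in L^1(G)$ (by integrability of $g$) and $F \in L^p(G)$, so Young's inequality makes $F *_G \mathcal{W}_g g$ a perfectly respectable bounded continuous function; equivalently, $F$ defines an element of $\mathcal{R}$ via $h \mapsto \int_G F(x) \overline{\mathcal{W}_g h(x)}\, d\mu_L(x)$, using $\mathcal{W}_g h \in L^{p'}(G)$ for test vectors $h \in \mathcal{H}^1$ (a fact itself worth a short interpolation argument between $L^1$ and $L^\infty$). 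Once this duality pairing is in place, Theorem~\ref{resovoir_vs_L_infinity} and Proposition~\ref{results_about_adjoint} apply and close the argument.
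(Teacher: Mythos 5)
Your proof is correct and takes essentially the same route as the paper: both reduce surjectivity to Theorem~\ref{resovoir_vs_L_infinity} by first upgrading $F \in \mathcal{M}_{p}(G)$ to $F = F *_{G} \mathcal{W}_{g}g \in L^{\infty}(G)$ and then reading off $f \in \mathcal{C}o_{p}(G)$ from $F \in L^{p}(G)$. One small correction: Young's inequality with $\mathcal{W}_{g}g \in L^{1}(G)$ alone only yields $F *_{G} \mathcal{W}_{g}g \in L^{p}(G)$, not boundedness; to conclude $F *_{G} \mathcal{W}_{g}g \in L^{\infty}(G)$ you need $\mathcal{W}_{g}g \in L^{q}(G)$ for the conjugate exponent $q$, which follows from $\mathcal{W}_{g}g \in L^{1}(G) \cap L^{\infty}(G)$ --- the very inclusion you invoke a line later for $\mathcal{W}_{g}h$, and precisely the step the paper's proof rests on.
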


\begin{proof}
It follows immediately from Theorem \ref{resovoir_vs_L_infinity} that $\mathcal{W}_{g}(\mathcal{C}o_{p}(G)) \subset \mathcal{M}_{p}(G)$. Hence it only remains to show that any $F \in \mathcal{M}_{p}(G)$ is in fact of the form $F = \mathcal{W}_{g}f$ for some $f \in \mathcal{C}o_{p}(G)$. Notice that $\mathcal{W}_{g}g \in L^{q}(G)$ for all $1 \leq q \leq \infty$ since \[\mathcal{W}_{g}g \in L^{1}(G) \cap L^{\infty}(G) \subset L^{q}(G).\]
We choose $q$ such that $p^{-1} + q^{-1} = 1$, with the obvious caveats for $p = 1, \infty$. Then
\[F = F * \mathcal{W}_{g}g \in L^{\infty}(G).\]
Hence the machinery in Theorem \ref{resovoir_vs_L_infinity} implies that $F = \mathcal{W}_{g}f$ for some $f \in \mathcal{R}$. We have that $f \in \mathcal{C}o_{p}(G)$ by definition since $F \in L^{p}(G)$.
\end{proof}

\subsection{Basic Properties of Coorbit Spaces}
\label{sec: Basic_properties_of_the_coorbit_spaces}

In this section we derive the basic properties of coorbit spaces. The reader should pay special attention to how the correspondence principle we proved in Theorem \ref{corresponance_principle} is utilized in several of the proofs in this section.

\begin{theorem}
\label{completeness_of_coorbit_spaces}
Let $\pi:G \to \mathcal{U}(\mathcal{H}_{\pi})$ be an integrable representation. Then the coorbit spaces $\mathcal{C}o_{p}(G)$ are $\pi$-invariant Banach spaces on which $\pi$ acts by isometries.
\end{theorem}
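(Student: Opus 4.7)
The plan is to push everything through the correspondence principle (Theorem~\ref{corresponance_principle}), which provides an isometric isomorphism $\mathcal{W}_{g}:\mathcal{C}o_{p}(G) \to \mathcal{M}_{p}(G)$. Since $\mathcal{M}_{p}(G)$ sits inside the Banach space $L^{p}(G)$, the completeness of $\mathcal{C}o_{p}(G)$ will follow once I verify that $\mathcal{M}_{p}(G)$ is closed in $L^{p}(G)$. Meanwhile, the $\pi$-action is under excellent control through the intertwining identity~\eqref{extended_intertwining_property}, so the invariance and isometry claims are genuinely the easy part.

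I would handle those first. For $\phi \in \mathcal{C}o_{p}(G)$ and $x \in G$, \eqref{extended_intertwining_property} gives $\mathcal{W}_{g}(\pi(x)\phi) = L_{x}[\mathcal{W}_{g}\phi]$. Since $\mu_{L}$ is left-invariant, left translation is an isometry on $L^{p}(G)$, so $\|\pi(x)\phi\|_{\mathcal{C}o_{p}(G)} = \|L_{x}\mathcal{W}_{g}\phi\|_{L^{p}(G)} = \|\mathcal{W}_{g}\phi\|_{L^{p}(G)} = \|\phi\|_{\mathcal{C}o_{p}(G)}$. This simultaneously shows that $\pi(x)\phi \in \mathcal{C}o_{p}(G)$ and that $\pi$ acts by isometries. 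That $\|\cdot\|_{\mathcal{C}o_{p}(G)}$ is genuinely a norm follows from $\|\mathcal{W}_{g}\phi\|_{L^{p}(G)} = 0$ combined with the continuity of $\mathcal{W}_{g}\phi$ (Proposition~\ref{proposition_basic_props_of_extended_wavelet_transform}), which forces $\mathcal{W}_{g}\phi \equiv 0$, whence $\phi = 0$ by the injectivity established in Corollary~\ref{injection_on_L_infinity_corollary}.

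The main work goes into showing $\mathcal{M}_{p}(G)$ is closed in $L^{p}(G)$, which I would do by proving that the convolution operator $T : L^{p}(G) \to L^{\infty}(G)$, $T(F) := F *_{G} \mathcal{W}_{g}g$, is bounded. H\"{o}lder's inequality gives the pointwise bound
\begin{equation*}
|T(F)(x)| \leq \|F\|_{L^{p}(G)} \left(\int_{G}|\mathcal{W}_{g}g(y^{-1}x)|^{q}\,d\mu_{L}(y)\right)^{1/q}, \qquad \tfrac{1}{p}+\tfrac{1}{q}=1,
\end{equation*}
and the inner integral turns out to be $x$-independent: the substitution $y \mapsto y^{-1}$ produces a modular-function weight $\Delta^{-1}$, which is exactly cancelled by the symmetry $|\mathcal{W}_{g}g(z^{-1})| = |\mathcal{W}_{g}g(z)|$ used already in the proof of Theorem~\ref{reproducing_formula}. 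The upshot is $\|T(F)\|_{L^{\infty}(G)} \leq \|\mathcal{W}_{g}g\|_{L^{q}(G)}\|F\|_{L^{p}(G)}$, which is finite because $\mathcal{W}_{g}g \in L^{1}(G) \cap L^{\infty}(G) \subset L^{q}(G)$.

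With $T$ bounded, closedness falls out quickly: if $F_{n} \in \mathcal{M}_{p}(G)$ satisfies $F_{n} \to F$ in $L^{p}(G)$, then $T(F_{n}) \to T(F)$ uniformly, while a subsequence of the $F_{n}$ also tends to $F$ pointwise almost everywhere. Since $F_{n} = T(F_{n})$, passing to the limit along this subsequence gives $F = T(F)$ almost everywhere, so $F \in \mathcal{M}_{p}(G)$. The main obstacle I anticipate is precisely this handling of $T$ on non-unimodular groups: the convolution inequality recalled in the preliminaries controls left-convolution by an $L^{1}$ function, whereas here the $L^{1}$ factor sits on the right, and the invariance $|\mathcal{W}_{g}g(z^{-1})| = |\mathcal{W}_{g}g(z)|$ is exactly what saves us.
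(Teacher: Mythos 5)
Your proof is correct, and for the norm axioms and the $\pi$-invariance/isometry claims it coincides with the paper's argument. For completeness the two proofs differ in organization: the paper runs a direct Cauchy-sequence argument, taking $\mathcal{W}_{g}f_{n} \to F$ in $L^{p}(G)$ and asserting $F *_{G} \mathcal{W}_{g}g = \lim_{n}\left(\mathcal{W}_{g}f_{n} *_{G} \mathcal{W}_{g}g\right) = F$ before invoking the correspondence principle, whereas you isolate the real content --- that right convolution by $\mathcal{W}_{g}g$ is a bounded map $T:L^{p}(G) \to L^{\infty}(G)$ --- prove it, and deduce that $\mathcal{M}_{p}(G)$ is closed in $L^{p}(G)$, so that completeness transfers across the isometric isomorphism of Theorem~\ref{corresponance_principle}. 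Your version actually supplies a justification the paper leaves implicit: the interchange of limit and convolution in the paper's chain of equalities is precisely the continuity statement you prove. One small quibble: your explanation of why $\int_{G}|\mathcal{W}_{g}g(y^{-1}x)|^{q}\,d\mu_{L}(y)$ is independent of $x$ (\textquote{the substitution $y \mapsto y^{-1}$ produces a weight $\Delta^{-1}$ which is cancelled by the symmetry}) is not quite how the cancellation works; the clean route is to apply the symmetry first, $|\mathcal{W}_{g}g(y^{-1}x)| = |\mathcal{W}_{g}g(x^{-1}y)|$, and then the left-invariance of $\mu_{L}$ under $y \mapsto xy$, which yields $\|\mathcal{W}_{g}g\|_{L^{q}(G)}^{q}$ with no modular function appearing at all. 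This is a presentational point rather than a gap: the ingredients you name (the symmetry of $\mathcal{W}_{g}g$ and the invariance of the Haar measure) are the right ones, and the resulting bound $\|T(F)\|_{L^{\infty}(G)} \leq \|\mathcal{W}_{g}g\|_{L^{q}(G)}\|F\|_{L^{p}(G)}$ is valid on non-unimodular groups.
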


\begin{proof}
We fix an integrable vector $g \in \mathcal{A} \setminus \{0\}$. Let us first show that $\|\cdot\|_{\mathcal{C}o_{p}(G)}$ is in fact a norm. The only non-trivial point is the positive-definiteness. Assume that $\|\mathcal{W}_{g}f\|_{L^{p}(G)} = 0$ for some $f \in \mathcal{C}o_{p}(G)$. Then $\mathcal{W}_{g}f$ is zero almost everywhere as a function on $G$. Since $\mathcal{W}_{g}f$ is a continuous function on $G$ by Proposition \ref{proposition_basic_props_of_extended_wavelet_transform}, we have that $\mathcal{W}_{g}f$ is identically zero. Since $\mathcal{W}_{g}:\mathcal{R} \to L^{\infty}(G)$ is injective, we conclude that $f = 0$. \par
To show completeness, we assume that $\{f_n\}_{n \in \mathbb{N}}$ is a Cauchy sequence in $\mathcal{C}o_{p}(G)$. Then $\{\mathcal{W}_{g}f_{n}\}_{n \in \mathbb{N}}$ is a Cauchy sequence in $L^{p}(G)$. By completeness of $L^{p}(G)$, there exists $F \in L^{p}(G)$ such that $\mathcal{W}_{g}f_n \to F$ in $L^{p}(G)$. It follows that \[F * \mathcal{W}_{g}g = \left(\lim_{n \to \infty}\mathcal{W}_{g}f_{n}\right) *_{G} \mathcal{W}_{g}g = \lim_{n \to \infty}\left(\mathcal{W}_{g}f_{n} *_{G} \mathcal{W}_{g}g \right) =  \lim_{n \to \infty}\mathcal{W}_{g}f_{n} = F.\]
We can now use the correspondence principle in Theorem \ref{corresponance_principle} to conclude that $F = \mathcal{W}_{g}f$ for some $f \in \mathcal{C}o_{p}(G)$.
Hence the coorbit spaces $\mathcal{C}o_{p}(G)$ are complete since \[\|f_n - f\|_{\mathcal{C}o_{p}(G)} = \|\mathcal{W}_{g}f_n - \mathcal{W}_{g}f\|_{L^{p}(G)} \to 0.\]
Finally, if $f \in \mathcal{C}o_{p}(G)$ and $x \in G$ then we use \eqref{extended_intertwining_property} to obtain \[\|\pi(x)f\|_{\mathcal{C}o_{p}(G)} = \|\mathcal{W}_{g}(\pi(x)f)\|_{L^{p}(G)} = \|L_{x}\mathcal{W}_{g}f\|_{L^{p}(G)} = \|\mathcal{W}_{g}f\|_{L^{p}(G)} = \|f\|_{\mathcal{C}o_{p}(G)}. \qedhere\]
\end{proof}

The following proposition shows that the spaces $\mathcal{H}^{1}$, $\mathcal{H}_{\pi}$, and $\mathcal{R}$ all have descriptions in terms of coorbit spaces. 

\begin{proposition}
\label{concrete_descriptions}
Let $\pi:G \to \mathcal{U}(\mathcal{H}_{\pi})$ be an integrable representation. We have the descriptions \[\mathcal{C}o_{1}(G) = \mathcal{H}^{1}, \quad \mathcal{C}o_{2}(G) = \mathcal{H}_{\pi}, \quad \mathcal{C}o_{\infty}(G) = \mathcal{R}.\]
\end{proposition}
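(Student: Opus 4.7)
The plan is to handle the three equalities in the order $p=\infty$, $p=2$, $p=1$, using the correspondence principle (Theorem \ref{corresponance_principle}) together with the basic mapping properties of the extended wavelet transform established in Proposition \ref{proposition_basic_props_of_extended_wavelet_transform} and Proposition \ref{L2-equality-definitions}. Fix once and for all an integrable vector $g \in \mathcal{A} \setminus \{0\}$; after rescaling we may even assume $g$ is admissible, which will simplify the step that uses the reproducing formula.

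The case $\mathcal{C}o_{\infty}(G) = \mathcal{R}$ is essentially tautological: the inclusion $\mathcal{C}o_{\infty}(G) \subset \mathcal{R}$ holds by definition, and the reverse inclusion is exactly the content of Proposition \ref{proposition_basic_props_of_extended_wavelet_transform}, which says $\mathcal{W}_{g}(\mathcal{R}) \subset C_{b}(G) \subset L^{\infty}(G)$.

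For $\mathcal{C}o_{2}(G) = \mathcal{H}_{\pi}$, I would first check the easy inclusion $\mathcal{H}_{\pi} \subset \mathcal{C}o_{2}(G)$: an element $f \in \mathcal{H}_{\pi}$, viewed as a member of $\mathcal{R}$ via the embedding from Lemma \ref{lemma_inclusions}, has extended wavelet transform $\mathcal{W}_{g}f(x) = \langle f, \pi(x)g\rangle_{\mathcal{H}_\pi}$, which coincides with the ordinary wavelet transform and lies in $L^{2}(G)$ by Proposition \ref{L2-equality-definitions}. The other inclusion is where the main work lies. Given $\phi \in \mathcal{C}o_{2}(G)$, set $F := \mathcal{W}_{g}\phi \in L^{2}(G)$; by the correspondence principle $F = F *_{G} \mathcal{W}_{g}g$. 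I then define $f \in \mathcal{H}_{\pi}$ by the $L^{2}$-adjoint reconstruction $f := \mathcal{W}_{g}^{*}(F)$ (the weak integral $\int_{G} F(x)\pi(x)g \, d\mu_{L}(x)$ from Proposition \ref{proposition_adjoint_formula}), and use the reproducing formula (Theorem \ref{reproducing_formula}) to compute $\mathcal{W}_{g}f = F *_{G}\mathcal{W}_{g}g = F = \mathcal{W}_{g}\phi$. Viewing $f \in \mathcal{H}_{\pi}$ as an element of $\mathcal{R}$, both $f$ and $\phi$ have the same extended wavelet transform, so the injectivity of $\mathcal{W}_{g}:\mathcal{R} \to L^{\infty}(G)$ supplied by Corollary \ref{injection_on_L_infinity_corollary} forces $\phi = f \in \mathcal{H}_{\pi}$. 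The delicate point to verify carefully is that the two adjoint constructions (the one into $\mathcal{H}_{\pi}$ and the one into $\mathcal{R}$) genuinely give the same element under the embedding $\mathcal{H}_{\pi} \hookrightarrow \mathcal{R}$; this is what I expect to be the main bookkeeping obstacle, but it follows by unwinding the dual pairings.

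For $\mathcal{C}o_{1}(G) = \mathcal{H}^{1}$, the inclusion $\mathcal{H}^{1} \subset \mathcal{C}o_{1}(G)$ is immediate from the definitions together with the embedding $\mathcal{H}^{1} \hookrightarrow \mathcal{H}_{\pi} \hookrightarrow \mathcal{R}$ of Proposition \ref{prop:completeness_H1} and Lemma \ref{lemma_inclusions}. For the converse, take $\phi \in \mathcal{C}o_{1}(G)$; since $\mathcal{W}_{g}\phi$ is simultaneously in $L^{1}(G)$ and (by Proposition \ref{proposition_basic_props_of_extended_wavelet_transform}) in $L^{\infty}(G)$, interpolation gives $\mathcal{W}_{g}\phi \in L^{2}(G)$, whence $\phi \in \mathcal{C}o_{2}(G) = \mathcal{H}_{\pi}$ by the case already handled. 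But then $\phi$ is an element of $\mathcal{H}_{\pi}$ whose ordinary wavelet transform lies in $L^{1}(G)$, which is the very definition of $\mathcal{H}^{1}$. This cascading structure, where each identification feeds into the next, is the organizing idea of the proof.
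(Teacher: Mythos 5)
Your proposal is correct and follows essentially the same route as the paper: the $p=\infty$ case from Proposition \ref{proposition_basic_props_of_extended_wavelet_transform}, the $p=2$ case via the correspondence principle and the fact that $F \mapsto F *_{G} \mathcal{W}_{g}g$ projects onto $\mathcal{W}_{g}(\mathcal{H}_{\pi})$ followed by injectivity of $\mathcal{W}_{g}$ on $\mathcal{R}$, and the $p=1$ case by reducing to $p=2$ through $L^{1}(G)\cap L^{\infty}(G)\subset L^{2}(G)$. The only cosmetic difference is that you exhibit the preimage explicitly as $\mathcal{W}_{g}^{*}(F)$ where the paper merely cites the projection statement of Theorem \ref{reproducing_formula}; the substance is identical.
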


\begin{proof}
As usual, we fix an integrable vector $g \in \mathcal{A} \setminus \{0\}$. The statement $\mathcal{C}o_{\infty}(G) = \mathcal{R}$ is clear from the definition of $\mathcal{C}o_{\infty}(G)$ since every $\phi \in \mathcal{R}$ satisfies $\mathcal{W}_{g}\phi \in L^{\infty}(G)$ by Proposition \ref{proposition_basic_props_of_extended_wavelet_transform}. We have that $\mathcal{H}^{1} \subset \mathcal{C}o_{1}(G)$ and $\mathcal{H}_{\pi} \subset \mathcal{C}o_{2}(G)$ through the inclusions in Lemma \ref{lemma_inclusions}. Conversely, assume that $f \in \mathcal{C}o_{2}(G)$. Then $\mathcal{W}_{g}f \in L^{2}(G)$ and satisfies by the correspondence principle in Theorem \ref{corresponance_principle} the convolution relation \[\mathcal{W}_{g}f = \mathcal{W}_{g}f *_{G} \mathcal{W}_{g}g.\]
However, in Theorem \ref{reproducing_formula} we showed that $F \mapsto F *_{G} \mathcal{W}_{g}g$ is the projection from $L^{2}(G)$ to the space $\mathcal{W}_{g}(\mathcal{H}_{\pi})$. Hence we conclude that $\mathcal{W}_{g}f = \mathcal{W}_{g}h$ for some $h \in \mathcal{H}_{\pi}$. Since $\mathcal{W}_{g}:\mathcal{R} \to L^{\infty}(G)$ is injective we have that $f = h$ as elements in $\mathcal{R}$. Moreover,  the injectivity of the inclusion $\mathcal{H}_{\pi} \xhookrightarrow{} \mathcal{R}$ forces $f \in \mathcal{H}_{\pi}$, and thus the claim $\mathcal{C}o_{2}(G) = \mathcal{H}_{\pi}$ follows. Since $L^{1}(G) \cap L^{\infty}(G) \subset L^{2}(G)$, we can repeat the same argument for $f \in \mathcal{C}o_{1}(G)$ and find that $f \in \mathcal{H}_{\pi}$. As $\mathcal{H}^{1}$ is by definition the set of elements $f \in \mathcal{H}_{\pi}$ such that $\mathcal{W}_{g}f \in L^{1}(G)$, we have that $\mathcal{C}o_{1}(G) = \mathcal{H}^{1}$.
\end{proof}

\begin{remark}
The proof of Proposition \ref{concrete_descriptions} shows that $\mathcal{C}o_{p}(G) \subset \mathcal{H}_{\pi}$ for all $p \in [1,2]$ since then $L^{p}(G) \cap L^{\infty}(G) \subset L^{2}(G)$.
\end{remark}

The following result shows that the coorbit spaces $\mathcal{C}o_{p}(G)$ inherit their duality properties from the $L^{p}(G)$-spaces. For a proof of this result, we refer the reader to \cite[Theorem 4.9]{feichtinger1989banach1}.

\begin{proposition}
\label{duality_statement}
Let $\pi:G \to \mathcal{U}(\mathcal{H}_{\pi})$ be an integrable representation. The coorbit spaces $\mathcal{C}o_{p}(G)$ for $1 \leq p < \infty$ satisfy the duality \[\mathcal{C}o_{p}(G)' = \mathcal{C}o_{q}(G), \qquad \frac{1}{p} + \frac{1}{q} = 1.\] In particular, the coorbit spaces $\mathcal{C}o_{p}(G)$ are reflexive Banach spaces for $1 < p < \infty$.
\end{proposition}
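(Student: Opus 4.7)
The natural strategy is to use the correspondence principle (Theorem \ref{corresponance_principle}) to transfer the question from the abstract coorbit space $\mathcal{C}o_p(G)$ to the concrete function space $\mathcal{M}_p(G) \subset L^p(G)$, and then to exploit the fact that $\mathcal{M}_p(G)$ sits inside $L^p(G)$ as a complemented closed subspace. Throughout the proof I would fix an \emph{admissible} integrable vector $g \in \mathcal{A} \setminus \{0\}$ (such a vector exists by normalizing any element of $\mathcal{A}$), so that $\mathcal{W}_g : \mathcal{C}o_p(G) \to \mathcal{M}_p(G)$ is an isometric isomorphism for every $1 \le p \le \infty$.

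The first step is structural: I would show that the operator $P_p : L^p(G) \to L^p(G)$ defined by $P_p F := F *_G \mathcal{W}_g g$ is a bounded projection onto $\mathcal{M}_p(G)$. Boundedness follows from the convolution inequality recalled in Section \ref{sec: Locally_Compact_Groups}, using that $\mathcal{W}_g g \in L^1(G)$. The identity $P_p^2 = P_p$ is a consequence of the associativity of convolution together with the relation $\mathcal{W}_g g *_G \mathcal{W}_g g = \mathcal{W}_g g$, which is the special case of the reproducing formula (Theorem \ref{reproducing_formula}) applied to $F = \mathcal{W}_g g \in \mathcal{W}_g(\mathcal{H}_{\pi})$. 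In particular, $\mathcal{M}_p(G) = \mathrm{Range}(P_p) = \ker(\mathrm{Id}-P_p)$ is a closed subspace of $L^p(G)$ and $\mathcal{C}o_p(G)$ inherits the Banach space structure of a complemented subspace.

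Next, I would introduce the candidate pairing
\begin{equation*}
\langle f, h \rangle_{p,q} := \int_{G} \mathcal{W}_g f(x)\, \overline{\mathcal{W}_g h(x)}\, d\mu_L(x),
\qquad f \in \mathcal{C}o_p(G),\ h \in \mathcal{C}o_q(G).
\end{equation*}
Hölder's inequality shows that $|\langle f,h\rangle_{p,q}| \le \|f\|_{\mathcal{C}o_p(G)} \|h\|_{\mathcal{C}o_q(G)}$, so each $h \in \mathcal{C}o_q(G)$ induces a bounded anti-linear functional on $\mathcal{C}o_p(G)$, giving an injective continuous map $J : \mathcal{C}o_q(G) \to \mathcal{C}o_p(G)'$ (injectivity comes from the injectivity of $\mathcal{W}_g$ on $\mathcal{R}$, Corollary \ref{injection_on_L_infinity_corollary}). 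To prove surjectivity of $J$, I would take $\Lambda \in \mathcal{C}o_p(G)'$, transport it to a bounded functional on $\mathcal{M}_p(G)$ via $\mathcal{W}_g$, extend it by Hahn-Banach to a functional on all of $L^p(G)$, and represent the extension as integration against some $H_0 \in L^q(G)$. The natural candidate for the representing element is then $h \in \mathcal{C}o_q(G)$ with $\mathcal{W}_g h = P_q H_0 \in \mathcal{M}_q(G)$. The verification that $\Lambda(f) = \langle f, h \rangle_{p,q}$ reduces to the identity
\begin{equation*}
\int_G F(x)\, \overline{H_0(x)}\, d\mu_L(x) \;=\; \int_G F(x)\, \overline{(H_0 *_G \mathcal{W}_g g)(x)}\, d\mu_L(x) \qquad \text{for all } F \in \mathcal{M}_p(G),
\end{equation*}
which I would establish by rewriting $F = F *_G \mathcal{W}_g g$ and applying Fubini, using the involutive symmetry $\overline{\mathcal{W}_g g(x)} = \mathcal{W}_g g(x^{-1})$. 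This self-adjoint convolution idempotent property is exactly what makes $P_p$ and $P_q$ formally adjoint with respect to the $L^p$-$L^q$ pairing, and it is the main technical obstacle in the argument (especially in the non-unimodular case, where the modular function must be tracked carefully). Finally, reflexivity for $1 < p < \infty$ is immediate: the duality identification $J : \mathcal{C}o_q(G) \xrightarrow{\sim} \mathcal{C}o_p(G)'$ applied twice, combined with the usual reflexivity of $L^p$-spaces, yields $\mathcal{C}o_p(G)'' \cong \mathcal{C}o_p(G)$ canonically.
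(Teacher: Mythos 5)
The paper does not actually prove this proposition; it defers entirely to \cite[Theorem 4.9]{feichtinger1989banach1}. Your proposal supplies a genuine proof, and its architecture is sound and is essentially the standard one from that reference: transfer via the correspondence principle to $\mathcal{M}_p(G)$, realize $\mathcal{M}_p(G)$ as the range of the convolution projection $P_p F = F *_G \mathcal{W}_g g$ (so that it is a complemented closed subspace of $L^p(G)$), pull back the $L^p$--$L^q$ duality through Hahn--Banach, and use the self-adjoint convolution idempotency of $\mathcal{W}_g g$ to see that $P_p$ and $P_q$ are adjoint to each other, so that the extension $H_0$ and its projection $P_q H_0$ induce the same functional on $\mathcal{M}_p(G)$. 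Your computation verifying $\int_G F\overline{H_0} = \int_G F\overline{(H_0 *_G \mathcal{W}_g g)}$ for $F \in \mathcal{M}_p(G)$ is correct, and the injectivity and surjectivity of $J$ both go through.

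The one step that does not follow from what you cite is the boundedness of $P_p$ on $L^p(G)$. The convolution inequality recalled in Section \ref{sec: Locally_Compact_Groups} is $\|f *_G g\|_{L^p} \leq \|f\|_{L^1}\|g\|_{L^p}$, i.e.\ \emph{left} convolution by an $L^1$-function; your $P_p$ is \emph{right} convolution by $\mathcal{W}_g g$. With respect to left Haar measure one has instead $\|F *_G k\|_{L^p} \leq \|F\|_{L^p} \int_G |k(y)|\,\Delta(y)^{-1/q}\,d\mu_L(y)$ with $1/p + 1/q = 1$, so on a non-unimodular group the hypothesis $\mathcal{W}_g g \in L^1(G)$ alone does not give boundedness of $P_p$ (nor does it justify the Fubini interchange and the associativity of the triple convolution without the same modular weight). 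This is precisely why the original papers impose Wiener-amalgam-type conditions on $\mathcal{W}_g g$ rather than bare $L^1$-membership, and it is the same gap that is silently present in the survey's own proof of Theorem \ref{corresponance_principle}. You flag the issue but do not close it: to make the argument complete you should either restrict to unimodular $G$ (where the symmetric Young inequality does give $\|F *_G k\|_{L^p} \leq \|F\|_{L^p}\|k\|_{L^1}$ and your proof is complete as written), or add the hypothesis $\int_G |\mathcal{W}_g g(y)|\max(1,\Delta(y)^{-1})\,d\mu_L(y) < \infty$, which suffices for all $1 \leq p \leq \infty$ simultaneously. Two minor further points: for the reflexivity claim it is cleaner to note that $\mathcal{M}_p(G)$ is a complemented closed subspace of the reflexive space $L^p(G)$ for $1 < p < \infty$, which avoids having to check that your double-duality isomorphism is the canonical embedding; and the functional $f \mapsto \langle f, h\rangle_{p,q}$ is linear in $f$ (it is the map $h \mapsto J(h)$ that is anti-linear), so the phrase \textquote{anti-linear functional on $\mathcal{C}o_p(G)$} should be adjusted to match whichever duality convention you adopt.
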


\begin{example}
\label{affine_coorbit_spaces_example}
Let us again consider the affine group $\textrm{Aff}$ together with the wavelet representation $\pi:\textrm{Aff} \to \mathcal{U}(L^{2}(\mathbb{R}))$ given by 
\begin{equation*}
    \pi(b,a)f(x) := T_{b}D_{a}f(x) = \frac{1}{\sqrt{|a|}}f\left(\frac{x - b}{a}\right).
\end{equation*} 
We showed in Example \ref{wavelet_example} that $\pi$ is a square integrable representation. A straightforward computation shows that $\pi$ is in fact integrable by considering a non-zero function $g \in \mathcal{S}(\mathbb{R})$ such that $\mathcal{F}(g)$ is supported on $[r,s]$ for $r,s \in (0, \infty)$. Hence for any integrable vector $g \in \mathcal{A} \setminus \{0\}$ we obtain for each $1 \leq p < \infty$ the \textit{affine coorbit space} $\mathcal{C}o_{p}(\textrm{Aff})$ defined by \[\mathcal{C}o_{p}(\textrm{Aff}) := \left\{f \in \mathcal{R} \, : \, \|f\|_{\mathcal{C}o_{p}(\textrm{Aff})} := \left(\int_{\textrm{Aff}}|\mathcal{W}_{g}f(b,a)|^{p} \, \frac{db \, da}{a^2}\right)^{\frac{1}{p}} < \infty \right\}.\]
As usual, the case $p = \infty$ is defined with the supremum. We immediately get from Theorem \ref{completeness_of_coorbit_spaces} that $\mathcal{C}o_{p}(\textrm{Aff})$ is a Banach space for each $1 \leq p \leq \infty$ on which the wavelet representation $\pi$ acts by isometries.
\end{example}

\subsection{Extension to the Weighted Setting}
\label{sec: Weighted_Versions}

In this section, we will discuss how coorbit spaces can be generalized to include weights. This is usually done right from the beginning in the literature, see e.g.\ \cite{feichtinger1988unified, feichtinger1989banach1, feichtinger1989banach2, dahlke2015harmonic, felix_thesis}. However, we have opted to introduce this separately so that coorbit spaces could initially be introduced with minimal technicalities. As weights do not introduce anything conceptually new, this section mostly consists of technicalities that invoke feelings of déjà vu. \par

\begin{definition}
Let $G$ be a locally compact group. Given any continuous function $w:G \to (0, \infty)$ we can form the \textit{weighted $L^{p}$-space} $L_{w}^{p}(G)$ for $1 \leq p \leq \infty$ consisting of all equivalence classes of measurable function $f:G \to \mathbb{C}$ such that \[\|f\|_{L_{w}^{p}(G)} := \|f \cdot w\|_{L^{p}(G)} < \infty.\]
We say that a continuous function $w:G \to (0, \infty)$ is a \textit{weight function} if it is \textit{sub-multiplicative}, that is, $w$ satisfies $w(xy) \leq w(x)w(y)$ for all $x,y \in G$.
\end{definition}

\begin{remark}
The reader should be aware that the conditions that goes into the term \textit{weight function} (or simply \textit{weight}) differs quite a bit from author to author: In \cite{felix_thesis} a sub-multiplicative weight is not assumed to be continuous, only measurable. It turns out that a not necessarily continuous sub-multiplicative weight is automatically bounded on compact sets by \cite[Theorem 2.2.22]{felix_thesis}. A weight $w$ in \cite[Chapter 3]{dahlke2015harmonic} is assumed to be \textit{symmetric}, meaning that $w(x) = w(x^{-1})$ for all $x \in G$. The symmetry assumption automatically gives that $w \geq 1$. If $w$ is a not necessarily symmetric weight function on $G$ such that $w \geq 1$, then $L_{w}^{p}(G) \xhookrightarrow{} L^{p}(G)$ is a continuous embedding since \[\|f\|_{L^{p}(G)} = \left(\int_{G}|f(x)|^{p} \, d\mu_{L}(x)\right)^{\frac{1}{p}} \leq\left(\int_{G}|f(x)|^{p}w(x)^{p} \, d\mu_{L}(x)\right)^{\frac{1}{p}} = \|f\|_{L_{w}^{p}(G)},\]
for all $f \in L_{w}^{p}(G)$.
\end{remark}

\begin{example}
Consider the function $w$ on $G = (0, \infty)$ given by \[w(x) := e^{|\log(x)|} = \begin{cases} x \, & \textrm{ if } x \geq 1 \\ \frac{1}{x}, \, & \textrm{ if } x < 1\end{cases}.\] It is straightforward to verify that $w$ is a symmetric weight function. The condition for a measurable function $f:G \to \mathbb{C}$ to be in $L_{w}^{1}(G)$ takes the form \[\int_{0}^{1}\frac{|f(x)|}{x^2} \, dx + \int_{1}^{\infty}|f(x)| \, dx < \infty.\]
\end{example}

\begin{definition}
Let $\pi:G \to \mathcal{U}(\mathcal{H}_{\pi})$ be an irreducible unitary representation of the locally compact group $G$ and fix a weight function $w:G \to (0, \infty)$. The representation $\pi$ is called \textit{$w$-integrable} if there exists a non-zero element $g \in \mathcal{H}_{\pi}$ such that $\mathcal{W}_{g}g \in L_{w}^{1}(G)$. We use the notation \[\mathcal{A}_{w} := \left\{g \in \mathcal{H}_{\pi} \, : \, \mathcal{W}_{g}g \in L_{w}^{1}(G) \right\}.\]
Similarly as before, we fix $g \in \mathcal{A}_{w} \setminus \{0\}$ and define the space of \textit{$w$-test vectors} $\mathcal{H}_{w,g}^{1}$ as the elements $f \in \mathcal{H}_{\pi}$ such that $\mathcal{W}_{g}f \in L_{w}^{1}(G)$.
\end{definition}

The proof of the following result illustrates the usefulness of the sub-multiplicative condition. 

\begin{lemma}
\label{act_continuously_lemma}
Let $\pi:G \to \mathcal{U}(\mathcal{H}_{\pi})$ be a $w$-integrable representation and fix $g \in \mathcal{A}_{w} \setminus \{0\}$. Then $\pi$ acts continuously and invariantly on $\mathcal{H}_{w,g}^{1}$. 
\end{lemma}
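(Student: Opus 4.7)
The plan is to establish invariance and continuity separately. Invariance will be a quick consequence of the intertwining identity and the sub-multiplicativity of $w$, while continuity will reduce to strong continuity of left translation on the weighted space $L_{w}^{1}(G)$ at the identity.

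For \textbf{invariance}, I would start from $\mathcal{W}_{g}(\pi(x)f) = L_{x}\mathcal{W}_{g}f$ (equation~\eqref{wavelet_intertwines_left_reg}), perform the substitution $z = x^{-1}y$ using the left-invariance of $\mu_{L}$, and pull out the sub-multiplicativity $w(xz) \leq w(x)w(z)$ to obtain the explicit operator bound
\[\|\pi(x)f\|_{\mathcal{H}_{w,g}^{1}} \;\leq\; w(x)\,\|f\|_{\mathcal{H}_{w,g}^{1}}.\]
In particular $\pi(x)\mathcal{H}_{w,g}^{1} \subset \mathcal{H}_{w,g}^{1}$, although $\pi$ is no longer isometric on $\mathcal{H}_{w,g}^{1}$, in contrast to the unweighted situation of Proposition~\ref{inclusion_proposition}.

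For \textbf{continuity}, the identity $\pi(y)f - \pi(x)f = \pi(x)(\pi(x^{-1}y)f - f)$ combined with the bound above reduces the problem to continuity at $e$, i.e.\ $\|\pi(x)f - f\|_{\mathcal{H}_{w,g}^{1}} \to 0$ as $x \to e$. A further application of the intertwining identity rewrites this as $\|L_{x}h - h\|_{L_{w}^{1}(G)} \to 0$ for $h := \mathcal{W}_{g}f \in L_{w}^{1}(G)$. So the remaining task is the strong continuity of left translation at the identity on the weighted space $L_{w}^{1}(G)$.

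To handle this last piece I would run a standard $\varepsilon/3$ argument with three ingredients: (i) the uniform local bound $\|L_{x}\|_{L_{w}^{1} \to L_{w}^{1}} \leq w(x)$ from the invariance step, which is bounded on any compact neighborhood of $e$ since $w$ is continuous; (ii) the density of $C_{c}(G)$ in $L_{w}^{1}(G)$, letting me approximate $h$ by some $\phi \in C_{c}(G)$ in the $L_{w}^{1}$-norm; and (iii) for $\phi$ with $\textrm{supp}(\phi) \subset K$ compact, the observation that $L_{x}\phi - \phi$ is supported in the fixed compact set $UK$ for $x$ ranging over a compact neighborhood $U$ of $e$, on which $w$ is bounded above. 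On this compact set, uniform continuity of $\phi$ reduces $\|L_{x}\phi - \phi\|_{L_{w}^{1}(G)}$ to the classical unweighted translation-continuity statement. The main obstacle is item (iii): once one notes that the compact support of $\phi$ localizes the action of the weight, the weighted translation-continuity collapses to the familiar unweighted fact, and the $\varepsilon/3$ argument closes.
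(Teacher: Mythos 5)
Your proof is correct, and it actually proves more than the paper does. The invariance step is identical to the paper's entire proof: the paper fixes $f \in \mathcal{H}_{w,g}^{1}$, uses the intertwining identity and the substitution $y \mapsto xy$ under the left Haar measure, and concludes from sub-multiplicativity that $\|\pi(x)f\|_{\mathcal{H}_{w,g}^{1}} \leq w(x)\|f\|_{\mathcal{H}_{w,g}^{1}}$ — and then stops. In other words, the paper reads \enquote{acts continuously} as \enquote{each $\pi(x)$ is a bounded operator on $\mathcal{H}_{w,g}^{1}$}, which is exactly what the displayed bound delivers (contrast with the isometric action in the unweighted Proposition~\ref{inclusion_proposition}, as you correctly note). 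Your second half — reducing strong continuity of $x \mapsto \pi(x)f$ to continuity of left translation at $e$ on $L_{w}^{1}(G)$, and then running the $\varepsilon/3$ argument with the locally uniform bound $\|L_{x}\| \leq w(x)$, density of $C_{c}(G)$, and localization of the weight on the compact set $UK$ — is sound and is not in the paper at all. That extra work buys the stronger and arguably more natural reading of the statement (strong continuity of the representation on $\mathcal{H}_{w,g}^{1}$), which is what one ultimately wants when treating $\pi|_{\mathcal{H}_{w,g}^{1}}$ as a genuine Banach-space representation; the paper's shorter proof suffices for its immediate use (invariance plus the norm bound, feeding into the density argument and the later remark about $\mathcal{C}o_{p,w}(G)$).
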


\begin{proof}
We fix $f \in \mathcal{H}_{w,g}^{1}$ and compute for $x\in G$ that 
\begin{align*}
    \|\pi(x)f\|_{\mathcal{H}_{w,g}^{1}} & = \int_{G} |\mathcal{W}_{g}(\pi(x)f)(y)|w(y) \, d\mu_{L}(y) \\ & = \int_{G} |\mathcal{W}_{g}(f)(x^{-1}y)|w(y) \, d\mu_{L}(y) \\ & = \int_{G} |\mathcal{W}_{g}(f)(y)|w(xy) \, d\mu_{L}(y).
\end{align*}
By using the sub-multiplicative condition we end up with \[\|\pi(x)f\|_{\mathcal{H}_{w,g}^{1}} \leq w(x)\int_{G} |\mathcal{W}_{g}(f)(y)|w(y) \, d\mu_{L}(y) = w(x)\cdot\|f\|_{\mathcal{H}_{w,g}^{1}}. \qedhere\]
\end{proof}

We can now use Lemma \ref{act_continuously_lemma} to see that the space $\mathcal{H}_{w,g}^{1}$ is dense in $\mathcal{H}_{\pi}$ for all $g \in \mathcal{A}_{w} \setminus \{0\}$. It is straightforward to check that the space $L_{w}^{1}(G)$ is invariant under both the left-translation operator and the right-translation operator. This fact is sufficient for Lemma \ref{lemma_about_existence_of_non_orthogonal_element} to go through in the weighted setting. Finally, only minor changes are needed in Proposition \ref{prop:completeness_H1}, Theorem \ref{Theorem_about_independence_of_window}, and Proposition \ref{equality_of_spaces} to obtain the weighted statements. Hence $\mathcal{H}_{w,g}^{1}$ does not depend on the choice of $g \in \mathcal{A}_{w} \setminus \{0\}$ and we simply write \[\mathcal{H}_{w}^{1} := \mathcal{H}_{w,g}^{1}.\] 

\begin{example}
\label{weighted_Feichtinger_space}
A class of commonly used symmetric weight functions on $\mathbb{R}^{2n}$ is given by \[v_{s}(x,\omega) := (1 + |x|^2 + |\omega|^2)^{\frac{s}{2}}, \quad (x,\omega) \in \mathbb{R}^{2n}, \, s \geq 0.\] The family $v_s$ is sometimes referred to as the \textit{polynomial weights}. For the STFT we can use the polynomial weights to define the \textit{weighted Feichtinger algebra} $M_{s}^{1}(\mathbb{R}^{n}) := \mathcal{H}_{v_s}^{1}$. The inequality \[v_{s} \leq v_{t}, \qquad 0 \leq s \leq t\] implies the inclusion $M_{t}^{1}(\mathbb{R}_{n}) \subset M_{s}^{1}(\mathbb{R}^{n})$. In particular, we have $M_{s}^{1}(\mathbb{R}^{n}) \subset M^{1}(\mathbb{R}^{n})$ for all $s \geq 0.$ It is straightforward to check that $M_{s}^{1}(\mathbb{R}^{n})$ still contains the rapidly decaying and smooth functions $\mathcal{S}(\mathbb{R}^{n})$ for all $s \geq 0$. Is there anything more than $\mathcal{S}(\mathbb{R}^{n})$ contained in all of the weighted Feichtinger algebras $M_{s}^{1}(\mathbb{R}^{n})$ for $s \geq 0$? By \cite[Proposition 11.3.1]{grochenig2001foundations} the answer is negative and we can write \[\mathcal{S}(\mathbb{R}^{n}) = \bigcap_{s \geq 0} M_{s}^{1}(\mathbb{R}^{n}).\]
\end{example}

\begin{definition}
\label{definition_weighted_reservoir}
Let $\pi:G \to \mathcal{U}(\mathcal{H}_{\pi})$ be a $w$-integrable representation. We define the \textit{$w$-reservoir space} $\mathcal{R}_{1/w}$ as the space of bounded anti-linear functionals on $\mathcal{H}_{w}^{1}$. 
\end{definition}

The duality between $\mathcal{H}_{w}^{1}$ and $\mathcal{R}_{1/w}$ is again denoted by $\phi(g) = \langle \phi, g \rangle$ for $g \in \mathcal{H}_{w}^{1}$ and $\phi \in \mathcal{R}_{1/w}$. Lemma \ref{lemma_inclusions} goes through directly with the new notational changes and we have the inclusions 
\[\mathcal{H}_{w}^{1} \xhookrightarrow{} \mathcal{H}_{\pi} \xhookrightarrow{} \mathcal{R}_{1/w}.\]
The action of $\pi$ on $\mathcal{R}_{1/w}$ is defined in the same way as in Section \ref{sec: Reservoirs_and_the_Extended_Wavelet_Transform}. We can again define the \textit{(extended) wavelet transform} by the formula \[\mathcal{W}_{g}\phi(x) :=  \langle \phi, \pi(x)g \rangle, \qquad g \in \mathcal{H}_{w}^{1}, \, \phi \in \mathcal{R}_{1/w}.\]

\begin{remark}
The reader should be aware that $1/w$ is not in general a weight function, even when $w:G \to (0,\infty)$ is a symmetric weight function. However, the failure of $1/w$ to be sub-multiplicative can be remedied: If $w$ is symmetric, then we can write for $x,y \in G$ that
\[w(x) = w(xyy^{-1}) \leq w(xy)w(y^{-1}) = w(xy)w(y).\] Hence \[\frac{1}{w(xy)} \leq \frac{1}{w(x)}w(y).\]
This relation suffices in most settings.  
\end{remark}

The proof of Lemma \ref{lemma_convergence_in_H1_and_convolution_relation} in \cite[Lemma 2.4.7 and Lemma 2.4.8]{felix_thesis} is stated in the weighted case. Finally, Corollary \ref{injection_on_L_infinity_corollary}, Proposition \ref{results_about_adjoint}, and Theorem \ref{resovoir_vs_L_infinity} are almost verbatim the same as previously. The only thing worth remarking is that the space of bounded anti-linear functionals on $L_{w}^{1}(G)$ is $L_{1/w}^{\infty}(G)$. This motivates the notation $\mathcal{R}_{1/w}$.

\begin{example}
For the STFT we use the notation $M_{1/s}^{\infty}(\mathbb{R}^{n}) := \mathcal{R}_{1/v_s}$, where $v_{s}$ for $s \geq 0$ are the polynomial weights introduced in Example \ref{weighted_Feichtinger_space}. The discussion in Example \ref{weighted_Feichtinger_space} regarding $\mathcal{S}(\mathbb{R}^{n})$ has the dual version  \[\mathcal{S}'(\mathbb{R}^{n}) = \bigcup_{s \geq 0} M_{1/s}^{\infty}(\mathbb{R}^{n}).\] Hence the pair $\left(\mathcal{S}(\mathbb{R}^{n}),\mathcal{S}'(\mathbb{R}^{n}) \right)$ works as limiting cases for respectively the weighted Feichtinger algebras $M_{s}^{1}(\mathbb{R}^{n})$ and the weighted reservoir spaces $M_{1/s}^{\infty}(\mathbb{R}^{n})$ for $s \geq 0$. 
\end{example}

\begin{definition}
Let $\pi:G \to \mathcal{U}(\mathcal{H}_{\pi})$ be a $w$-integrable representation and fix a $w$-integrable vector $g \in \mathcal{A}_{w} \setminus \{0\}$. The \textit{(weighted) coorbit space} $\mathcal{C}o_{p,w}(G)$ for $1 \leq p \leq \infty$ is given by the straightforward extension 
\begin{equation*}
    \mathcal{C}o_{p,w}(G) := \{\phi \in \mathcal{R}_{1/w} \, : \, \mathcal{W}_{g}\phi \in L_{w}^{p}(G)\},
\end{equation*}
with the norm \[\|\phi\|_{\mathcal{C}o_{p,w}(G)} := \|\mathcal{W}_{g}\phi\|_{L_{w}^{p}(G)}.\] 
\end{definition}

As previously, the coorbit spaces $\mathcal{C}o_{p,w}(G)$ do not depend on the choice of $w$-integrable vector $g \in \mathcal{A}_{w} \setminus \{0\}$, see \cite[Section 5.2]{feichtinger1988unified} for details. It is clear that \[\mathcal{C}o_{\infty,1/w}(G) = \mathcal{R}_{1/w}.\]

\begin{example}
We define the \textit{weighted modulation spaces} $M_{s}^{p,q}(\mathbb{R}^{n})$ for $1 \leq p,q \leq \infty$ and $s \geq 0$ to be all elements $f \in M_{1/s}^{\infty}(\mathbb{R}^{n})$ such that \[\|f\|_{M_{s}^{p,q}(\mathbb{R}^{n})} := \left(\int_{\mathbb{R}^n}\left(\int_{\mathbb{R}^{n}}|V_{g}f(x,\omega)|^{p} (1 + |x|^2 + |\omega|^2)^{\frac{ps}{2}} \, dx \right)^{\frac{q}{p}} \, d\omega \right)^{\frac{1}{q}} < \infty,\]
where $g \in \mathcal{A}_{v_s} \setminus \{0\}$ is fixed. Since the reduced Heisenberg group $\mathbb{H}_{r}^{n}$ is unimodular, it follows from the weighted version of Proposition \ref{equality_of_spaces} that \[\mathcal{A}_{v_s} = \mathcal{H}_{v_s}^{1} = M_{s}^{1}(\mathbb{R}^{n}).\] It is common in practice to choose $g \in \mathcal{S}(\mathbb{R}^{n}) \setminus \{0\}$, which is valid since $\mathcal{S}(\mathbb{R}^{n}) \subset M_{s}^{1}(\mathbb{R}^{n})$ for all $s \geq 0$. Moreover, one can choose the reservoir to be $\mathcal{S}'(\mathbb{R}^{n})$ instead of $M_{1/s}^{\infty}(\mathbb{R}^{n})$ without changing the weighted modulation spaces. It is possible to use different weights to obtain other weighted modulation spaces. We refer the reader to \cite[Section 11.4]{grochenig2001foundations} to see how one can define weighted modulation spaces where the weights have exponential growth.
\end{example}

The completeness of $L_{w}^{p}(G)$ for a weight function $w:G \to (0, \infty)$ allows us to extend the first statement in Theorem \ref{completeness_of_coorbit_spaces} to the weighted setting. The second statement in Theorem \ref{completeness_of_coorbit_spaces} has to be altered to say that $\pi$ acts continuously on the weighted coorbit spaces $\mathcal{C}o_{p,w}(G)$; this uses the same argument we gave in the proof of Lemma \ref{act_continuously_lemma}. 
The statement in Proposition \ref{concrete_descriptions} is valid in the weighted setting with the nessesary changes. More precisely, for $g \in \mathcal{A}_{w} \setminus \{0\}$ we let $\mathcal{H}_{\pi,w}$ denote the elements $f \in \mathcal{H}_{\pi}$ such that $\mathcal{W}_{g}f \in L_{w}^{2}(G)$. Then we can adapt the proof of Proposition~\ref{concrete_descriptions} to see that \[\mathcal{C}o_{1,w}(G) = \mathcal{H}_{w}^{1}, \qquad \mathcal{C}o_{2,w}(G) = \mathcal{H}_{\pi,w}, \qquad \mathcal{C}o_{\infty,1/w}(G) = \mathcal{R}_{1/w}.\] Finally, the duality statement in Proposition \ref{duality_statement} is still valid in the weighted setting with the nessesary changes, see \cite[Theorem 4.9]{feichtinger1989banach1} for details.
Before moving on, we summarize the most important results regarding the weighted coorbit spaces in one theorem so that we have precise statements we can reference later in the survey.

\begin{theorem}
\label{summary_theorem}
Let $\pi:G \to \mathcal{U}(\mathcal{H}_{\pi})$ be a $w$-integrable representation where $w:G \to (0, \infty)$ is a weight function. Fix a $w$-integrable vector $g \in \mathcal{A}_{w} \setminus \{0\}$. Then the coorbit spaces $\mathcal{C}o_{p,w}(G)$ for $1 \leq p \leq \infty$ satisfy the following properties:
\begin{enumerate}[label=\alph*)]
    \item The coorbit spaces $\mathcal{C}o_{p,w}(G)$ are Banach spaces on which the representation $\pi$ acts invariantly and continuously.
    \item An element $F \in L_{w}^{p}(G)$ satisfies the convolution relation $F = F *_{G} \mathcal{W}_{g}g$ if and only if $F = \mathcal{W}_{g}f$ for some $f \in \mathcal{C}o_{p,w}(G)$.
    \item We have the identifications \[\mathcal{C}o_{1,w}(G) = \mathcal{H}_{w}^{1}, \qquad \mathcal{C}o_{2,w}(G) = \mathcal{H}_{\pi,w}, \qquad \mathcal{C}o_{\infty,1/w}(G) = \mathcal{R}_{1/w}.\]
    \item The coorbit spaces $\mathcal{C}o_{p,w}(G)$ for $1 \leq p < \infty $ satisfy the duality relation \[\mathcal{C}o_{p,w}(G)' = \mathcal{C}o_{q,1/w}(G), \qquad \frac{1}{p} + \frac{1}{q} = 1,\]
    where \[\mathcal{C}o_{p,1/w}(G) := \{\phi \in \mathcal{R}_{1/w} \, : \, \mathcal{W}_{g}\phi \in L_{1/w}^{p}(G)\}.\]
\end{enumerate}
\end{theorem}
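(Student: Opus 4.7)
The plan is to establish each of the four items by adapting, almost verbatim, the corresponding unweighted proofs from earlier in the chapter, with sub-multiplicativity of $w$ providing the single new ingredient. I would not re-prove the weighted versions of Proposition \ref{prop:completeness_H1}, Theorem \ref{Theorem_about_independence_of_window}, Corollary \ref{injection_on_L_infinity_corollary}, or Theorem \ref{resovoir_vs_L_infinity}, since the paragraphs preceding the theorem indicate that only bookkeeping changes are needed; instead I would cite these and assemble the four statements.

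For (a), I first check that $\|\cdot\|_{\mathcal{C}o_{p,w}(G)}$ is a genuine norm by invoking injectivity of $\mathcal{W}_g$ on $\mathcal{R}_{1/w}$ (the weighted analogue of Corollary \ref{injection_on_L_infinity_corollary}). Completeness then copies the argument of Theorem \ref{completeness_of_coorbit_spaces}: a Cauchy sequence $(f_n)$ in $\mathcal{C}o_{p,w}(G)$ gives a Cauchy sequence $(\mathcal{W}_g f_n)$ in the complete space $L^p_w(G)$, whose limit $F$ still satisfies $F = F *_G \mathcal{W}_g g$ by continuity of convolution (using $\mathcal{W}_g g \in L^1_w(G)$); item (b), once proved, yields $F = \mathcal{W}_g f$ with $f \in \mathcal{C}o_{p,w}(G)$. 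Continuity of the $\pi$-action is the same computation as in Lemma \ref{act_continuously_lemma}: combining the intertwining relation \eqref{extended_intertwining_property} with the change of variable $y \mapsto xy$ and sub-multiplicativity gives the bound $\|\pi(x) f\|_{\mathcal{C}o_{p,w}(G)} \leq w(x) \|f\|_{\mathcal{C}o_{p,w}(G)}$.

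For (b), the forward implication is the weighted version of the reproducing formula from Lemma \ref{lemma_convergence_in_H1_and_convolution_relation}. For the converse, given $F \in L^p_w(G)$ with $F = F *_G \mathcal{W}_g g$, I would first upgrade $F$ to an element of $L^\infty_{1/w}(G)$ by a pointwise Hölder estimate on $F = F *_G \mathcal{W}_g g$ in the dual pairing $L^p_w \times L^{p'}_{1/w}$, using sub-multiplicativity to redistribute the weight between the two factors. Once $F \in L^\infty_{1/w}(G)$ satisfies the convolution relation, the weighted analogue of Theorem \ref{resovoir_vs_L_infinity} supplies a unique $f \in \mathcal{R}_{1/w}$ with $F = \mathcal{W}_g f$, and $f \in \mathcal{C}o_{p,w}(G)$ then follows from $F \in L^p_w(G)$ by definition. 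Item (c) is then a direct translation of Proposition \ref{concrete_descriptions}: the identifications for $p = 1, \infty$ are immediate from the definitions, while $\mathcal{C}o_{2,w}(G) = \mathcal{H}_{\pi, w}$ repeats the projection argument, using that convolution with $\mathcal{W}_g g$ is the projection from $L^2(G)$ onto $\mathcal{W}_g(\mathcal{H}_\pi)$ together with injectivity of $\mathcal{W}_g$ on $\mathcal{R}_{1/w}$.

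Item (d) I would reduce to \cite[Theorem 4.9]{feichtinger1989banach1}, noting only that the pairing between $\mathcal{C}o_{p,w}(G)$ and $\mathcal{C}o_{q,1/w}(G)$ is the one inherited from the duality $L^p_w(G)' = L^q_{1/w}(G)$ via the correspondence principle in (b): set $\langle f, \phi \rangle := \int_G \mathcal{W}_g f(x) \overline{\mathcal{W}_g \phi(x)} \, d\mu_L(x)$ and verify non-degeneracy using injectivity of the wavelet transform. The only real obstacle I anticipate is the $L^\infty_{1/w}$-upgrade in (b), where one must apply sub-multiplicativity in the correct direction to balance the weight placed on $F$ and on the convolution kernel $\mathcal{W}_g g$; every individual estimate is routine, but assembling them with the right weight on each factor is where care is needed.
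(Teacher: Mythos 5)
Your proposal is correct and follows essentially the same route as the paper, which likewise assembles the theorem by citing the weighted analogues of Proposition \ref{prop:completeness_H1}, Theorem \ref{resovoir_vs_L_infinity}, Theorem \ref{completeness_of_coorbit_spaces}, Lemma \ref{act_continuously_lemma}, and Proposition \ref{concrete_descriptions}, and by deferring the duality statement to \cite[Theorem 4.9]{feichtinger1989banach1}. The one place where you go into more detail than the paper --- the sub-multiplicativity estimate $1/w(y) \leq w(y^{-1}x)/w(x)$ needed to upgrade $F$ to $L^{\infty}_{1/w}(G)$ in part (b) --- is exactly the right new ingredient, and identifying it as the only non-routine step is accurate.
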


\begin{example}
Consider the function $w_{s}: \textrm{Aff} \to (0, \infty)$ for $s \geq 0$ on the affine group $\textrm{Aff}$ given by $w_{s}(b,a) := |a|^{-s}$. The computation \[w_{s}(b,a)w_{s}(d,c) = |a|^{-s}|c|^{-s} = |ac|^{-s} = w_{s}((b,a)\cdot (d,c)), \qquad (b,a) \, (d,c) \in \textrm{Aff},\] shows that $w_{s}$ is multiplicative, and hence clearly a weight function. The argument in Example \ref{affine_coorbit_spaces_example} can be extended to show that the wavelet representation $\pi:\textrm{Aff} \to \mathcal{U}(L^{2}(\mathbb{R}))$ is $w_{s}$-integrable for any $s \geq 0$. Thus we can consider the \textit{weighted affine coorbit spaces} $\mathcal{C}o_{p,w_{s}}(\textrm{Aff})$. It turns out that \[\mathcal{C}o_{p,w_{s}}(\textrm{Aff}) = \dot{\mathcal{B}}_{p}^{s - \frac{1}{2} + \frac{1}{p}}(\mathbb{R}),\]
where $\dot{\mathcal{B}}_{p}^{s}(\mathbb{R})$ denotes the \textit{homogeneous Besov space}
in classical harmonic analysis with smoothness parameter $s \in \mathbb{R}$ and integrability parameter $1 \leq p \leq \infty$. We refer the reader to \cite{feichtinger1988unified} for details of this fascinating connection. 
\end{example}

\subsection{Atomic Decompositions}
\label{sec: Atomic Decompositions}

We have so far introduced the coorbit spaces and derived their basic properties. The message that should be drawn from Theorem \ref{summary_theorem} is that coorbit spaces form a well-behaved class of Banach spaces. Nevertheless, the reader might find herself wondering what the fuzz is all about. Constructing function spaces is commonplace in modern mathematics, so it is maybe unclear why coorbit spaces offer something special. The goal of this section is to convince the reader that the coorbit spaces are deeply connected with the geometry of the underlying locally compact group. Moreover, this connection is inherently practical as it furnishes us with a natural way to discretize elements in coorbit spaces as we mentioned in \eqref{atomic_decomposition_introduction}. This makes coorbit spaces novel because they form a bridge between geometry, representation theory, and approximation theory. \par
Let us start by precisely stating the continuous reconstruction formula for coorbit spaces. Fix a weight function $w:G \to (0, \infty)$ and a $w$-integrable representation $\pi: G \to \mathcal{U}(\mathcal{H}_{\pi})$. Then for $f \in \mathcal{C}o_{p,w}(G)$ we can use the weighted version of Proposition \ref{results_about_adjoint} to write 
\begin{equation}
\label{coorbit_reconstruction_formula}
    f = \mathcal{W}_{g}^{*}\left(\mathcal{W}_{g}f\right) = \int_{G}\mathcal{W}_{g}f(x) \pi(x)g \, d\mu_{L}(x),
\end{equation}
for $g \in \mathcal{A}_{w} \setminus \{0\}$. We refer to \eqref{coorbit_reconstruction_formula} as the \textit{continuous reconstruction formula} for $\mathcal{C}o_{p,w}(G)$. \par
What does a discretization of \eqref{coorbit_reconstruction_formula} look like? Replacing the integral with summation, we hope to express $f \in \mathcal{C}o_{p,w}(G)$ as the discrete superposition 
\begin{equation}
\label{discrete_series_expansion}
    f = \sum_{i \in I}c_{i}(f)\pi(x_i)g,
\end{equation}
where $(c_{i}(f))_{i \in I}$ are coefficients that depend on $f$ and $\{x_{i}\}_{i \in I} \subset G$ is a chosen countable collection of points. We note that \eqref{discrete_series_expansion} should be interpreted as convergence in the norm on $\mathcal{C}o_{p,w}(G)$ for $1 \leq p < \infty$. When $p = \infty$ we interpret \eqref{discrete_series_expansion} as convergence in the $\mathrm{weak}^*$-topology. In the literature, expansions on the form \eqref{discrete_series_expansion} are sometimes called \textit{atomic decompositions} as the element $g$ is considered an \textit{atom} from which all other relevant functions are constructed. Three natural questions emerge:
\begin{itemize}
    \item How can we chose the collection $\{x_{i}\}_{i \in I} \subset G$ such that \eqref{discrete_series_expansion} converges appropriately?
    \item How does the size of $f \in \mathcal{C}o_{p,w}(G)$ affect the size of $(c_{i}(f))_{i \in I}$ in a suitable norm?
    \item Is it possible to choose the coefficients $(c_{i}(f))_{i \in I}$ to depend linearly on $f$?
\end{itemize}
Before we answer the questions above in Theorem \ref{big_discretization_theorem} we will borrow some terminology from large scale geometry. This will provide a conceptual language for discussing discretizations.
\begin{definition}
Let $X$ be a non-empty set. We will refer to a collection of non-empty subsets $\mathcal{Q} = (Q_{i})_{i \in I}$ as an \textit{admissible covering} for $X$ if $X = \cup_{i \in I}Q_{i}$ and 
\begin{equation}
\label{neighbours}
    \sup_{i \in I}\left|\left\{j \in I \, \Big| \, Q_{i} \cap Q_{j} \ne \emptyset \right\}\right| < \infty.
\end{equation} 
\end{definition}
Intuitively, the condition \eqref{neighbours} states that each $Q_{i} \in \mathcal{Q}$ can not have to many neighbors. Given an admissible covering $\mathcal{Q} = (Q_{i})_{i \in I}$ for a non-empty set $X$, we call a sequence $Q_{i_1}, \dots, Q_{i_k} \in \mathcal{Q}$ with $x \in Q_{i_1}$ and $y \in Q_{i_k}$ a $\mathcal{Q}$-\textit{chain} from $x$ to $y$ of \textit{length} $k$ whenever $Q_{i_l} \cap Q_{i_{l+1}} \ne \emptyset$ for every $1 \leq l \leq k-1$. The notation $\mathcal{Q}(k,x,y)$ will be used to denote all $\mathcal{Q}$-chains of length $k$ from $x$ to $y$. An admissible covering $\mathcal{Q}$ on a set $X$ will be called a \textit{concatenation} if for every pair of points $x,y \in X$ there exists a positive number $k \in \mathbb{N}$ such that $\mathcal{Q}(k,x,y) \ne \emptyset$. The idea, originating from \cite{Hans_Grobner}, is to consider a metric $d_{\mathcal{Q}}$ that incorporates closeness relative to the covering $\mathcal{Q}$. This idea has more recently been further investigated in \cite{eiriklargescale, renethesis}. Formally, we have the following definition.

\begin{definition}
Consider a concatenation $\mathcal{Q} = (Q_{i})_{i \in I}$ for a non-empty set $X$.
Define the metric $d_{\mathcal{Q}}$ on $X$ by the rule $d_{\mathcal{Q}}(x,x) = 0$ for all $x \in X$ and \[d_{\mathcal{Q}}(x,y) = \inf \left\{k:\mathcal{Q}(k,x,y) \ne \emptyset \right\}, \qquad x,y \in X, \, \, x \ne y.\]
\end{definition}

It is straightforward to check that $d_Q$ is indeed a metric on $X$. Notice that $d_{Q}(x,y) < \infty$ for all $x,y \in X$ precisely because we assume that $\mathcal{Q}$ is a concatenation. We will refer to $(X,d_{\mathcal{Q}})$ as the \textit{associated metric space} to the concatenation $\mathcal{Q}$. A subset $N \subset X$ is called a \textit{net} if there exists a fixed constant $C > 0$ such that for every $x \in X$ there is $y \in N$ such that $d_{\mathcal{Q}}(x,y) < C$. 

\begin{definition}
Let $(X,d_{X})$ and $(Y, d_{Y})$ be two metric spaces. We say that a map $f:X \to Y$ is a \textit{quasi-isometry} if $f(X)$ is a net in $(Y,d_{Y})$ and there exist fixed constants $C, L > 0$ such that \[\frac{1}{L}d_{X}(x,y) - C \leq d_{Y}(f(x), f(y)) \leq L d_{X}(x,y) + C,\]
for every $x,y \in X$.
\end{definition}

\begin{remark}
Notice that a quasi-isometry $f:X \to Y$ is a generalization of an isometry where the map $f$ does not need to be injective nor surjective. This is a suitable notion for comparing metric spaces of different cardinalities. As an example, the inclusion $i:\mathbb{Z} \hookrightarrow \mathbb{R}$ is a quasi-isometry when considering the standard metrics. 
\end{remark}

Let us now focus on the setting we are interested in. Given a locally compact group $G$ we fix a compact set $Q$ with non-empty interior that contains the identity element $e \in G$. Then the collection \[\mathcal{Q}_{\textrm{cont}} := (x \cdot Q)_{x \in G}\] is a cover for $G$ that is typically not admissible. However, it is always possible to find a subfamily $N = \{x_{i}\}_{i \in I} \subset G$ such that $\mathcal{Q} := (x_i \cdot Q)_{i \in I}$ is admissible by \cite[Theorem 4.1 (A)]{Hans_2}. This way of obtaining $N$ is non-constructive and one usually relies on an understanding of the geometry of $G$ in practical situations to construct $N$. We refer to $\mathcal{Q}$ as the \textit{uniform covering} corresponding to $G$ with reference set $Q$. When $G$ is path-connected the covering $\mathcal{Q}$ is actually a concatenation, see \cite[Lemma 3.1]{eiriklargescale}. Hence we obtain an associated metric $d_{\mathcal{Q}}$ on $G$. Maybe surprisingly, the resulting metric space $(G,d_{\mathcal{Q}})$ does not depend (up to quasi-isometry) on the choice of $N$ by \cite[Theorem 4.1 (B)]{Hans_2}. In light of this, we refer to the metric $d_{\mathcal{Q}}$ as the \textit{uniform metric} and the space $(G,d_{\mathcal{Q}})$ as the \textit{uniform metric space} corresponding to a path-connected locally compact group $G$. Although the metric $d_{\mathcal{Q}}$ is left-invariant, it is almost never compatible with the underlying topology of $G$. 

\begin{example}
Consider the group $G = \mathbb{R}$ with the reference set $Q = [-1,1]$. Then \[\mathcal{Q}_{\textrm{cont}} = (x + Q)_{x \in \mathbb{R}} = ([x - 1, x + 1])_{x \in \mathbb{R}}.\] The subfamily $N = \mathbb{Z}$ makes \[\mathcal{Q} := (n + Q)_{n \in \mathbb{Z}} = ([n - 1, n + 1])_{n \in \mathbb{Z}}\] into a concatenation. Due to the left invariance of the metric $d_{\mathcal{Q}}$, it is completely determined by \[d_{\mathcal{Q}}(0, x) = \lceil x \rceil, \qquad x > 0,\]
where $\lceil x \rceil$ denotes the ceiling function of $x \in \mathbb{R}$.
\end{example}

\begin{remark}
The points $\{x_{i}\}_{i \in I}$ such that $\mathcal{Q} = (x_i \cdot Q)_{i \in I}$ is the uniform covering of $G$ are only \textit{candidates} for points where the atomic discretization \eqref{discrete_series_expansion} is valid. As an extreme example, consider when $G$ is compact and we pick the reference set $Q = G$. Then $\mathcal{Q} = \{e \cdot Q\} = \{Q\}$ is the uniform covering. However, one does not generally have a discretization \[f = c(f) \cdot \pi(e)g = c(f) \cdot g,\] for all $f \in \mathcal{C}o_{p}(G)$ since $\mathcal{C}o_{p}(G)$ is not necessarily one-dimensional. The problem here is that the reference set $Q$ is to large.
\end{remark}

The following theorem is the main result regarding atomic decompositions. 

\begin{theorem}[Atomic Decomposition Theorem]
\label{big_discretization_theorem}
Let $\pi: G \to \mathcal{U}(\mathcal{H}_{\pi})$ be a $w$-integrable representation, where $w:G \to (0,\infty)$ is a weight function. For well-behaved $g \in \mathcal{A}_{w} \setminus \{0\}$ we have for any sufficiently small reference set $Q$ and any $1 \leq p \leq \infty$ the following properties:
\begin{itemize}
    \item For $f \in \mathcal{C}o_{p,w}(G)$ we have the discrete reconstruction formula 
    \begin{equation*}
        f = \sum_{i \in I}c_{i}(f)\pi(x_i)g,
    \end{equation*}
    where $\mathcal{Q} = (x_i \cdot Q)_{i \in I}$ is the uniform covering corresponding to the reference set $Q$. The sequence $(c_{i}(f))_{i \in I}$ depends linearly on $f$. Moreover, there exists a constant $C_{A} > 0$ not depending on $f$ such that
    \[\|(c_{i}(f))_{i \in I}\|_{l_{w}^{p}(I)} \leq C_{A} \|f\|_{\mathcal{C}o_{p,w}(G)}.\]
    \item Given a sequence $(c_i)_{i \in I} \in l_{w}^{p}(I)$ we can construct \[f = \sum_{i \in I}c_{i}\pi(x_i)g\] such that \[\|f\|_{\mathcal{C}o_{p,w}(G)} \leq C_{R} \|(c_i)_{i \in I}\|_{l_{w}^{p}(I)},\]
    where $C_{R} > 0$ is a constant not depending on $(c_i)_{i \in I}$.
\end{itemize}
\end{theorem}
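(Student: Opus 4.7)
The plan is to discretize the continuous reconstruction formula $f = \int_G \mathcal{W}_g f(x) \pi(x) g \, d\mu_L(x)$ from Corollary~\ref{reconstruction_corollary} (extended to $\mathcal{C}o_{p,w}(G)$) by replacing the integral with a sum over the points $\{x_i\}_{i \in I}$ furnished by the uniform covering $\mathcal{Q} = (x_i \cdot Q)_{i \in I}$. The proof naturally splits in two: the synthesis direction (second bullet) amounts to a discrete convolution inequality, while the analysis direction (first bullet) requires a perturbation argument built on top of that inequality. I would prove synthesis first and then deploy it inside the analysis argument.

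For synthesis, I take $(c_i)_{i \in I} \in l_w^p(I)$ and set $f := \sum_{i \in I} c_i \pi(x_i) g$, interpreted as a weak-$*$ limit in $\mathcal{R}_{1/w}$ when $p = \infty$ and as a norm-convergent series otherwise. Using the intertwining property \eqref{extended_intertwining_property}, the extended wavelet transform of this formal sum is
\[
    \mathcal{W}_g f(x) = \sum_{i \in I} c_i \, \mathcal{W}_g g(x_i^{-1} x).
\]
A discrete convolution estimate of the form $\bigl\| \sum_i c_i K(x_i^{-1}\,\cdot\,) \bigr\|_{L_w^p(G)} \leq C \, \|(c_i)\|_{l_w^p(I)}$ with $K := \mathcal{W}_g g$, where $C$ depends on $K$ through a norm encoding its local suprema on translates of $Q$, together with sub-multiplicativity of $w$ and the bounded overlap of $\mathcal{Q}$, gives $\|f\|_{\mathcal{C}o_{p,w}(G)} \leq C_R \|(c_i)\|_{l_w^p(I)}$. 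Theorem~\ref{summary_theorem}(b) then certifies that $f$ actually lies in $\mathcal{C}o_{p,w}(G)$, since $\mathcal{W}_g f$ automatically satisfies the reproducing convolution relation.

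For analysis, I would fix a \emph{bounded uniform partition of unity} $(\psi_i)_{i \in I}$ subordinate to $\mathcal{Q}$, that is, non-negative continuous functions with $\textrm{supp}(\psi_i) \subset x_i \cdot Q$ and $\sum_i \psi_i \equiv 1$, and define the approximation operator
\[
    T_\psi f := \sum_{i \in I} \left( \int_G \mathcal{W}_g f(y)\, \psi_i(y) \, d\mu_L(y) \right) \pi(x_i) g,
\]
whose candidate coefficients $c_i^\psi(f) := \int \mathcal{W}_g f \cdot \psi_i \, d\mu_L$ depend linearly on $f$. Using the reproducing formula $\mathcal{W}_g f = \mathcal{W}_g f *_G \mathcal{W}_g g$ and the partition of unity to insert $\sum_i \psi_i(y) = 1$, the error becomes
\[
    \mathcal{W}_g(f - T_\psi f)(x) = \int_G \mathcal{W}_g f(y) \sum_{i \in I} \psi_i(y) \Bigl[ \mathcal{W}_g g(y^{-1}x) - \mathcal{W}_g g(x_i^{-1}x) \Bigr] d\mu_L(y),
\]
which is pointwise controlled by the local oscillation of $\mathcal{W}_g g$ over translates of $Q$. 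Shrinking $Q$ drives this oscillation to zero in the convolution norm used in the synthesis step, so $\|\textrm{Id} - T_\psi\|_{\mathcal{C}o_{p,w}(G) \to \mathcal{C}o_{p,w}(G)} < 1$ and $T_\psi$ becomes invertible via Neumann series. Setting $c_i(f) := c_i^\psi(T_\psi^{-1} f)$ produces the desired linear coefficient map, and its boundedness from $\mathcal{C}o_{p,w}(G)$ to $l_w^p(I)$ follows by composing the continuity of $T_\psi^{-1}$ with the continuity of $f \mapsto (c_i^\psi(f))_{i \in I}$, which is itself a consequence of $\mathcal{W}_g f \in L_w^p(G)$ and the bounded overlap of $\mathcal{Q}$.

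The principal obstacle — and the reason the theorem requires $g$ to be \emph{well-behaved} rather than merely $w$-integrable — is quantifying the oscillation of $\mathcal{W}_g g$ in a norm strong enough to control operators on $\mathcal{C}o_{p,w}(G)$ uniformly in $p$. The clean framework is the Wiener amalgam space $W(L^\infty, L_w^1)(G)$, which the survey has deliberately suppressed; informally one asks that the local supremum of $\mathcal{W}_g g$ over each translate of a fixed compact neighborhood lie, as a function of the translation parameter, in $L_w^1(G)$. Under this hypothesis the oscillation over $Q$ shrinks continuously to zero as $Q$ contracts toward the identity, which is precisely what both the synthesis convolution estimate and the Neumann series step require.
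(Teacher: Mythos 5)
Your proposal is correct and follows exactly the route the survey itself indicates: the paper does not prove Theorem \ref{big_discretization_theorem} but remarks that the idea is to approximate the convolution operator $F \mapsto F *_{G} \mathcal{W}_{g}g$ by discrete operators built from the wavelet transform and a uniform partition of unity, deferring the details to Feichtinger--Gr\"ochenig and to Dahlke et al. Your two-step argument (synthesis via the discrete convolution estimate, analysis via the oscillation bound, Neumann-series inversion of $T_\psi$, and the identification of \emph{well-behaved} with membership of $\mathcal{W}_{g}g$ in the relevant Wiener amalgam space) is precisely that standard proof, accurately sketched.
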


\begin{remark}
There are a few details regarding Theorem \ref{big_discretization_theorem} that should be clarified:
\begin{itemize}
    \item For a discrete index set $I$ and a function $w:I \to (0, \infty)$, the space $l_{w}^{p}(I)$ for $1 \leq p < \infty$ denotes the sequences $(a_i)_{i \in I}$ such that \begin{equation}
    \label{norm_on_sequence_spaces}
    \left(\sum_{i \in I}|a_{i}|^{p} w(i)^p\right)^{\frac{1}{p}} < \infty.
    \end{equation}
    The case $p = \infty$ is given by replacing summation with supremum. It is straightforward to check that $l_{w}^{p}(I)$ are Banach spaces with the norm \eqref{norm_on_sequence_spaces}. In the setting of Theorem \ref{big_discretization_theorem} the function $w:I \to (0,\infty)$ is obtained by $w(i) := w(x_{i})$, where $w(x_{i})$ is the weight function $w:G \to (0,\infty)$ evaluated at the point $x_{i} \in G$. We use the same notation for the weight function $w:G \to (0, \infty)$ and the induced map $w:I \to (0, \infty)$ on the index set $I$.
    \item The requirement that $g \in \mathcal{A}_{w} \setminus \{0\}$ should be well-behaved is a technical condition. A sufficient criterion in general is that $\mathcal{W}_{g}g$ belongs to certain \textit{Wiener amalgam spaces} \cite[Theorem 3.15]{dahlke2015harmonic}. We refer the reader to \cite{feichtinger1980banach} and the survey \cite{heil2003introduction} for more details on Wiener amalgam spaces.
    \item The idea for the proof of Theorem \ref{big_discretization_theorem} is to approximate the convolution operator \[F \longmapsto F *_{G} \mathcal{W}_{g}g\] with special operators involving the wavelet transform. As these ideas are further elaborated on in \cite[Proof of Theorem 24.2.4]{frames_and_riesz_bases2016}, we will not go more into this. The full proof of Theorem \ref{big_discretization_theorem} can be found in \cite[Theorem 6.1]{feichtinger1989banach1} and in \cite[Theorem 3.15]{dahlke2015harmonic}.
\end{itemize}
\end{remark}

Let us use the language of large scale geometry to make Theorem \ref{big_discretization_theorem} more conceptual: Pick a sufficiently small reference set $Q \subset G$ and the associated family $N = (x_{i})_{i \in I}$ corresponding to the uniform covering $\mathcal{Q} = (x_{i} \cdot Q)_{i \in I}$. Define the trivial map $j:N \to l_{w}^{p}(I)$ given by $j(x_{i}) = \delta_{i}$. Fix a well-behaved element $g \in \mathcal{A}_{w} \setminus \{0\}$ and define $h:G \to \mathcal{C}o_{p,w}(G)$ by $h(x) = \pi(x)g$. Finally, we have a reconstruction map $\mathcal{R}: l_{w}^{p}(I) \to \mathcal{C}o_{p,w}(G)$ given by \[\mathcal{R}((c_{i})_{i \in I}) = \sum_{i \in I}c_{i}\pi(x_i)g.\] Together, these maps form the commutative diagram 
\begin{equation}\label{diagram}
    \begin{tikzcd}
        G \arrow[r, "h"]
        & \mathcal{C}o_{p,w}(G) \\
        N \arrow[r, "j"] \arrow[u, hook]
        & l_{w}^{p}(I) \arrow[u, "\mathcal{R}"]
    \end{tikzcd}
\end{equation}
When $G$ is equipped with the uniform metric $d_{\mathcal{Q}}$ the inclusion $N \xhookrightarrow{} G$ in \eqref{diagram} is a quasi-isometry. The gist of Theorem \ref{big_discretization_theorem} is that this quasi-isometry carries over to the reconstruction map $\mathcal{R}$ in \eqref{diagram}, where it manifests itself as a norm-equivalence.

\subsection{One Banach Frame to Discretize Them All}
\label{sec: Banach_Frames}

Looking back at Theorem \ref{big_discretization_theorem}, we see that it characterizes the elements in $\mathcal{C}o_{p,w}(G)$ in terms of discrete expansions. However, if we are given $f \in \mathcal{R}_{1/w}$, then it might not be obvious to check whether $f \in \mathcal{C}o_{p,w}(G)$ with a set of discrete conditions. This leads us to the following question:

\begin{center}
    \begin{adjustwidth}{40pt}{40pt}
        \textbf{Q:} Given the elements $\pi(x_i)g$ for $i \in I$ in Theorem \ref{big_discretization_theorem}, is it possible to determine if $f \in \mathcal{C}o_{p,w}(G)$ based on the interaction between $f$ and $\pi(x_i)g$ for all $i \in I$?
    \end{adjustwidth}
\end{center}
We show in this section that the answer to the question is affirmative. Before stating the result, we briefly discuss Banach frames to put the result into context.

\begin{definition}
Let $B$ be a separable Banach space. Consider a countable subset $\mathcal{E} = \{g_{i}\}_{i \in I}$ of continuous anti-linear functionals on $B$ together with an associated sequence space $B_{\mathcal{E}}$ on the index set $I$. We say that the pair $(\mathcal{E}, B_{\mathcal{E}})$ is a \textit{Banach frame} for $B$ if the following two properties are satisfied:
\begin{itemize}
    \item The \textit{coefficient operator} $\mathcal{C}_{\mathcal{E}}:B \to B_{\mathcal{E}}$ defined by $\mathcal{C}_{\mathcal{E}}(f) = (\langle g_{i}, f \rangle)_{i \in I}$ for $f \in B$ satisfies the norm-equivalence 
    \begin{equation}\label{eq:Banach_frame_norm_equivalence}
        \|f\|_{B} \asymp \|\mathcal{C}_{\mathcal{E}}(f)\|_{B_{\mathcal{E}}}.
    \end{equation}
    \item There exists a bounded linear map $R_{\mathcal{E}}:B_{\mathcal{E}} \to B$ called the \textit{reconstruction operator} that is a left inverse for $\mathcal{C}_{\mathcal{E}}$.
\end{itemize}
\end{definition}

Explicitly, a reconstruction operator $R_{\mathcal{E}}:B_{\mathcal{E}} \to B$ for the Banach frame $(\mathcal{E}, B_{\mathcal{E}})$ satisfies \[R_{\mathcal{E}}\left((\langle g_{i}, f \rangle)_{i \in I}\right) = f, \qquad f \in B.\]
The notion of a Banach frame was first considered in \cite{gro91}. In \cite[Proposition 2.4]{frames_casazza} it was shown that there exists a Banach frame for any separable Banach space. However, the mere existence of a Banach frame is not necessarily useful as it might be difficult to both understand and compute.

\begin{example}
The most well-studied example of a Banach frame is in the case where $\mathcal{H} = B$ is a separable Hilbert space. Then, by identifying $\mathcal{H}$ with its anti-dual space, we can consider the sequence $\mathcal{E} = \{g_{i}\}_{i \in I} \subset \mathcal{H}$. Moreover, in this case there is a natural sequence space available, namely $l^{2}(I)$. Hence the norm equivalence in \eqref{eq:Banach_frame_norm_equivalence} requires that there exists $A,B > 0$ such that \[A \, \|f\|_{\mathcal{H}}^{2} \leq \sum_{i \in I}|\langle f, g_{i}\rangle|^{2} \leq B \,\|f\|_{\mathcal{H}}^{2}.\]
It turns out the existence of the reconstruction operator $R_{\mathcal{E}}$ is automatically satisfied in this case by \cite[Theorem 3.2.3]{frames_and_riesz_bases2016} and is given by \[R_{\mathcal{E}}((c_{i})_{i \in I}) = \sum_{i \in I}c_{i}g_{i}, \qquad (c_{i})_{i \in I} \in l^{2}(I).\]
In fact, the reconstruction operator $\mathcal{R}_{\mathcal{E}}$ is in this case simply the Hilbert space adjoint of the coefficient operator! In light of these simplifications, it makes sense to simply refer to the collection $\mathcal{E}$ as a \textit{frame} for the Hilbert space $\mathcal{H}$. Frame theory has a prominent place in modern applied harmonic analysis, and we refer the reader to \cite{frames_and_riesz_bases2016} for more on this fascinating topic.
\end{example}

The following result answers the question posed in the introduction of this section, and we refer the reader to the original paper \cite{gro91} for a proof.

\begin{theorem}
\label{banach_frame_theorem}
Consider a $w$-integrable representation $\pi: G \to \mathcal{U}(\mathcal{H}_{\pi})$ where $w:G \to (0,\infty)$ is a weight function. Choose a well-behaved $g \in \mathcal{A}_{w} \setminus \{0\}$ and any sufficiently small reference set $Q$. Then the pair
\[\mathcal{E} = \{\pi(x_{i})g\}_{i \in I}, \qquad B_{\mathcal{E}} = l_{w}^{p}(I)\] is a Banach frame for the coorbit space $\mathcal{C}o_{p,w}(G)$, where $1 \leq p \leq \infty$ and $\mathcal{Q} = (x_i \cdot Q)_{i \in I}$ is the uniform covering corresponding to the reference set $Q$. 
\end{theorem}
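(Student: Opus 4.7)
The plan is to lift everything through the (weighted) correspondence principle of Theorem \ref{corresponance_principle}: applying $\mathcal{W}_{g}$ identifies $\mathcal{C}o_{p,w}(G)$ with the reproducing subspace $\mathcal{M}_{p,w}(G)=\{F\in L_{w}^{p}(G):F=F*_{G}\mathcal{W}_{g}g\}$, so the statement becomes that point-sampling $S:F\longmapsto (F(x_{i}))_{i\in I}$ is bounded $\mathcal{M}_{p,w}(G)\to l_{w}^{p}(I)$ and admits a bounded left inverse. Because $\langle f,\pi(x_{i})g\rangle=\mathcal{W}_{g}f(x_{i})$, the coefficient operator $\mathcal{C}_{\mathcal{E}}$ is (up to complex conjugation) exactly $S\circ\mathcal{W}_{g}$, so proving the two Banach-frame axioms amounts to proving the sampling statement for $\mathcal{M}_{p,w}(G)$.

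For boundedness of $\mathcal{C}_{\mathcal{E}}$, I would exploit the reproducing identity $F=F*_{G}\mathcal{W}_{g}g$ to control each value $F(x_{i})$ by a local $L_{w}^{p}$-average. Specifically, writing $F(x_{i})=\int_{G}F(y)\,\mathcal{W}_{g}g(y^{-1}x_{i})\,d\mu_{L}(y)$ and using that $\mathcal{W}_{g}g$ belongs to the appropriate Wiener amalgam space (this is the \emph{well-behaved} assumption on $g$), one obtains an estimate of the form $|F(x_{i})|\leq C\,\|F\cdot\mathbf{1}_{x_{i}\,U}\|_{L_{w}^{p}(G)}/w(x_{i})$ for a suitable neighbourhood $U$ of the identity. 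Raising to the $p$-th power, multiplying by $w(x_{i})^{p}$, and summing, the finite-overlap property \eqref{neighbours} of the uniform covering $\mathcal{Q}=(x_{i}Q)_{i\in I}$ collapses the sum back to $\|F\|_{L_{w}^{p}(G)}^{p}$, producing $\|\mathcal{C}_{\mathcal{E}}(f)\|_{l_{w}^{p}(I)}\lesssim \|f\|_{\mathcal{C}o_{p,w}(G)}$. The opposite inequality in \eqref{eq:Banach_frame_norm_equivalence}, as well as the existence of $R_{\mathcal{E}}$, follow from the reconstruction step below.

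For reconstruction I would discretize the reproducing formula by a partition of unity $\{\psi_{i}\}$ subordinate to $\mathcal{Q}$: approximating $F(y)$ by $F(x_{i})$ on $\mathrm{supp}\,\psi_{i}$ yields the operator
\[
TF(x):=\sum_{i\in I}F(x_{i})\,c_{i}\,\mathcal{W}_{g}g(x_{i}^{-1}x), \qquad c_{i}:=\int_{G}\psi_{i}(y)\,d\mu_{L}(y),
\]
which, since $\mathcal{W}_{g}g(x_{i}^{-1}\cdot)=\mathcal{W}_{g}(\pi(x_{i})g)$, is nothing but $\mathcal{W}_{g}$ applied to the formal atomic series $\sum_{i}c_{i}F(x_{i})\pi(x_{i})g$. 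Writing
\[
F(x)-TF(x)=\int_{G}\sum_{i\in I}\psi_{i}(y)\bigl(F(y)-F(x_{i})\bigr)\mathcal{W}_{g}g(y^{-1}x)\,d\mu_{L}(y),
\]
the goal is an estimate $\|F-TF\|_{L_{w}^{p}(G)}\leq\varepsilon(Q)\,\|F\|_{L_{w}^{p}(G)}$ with $\varepsilon(Q)\to 0$ as $Q$ shrinks to $\{e\}$. Once $Q$ is small enough that $\varepsilon(Q)<1$, the operator $T$ becomes invertible on $\mathcal{M}_{p,w}(G)$ via the Neumann series $T^{-1}=\sum_{n\geq 0}(\mathrm{Id}-T)^{n}$, and I would then set
\[
R_{\mathcal{E}}\bigl((d_{i})_{i\in I}\bigr):=T^{-1}\Bigl(\sum_{i\in I}c_{i}d_{i}\,\pi(x_{i})g\Bigr),
\]
which is linear, bounded by the atomic-synthesis estimate of Theorem \ref{big_discretization_theorem}, and satisfies $R_{\mathcal{E}}\circ\mathcal{C}_{\mathcal{E}}=\mathrm{Id}$ by construction. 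The norm-equivalence \eqref{eq:Banach_frame_norm_equivalence} then follows by combining the two one-sided bounds.

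The main obstacle is the quantitative decay estimate $\varepsilon(Q)\to 0$. It requires controlling the modulus of continuity of $F=F*_{G}\mathcal{W}_{g}g$ uniformly in the unit ball of $\mathcal{M}_{p,w}(G)$, which in turn reduces to a weighted translation-continuity statement for the kernel $\mathcal{W}_{g}g$. This is precisely where the sub-multiplicativity of $w$ and the Wiener-amalgam hypothesis packaged in the phrase \emph{well-behaved $g$} do the heavy lifting: the former ensures that left-translation by $x_{i}\in\mathrm{supp}\,\psi_{i}$ distorts the weight only by a controlled local factor, while the latter upgrades pointwise smallness of $|\mathcal{W}_{g}g(y^{-1}x)-\mathcal{W}_{g}g(x_{i}^{-1}x)|$ for $y\in x_{i}Q$ into a global $L^{1}$-type bound after integration in $x$. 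This step is the technical heart of the argument and dictates exactly how small $Q$ must be taken.
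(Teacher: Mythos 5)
Your plan is essentially the argument the paper defers to in \cite{gro91} (and alluded to in the remark following Theorem \ref{big_discretization_theorem}): pass to the reproducing subspace of $L_{w}^{p}(G)$ via the correspondence principle, bound the sampling/coefficient operator using the identity $F=F*_{G}\mathcal{W}_{g}g$ together with the Wiener-amalgam hypothesis on $\mathcal{W}_{g}g$ and the finite-overlap property of $\mathcal{Q}$, and obtain the reconstruction operator by inverting the discretized convolution operator $T$ through a Neumann series once $Q$ is small enough. One cosmetic slip: since your $T$ also replaces $\mathcal{W}_{g}g(y^{-1}x)$ by $\mathcal{W}_{g}g(x_{i}^{-1}x)$, the displayed error $F-TF$ should carry the additional term $\sum_{i}\int_{G}\psi_{i}(y)F(x_{i})\bigl(\mathcal{W}_{g}g(y^{-1}x)-\mathcal{W}_{g}g(x_{i}^{-1}x)\bigr)\,d\mu_{L}(y)$ --- but this is exactly the kernel-oscillation quantity you single out in your final paragraph, so no idea is missing.
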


\begin{remark}\hfill
\begin{itemize}
    \item Notice that, under the assumptions in Theorem \ref{banach_frame_theorem}, the elements in $\mathcal{E} = \{\pi(x_{i})g\}_{i \in I}$ belong to $\mathcal{H}_{w}^{1}$. Hence it makes sense for $f \in \mathcal{C}o_{p,w}(G) \subset \mathcal{R}_{1/w}$ to consider the duality pairing \[\mathcal{W}_{g}f(x_{i}) = \langle f, \pi(x_{i})g  \rangle_{\mathcal{R}_{1/w}, \mathcal{H}_{w}^{1}}.\] As such, the coefficient operator $\mathcal{C}_{\mathcal{E}}$ in this case is simply given by sampling on the points $\{x_{i}\}_{i \in I}  \subset G$, that is, 
    \begin{equation*}
        \mathcal{C}_{\mathcal{E}}(f) = \left(\mathcal{W}_{g}f(x_{i})\right)_{i \in I}.
    \end{equation*}
    \item The reader should be aware that although the collection $\mathcal{E} = \{\pi(x_{i})g\}_{i \in I}$ is fixed for each $1 \leq p \leq \infty$, the sequence space $B_{\mathcal{E}} = l_{w}^{p}(I)$ does indeed depend on $p$. Since we have defined a Banach frame as the pair $(\mathcal{E}, B_{\mathcal{E}})$, we are being slightly imprecise when stating that Theorem~\ref{banach_frame_theorem} provides a single Banach frame for all the coorbit spaces $\mathcal{C}o_{p,w}(G)$ for $1 \leq p \leq \infty$.
\end{itemize}
\end{remark}

\begin{example}
Consider for $\alpha, \beta > 0$ the uniform covering $\mathcal{Q}_{\alpha, \beta}$ of $\mathbb{R}^{2n}$ given by \[\mathcal{Q}_{\alpha, \beta} := ((\alpha k, \beta l) + Q)_{k,l \in \mathbb{Z}^{n}}, \qquad Q := [-\alpha, \alpha]^{n} \times [-\beta, \beta]^{n}.\] Then for $g \in \mathcal{S}(\mathbb{R}^{n})$ and sufficiently small $\alpha, \beta$ we have that 
\[\mathcal{E} = \{M_{\beta l}T_{\alpha k}g\}_{k,l \in \mathbb{Z}^{n}}, \qquad B_{\mathcal{E}} = l_{v_{s}}^{p}(\mathbb{Z}^{2n}),\] is a Banach frame for the modulation space $M_{s}^{p}(\mathbb{R}^{n}) := M_{v_s}^{p,p}(\mathbb{R}^{n}),$ where $v_{s}$ for $s \geq 0$ is the polynomial weight given in Example \ref{weighted_Feichtinger_space}. The collection $\mathcal{E}$ is often called a \textit{Gabor system} in the literature. Hence we have the norm-equivalence 
\[\|f\|_{M_{s}^{p}(\mathbb{R}^{n})} \asymp \left(\sum_{k,l \in \mathbb{Z}^{n}}\left|V_{g}f(\alpha k, \beta l)\right|^{p}(1 + |\alpha k|^2 + |\beta l|^2)^{\frac{sp}{2}}\right)^{\frac{1}{p}}.\]
\end{example}

\subsection{A Kernel Theorem for Coorbit Spaces}
\label{sec: A Kernel Theorem for Coorbit Spaces}

The \textit{Schwartz kernel theorem} is one of the most influential results in distribution theory. It states that any continuous linear operator $A:\mathcal{S}(\mathbb{R}^{n}) \to \mathcal{S}'(\mathbb{R}^{n})$ can be represented by a unique distributional kernel $K \in \mathcal{S}'(\mathbb{R}^{2n})$ in the sense that 
\begin{equation}
\label{eq:kernel_theorem_Schwartz}
    \langle Af, g \rangle = \langle K, f \otimes g \rangle, \qquad f,g \in \mathcal{S}(\mathbb{R}^{n}).
\end{equation}
If $K$ is a locally integrable function, then we have that $K$ is indeed an integral kernel in the sense that 
\[\langle Af, g \rangle = \int_{\mathbb{R}^{n}}K(x,y)f(y)\overline{g(x)}\, dy \, dx, \qquad f,g \in \mathcal{S}(\mathbb{R}^{n}).\] \par
In \cite{fei80}, Hans Georg Feichtinger showed that a kernel theorem is also valid for the modulation spaces. More precisely, he showed that any continuous linear operator $A: M^{1}(\mathbb{R}^{n}) \to M^{\infty}(\mathbb{R}^{n})$ can be represented as in \eqref{eq:kernel_theorem_Schwartz} with $K \in M^{\infty}(\mathbb{R}^{2n})$. This result shows that kernel theorems are also possible in the Banach space setting. Building on this, the authors in \cite{speck19} have recently extended Feichtinger's kernel theorem to coorbit spaces. In this section, we showcase their results without weights for simplicity and refer the reader to the well-written paper \cite{speck19} for more information.\par 
Consider two integrable representations $\pi_{1}:G_{1} \to \mathcal{U}(\mathcal{H}_{1})$ and $\pi_{2}:G_{2} \to \mathcal{U}(\mathcal{H}_{2})$. From this we obtain the corresponding coorbit spaces $\mathcal{C}o_{p}^{\pi_{1}}(G_{1})$ and $\mathcal{C}o_{q}^{\pi_{2}}(G_{2})$ for all $1 \leq p,q \leq \infty$. The goal is to represent any continuous and linear operator $A: \mathcal{C}o_{1}^{\pi_{1}}(G_{1}) \to \mathcal{C}o_{\infty}^{\pi_{2}}(G_{2})$ though a distributional kernel $K$ in an appropriate sense. The first step is to identify which space the distributional $K$ should be taken from. To do this, we briefly review tensor products of representations.

\begin{definition}
We can consider the \textit{tensor product representation} $\pi_{1} \otimes \pi_{2}$ from $G_{1} \times G_{2}$ to unitary operators on the tensor product $\mathcal{H}_{2} \otimes \mathcal{H}_{1}$ given on simple tensors $\psi_{2} \otimes \psi_{1} \in \mathcal{H}_{2} \otimes \mathcal{H}_{1}$ by 
\[(\pi_{1} \otimes \pi_{2})(g_{1}, g_{2})(\psi_{2} \otimes \psi_{1}) := \pi_{2}(g_{2})\psi_{2} \otimes \pi_{1}(g_{1})\psi_{1}, \qquad (g_{1}, g_{2}) \in G_{1} \times G_{2}.\]
\end{definition}

It is straightforward to verify that if $\psi_1 \in \mathcal{H}_{1}$ and $\psi_{2} \in \mathcal{H}_{2}$ are integrable vectors for respectively $\pi_{1}$ and $\pi_{2}$, then $\psi_{2} \otimes \psi_{1} \in \mathcal{H}_{2} \otimes \mathcal{H}_{1}$ is an integrable vector for the tensor product representation $\pi_{1} \otimes \pi_{2}$. As such, it makes sense to consider the coorbit space \[\mathcal{C}o_{p}(G_{1} \times G_{2}) \coloneqq \mathcal{C}o_{p}^{\pi_{1} \otimes \pi_{2}}(G_{1} \times G_{2}), \qquad 1 \leq p \leq \infty,\] associated to the tensor product representation $\pi_{1} \otimes \pi_{2}$. The following result from \cite[Theorem 3]{speck19} shows that a kernel theorem is valid for general coorbit spaces.

\begin{theorem}[Coorbit Kernel Theorem]
\label{general_kernel_theorem}
Let $\pi_{i}:G_{i} \to \mathcal{U}(\mathcal{H}_{i})$ for $i = 1,2$ be two integrable representations. There is a bijective correspondence between bounded linear operators \[A: \mathcal{C}o_{1}^{\pi_{1}}(G_{1}) \to \mathcal{C}o_{\infty}^{\pi_{2}}(G_{2})\] and elements $K \in \mathcal{C}o_{\infty}(G_{1} \times G_{2})$ given by 
\begin{equation}
\label{eq:kernel_theorem_general}
    \langle Af, g \rangle = \langle K, f \otimes g \rangle,
\end{equation}
where $f \in \mathcal{C}o_{1}^{\pi_{1}}(G_{1})$ and $g \in \mathcal{C}o_{1}^{\pi_{2}}(G_{2})$. Moreover, we have the norm-equivalence 
\[\|A\|_{Op} \asymp \|K\|_{\mathcal{C}o_{\infty}(G_{1} \times G_{2})}.\]
\end{theorem}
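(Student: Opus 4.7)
The plan is to move everything over to the group side via the wavelet transform and then invoke the correspondence principle in Theorem~\ref{summary_theorem}(b) on the tensor product group $G_1 \times G_2$. Fix admissible integrable vectors $\psi_1 \in \mathcal{A}_1 \setminus \{0\}$ and $\psi_2 \in \mathcal{A}_2 \setminus \{0\}$, and set $\Psi := \psi_2 \otimes \psi_1 \in \mathcal{H}_2 \otimes \mathcal{H}_1$. A direct computation on the definition of $\pi_1 \otimes \pi_2$ gives the factorization
\[
\mathcal{W}_{\Psi}\Psi(g_1,g_2) = \mathcal{W}_{\psi_1}\psi_1(g_1)\,\mathcal{W}_{\psi_2}\psi_2(g_2),
\]
so $\Psi$ is admissible and integrable for $\pi_1 \otimes \pi_2$, and an analogous identity shows that if $f \in \mathcal{H}_1^1$ and $h \in \mathcal{H}_2^1$ then $h \otimes f \in \mathcal{H}^1(G_1 \times G_2)$ with $\|h \otimes f\|_{\mathcal{H}^1} = \|f\|_{\mathcal{H}_1^1}\|h\|_{\mathcal{H}_2^1}$.

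For the easy direction (kernel $\Rightarrow$ operator), given $K \in \mathcal{C}o_\infty(G_1 \times G_2)$ define $A_K : \mathcal{H}_1^1 \to \mathcal{R}_2$ by $\langle A_K f, h\rangle := \langle K, h \otimes f\rangle$. The factorization just noted yields
\[
|\langle A_K f, h \rangle| \leq \|K\|_{\mathcal{C}o_\infty}\|h \otimes f\|_{\mathcal{H}^1} = \|K\|_{\mathcal{C}o_\infty}\|f\|_{\mathcal{H}_1^1}\|h\|_{\mathcal{H}_2^1},
\]
so $A_K$ is bounded with $\|A_K\|_{\mathrm{Op}} \leq \|K\|_{\mathcal{C}o_\infty}$. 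For the converse estimate, write $\|K\|_{\mathcal{C}o_\infty}$ using the wavelet transform as $\sup_{(g_1,g_2)}|\mathcal{W}_\Psi K(g_1,g_2)| = \sup_{(g_1,g_2)}|\langle A_K(\pi_1(g_1)\psi_1),\pi_2(g_2)\psi_2\rangle|$, which is bounded by $\|\psi_1\|_{\mathcal{H}_1^1}\|\psi_2\|_{\mathcal{H}_2^1}\|A_K\|_{\mathrm{Op}}$. This yields the norm equivalence.

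The hard direction (operator $\Rightarrow$ kernel) is where the main work lies. Given $A$, define
\[
F_A(g_1,g_2) := \langle A(\pi_1(g_1)\psi_1), \pi_2(g_2)\psi_2\rangle.
\]
Boundedness in $L^\infty(G_1\times G_2)$ is immediate from $\|A\|_{\mathrm{Op}}$, joint continuity follows from the continuous action of $\pi_1$ on $\mathcal{C}o_1^{\pi_1}(G_1)$ (Theorem~\ref{summary_theorem}(a)) together with continuity of $A$ and of $\mathcal{W}_{\psi_2}$ on $\mathcal{R}_2$. The crux is to verify the reproducing convolution relation $F_A = F_A *_{G_1 \times G_2} \mathcal{W}_\Psi\Psi$ so that the correspondence principle on $G_1 \times G_2$ produces a unique $K_A \in \mathcal{C}o_\infty(G_1\times G_2)$ with $\mathcal{W}_\Psi K_A = F_A$. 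The key observation is a pair of partial reproducing identities: for fixed $g_1$, the element $A(\pi_1(g_1)\psi_1) \in \mathcal{R}_2$ makes $F_A(g_1,\cdot) = \mathcal{W}_{\psi_2}(A(\pi_1(g_1)\psi_1))$, so $F_A(g_1,\cdot) *_{G_2} \mathcal{W}_{\psi_2}\psi_2 = F_A(g_1,\cdot)$ by Theorem~\ref{summary_theorem}(b); dually, for fixed $g_2$, the functional $\phi \mapsto \langle A\phi,\pi_2(g_2)\psi_2\rangle$ belongs to $\mathcal{R}_1$, whence $F_A(\cdot,g_2) *_{G_1} \mathcal{W}_{\psi_1}\psi_1 = F_A(\cdot,g_2)$. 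Since $\mathcal{W}_\Psi\Psi$ is the tensor product of the two wavelet transforms and the Haar measure on $G_1 \times G_2$ is the product measure, the convolution on the product group factors as iterated convolutions, and the two partial identities combine to give the desired full identity. This is the step I expect to be most delicate, chiefly because one must justify Fubini and the interchange of the dualities $\langle A\cdot,\cdot\rangle$ with the integrals in both variables.

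Finally, uniqueness and the closing of the bijection use Lemma~\ref{lemma_convergence_in_H1_and_convolution_relation} applied to $\Psi$: the linear span of $\{\pi_2(g_2)\psi_2 \otimes \pi_1(g_1)\psi_1 : (g_1,g_2) \in G_1\times G_2\}$ is dense in $\mathcal{H}^1(G_1 \times G_2)$. The defining identity of $K_A$, namely $\langle K_A,\pi_2(g_2)\psi_2 \otimes \pi_1(g_1)\psi_1\rangle = \langle A(\pi_1(g_1)\psi_1),\pi_2(g_2)\psi_2\rangle$, then extends by density and continuity to $\langle K_A, h\otimes f\rangle = \langle Af,h\rangle$ for all $f \in \mathcal{H}_1^1$, $h \in \mathcal{H}_2^1$, proving \eqref{eq:kernel_theorem_general}. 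Checking $A_{K_A} = A$ and $K_{A_K} = K$ from this formula is then routine, and the norm equivalence $\|A\|_{\mathrm{Op}} \asymp \|K\|_{\mathcal{C}o_\infty(G_1 \times G_2)}$ has already been obtained in the previous paragraph.
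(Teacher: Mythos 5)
First, a point of reference: the paper does not prove this theorem --- it is quoted from \cite[Theorem 3]{speck19} --- so there is no in-paper proof to compare against, and I am judging your argument on its own merits. Your overall strategy (factor $\mathcal{W}_{\Psi}\Psi$ over the product group and reduce everything to the correspondence principle of Theorem \ref{resovoir_vs_L_infinity} applied to $G_{1}\times G_{2}$) is the right one, and the easy direction, the norm equivalence, and the density/uniqueness step via Lemma \ref{lemma_convergence_in_H1_and_convolution_relation} are sound in outline.

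There is, however, a genuine gap in the step you flagged as delicate, and it is not a Fubini issue (Fubini is unproblematic here, since $F_{A}$ is bounded and the reproducing kernels are in $L^{1}$). The functional $\phi\mapsto\langle A\phi,\pi_{2}(g_{2})\psi_{2}\rangle$ is \emph{linear} in $\phi$, because $A$ is linear and the pairing $\langle\cdot,\cdot\rangle_{\mathcal{R}_{2},\mathcal{H}_{2}^{1}}$ is linear in its first slot; but $\mathcal{R}_{1}$ consists of bounded \emph{anti-linear} functionals, so this functional does not lie in $\mathcal{R}_{1}$ as you claim. If you repair this by passing to the conjugate $\Lambda:\phi\mapsto\overline{\langle A\phi,\pi_{2}(g_{2})\psi_{2}\rangle}\in\mathcal{R}_{1}$, then $\mathcal{W}_{\psi_{1}}\Lambda=\overline{F_{A}(\cdot,g_{2})}$, and the reproducing formula followed by conjugation (using $\overline{\mathcal{W}_{\psi_{1}}\psi_{1}(y^{-1}x)}=\mathcal{W}_{\psi_{1}}\psi_{1}(x^{-1}y)$) yields
\begin{equation*}
F_{A}(x_{1},g_{2})=\int_{G_{1}}F_{A}(y_{1},g_{2})\,\mathcal{W}_{\psi_{1}}\psi_{1}(x_{1}^{-1}y_{1})\,d\mu_{L}(y_{1}),
\end{equation*}
which is a correlation, not the convolution $F_{A}(\cdot,g_{2})*_{G_{1}}\mathcal{W}_{\psi_{1}}\psi_{1}$; the two agree only when $\mathcal{W}_{\psi_{1}}\psi_{1}$ is real-valued. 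Hence your two partial identities do not combine into $F_{A}=F_{A}*_{G_{1}\times G_{2}}\mathcal{W}_{\Psi}\Psi$, and the correspondence principle for $\pi_{1}\otimes\pi_{2}$ cannot be invoked as written. The same sesquilinearity mismatch is already visible in \eqref{eq:kernel_theorem_general}: $\langle Af,g\rangle$ is linear in $f$, while $\langle K,f\otimes g\rangle$ is anti-linear in $f$ if $\otimes$ is bilinear and $K\in\mathcal{R}$ (your $A_{K}$ in the easy direction is for the same reason anti-linear rather than linear in $f$). The standard repair is to place the \emph{conjugate} representation $\overline{\pi_{1}}$ in the first tensor factor: its reproducing kernel is $\overline{\mathcal{W}_{\psi_{1}}\psi_{1}}$, the correlation above becomes precisely the convolution identity for $\overline{\pi_{1}}\otimes\pi_{2}$, and $K$ is then produced in the coorbit space of that representation. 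With this adjustment the rest of your argument goes through.
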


The reader is referred to \cite[Section 5]{speck19}
for concrete applications of Theorem \ref{general_kernel_theorem} regarding mappings between Besov spaces and modulation spaces. In light of Theorem \ref{general_kernel_theorem}, it makes sense to refer to $K$ in \eqref{eq:kernel_theorem_general} as the \textit{distributional kernel} of the operator $A: \mathcal{C}o_{1}^{\pi_{1}}(G_{1}) \to \mathcal{C}o_{\infty}^{\pi_{2}}(G_{2})$. The authors in \cite{speck19} go on to use Theorem \ref{general_kernel_theorem} to deduce properties of $A$ based on knowledge of its distributional kernel $K$. In particular, they show the following elegant result in \cite[Theorem 9]{speck19}.

\begin{corollary}
In the notation of Theorem \ref{general_kernel_theorem}, the operator $A$ defines a bounded linear operator $A: \mathcal{C}o_{\infty}^{\pi_{1}}(G_{1}) \to \mathcal{C}o_{1}^{\pi_{2}}(G_{2})$ when its distributional kernel $K$ satisfies $K \in \mathcal{C}o_{1}(G_{1} \times G_{2})$.
\end{corollary}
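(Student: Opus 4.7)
The plan is to extend the operator $A$ from Theorem~\ref{general_kernel_theorem} to the larger space $\mathcal{C}o_{\infty}^{\pi_1}(G_1)$ using the duality pairing, and then show directly via the correspondence principle that its image lies in $\mathcal{C}o_1^{\pi_2}(G_2)$. First I would observe that $\mathcal{C}o_1(G_1 \times G_2) \hookrightarrow \mathcal{C}o_\infty(G_1\times G_2)$, since any $F=\mathcal{W}_\Psi\phi\in L^1$ satisfying the convolution relation $F=F*_{G_1\times G_2}\mathcal{W}_\Psi\Psi$ is automatically in $L^\infty$, where $\Psi:=\psi_2\otimes\psi_1$ for admissible integrable vectors $\psi_i\in\mathcal{H}_i$. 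Hence under the hypothesis $K\in\mathcal{C}o_1(G_1\times G_2)$, Theorem~\ref{general_kernel_theorem} already yields a bounded operator $A:\mathcal{C}o_1^{\pi_1}(G_1)\to\mathcal{C}o_\infty^{\pi_2}(G_2)$. Because $K$ now belongs to $\mathcal{C}o_1(G_1\times G_2)=\mathcal{H}^1(G_1\times G_2)$, i.e.\ $K$ is a test vector for $\pi_1\otimes\pi_2$, and $f\otimes g$ lies in $\mathcal{R}(G_1\times G_2)=\mathcal{C}o_\infty(G_1\times G_2)$ whenever $f\in\mathcal{C}o_\infty^{\pi_1}(G_1)$ and $g\in\mathcal{C}o_1^{\pi_2}(G_2)$, the defining pairing $\langle Af,g\rangle:=\langle K,f\otimes g\rangle$ continues to make sense on the larger domain and furnishes a bounded functional on $\mathcal{C}o_1^{\pi_2}(G_2)$; by Theorem~\ref{summary_theorem}~(d) this gives an extension $A:\mathcal{C}o_\infty^{\pi_1}(G_1)\to\mathcal{C}o_\infty^{\pi_2}(G_2)$ agreeing with the original one on $\mathcal{C}o_1^{\pi_1}(G_1)$.

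Next I would compute $\mathcal{W}_{\psi_2}(Af)$ explicitly. Using Proposition~\ref{proposition_basic_props_of_extended_wavelet_transform} and the defining relation,
\begin{equation*}
\mathcal{W}_{\psi_2}(Af)(y_2)=\langle Af,\pi_2(y_2)\psi_2\rangle=\langle K,f\otimes\pi_2(y_2)\psi_2\rangle.
\end{equation*}
Now I expand the right-hand side through the orthogonality/duality formula for the $\mathcal{H}^1$--$\mathcal{R}$ pairing applied to the tensor product representation: for an admissible $\Psi$ the identity $\langle F,\Phi\rangle_{\mathcal{H}^1,\mathcal{R}}=\int\mathcal{W}_\Psi F\,\overline{\mathcal{W}_\Psi\Phi}$ (extending Corollary~\ref{reconstruction_corollary} by density of $\mathcal{H}_\pi$ in $\mathcal{R}$ in the appropriate topology) together with the factorization $\mathcal{W}_{\Psi}(f\otimes h)(x_1,x_2)=\mathcal{W}_{\psi_1}f(x_1)\,\mathcal{W}_{\psi_2}h(x_2)$ yields
\begin{equation*}
\mathcal{W}_{\psi_2}(Af)(y_2)=\int_{G_1\times G_2}\mathcal{W}_\Psi K(x_1,x_2)\,\overline{\mathcal{W}_{\psi_1}f(x_1)}\,\overline{\mathcal{W}_{\psi_2}\psi_2(y_2^{-1}x_2)}\,d\mu_{L}(x_1)\,d\mu_{L}(x_2),
\end{equation*}
where I used the intertwining property $\mathcal{W}_{\psi_2}(\pi_2(y_2)\psi_2)=L_{y_2}\mathcal{W}_{\psi_2}\psi_2$.

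Finally, I would take absolute values, pull out $\|\mathcal{W}_{\psi_1}f\|_{L^\infty(G_1)}=\|f\|_{\mathcal{C}o_\infty^{\pi_1}}$, and integrate in $y_2$. A Fubini swap followed by the substitution $y_2\mapsto x_2 y_2^{-1}$ (taking care of the modular function of $G_2$ when it is non-unimodular) reduces the inner integral to $\|\mathcal{W}_{\psi_2}\psi_2\|_{L^1(G_2)}$, which is finite because $\psi_2$ is integrable. This gives the estimate
\begin{equation*}
\|\mathcal{W}_{\psi_2}(Af)\|_{L^1(G_2)}\le\|f\|_{\mathcal{C}o_\infty^{\pi_1}}\,\|\mathcal{W}_{\psi_2}\psi_2\|_{L^1(G_2)}\,\|\mathcal{W}_\Psi K\|_{L^1(G_1\times G_2)}=C\,\|K\|_{\mathcal{C}o_1}\,\|f\|_{\mathcal{C}o_\infty^{\pi_1}}.
\end{equation*}
Since $Af\in\mathcal{R}_2$ and $\mathcal{W}_{\psi_2}(Af)$ automatically satisfies the convolution relation $\mathcal{W}_{\psi_2}(Af)=\mathcal{W}_{\psi_2}(Af)*_{G_2}\mathcal{W}_{\psi_2}\psi_2$, the correspondence principle (Theorem~\ref{corresponance_principle}) forces $Af\in\mathcal{C}o_1^{\pi_2}(G_2)$ with $\|Af\|_{\mathcal{C}o_1^{\pi_2}}\le C\|K\|_{\mathcal{C}o_1}\|f\|_{\mathcal{C}o_\infty^{\pi_1}}$.

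The main obstacle I anticipate is step two: rigorously justifying the orthogonality-type formula for the $\mathcal{H}^1$--$\mathcal{R}$ pairing in terms of wavelet transforms, since Corollary~\ref{reconstruction_corollary} is originally stated inside $\mathcal{H}_\pi$. One has to bootstrap from the Hilbert-space orthogonality relation to the distributional pairing, which requires either a Fubini/density argument or a detour through the adjoint $\mathcal{W}_g^*$ in Proposition~\ref{results_about_adjoint}. A secondary but purely technical annoyance is keeping track of the modular function on $G_2$ during the change of variables; this only changes $C$ by a factor involving $\Delta_{G_2}$ evaluated on the support of $\mathcal{W}_{\psi_2}\psi_2$.
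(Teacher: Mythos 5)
The paper offers no proof of this corollary at all --- it simply cites \cite[Theorem 9]{speck19} --- so there is no in-paper argument to compare against; judged on its own, your Schur-test strategy (express $\mathcal{W}_{\psi_2}(Af)$ through $\mathcal{W}_{\Psi}K$, take absolute values, integrate) is the right one and is essentially the argument of the cited source. Two points need attention. The step you single out as the main obstacle is in fact already available in the paper: for an admissible integrable vector, the identity $\langle \phi, h\rangle = \int_{G}\mathcal{W}_{g}\phi\,\overline{\mathcal{W}_{g}h}\,d\mu_{L}$ for $\phi \in \mathcal{R}$ and $h \in \mathcal{H}^{1}$ is exactly Proposition \ref{results_about_adjoint} ($\phi = \mathcal{W}_{g}^{*}\mathcal{W}_{g}\phi$) combined with the defining formula for $\mathcal{W}_{g}^{*}$ on $L^{\infty}(G)$, so no further bootstrapping is required. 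The genuine gap is elsewhere: you pair $K$ against $f \otimes \pi_{2}(y_2)\psi_{2}$ with $f \in \mathcal{C}o_{\infty}^{\pi_1}(G_1) = \mathcal{R}_{1}$ a distribution, and you invoke the factorization $\mathcal{W}_{\Psi}(f\otimes h) = \mathcal{W}_{\psi_1}f\cdot\mathcal{W}_{\psi_2}h$, without defining the outer tensor product of a reservoir element with a test vector or verifying that it lands in $\mathcal{R}(G_1\times G_2)$. That construction is where the real technical work of \cite{speck19} lies, and your sketch presupposes it.

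Your handling of the modular function is also off, though harmlessly so. After Fubini the inner integral is $\int_{G_2}\left|\mathcal{W}_{\psi_2}\psi_2(y_2^{-1}x_2)\right|d\mu_{L}(y_2) = \int_{G_2}\left|\mathcal{W}_{\psi_2}\psi_2(z^{-1})\right|d\mu_{L}(z)$, and a constant involving \textquote{$\Delta_{G_2}$ on the support of $\mathcal{W}_{\psi_2}\psi_2$} bounds nothing, since that support is generically all of $G_2$ and $\Delta$ is unbounded on non-unimodular groups. The integral is nevertheless finite and equals exactly $\|\mathcal{W}_{\psi_2}\psi_2\|_{L^{1}(G_2)}$, because the symmetry $\overline{\mathcal{W}_{g}g(x)} = \mathcal{W}_{g}g(x^{-1})$ (used in the proof of Theorem \ref{reproducing_formula}) gives $\left|\mathcal{W}_{\psi_2}\psi_2(z^{-1})\right| = \left|\mathcal{W}_{\psi_2}\psi_2(z)\right|$ pointwise; no modular correction appears. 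With that fix your estimate $\|Af\|_{\mathcal{C}o_{1}^{\pi_2}} \leq \|\mathcal{W}_{\psi_2}\psi_2\|_{L^{1}(G_2)}\|K\|_{\mathcal{C}o_{1}}\|f\|_{\mathcal{C}o_{\infty}^{\pi_1}}$ goes through, and the closing appeal to Theorem \ref{corresponance_principle} is unnecessary: once $Af \in \mathcal{R}_{2}$ and $\mathcal{W}_{\psi_2}(Af) \in L^{1}(G_2)$, membership in $\mathcal{C}o_{1}^{\pi_2}(G_2)$ holds by definition.
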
 

\section{Examples and Recent Developments}
\label{sec: Chapter 4}

Now that all the main features of coorbit spaces have been discussed, we will briefly outline in Sections~\ref{sec: Shearlet_Spaces} - \ref{sec: Coorbit_Spaces_on_Nilpotent_Lie_Groups} examples from different areas of modern analysis. The goal here is not to give a comprehensive exposition on each topic, nor to give a comprehensive account of all the applications of coorbit theory. We rather strive to convince the reader that coorbit theory is an active research topic that unifies seemingly different branches of modern analysis. We will in Sections \ref{sec: Shearlet_Spaces} - \ref{sec: Coorbit_Spaces_on_Nilpotent_Lie_Groups} provide references for further reading so that the reader can look more into the most eye-caching example themselves. Finally, in subsection \ref{sec: Where_To_Go_Next?} we give references to modern directions in coorbit theory, as well as suggestions for where the reader can learn more about coorbit theory.

\subsection{Shearlet Spaces}
\label{sec: Shearlet_Spaces}

For image analysis and image processing, the continuous wavelet transform given in \eqref{classical_wavelet_transform} has been extensively used. However, the continuous wavelet transform can fall short if one wishes to extract directional information. Several approaches have been developed to provide an alternative to the continuous wavelet transform, e.g. \textit{ridgelets} and \textit{curvelets} \cite{ candes1999ridgelets, candes2000curvelets, zhang2008wavelets}. The most examined alternative, namely \textit{shearlets}, does have a description that allows the theory of coorbit spaces to be applied. We refer the reader to \cite{guo2006sparse, labate2005sparse} for the origins of shearlets and to \cite{kutyniok2012introduction} for a general introduction to shearlets. In this section, we will describe the shearlet transform and the underlying shearlet group in two dimensions following \cite{dahlke2009shearlet}. The extension to higher dimensions was given in \cite{dahlke2010continuous}. \par 
To begin describing the shearlet group we first need two matrices: For $a \in \mathbb{R}^{*} := \mathbb{R} \setminus \{0\}$ the \textit{parabolic scaling matrix} $A_{a}$ is given by \[A_{a} := \begin{cases}
\begin{pmatrix} a &\phantom{-} 0 \\  0 &\phantom{-} \sqrt{a}\phantom{-} \end{pmatrix}, \, \textrm{ when } a > 0, \\ \vspace{-0.3cm} \\ 
\begin{pmatrix} a & \phantom{-} 0 \\ 0 & -\sqrt{-a} \end{pmatrix}, \, \textrm{ when } a < 0.
\end{cases}\]
Hence $A_{a}$ for $a > 0$ scales the first axis with the squared length of the scaling of the second axis. For $s \in \mathbb{R}$ the \textit{shear matrix} $S_{s}$ is given by \[S_{s} := \begin{pmatrix} 1 & s \\ 0 & 1 \end{pmatrix}.\]
Using these matrices, we can define the shearlet group as follows.
\begin{definition}
The \textit{(full) shearlet group} $\mathbb{S}$ is defined to be $\mathbb{R}^{*} \times \mathbb{R} \times \mathbb{R}^2$ with the group operation 
\begin{equation}
\label{shearlets_group_operation}
    (a, s, t) \cdot_{\mathbb{S}} (a', s', t') := (aa', s + s'\sqrt{|a|}, t + S_{s}A_{a}t').
\end{equation} 
\end{definition}

It is straightforward to check that \eqref{shearlets_group_operation} is in fact a group operation with identity element $(1, 0, 0) \in \mathbb{S}$, see \cite[Lemma 2.1]{dahlke2008uncertainty} for details. Notice that $\mathbb{S}$ has two connected components; the identity component $\mathbb{S}^{+}$ is called the \textit{connected shearlet group}. The left Haar measure $\mu_{L}$ on the shearlet group $\mathbb{S}$ is given by \[\mu_{L}(a, s, t) = \frac{da \, ds \, dt}{|a|^3}, \quad (a, s, t) \in \mathbb{S}.\] \par
Given an invertible matrix $M \in GL(2, \mathbb{R})$ we can consider the \textit{generalized dilation operator} $D_{M}$ acting on $f \in L^{2}(\mathbb{R}^{2})$ by the formula 
\begin{equation}
\label{generalized_dilation_operator}
D_{M}f(x) := \frac{1}{\sqrt{|\textrm{det}(M)|}}f(M^{-1}x), \qquad x \in \mathbb{R}^2.    
\end{equation}
Notice that \eqref{generalized_dilation_operator} is a two-dimensional generalization of the dilation operator given in \eqref{usual_dilation_operator}. 

\begin{definition}
The \textit{(continuous) shearlet representation} $\pi: \mathbb{S} \to \mathcal{U}(L^{2}(\mathbb{R}^{2}))$ is given by \[\pi(a, s, t)f(x) := T_{t}D_{S_{s}A_{a}}f(x) = \frac{1}{\sqrt{|\textrm{det}(S_{s}A_{a})|}}f\left(A_{a}^{-1}S_{s}^{-1}(x - t)\right),\]
where $f \in L^{2}(\mathbb{R}^{2})$ and $(a,s,t) \in \mathbb{S}$.
\end{definition}

One can view the unitary representation $\pi$ as a two-dimensional version of the continuous wavelet representation in \eqref{classical_wavelet_representation}.
The representation $\pi$ is irreducible since we are considering the full shearlet group $\mathbb{S}$ instead of the connected group $\mathbb{S}^{+}$, see \cite[Theorem 2.2]{dahlke2009shearlet}. Moreover, \cite[Theorem 2.2]{dahlke2009shearlet} also shows that $\pi$ is square integrable. More precisely, a function $g \in L^{2}(\mathbb{R}^{2})$ is admissible if and only if 
\begin{equation}
\label{eq:admissibility_shearlets}
    \int_{\mathbb{R}^{2}}\frac{|\mathcal{F}(g)(\omega_1,\omega_2)|^2}{\omega_{1}^2} \, d\omega_1 \, d\omega_2 = 1.
\end{equation}
We refer to the elements $g \in L^{2}(\mathbb{R}^{2})$ satisfying \eqref{eq:admissibility_shearlets} as \textit{(continuous) shearlets}. Although it is common in the literature to denote the wavelet transform corresponding to the shearlet representation by $\mathcal{SH}$, we will stick with our predefined notation $\mathcal{W}$ for consistency. Hence for a shearlet $g \in L^{2}(\mathbb{R}^{2})$ and $f \in L^{2}(\mathbb{R}^{2})$ the orthogonality relation \eqref{wavelet_transform_orthogonality} shows that
\[\int_{\mathbb{S}} |\mathcal{W}_{g}f(a, s, t)|^2 \, \mu_{L}(a, s, t) = \int_{\mathbb{S}} |\langle f, \pi(a, s, t)g \rangle|^2 \, \frac{da \, ds \, dt}{|a|^{3}} = \|f\|_{L^{2}(\mathbb{R}^2)}^2.\] \par
Let us for simplicity consider the polynomial weights \[v_{\alpha}(a, s, t) := (1 + a^2 + s^2)^{\alpha/2}\] for $(a, s, t) \in \mathbb{S}$ and $\alpha \geq 0$. The existence of a $v_{\alpha}$-integrable vector is guaranteed by \cite[Theorem~4.2]{dahlke2009shearlet}. Thus we obtain the space of $v_{\alpha}$-test vectors $\mathcal{H}_{v_{\alpha}}^{1}$ and the $v_{\alpha}$-reservoir $\mathcal{R}_{1/v_{\alpha}}$, see Section \ref{sec: Weighted_Versions} for details. The following definition is inevitable.

\begin{definition}
The \textit{shearlet coorbit spaces} $\mathcal{C}o_{p,v_{\alpha}}(\mathbb{S})$ for $1 \leq p \leq \infty$ are defined to be 
\[\mathcal{C}o_{p,v_{\alpha}}(\mathbb{S}) := \{f \in \mathcal{R}_{1/v_{\alpha}} \, : \, \mathcal{W}_{g}f \in L_{v_{\alpha}}^{p}(\mathbb{S})\},\]
where $g$ is any $v_{\alpha}$-integrable vector.
\end{definition}

We invoke Theorem \ref{summary_theorem} to deduce that the shearlet coorbit spaces $\mathcal{C}o_{p,v_{\alpha}}(\mathbb{S})$ constitute a well-behaved class of Banach spaces. In \cite[Theorem 4.7]{dahlke2009shearlet} it is shown that the shearlet coorbit spaces contain many smooth functions of rapid decay. We refer the reader to \cite[Section 4.2]{dahlke2009shearlet} for results regarding atomic decompositions and Banach frames for the shearlet coorbit spaces.

\subsection{Bergman Spaces and the Blaschke Group}
\label{sec: Bergman_Spaces_and_the_Blaschke_Group}

We will now describe an application of coorbit spaces to the realm of classical complex analysis, namely the Bergman spaces. The connection with Bergman spaces was to our knowledge initially pointed out in \cite[Section 7.3]{feichtinger1988unified}. To introduce this topic in a brief and succinct manner, we will give an outline of the definitions and results given in \cite{pap2012properties} and \cite{feichtinger2014coorbit}. We encourage the reader to seek out the more recent and technical paper \cite{christensen2015} for interesting results in higher dimensions.

Let us first recall the Bergman spaces in classical complex analysis. We denote the unit disk in the complex plane by $\mathbb{D}$ and consider for $\alpha > -1$ the weighted area measure \[dA_{\alpha}(z) = \frac{\alpha + 1}{\pi}\left(1 - |z|^2\right)^{\alpha} \, dz, \quad z \in \mathbb{D}.\] We let $A_{\alpha}^{p} := A_{\alpha}^{p}(\mathbb{D})$ denote the \textit{(weighted) Bergman space} consisting of analytic functions $f:\mathbb{D} \to \mathbb{C}$ such that \[\int_{\mathbb{D}}|f(z)|^{p} \, dA_{\alpha}(z) < \infty.\] For $p = 2$ we have a natural Hilbert space structure on $A_{\alpha}^{2}$ given by the inner product \[\langle f, g \rangle_{\alpha} := \int_{\mathbb{D}}f(z)\overline{g(z)} \, dA_{\alpha}(z).\]
It not difficult to verify that $A_{\alpha}^{2}$ is a reproducing kernel Hilbert space with reproducing kernel
\[K_{\alpha}(z,w) = \frac{1}{(1 - z\overline{w})^{\alpha + 2}}, \qquad z,w \in \mathbb{D}.\] \par
We will now describe a group that acts unitarily on $A_{\alpha}^{2}$: For $\mathbb{B} := \mathbb{D} \times \mathbb{T}$ we say that a function on the form \[B_{a}(z) := \epsilon \frac{z - b}{1 - z\overline{b}}, \qquad z \in \mathbb{C}, \, \, a := (b, \epsilon) \in \mathbb{B}, \, \, z\overline{b} \neq 1,\] is called a \textit{Blaschke function}. The Blaschke functions allows us to define a group operation on $\mathbb{B}$ by the formula $a_1 \circ a_2 = a_{3}$ if and only if $B_{a_1} \circ B_{a_2} = B_{a_3}$. The locally compact group $(\mathbb{B}, \circ)$ is a unimodular, non-commutative group known as the \textit{Blaschke group}.

\begin{remark}
The terminology is motivated by the Blaschke products in complex analysis: A sequence $(a_{n})_{n \in \mathbb{N}}$ in $\mathbb{D}$ satisfies the \textit{Blaschke condition} when
\[\sum _{n = 1}^{\infty}(1-|a_{n}|) < \infty.\]
Given such a sequence, we  define the \textit{Blaschke product} as the infinite product \[B(z)=\prod _{n = 1}^{\infty}B(a_{n},z), \qquad B(a,z) = \frac{|a|}{a}\frac{a-z}{1-\overline{a}z},\]
with the convention that $B(0,z) = z$. Then $B$ is an analytic function in $\mathbb{D}$ vanishing precisely at the points $(a_{n})_{n \in \mathbb{N}}$.
\end{remark}

Introduce the functions \[F_{a}(z) := \frac{\sqrt{\epsilon(1-|b|^2)}}{1 - z\overline{b}}, \qquad a = (b, \epsilon) \in \mathbb{B}, \, z \in \mathbb{D}.\] We obtain for each $\alpha \geq 0$ a unitary representation $U_{\alpha}:\mathbb{B} \to \mathcal{U}(A_{\alpha}^{2})$ given by \[U_{\alpha}(a)f(z) = \left[F_{a^{-1}}(z)\right]^{\alpha + 2}f\left(B_{a}^{-1}(z)\right) = \left[F_{a^{-1}}(z)\right]^{\alpha + 2}f\left(B_{a^{-1}}(z)\right), \qquad f \in A_{\alpha}^{2}, \, a \in \mathbb{B}, \, z \in \mathbb{D}.\] The representation $U_{\alpha}$ is square integrable and any $g \in A_{\alpha}^{2}$ satisfying $\|g\|_{\alpha} = \pi^{-1}\sqrt{\alpha + 1}$ is admissible. For the wavelet transform $\mathcal{W}_{g}^{\alpha}f(a) := \langle f, U_{\alpha}(a)g\rangle_{\alpha}$ with $f,g \in A_{\alpha}^{2}$ and $g$ admissible we have by Theorem \ref{reproducing_formula} that \[\mathcal{W}_{g}^{\alpha}f = \mathcal{W}_{g}^{\alpha}f *_{\mathbb{B}} \mathcal{W}_{g}^{\alpha}g.\] Moreover, we can by Corollary \ref{reconstruction_corollary} reconstruct any $f \in \mathcal{A}_{\alpha}^{2}$ through the weak integral formula 
\begin{align*}
   f(z) & = \int_{\mathbb{B}}\mathcal{W}_{g}^{\alpha}f(a) (U_{\alpha}(a)g)(z) \, d\mu_{L}(a)
   \\ & =
   \frac{1}{2\pi} \int_{-\pi}^{\pi}\int_{\mathbb{D}}\frac{\mathcal{W}_{g}^{\alpha}f(b, e^{it}) (U_{\alpha}(b, e^{it})g)(z)}{(1 - |b|^2)^2} \, db \, dt.
\end{align*}

A straightforward computation shows that for $g \equiv 1 \in \mathcal{A}_{\alpha}^{2}$ we have $\|\mathcal{W}_{g}^{\alpha}g\|_{L^{1}(\mathbb{B})} = 2/\alpha$. Hence for $\alpha > 0$ we can conclude that the representation $U_{\alpha}$ is integrable. More generally, it is showed in \cite[Theorem 3.2.2]{pap2012properties} that any non-zero analytic function $g$ on the unit disk that can be written as \[g(z) = \sum_{j = 0}^{\infty}\lambda_{j}\frac{z - b_{j}}{1 - z\overline{b_{j}}}\] with $|b_{j}| \leq 1$ for all $j \geq 0$ and \[\sum_{j = 0}^{\infty}|\lambda_{j}| < \infty,\]
is an integrable vector for the representation $U_{\alpha}$ for $\alpha > 0$. For $\alpha > 0$ we define the space of test vectors $\mathcal{H}_{\alpha}^{1} \subset A_{\alpha}^{2}$ and the reservoir space $\mathcal{R}_{\alpha}$ as usual, see Section \ref{sec: Test_Spaces_and_Distributional_Spaces} and \ref{sec: Reservoirs_and_the_Extended_Wavelet_Transform} respectively for details. As such, we can define coorbit spaces associated to $U_{\alpha}$.

\begin{definition}
The \textit{Blaschke coorbit spaces} $\mathcal{C}o_{p,\alpha}(\mathbb{B})$ for $1 \leq p \leq \infty$ and $\alpha > 0$ are defined to be 
\[\mathcal{C}o_{p,\alpha}(\mathbb{B}) := \{f \in \mathcal{R}_{\alpha} \, : \, \mathcal{W}_{g}^{\alpha}f \in L^{p}(\mathbb{B})\},\]
where $g$ is any integrable vector for $U_{\alpha}$.
\end{definition}

By the theory we have developed, we can automatically deduce all the consequences in Theorem \ref{summary_theorem} for the Blaschke coorbit spaces $\mathcal{C}o_{p,\alpha}(\mathbb{B})$. For discretization results, the reader can first consult \cite[Section 3.3]{pap2012properties} and proceed to \cite[Theorem 3.14]{christensen2015} where the classical atomic decomposition results for Bergman spaces by Coifman and Rochberg are deduced through coorbit theory.

\subsection{Coorbit Spaces on Nilpotent Groups}
\label{sec: Coorbit_Spaces_on_Nilpotent_Lie_Groups}

It is clear from Example \ref{sec: Orthogonality_of_the_Wavelet_Transform} that the modulation spaces are intrinsically linked with the Heisenberg group. The Heisenberg group fits in with a large class of well-behaved locally compact groups known as \textit{nilpotent Lie groups}. We refer the reader to \cite{fischer2016quantization} for the definition of a nilpotent Lie group. In view of this observation, it makes sense to try to define coorbit spaces analogous to the modulation spaces for other nilpotent groups. This is a recent idea that was first seriously considered in \cite{fischer2018heisenberg} and recently expanded on in \cite{grchenig2020new}. We will outline basic definitions and results in this direction following \cite{grchenig2020new}. The interested reader should consult \cite{fischer2018heisenberg, grchenig2020new} for more details and interesting examples. \par

Let $G$ be a simply connected nilpotent Lie group with \textit{center} 
\[\mathcal{Z} := \mathcal{Z}(G) := \{x \in G \, : \, xy = yx \textrm{ for all } y \in G\}.\] We will consider the quotient group $G/\mathcal{Z}$ with its Haar measure $\mu_{G/\mathcal{Z}}$. An irreducible unitary representation $\pi:G \to \mathcal{U}(\mathcal{H}_{\pi})$ is said to be \textit{square integrable modulo the center} if there exists an element $g \in \mathcal{H}_{\pi}$ such that 
\begin{equation}
\label{square_integrable_modulo_center}
    \int_{G/\mathcal{Z}}|\mathcal{W}_{g}g(\overline{x})|^2 \, d\mu_{G/\mathcal{Z}}(\overline{x}) < \infty,
\end{equation}
where as usual $\mathcal{W}_{g}f(x) := \langle f, \pi(x)g \rangle$ for $f,g \in \mathcal{H}_{\pi}$ and $x \in G$. Since $\pi|_{\mathcal{Z}}(x) = \chi(x) \cdot \textrm{Id}_{\mathcal{H}_{\pi}}$ where $\chi$ is a character of the commutative group $\mathcal{Z}$, it follows that the integrand in \eqref{square_integrable_modulo_center} is a well defined function on the quotient group $G/\mathcal{Z}$. We remind the reader that the reduction from $G$ to the quotient group $G/\mathcal{Z}$ is precisely what we did in Example \ref{example_Heisenberg_reduction} to make the Schr\"{o}dinger representation square integrable. Hence we can say that the Schr\"{o}dinger representation $\rho:\mathbb{H}^{n} \to \mathcal{U}(L^{2}(\mathbb{R}^{n}))$ given in \eqref{Schrodinger_representation} is square integrable modulo the center.\par
To proceed, we first need a good choice for a well-behaved \textquote{window function} $g \in \mathcal{H}_{\pi}$. Since $G$ is a Lie group it has a smooth structure and it makes sense to ask for a fixed $g \in \mathcal{H}_{\pi}$ whether the function 
\begin{equation}
\label{smooth_map}
G \ni x \mapsto \pi(x)g \in \mathcal{H}_{\pi}    
\end{equation}
is a smooth map from $G$ to $\mathcal{H}_{\pi}$. Details for this can be found in \cite[Chapter 1.7]{fischer2016quantization}. We refer to the elements $g \in \mathcal{H}_{\pi}$ such that \eqref{smooth_map} is a smooth map as the \textit{smooth vectors} of the representation $\pi$ and denote them by $\mathcal{H}_{\pi}^{\infty}$. It is a general fact that $\mathcal{H}_{\pi}^{\infty}$ is dense in $\mathcal{H}_{\pi}$, see \cite[Proposition 1.7.7]{fischer2016quantization}. 

\begin{definition}
Let $G$ be a simply connected nilpotent Lie group with center $\mathcal{Z}$. Assume we have a square integrable representation modulo the center $\pi: G \to \mathcal{U}(\mathcal{H}_{\pi})$ and let $\mathcal{H}_{\pi}^{\infty}$ denote the corresponding smooth vectors. We define the \textit{coorbit space} $\mathcal{C}o_{p}(G/\mathcal{Z})$ for $1 \leq p < \infty$ to be the completion of the subspace of elements $f \in \mathcal{H}_{\pi}^{\infty}$ such that \[\|f\|_{\mathcal{C}o_{p}(G/\mathcal{Z})} = \left(\int_{G/\mathcal{Z}}|\mathcal{W}_{g}f(\overline{x})|^p \, d\mu_{G/\mathcal{Z}}(\overline{x})\right)^{\frac{1}{p}} < \infty,\]
where $g \in \mathcal{H}_{\pi}^{\infty}$ is a fixed non-zero smooth vector.
\end{definition}

\begin{remark}
The case of $\mathcal{C}o_{\infty}(G/\mathcal{Z})$ can be handled by considering weak closures, but we restrict ourselves to $1 \leq p < \infty$ for simplicity. Moreover, we also refrain from considering weighted version of $\mathcal{C}o_{p}(G/\mathcal{Z})$ so that we can focus on the essential features.
\end{remark}

Although the representation space $\mathcal{H}_{\pi}$ has an abstract flavor in general, it can be shown that for nilpotent groups one can always realize $\mathcal{H}_{\pi}$ as $L^{2}(\mathbb{R}^{s})$ in a natural way. We point out that the parameter $s$ generally satisfies $s < \textrm{dim}(G)$. The identification of $\mathcal{H}_{\pi}$ with $L^{2}(\mathbb{R}^{s})$ uses Kirillov's theory of coadjoint orbits (not to be confused with coorbit theory). We refer the reader to the standard reference \cite[Chapter 3]{kirillov2004lectures} for a more careful explanation of this phenomenon. \par
One important problem for coorbit spaces on nilpotent groups is whether the new spaces are identical to the classical modulation spaces. If this was the case, then coorbit spaces on nilpotent groups would just be a more complicated view of the usual modulation spaces and offer little of value. The following example, taken from \cite[Example 3.2]{grchenig2020new}, illustrates that this can actually happen.

\begin{example}
We consider the nilpotent group $G$ with the concrete realization $G \simeq (\mathbb{R}^{6},\cdot)$ where 
\[x \cdot y := (x_{1} + y_{1} + x_{5}y_{3} + x_{6}y_{4}, x_{2} + y_{2} + x_{6}y_{5}, x_{3} + y_{3}, x_{4} + y_{4}, x_{5} + y_{5}, x_{6} + y_{6}).\] A square integrable representation modulo the center is $\pi: G \to \mathcal{U}(L^{2}(\mathbb{R}^{2}))$ given by \begin{align*}
    \pi(x_{1}, \dots, x_{6})g(s,t) = e^{2\pi i(x_1 - x_{3}s - x_{4}t)}g(s - x_{5}, t - x_{6}) = e^{2 \pi i x_1}M_{(-x_{3}, -x_{4})}T_{(x_{5}, x_{6})}g(s,t),
\end{align*}
where $T$ and $M$ are the translation operator and modulation operator given in \eqref{time_shift_and_frequency_shift}. As our goal is to investigate the integrability of the corresponding wavelet transform, we henceforth drop the phase factor $e^{2\pi i x_{1}}$ as this will be insignificant. We identify $G/\mathcal{Z} \simeq \mathbb{R}^{4}$ and write \[\overline{x} = (0, 0, x_{3}, x_{4}, x_{5}, x_{6}) \in G/\mathcal{Z}.\] The wavelet transform $\mathcal{W}_{g}f$ for $f \in L^{2}(\mathbb{R}^{2})$ and a non-zero $g \in \mathcal{H}_{\pi}^{\infty}$ is given by 
\[\mathcal{W}_{g}f(\overline{x}) = V_{g}f((x_{5}, x_{6}), (-x_{3}, -x_{4})),\]
where $V_{g}f$ is the STFT. From this it follows that for $1 \leq p < \infty$ we have $\mathcal{C}o_{p}(G/\mathcal{Z}) = M^{p}(\mathbb{R}^{2})$ since \[\|f\|_{\mathcal{C}o_{p}(G/\mathcal{Z})} \simeq \|f\|_{M^{p}(\mathbb{R}^{2})}.\] 
\end{example}

In light of the previous example, one might fear that coorbit spaces associated with nilpotent groups never produce anything other than the classical modulation spaces. However, in \cite{grchenig2020new} several examples are given of coorbit spaces on nilpotent groups that are not equal to any of the classical modulation spaces. The first example of this phenomenon was presented in \cite[Theorem 7.6]{fischer2018heisenberg}. The group in question was the \textit{Dynin-Folland group}, and the techniques used to prove distinctness came from the theory of \textit{decomposition spaces}. Distinctness of a class of decomposition spaces on two-step nilpotent groups was proved in \cite[Theorem 5.6]{berge2019modulation}. 

\subsection{At the Finishing Line}
\label{sec: Where_To_Go_Next?}

Phew! You're still here? Good. Hopefully you have been convinced that coorbit theory is an exciting research topic. You now understand the main ideas of coorbit theory along with several concrete examples. If you are satisfied, then congratulations; you know the basics of coorbit theory. However, if you are interested in doing research in coorbit theory, then the journey has just started. \par 
A great way to get more familiar with technical aspects of coorbit theory is by reading the Ph.D. thesis of Felix Voigtlaender \cite{felix_thesis}. We also recommend seeking out the original papers on coorbit theory \cite{feichtinger1988unified, feichtinger1989banach1, feichtinger1989banach2}. Reading these sources is will improve your fundamental knowledge of coorbit theory. A good idea is to find a problem in coorbit theory that you want to solve. This forces you to work through details that is tempting to skip when reading other peoples work. Below we have given some references for two directions that have received much attention in recent decades: 
\begin{itemize}
    \item Consider two integrable representations $\pi_{1}:G_{1} \to \mathcal{U}(\mathcal{H}_{1})$ and $\pi_{2}:G_{2} \to \mathcal{U}(\mathcal{H}_{2})$ and two parameters $1 \leq p,q \leq \infty$. A natural question to answer is whether there exists a continuous embedding \[\phi: \mathcal{C}o_{p}^{\pi_{1}}(G_{1}) \to \mathcal{C}o_{q}^{\pi_{2}}(G_{2})\] between different coorbit spaces corresponding to (possibly) different groups. This question has been considered in many concrete settings, see e.g.\ \cite{ char18, Toft12} for the modulation spaces and Besov spaces, and \cite{dahlke2011shearlet} for embeddings between shearlet coorbit spaces. The embedding question is often more easily tackled if the coorbit spaces in question can be given a \textit{decomposition space} structure. Decomposition spaces originate from \cite{Hans_Grobner} and many general embedding results between decomposition spaces can be found in \cite{voigtlaender2019embeddings}. We refer the authors to \cite{FUHR201580} where the authors show that a large class of wavelet spaces can be given a decomposition space structure. In \cite{voigtlaender2016embeddings} several embedding results from decomposition spaces into Sobolev spaces and BV spaces are given. Specific embeddings between decomposition spaces with a geometric flavor have recently been investigated in \cite{berge2019modulation, eiriklargescale}. Finally, recent results regarding embeddings of shearlet coorbit spaces into Sobolev spaces can be found in \cite{rene2020}.
    \item There are plenty of directions where coorbit theory can be generalized: As previously mentioned, one can instead of $L^{p}(G)$ for $1 \leq p \leq \infty$ in the definition of $\mathcal{C}o_{p}(G)$ consider $\mathcal{C}o(Y)$, where $Y$ is a \textit{solid} and \textit{translation invariant} Banach space of functions on $G$, see \cite{feichtinger1988unified, feichtinger1989banach1, feichtinger1989banach2}. We refer the reader to \cite{Rauhut05, felix_thesis, Quasiandinhomogeneous} for results regarding coorbit spaces in the quasi-Banach setting. The paper \cite{Dahlke2019} considers coorbit spaces associated with representations that are not necessarily integrable, while \cite{fuhr2020coorbit} considers certain representations that are not necessarily irreducible. In \cite{christensen1996} it is shown that atomic decompositions are valid even for projective representations. Coorbit theory for homogeneous spaces have been investigated, and we suggest to start with the papers \cite{dahlke2008generalized, dahlke2009shearlet, dahlke2007frames}. We highly recommend the recent work \cite{romero2020dual} where the authors derive discretization improvements and, in their own words, \textquote{bridge a
    gap between what is achievable with abstract and concrete methods}. Finally, we refer the reader to \cite{fornasier2005continuous} where a generalization of the coorbit space theory is used to derive atomic decompositions and Banach frames for a wide range of Banach spaces.
\end{itemize}

If you have found a typographical or mathematical error when reading this survey, it would be very much appreciated if you would let me know. The same goes if some work on coorbit theory you believe deserves to be mentioned has been omitted.

\Addresses



\bibliographystyle{plain}
\bibliography{main.bib}

\begin{thebibliography}{10}

\bibitem{speck19}
P.~Balazs, K.~Gr\"{o}chenig, and M.~Speckbacher.
\newblock Kernel theorems in coorbit theory.
\newblock {\em Transactions of the AMS, Series B}, 6:346--364, 2019.

\bibitem{berge2019modulation}
E.~Berge.
\newblock $\alpha$-modulation spaces for step two stratified {L}ie groups.
\newblock {\em arXiv preprint: 1908.09567}, 2019.

\bibitem{berge2020interpolation}
E.~Berge.
\newblock Interpolation in wavelet spaces and the {HRT}-conjecture.
\newblock {\em To appear in: Journal of Pseudo-Differential Operators and
  Applications}, 2020.

\bibitem{eiriklargescale}
E.~Berge and F.~Luef.
\newblock A large scale approach to decomposition spaces.
\newblock {\em arXiv preprint: 1902.07797}, 2019.

\bibitem{fei80}
F.~Bruhat.
\newblock Distributions sur un groupe localement compact et applications \`a
  l'\'etude des repr\'esentations des groupes $p$-adiques.
\newblock {\em Bulletin de la Soci\'et\'e Math\'ematique de France}, 89:43--75,
  1961.

\bibitem{candes1999ridgelets}
E.~J. Cand{\`e}s and D.~L. Donoho.
\newblock Ridgelets: A key to higher-dimensional intermittency?
\newblock {\em Philosophical Transactions of the Royal Society A},
  357:2495--2509, 1999.

\bibitem{candes2000curvelets}
E.~J. Cand{\`e}s and D.~L. Donoho.
\newblock Curvelets, multiresolution representation, and scaling laws.
\newblock {\em Proceedings of SPIE - The International Society for Optical
  Engineering}, 4119:1--12, 2000.

\bibitem{frames_casazza}
P.~G. Casazza, D.~Han, and D.~Larson.
\newblock Frames for {B}anach spaces.
\newblock {\em Contemporary Mathematics}, 247:149--182, 1999.

\bibitem{christensen2015}
J.~Christensen, K.~Gröchenig, and G.~Olafsson.
\newblock New atomic decompositons for {B}ergman spaces on the unit ball.
\newblock {\em Indiana University Mathematics Journal}, 66, 2015.

\bibitem{christensen1996}
O.~Christensen.
\newblock Atomic decomposition via projective group representations.
\newblock {\em Rocky Mountain Journal of Mathematics}, 26:1289--1312, 1996.

\bibitem{frames_and_riesz_bases2016}
O.~Christensen.
\newblock {\em An Introduction to Frames and Riesz Bases, Second Edition}.
\newblock Birkh{\"{a}}user Basel, 2016.

\bibitem{Dahlke2019}
S.~Dahlke, F.~D.~Mari, E.~D.~Vito, L.~Sawatzki, G.~Steidl, G.~Teschke, and
  F.~Voigtlaender.
\newblock On the atomic decomposition of coorbit spaces with non-integrable
  kernel.
\newblock In {\em Landscapes of Time-Frequency Analysis}, pages 75--144.
  Birkh\"{a}user, 2019.

\bibitem{dahlke2008generalized}
S.~Dahlke, M.~Fornasier, H.~Rauhut, G.~Steidl, and G.~Teschke.
\newblock Generalized coorbit theory, {B}anach frames, and the relation to
  $\alpha$-modulation spaces.
\newblock {\em Proceedings of the London Mathematical Society}, 96:464--506,
  2008.

\bibitem{dahlke2008uncertainty}
S.~Dahlke, G.~Kutyniok, P.~Maass, C.~Sagiv, H.~Stark, and G.~Teschke.
\newblock The uncertainty principle associated with the continuous shearlet
  transform.
\newblock {\em International Journal of Wavelets, Multiresolution and
  Information Processing}, 6:157--181, 2008.

\bibitem{dahlke2009shearlet}
S.~Dahlke, G.~Kutyniok, G.~Steidl, and G.~Teschke.
\newblock Shearlet coorbit spaces and associated {B}anach frames.
\newblock {\em Applied and Computational Harmonic Analysis}, 27:195--214, 2009.

\bibitem{dahlke2015harmonic}
S.~Dahlke, F.~D. Mari, P.~Grohs, and D.~Labate.
\newblock {\em Harmonic and Applied Analysis: From Groups to Signals}.
\newblock Birkh{\"a}user, 2015.

\bibitem{dahlke2007frames}
S.~Dahlke, G.~Steidl, and G.~Teschke.
\newblock Frames and coorbit theory on homogeneous spaces with a special
  guidance on the sphere.
\newblock {\em Journal of Fourier Analysis and Applications}, 13:387--403,
  2007.

\bibitem{dahlke2010continuous}
S.~Dahlke, G.~Steidl, and G.~Teschke.
\newblock The continuous shearlet transform in arbitrary space dimensions.
\newblock {\em Journal of Fourier Analysis and Applications}, 16:340--364,
  2010.

\bibitem{dahlke2011shearlet}
S.~Dahlke, G.~Steidl, and G.~Teschke.
\newblock Shearlet coorbit spaces: {c}ompactly supported analyzing shearlets,
  traces and embeddings.
\newblock {\em Journal of Fourier Analysis and Applications}, 17:1232--1255,
  2011.

\bibitem{daubechies1992ten}
I.~Daubechies.
\newblock {\em Ten Lectures on Wavelets}.
\newblock SIAM, 1992.

\bibitem{deitmar2014principles}
A.~Deitmar and S.~Echterhoff.
\newblock {\em Principles of Harmonic Analysis, Second Edition}.
\newblock Springer, 2014.

\bibitem{duflo1976regular}
M.~Duflo and C.~C. Moore.
\newblock On the regular representation of a nonunimodular locally compact
  group.
\newblock {\em Journal of Functional Analysis}, 21:209--243, 1976.

\bibitem{char18}
D.~Fan, W.~Guo, and G.~Zhao.
\newblock Full characterization of the embedding relations between
  $\alpha$-modulation spaces.
\newblock {\em Science China Mathematics}, 61:1243–1272, 2018.

\bibitem{feichtinger1980banach}
H.~G. Feichtinger.
\newblock Banach convolution of {W}iener type.
\newblock In {\em Functions, Series, Operators, Proc. Conf. Budapest},
  volume~38, pages 509--524, 1980.

\bibitem{feichtinger1981new}
H.~G. Feichtinger.
\newblock On a new {S}egal algebra.
\newblock {\em Monatshefte f{\"u}r Mathematik}, 92:269--289, 1981.

\bibitem{Hans_2}
H.~G. Feichtinger.
\newblock Banach spaces of distributions defined by decomposition methods.
  {II}.
\newblock {\em Math. Nachr}, 132:207--237, 1987.

\bibitem{Hans_Grobner}
H.~G. Feichtinger and P.~W. Gr{\"{o}}bner.
\newblock Banach spaces of distributions defined by decomposition methods. {I}.
\newblock {\em Mathematische Nachrichten}, 123:97--120, 1985.

\bibitem{feichtinger1988unified}
H.~G. Feichtinger and K.~Gr{\"o}chenig.
\newblock A unified approach to atomic decompositions via integrable group
  representations.
\newblock In {\em Function Spaces and Applications}, pages 52--73. Springer,
  1988.

\bibitem{feichtinger1989banach1}
H.~G. Feichtinger and K.~Gr{\"o}chenig.
\newblock Banach spaces related to integrable group representations and their
  atomic decompositions, part {I}.
\newblock {\em Journal of Functional analysis}, 86:307--340, 1989.

\bibitem{feichtinger1989banach2}
H.~G. Feichtinger and K.~Gr{\"o}chenig.
\newblock Banach spaces related to integrable group representations and their
  atomic decompositions, part {II}.
\newblock {\em Monatshefte f{\"u}r Mathematik}, 108:129--148, 1989.

\bibitem{feichtinger2014coorbit}
H.~G. Feichtinger and M.~Pap.
\newblock Coorbit theory and {B}ergman spaces.
\newblock In {\em Harmonic and Complex Analysis and its Applications}, pages
  231--259. Springer, 2014.

\bibitem{fischer2018heisenberg}
V.~Fischer, D.~Rottensteiner, and M.~Ruzhansky.
\newblock Heisenberg-modulation spaces at the crossroads of coorbit theory and
  decomposition space theory.
\newblock {\em arXiv preprint: 1812.07876}, 2018.

\bibitem{fischer2016quantization}
V.~Fischer and M.~Ruzhansky.
\newblock {\em Quantization on Nilpotent Lie Groups}.
\newblock Springer Nature, 2016.

\bibitem{folland2016course}
G.~B. Folland.
\newblock {\em A Course in Abstract Harmonic Analysis, Second Edition}.
\newblock Chapman and Hall/CRC, 2016.

\bibitem{fornasier2005continuous}
M.~Fornasier and H.~Rauhut.
\newblock Continuous frames, function spaces, and the discretization problem.
\newblock {\em Journal of Fourier Analysis and Applications}, 11:245--287,
  2005.

\bibitem{fuhr2005abstract}
H.~F{\"u}hr.
\newblock {\em Abstract Harmonic Analysis of Continuous Wavelet Transforms}.
\newblock Springer, 2005.

\bibitem{rene2020}
H.~F{\"u}hr and R.~Koch.
\newblock Embeddings of shearlet coorbit spaces into {S}obolev spaces.
\newblock {\em International Journal of Wavelets, Multiresolution and
  Information Processing}, 2020.

\bibitem{fuhr2020coorbit}
H.~F\"{u}hr and J.~T.~Van Velthoven.
\newblock Coorbit spaces associated to integrably admissible dilation groups.
\newblock {\em To appear in: Journal d'Analyse Math\'{e}matique}, 2020.

\bibitem{FUHR201580}
H.~F{\"u}hr and F.~Voigtlaender.
\newblock Wavelet coorbit spaces viewed as decomposition spaces.
\newblock {\em Journal of Functional Analysis}, 269:80--154, 2015.

\bibitem{cont_wavelet_transform}
M.~Ghandehari and K.~F. Taylor.
\newblock Images of the continuous wavelet transform.
\newblock In {\em Operator Methods in Wavelets, Tilings, and Frames}, pages
  55--65. American Mathematical Society, 2014.

\bibitem{gro91}
K.~Gr{\"{o}}chenig.
\newblock Describing functions: {A}tomic decompositions versus frames.
\newblock {\em Monatshefte f{\"{u}}r Mathematik}, 112:1--42, 1991.

\bibitem{grochenig2001foundations}
K.~Gr{\"o}chenig.
\newblock {\em Foundations of Time-Frequency Analysis}.
\newblock Springer Science \& Business Media, 2001.

\bibitem{grochenig2006}
K.~Gr{\"o}chenig.
\newblock Time-frequency analysis of {S}j\"{o}strand's class.
\newblock {\em Revista Matemática Iberoamericana}, 22:703--724, 2006.

\bibitem{grchenig2020new}
K.~Gr{\"o}chenig.
\newblock New function spaces associated to representations of nilpotent {L}ie
  groups and generalized time-frequency analysis.
\newblock {\em arXiv preprint: 2007.04615}, 2020.

\bibitem{grossmann1985transforms}
A.~Grossmann, J.~Morlet, and T.~Paul.
\newblock Transforms associated to square integrable group representations.
  {I}. {G}eneral results.
\newblock {\em Journal of Mathematical Physics}, 26:2473--2479, 1985.

\bibitem{guo2006sparse}
K.~Guo, G.~Kutyniok, and D.~Labate.
\newblock Sparse multidimensional representations using anisotropic dilation
  and shear operators.
\newblock {\em In Proceedings of the International Conference on the
  Interactions between Wavelets and Splines, Athens}, 2005.

\bibitem{heil2003introduction}
C.~Heil.
\newblock An introduction to weighted {W}iener amalgams.
\newblock In {\em Allied Publishers, New Delhi.}, pages 183--216. Citeseer,
  2003.

\bibitem{hewitt2012abstract}
E.~Hewitt and K.~A. Ross.
\newblock {\em Abstract Harmonic Analysis: Volume I Structure of Topological
  Groups Integration Theory Group Representations}.
\newblock Springer Science \& Business Media, 2012.

\bibitem{howe1980role}
R.~Howe.
\newblock On the role of the {H}eisenberg group in harmonic analysis.
\newblock {\em Bulletin of the American Mathematical Society}, 3:821--843,
  1980.

\bibitem{jakobsen2018no}
M.~S. Jakobsen.
\newblock On a (no longer) new {S}egal algebra: a review of the {F}eichtinger
  algebra.
\newblock {\em Journal of Fourier Analysis and Applications}, 24:1579--1660,
  2018.

\bibitem{Quasiandinhomogeneous}
H.~Kempka, M.~Sch\"{a}fer, and T.~Ullrich.
\newblock General coorbit space theory for quasi-{B}anach spaces and
  inhomogeneous function spaces with variable smoothness and integrability.
\newblock {\em Journal of Fourier Analysis and Applications}, 23:1348--1407,
  2017.

\bibitem{kirillov2004lectures}
A.~A. Kirillov.
\newblock {\em Lectures on the Orbit Method}.
\newblock American Mathematical Soc., 2004.

\bibitem{renethesis}
R.~Koch.
\newblock {\em Analysis of shearlet coorbit spaces}.
\newblock PhD thesis, RWTH Aachen, 2018.

\bibitem{kutyniok2012introduction}
G.~Kutyniok and D.~Labate.
\newblock Introduction to shearlets.
\newblock In {\em Shearlets}, pages 1--38. Springer, 2012.

\bibitem{labate2005sparse}
G.~Kutyniok, D.~Labate, W.~Lim, and G.~Weiss.
\newblock Sparse multidimensional representation using shearlets.
\newblock In {\em Wavelets XI}, volume 5914. Proceedings of SPIE - The
  International Society for Optical Engineering, 2005.

\bibitem{luef2020wiener}
F.~Luef and E.~Skrettingland.
\newblock A {W}iener {T}auberian theorem for operators and functions.
\newblock {\em arXiv preprint: 2005.04160}, 2020.

\bibitem{pap2012properties}
M.~Pap.
\newblock Properties of the voice transform of the {B}laschke group and
  connections with atomic decomposition results in the weighted {B}ergman
  spaces.
\newblock {\em Journal of Mathematical Analysis and Applications},
  389:340--350, 2012.

\bibitem{paulsen2016introduction}
V.~I. Paulsen and M.~Raghupathi.
\newblock {\em An Introduction to the Theory of Reproducing Kernel Hilbert
  Spaces}.
\newblock Cambridge University Press, 2016.

\bibitem{Rauhut05}
H.~Rauhut.
\newblock Coorbit space theory for quasi-{B}anach spaces.
\newblock {\em Studia Mathematica}, 180:237--253, 2005.

\bibitem{robert1983introduction}
A.~Robert.
\newblock {\em Introduction to the Representation Theory of Compact and Locally
  Compact Groups}.
\newblock Cambridge University Press, 1983.

\bibitem{romero2020dual}
J.~L. Romero, J.~T.~Van Velthoven, and F.~Voigtlaender.
\newblock On dual molecules and convolution-dominated operators.
\newblock {\em arXiv preprint: 2001.09609}, 2020.

\bibitem{Toft12}
J.~Toft and P.~Wahlberg.
\newblock Embeddings of $\alpha$-modulation spaces.
\newblock {\em Pliska Studia Mathematica Bulgarica}, 21:25--56, 2012.

\bibitem{felix_thesis}
F.~Voigtlaender.
\newblock {\em Embedding theorems for decomposition spaces with applications to
  wavelet coorbit spaces}.
\newblock PhD thesis, RWTH Aachen University, 2016.

\bibitem{voigtlaender2016embeddings}
F.~Voigtlaender.
\newblock Embeddings of decomposition spaces into {S}obolev and {BV} spaces.
\newblock {\em arXiv preprint: 1601.02201}, 2016.

\bibitem{voigtlaender2019embeddings}
F.~Voigtlaender.
\newblock Embeddings of decomposition spaces.
\newblock {\em arXiv preprint: 1605.09705}, 2019.

\bibitem{wilczok2000new}
E.~Wilczok.
\newblock New uncertainty principles for the continuous {G}abor transform and
  the continuous wavelet transform.
\newblock {\em Documenta Mathematica}, 5:201--226, 2000.

\bibitem{wong2002wavelet}
M.~W. Wong.
\newblock {\em Wavelet Transforms and Localization Operators}.
\newblock Springer Science \& Business Media, 2002.

\bibitem{zhang2008wavelets}
B.~Zhang, J.~M. Fadili, and J.~Starck.
\newblock Wavelets, ridgelets, and curvelets for {P}oisson noise removal.
\newblock {\em IEEE Transactions on Image Processing}, 17:1093--1108, 2008.

\end{thebibliography}

\end{document}